\documentclass[reqno,11pt,a4paper]{amsart}
\usepackage{amssymb,amsmath,amsthm}

\parskip 2pt

\usepackage[colorlinks=true, pdfstartview=FitV, linkcolor=blue, citecolor=blue, urlcolor=blue,pagebackref=false]{hyperref}
\usepackage{esint}
\usepackage{mathrsfs}
\usepackage{comment}
\usepackage{enumitem}

\setlength{\topmargin}{-0.50cm}
\setlength{\oddsidemargin}{1.05cm}
\setlength{\evensidemargin}{1.05cm}
\textwidth=140mm
\textheight=230mm

\usepackage{times,latexsym,color}

\newcommand{\BOX}{\ensuremath\Box}

\newtheorem{theorema}{Theorem}

\newtheorem{theorem}{Theorem}[section]
\newtheorem{proposition}[theorem]{Proposition}
\newtheorem{lemma}[theorem]{Lemma}
\newtheorem{corollary}[theorem]{Corollary}

\theoremstyle{remark}
\newtheorem{remark}[theorem]{Remark}

\theoremstyle{definition}
\newtheorem{definition}[theorem]{Definition}

\numberwithin{equation}{section}

\DeclareMathOperator{\dist}{dist}

\DeclareMathOperator*{\osc}{osc}

\newcommand{\N}{\mathbb{N}}

\newcommand{\R}{\mathbb{R}}
\newcommand{\Z}{\mathbb{Z}}
\newcommand{\dd}{\,{\rm d}}

\newcommand{\ep}{\varepsilon}

\newcommand{\B}{\mathbb{B}}

\def\Xint#1{\mathchoice
	{\XXint\displaystyle\textstyle{#1}}%
	{\XXint\textstyle\scriptstyle{#1}}%
	{\XXint\scriptstyle\scriptscriptstyle{#1}}%
	{\XXint\scriptscriptstyle\scriptscriptstyle{#1}}%
	\!\int}
\def\XXint#1#2#3{{\setbox0=\hbox{$#1{#2#3}{\int}$}
		\vcenter{\hbox{$#2#3$}}\kern-.5\wd0}}

\def\dashint{\Xint-}

\definecolor{darkgreen}{rgb}{0,0.5,0}
\definecolor{darkblue}{rgb}{0,0,0.7}
\definecolor{darkred}{rgb}{0.9,0.1,0.1}
\definecolor{lightblue}{rgb}{0,0.51,1}

\newenvironment{proofx}[1]%
{\vskip\baselineskip\noindent\textbf{Proof of {#1}:}}%
{\hspace*{.1pt}\hspace*{\fill}\BOX\vskip\baselineskip}
{\vskip\baselineskip\noindent\textbf{Proof of Theorem \protect\ref{#1}:}}%
{\hspace*{.1pt}\hspace*{\fill}\BOX\vskip\baselineskip}
{\vskip\baselineskip\noindent\textbf{Proof of Lemma~\protect\ref{#1}:}}%
{\hspace*{.1pt}\hspace*{\fill}\BOX\vskip\baselineskip}
{\vskip\baselineskip\noindent\textbf{Proof of Proposition~\protect\ref{#1}:}}%
{\hspace*{.1pt}\hspace*{\fill}\BOX\vskip\baselineskip}
{\vskip\baselineskip\noindent\textbf{Proof of Theorems \protect\ref{#1} --
		\protect\ref{#2}:}}%
{\hspace*{.1pt}\hspace*{\fill}\BOX\vskip\baselineskip}

\begin{document}
	
	\title[Large-scale regularity for fluids over non-Lipschitz boundaries]{Large-scale regularity for the stationary Navier-Stokes equations over non-Lipschitz boundaries}
	
	\author[M. Higaki]{Mitsuo Higaki}
	\address[M. Higaki]{Department of Mathematics, Kobe University, 1-1 Rokkodai, Nada-ku, Kobe 657-8501, Japan}
	\email{higaki@math.kobe-u.ac.jp}
	
	\author[C. Prange]{Christophe Prange}
	\address[C. Prange]{Laboratoire de Math\'ematiques AGM, UMR CNRS 8088, Cergy Paris Universit\'e, 
		2 avenue Adolphe Chauvin, 95302 Cergy-Pontoise Cedex, France}
	\email{christophe.prange@cyu.fr}
	
	\author[J. Zhuge]{Jinping Zhuge}
	\address[J. Zhuge]{Department of Mathematics, University of Chicago, Chicago, IL, 60637}
	\email{jpzhuge@math.uchicago.edu}
	
	\keywords{Navier-Stokes equations, large-scale regularity, boundary layers, John domains, Green functions, homogenization}
	\subjclass[2010]{35Q30, 35B27, 76D03, 76D10, 76M50.}
	
	\maketitle
	
	\begin{abstract}
		In this paper we address the large-scale regularity theory for the stationary Navier-Stokes equations in highly oscillating bumpy John domains. These domains are very rough, possibly with fractals or cusps, at the microscopic scale, but are amenable to the mathematical analysis of the Navier-Stokes equations. We prove: (i) a large-scale Calder\'on-Zygmund estimate, (ii) a large-scale Lipschitz estimate, (iii) large-scale higher-order regularity estimates, namely, $C^{1,\gamma}$ and $C^{2,\gamma}$ estimates. These nice regularity results are inherited only at mesoscopic scales, and clearly fail in general at the microscopic scales. We emphasize that the large-scale $C^{1,\gamma}$ regularity is obtained by using first-order boundary layers constructed via a new argument.
		The large-scale $C^{2,\gamma}$ regularity relies on the construction of second-order boundary layers, which allows for certain boundary data with linear growth at spatial infinity. To the best of our knowledge, our work is the first to carry out such an analysis. 
		In the wake of many works in quantitative homogenization, our results strongly advocate in favor of considering the boundary regularity of the solutions to fluid equations as a multiscale problem, with improved regularity at or above a certain scale.

	\end{abstract}

	\section{Introduction}

	In this work we consider the large-scale boundary regularity for the stationary Navier-Stokes equations
	\begin{equation}\tag{NS$^\ep$}\label{intro.NS.ep}
		\left\{
		\begin{array}{ll}
			-\Delta u^\ep+\nabla p^\ep=-u^\ep\cdot\nabla u^\ep&\mbox{in}\ B^\ep_{1,+}\\
			\nabla\cdot u^\ep=0&\mbox{in}\ B^\ep_{1,+}\\
			u^\ep=0&\mbox{on}\ \Gamma^\ep_1,
		\end{array}
		\right.
	\end{equation}
	in a domain with a rough bumpy boundary. The no-slip boundary condition is prescribed only on the lower part $\Gamma^\ep_1$ of $\partial B^\ep_{1,+}$. The boundary is rough in two aspects: (i) possible lack of regularity at the microscopic scale as the boundary may have fractals or inward cusps; (ii) bumpiness, i.e., the boundary is highly oscillating. 
	The functions $u^\ep = (u^\ep_1(x), u^\ep_2(x), u^\ep_3(x)) \in\R^3$ and $p^\ep=p^\ep(x)\in\R$ denote respectively the velocity field and the pressure field of the fluid. The definitions of $B^\ep_{r,+}$ and $\Gamma^\ep_r$ are given in Subsection \ref{sec.not}. 
	We will show large-scale regularity estimates, including Lipschitz estimate (see Theorem \ref{theo.lip.nonlinear} in Subsection \ref{sec.outline} below), $C^{1,\gamma}$ estimate (see Theorem \ref{thm.C1g}) and $C^{2,\gamma}$ estimate  (see Theorem \ref{thm.C2g}).
		These improved regularity results at the large scales are generally false at the small scales due to the roughness of the boundary. The tools developed in this paper enable us to decouple the large-scale regularity from the small-scale properties of the boundary. Therefore, our results (i.e., Theorems \ref{theo.lip.nonlinear}, \ref{thm.C1g} and \ref{thm.C2g}) show that stationary incompressible Newtonian fluids are regular above the microscopic scale, regardless of the irregularity of surfaces at the microscopic scale.

	Before going into the details of our results and of the mathematical analysis, let us give some more general perspectives. The study of fluids over rough boundaries plays a prominent role in the field of hydrodynamics, at least for three reasons.
	
	First, rough, bumpy or corrugated surfaces are ubiquitous in nature and engineering. They appear at any scales from geophysics (see for instance \cite{narteau2001} for the fractal-like core-mantle boundary in the Earth) to zoology \cite{pu2016} and microfluidics \cite{waheed2016}. At the microstructure, the geometry may be anything from fractal to periodic and crenellated. No surface is perfectly smooth, and the lack of smoothness may actually enable us to resolve certain oddities, such as the no-collision paradox for a sphere dropped in a viscous fluid under the action of gravity \cite{smart1989,joseph2001,davis2003,GVH12,izard2014}. Moreover, certain roughness patterns are either selected by biological processes and environmental pressure such as scales of sharks for their drag reduction properties, or designed for industrial applications especially in aeronautics, microfluidics and for the transport of fluids in pipes \cite{pu2016,dean2010,lee2005}.

	Second, the study of roughness is strongly tied to the derivation of boundary conditions in fluid mechanics. The question whether fluids slip or not over surfaces is still a matter of active debate. Experiments show that there is no universal answer and that the slip behavior depends a lot on the geometry and microstructure of the surface \cite{bocquet2007,lauga2007}. A widespread idea is that roughness favors slip. To give one specific example where finding the most accurate boundary condition is critical, let us cite the field of glaciology. The assessment of various friction laws for the flow of a glacier over a rough bedrock is crucial in order to understand the speed of glacier discharge and eventually estimate the sea level rise as a result of global warming \cite{joughin19,minchew20}.
	
	Third, the study of the impact of roughness on the behavior of fluids accompanied the development of  turbulence research, as underlined by Jim\'enez in \cite{jimenez2004}:
	\begin{quote}
		Turbulent flows over rough walls have been studied since the  early  works  of Hagen (1854) and  Darcy  (1857), who were  concerned  with  pressure  losses  in water conduits. They have been important in the history of turbulence. Had those conduits not been fully rough, turbulence theory would probably have developed more slowly. The pressure loss in pipes only becomes independent of viscosity in the fully rough limit, and this independence was the original indication that something was amiss with laminar theory. Flows over smooth walls never become fully turbulent, and their theory is correspondingly harder.
	\end{quote}
	Investigations of the effect of roughness on fluid flows span three distinct regimes. In the laminar regime, studies focus on the drag reducing properties of roughness elements \cite{bechert1989,GMJ11}. As for the onset of turbulence \cite{schultz2007,squire16}, there are some indications that roughness lowers the critical Reynolds number for the transition from the laminar to turbulent regime \cite{varnik2007}. In the fully turbulent regime, a similarity hypothesis for the flow over flat surfaces and for the flow over rough surfaces was put forward \cite{townsend1980}. The extent to which such a universal law holds is still being disputed \cite{jimenez2004,castro2007,flack2007,schlichting2016}.

	The three main directions raised above are reflected in the mathematical works. The literature is vast. Therefore we do not aim for exhaustivity. 
	
	First, there is an extensive body of works that deal with wall (or friction) laws, or in other words, effective or homogenized boundary conditions. One aims at replacing rough boundaries by fictitious, smooth or flat boundaries. In that line of research, it is well-known that Navier-slip boundary conditions provide refined approximations for fluids above bumpy boundaries. Under some quantitative ergodicity assumptions, one can get error estimates. Historically, periodic roughness profiles were first looked at \cite{AS97,APV98,JM01,JM03}. Analysis of almost-periodic \cite{GVM10} or random stationary ergodic \cite{GV09,BGV08} boundary oscillations was done more recently. Let us also mention a few works that address non-stationary fluids \cite{BFN10,Higaki16}, for which the analysis is less developed due to its inherent difficulties. 
	We also point out that some authors attempted to justify boundary conditions arising in fluid mechanics starting from boundary conditions at the microscopic scale; see for instance \cite{CDFS03,BFNW08,BB12} for the derivation of the no-slip boundary condition from a perfect slip boundary condition at the microscale, or \cite{DGV11} for the computation of the homogenized effect starting from Navier-slip boundary conditions at the microscale.
	
	A second topic is the study of the effect of roughness on singular limits. The topics of rotating fluids and of the homogenized effect of bumpiness on Ekman pumping has been studied in numerous papers \cite{GV03,GVD06,DP14,DGV17}. The paper \cite{GVLNR18} carries out an analysis of the vanishing viscosity limit in a specific scaling regime. There are also studies concerned with equations in singularly perturbed domains such as the Stokes equations in rough thin films \cite{CM12} or water waves above a rough topography in the shallow regime \cite{CLS12}.
	
	Third, rough domains pose considerable numerical challenge. This aspect has certainly driven the development of wall laws in a model reduction perspective; see for instance \cite{APV98,DDM15}. Other approaches are being elaborated, such as Direct Numerical Simulations \cite{cardillo13}, Lattice Boltzmann Methods that are adapted to intricate geometries \cite{varnik2007} and Large Eddy Simulations \cite{anderson2011,BSG18} that in this context cause important parametrization issues of the small scales.

	In this work, we tackle these questions from the angle of regularity theory. The following two general objectives in regularity theory motivate our results: (i) identify building blocks describing the local behavior of solutions, (ii) estimate the decay of certain excess quantities at various scales. We prove that fluids above bumpy boundaries, that are very rough at the microscopic scale, have improved regularity at large scales. 
	Our results are in the spirit of large-scale regularity estimates pioneered in \cite{AL87} for periodic homogenization, and later extended to stochastic homogenization; see for instance \cite{AS16,AM16,GNO14,GNO15} and \cite{AKM16} for the higher-order large-scale regularity theory. Our research program was started with the works \cite{KP15,KP18} concerned with uniform regularity estimates above highly oscillating boundaries for elliptic equations. In the paper \cite{HP}, the large-scale Lipschitz and $C^{1,\gamma}$ estimates for the stationary Navier-Stokes equations were established above Lipschitz boundaries. A local Navier wall law was also obtained. Finally, let us also mention the paper \cite{Z} that deals with the large-scale regularity of elliptic equations above arbitrarily rough microstructures.

	\subsection{Outline of the main results of the paper}
	\label{sec.outline}
	
	In this paper, we study the large-scale regularity for stationary incompressible viscous fluids modeled by the Stokes or Navier-Stokes equations, in domains that are very rough and bumpy at the microscale. Our results show that the large-scale regularity is completely independent of small-scale properties of the boundary. 
	
	Let us stress some novel aspects of our results. We refer to Subsection \ref{sec.comp} for a further comparison with a few related works, and to Subsection \ref{sec.strat} for an outline of the proofs.
	
	First we consider John domains, whose boundaries allow for fractals and inward cusps. Hence, the boundaries considered in this paper get closer to the modeling of real boundaries found in nature, that in particular do not need to be graphs.
	John domains have in a broad sense the minimal properties for the analysis of incompressible fluids. Indeed, we rely on a Bogovskii operator in John domains to estimate the pressure. For precise definitions and a more complete discussion, we refer to Subsection \ref{sec.not} below.
	
	Second, beyond the Lipschitz estimate, we prove higher-order $C^{1,\gamma}$ and $C^{2,\gamma}$ estimates for $\gamma\in[0,1)$, as stated in Theorems \ref{thm.C1g} and \ref{thm.C2g}. These require the construction of boundary layer correctors, which is at the heart of the paper in Section \ref{sec.bl}; see Subsection \ref{subsec.1st.BL} for the first-order boundary layers and Subsection \ref{subsec.2nd.BL} for the second-order boundary layers. As far as we know, the present work is the first to construct the second-order boundary layers with a linear growth in the direction tangential to the boundary. To make the analysis more tractable, we assume that the boundary is periodic for the structure result of second-order boundary layers; see Theorem \ref{prop.BL21} and Theorem \ref{prop.BL2356}. We are aware of the papers \cite{BLTV,BM10}, where a refined second-order approximation is constructed for the Stokes equations in a two-dimensional rough channel. However, the boundary layers considered in \cite{BLTV,BM10} only involve data spanned by linear and quadratic polynomials of the vertical variable, $x_2$ and $x_2^2$ in this two-dimensional case, which are bounded on the bumpy boundary. In our three-dimensional situation, the class of ``no-slip Stokes polynomials'' (see Subsection \ref{subsec.Taylor.polys}) is much richer and involves boundary data with linear growth at spatial infinity.

	Third, we provide explicit quantitative regularity estimates in the non-perturbative regime. 
	
	Fourth, in the vein of the seminal works \cite{AL87,AL91} and of \cite{KLS14,GZ19}, we provide pointwise estimates for the large-scale decay of the velocity and pressure parts of the Green function associated to the Stokes system in bumpy John half-spaces; see Subsection \ref{sec.strat} and Appendix \ref{app.green}. These estimates are pivotal to construct the first-order boundary layers in Subsection \ref{subsec.1st.BL}.
	
	We now state the three main theorems of the paper.
		
	%
	\begin{theorema}[Large-scale Lipschitz regularity]\label{theo.lip.nonlinear}
		For all $\ep\in(0,\frac12)$, 
		$L\in (0,\infty)$, $M\in(0,\infty)$ and $\delta\in(0,1)$, the following statement holds. Let $\Omega$ be a bumpy John domain with constant $L$ according to Definition \ref{def.John2} below. If $(u^\ep, p^\ep) \in H^1(B^\ep_{1,+})^3 \times L^2(B^\ep_{1,+})$ is a weak solution of \eqref{intro.NS.ep} satisfying
		\begin{align}\label{e.defMbis}
			\bigg(\dashint_{B^\ep_{1,+}} |\nabla u^\ep|^2 \bigg)^{1/2}
			\le
			M
		\end{align}
		(The precise definition of the bumpy cube $B^\ep_{r,+} = Q_r(0) \cap \Omega^\ep$ can be found in Subsection \ref{sec.not}.),
		then, for any $r\in(\ep, \frac12)$,
		\begin{align}\label{est2.theo.lip.nonlinear}
			\bigg(\dashint_{B^\ep_{r,+}} |\nabla u^\ep|^2 \bigg)^{1/2}
			+ \bigg(\dashint_{B^\ep_{r,+}} |p^\ep-\dashint_{B^\ep_{1/2,+}} p^{\ep}|^2\bigg)^{1/2}
			\le
			C(M+M^{4+\delta}),
		\end{align}
		where the constant $C$ is independent of $\ep$, $M$ and $r$, and depends on $L$ 
		and $\delta$.
	\end{theorema}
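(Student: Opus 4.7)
The plan is to treat the convective nonlinearity as a tame forcing term for the Stokes system and run a Sobolev bootstrap on top of the linear large-scale regularity estimates established elsewhere in the paper.

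Since $\nabla\cdot u^\ep=0$, I first recast \eqref{intro.NS.ep} as a Stokes system with a divergence-form forcing
\[
-\Delta u^\ep+\nabla p^\ep=\nabla\cdot F^\ep,\qquad F^\ep:=-u^\ep\otimes u^\ep,
\]
which is exactly the structure that the large-scale Calder\'on-Zygmund estimate for Stokes in bumpy John domains (announced as item (i) of the abstract) is designed to handle. From \eqref{e.defMbis} and the Sobolev--Poincar\'e inequality (applicable thanks to the Dirichlet condition on $\Gamma^\ep_1$), I obtain $\|u^\ep\|_{L^6(B^\ep_{1,+})}\le CM$ and hence $\|F^\ep\|_{L^3(B^\ep_{1,+})}\le CM^2$.

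Feeding this into the large-scale Calder\'on-Zygmund estimate yields, at every scale $r\ge\ep$, the bound $(\dashint_{B^\ep_{r,+}}|\nabla u^\ep|^3)^{1/3}\le C(M+M^2)$. Combined with a scale-invariant Sobolev embedding, this upgrades $u^\ep$ to $L^{q}$ at large scales for every finite $q$, so a second loop produces $F^\ep\in L^s$ for some $s>3$ with $\|F^\ep\|_{L^s}\le C(M^2+M^{4})$, the slight loss in exponent being hidden in the parameter $\delta$. Since $s$ is supercritical, the large-scale Stokes Lipschitz estimate with $L^s$-forcing then applies and delivers
\[
\bigg(\dashint_{B^\ep_{r,+}}|\nabla u^\ep|^2\bigg)^{1/2}\le C(M+M^{4+\delta}),
\]
which is the velocity half of \eqref{est2.theo.lip.nonlinear}. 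The pressure half follows from the John--Bogovskii operator: testing $\nabla p^\ep=\Delta u^\ep-u^\ep\cdot\nabla u^\ep$ against the Bogovskii preimage of an arbitrary mean-zero $L^2$ function over $B^\ep_{1/2,+}$, whose existence and boundedness rest precisely on the John property of $\Omega$, recovers $p^\ep-\dashint_{B^\ep_{1/2,+}}p^\ep$ by duality from the velocity bound and the $L^s$ control on $F^\ep$.

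The main difficulty is that the estimate must be uniform in $M\in(0,\infty)$, so one cannot invoke any smallness to linearize the equation. This is precisely what forces the bootstrap to run through two full Sobolev loops, each roughly doubling the power of $M$, producing the composite exponent $M^{4+\delta}$ in the final estimate; keeping track of where the $\delta$ enters (namely, from pushing past the Sobolev threshold $s=3$) is the most delicate bookkeeping step. Everything is, of course, contingent on the availability of the large-scale Calder\'on-Zygmund and Lipschitz estimates for the linear Stokes system over bumpy John domains, which themselves rely on the first-order boundary-layer construction and on the Green function estimates that occupy the bulk of the paper.
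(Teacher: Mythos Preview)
Your bootstrap of the nonlinearity is essentially what the paper does in Section~2 (Theorems~\ref{thm.CZ.Stokes}, \ref{thm.Sobolev}, \ref{thm.MFL3}), but the proposal then goes wrong in two places.

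First, you invoke a ``large-scale Stokes Lipschitz estimate with $L^s$-forcing'' as a ready-made black box. No such statement is available in advance; producing it is precisely the content of Section~\ref{sec.large}. The paper runs an excess decay argument: at each scale $r\ge\ep$ it approximates $(u^\ep,p^\ep)$ by a Stokes solution over a flat boundary (Lemma~\ref{lem.est.approx.}), uses the $C^{1,\alpha}$ regularity of that flat solution to get a one-step decay of the excess $H(u^\ep,p^\ep;\cdot)$ (Lemmas~\ref{lem.est.approx.func.}--\ref{lem.excess.decay}), and then closes via the iteration Lemma~\ref{lem.iteration}. The H\"older control on $\mathcal{M}_\ep^2[F^\ep]$ from Theorem~\ref{thm.MFL3} enters as the source term $B_0 r^\beta$ in that iteration. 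Your last sentence compounds the error by reversing the logical order: the large-scale Lipschitz estimate does \emph{not} rely on boundary layers or Green-function estimates. On the contrary, the paper emphasizes (Section~\ref{sec.comp}, item (ii), and Section~\ref{sec.strat}) that the Lipschitz estimate is proved first by the quantitative method, and only then used to build the Green function (Appendix~\ref{app.green}), which in turn is used to construct the first-order boundary layers (Section~\ref{subsec.1st.BL}).

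Second, the pressure argument is incomplete. A single Bogovskii duality on $B^\ep_{1/2,+}$ bounds $\|p^\ep-\dashint_{B^\ep_{1/2,+}} p^\ep\|_{L^2(B^\ep_{1/2,+})}$, but after dividing by $|B^\ep_{r,+}|$ this blows up as $r\to 0$. The paper instead splits $p^\ep-\dashint_{B^\ep_{1/2,+}} p^\ep$ into the local oscillation $p^\ep-\dashint_{B^\ep_{r,+}} p^\ep$ (handled by Bogovskii at scale $r$ plus the velocity Lipschitz bound) and the drift $\dashint_{B^\ep_{r,+}} p^\ep-\dashint_{B^\ep_{1/2,+}} p^\ep$, which is controlled by telescoping over dyadic scales and summing the pressure-oscillation part of $H(u^\ep,p^\ep;\cdot)$; this sum is finite precisely because the iteration lemma yields $\int_\ep^{1/2} H(t)\,t^{-1}\,\mathrm{d}t<\infty$.
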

	%

	Notice that Theorem \ref{theo.lip.nonlinear}, as well as the subsequent results, holds in the non-perturbative regime for arbitrarily large $M$ in \eqref{e.defMbis}. This is due to the energy subcritical nature of the stationary Navier-Stokes equations, which makes it an easier problem than the non-stationary Navier-Stokes system. Remark also that the powers of $M$ in the right-hand side of \eqref{est2.theo.lip.nonlinear} are explicit.

	For higher-order $C^{1,\gamma}$ and $C^{2,\gamma}$ regularity results, we measure the oscillation of the solution with respect to modified polynomials that vanish on the bumpy boundary. These modified polynomials are polynomials of degree one and two that are corrected by the first-order and second-order boundary layers. 
	
	\begin{theorema}[Large-scale $C^{1,\gamma}$ regularity]\label{thm.C1g}
		For all $\gamma\in [0,1)$, 
		$\ep\in(0, 
		\frac12)$, $L\in (0,\infty)$, $M\in(0,\infty)$ and $\delta\in(0,1)$, the following statement holds. Let $\Omega$ be a bumpy John domain with constant $L$ according to Definition \ref{def.John2} below. If $(u^\ep, p^\ep) \in H^1(B^\ep_{1,+})^3 \times L^2(B^\ep_{1,+})$ is a weak solution of \eqref{intro.NS.ep} satisfying \eqref{e.defMbis},
		then, there exists a constant $\bar{P}_1$ (depending on $p^\ep$) such that, for any $r\in (\ep, \frac12)$, 
		\begin{equation}\label{est.thmC1a}
			\begin{aligned}
				&\inf_{(w,\pi)\in \mathscr{Q}_1(\Omega)}  \bigg\{ \frac{1}{r} \bigg( \dashint_{B_{r,+}^\ep} |u^\ep - \ep w(x/\ep)|^2 \dd x \bigg)^{1/2} + \bigg( \dashint_{B^\ep_{r,+}} | p^\ep - \pi(x/\ep) - \bar{P}_1 |^2\dd x \bigg)^{1/2} \bigg\} \\
				& \quad \le Cr^\gamma (M+M^{4+2\gamma+\delta}),
			\end{aligned}
		\end{equation}
		where $\mathscr{Q}_1(\Omega)$ is the class of all solutions to the Stokes equations in a bumpy John half-space $\Omega$ with linear growth at infinity that vanish on $\partial\Omega$; see \eqref{e.defQ1}. The constant $C$ is independent of $\ep$, $M$ and $r$, but depends on $L$, $\gamma$ and $\delta$.
	\end{theorema}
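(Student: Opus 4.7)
The plan is to carry out a Campanato-type excess-decay iteration adapted to the bumpy John half-space, combined with the first-order boundary layer correctors constructed in Subsection \ref{subsec.1st.BL}. For $r\in(\ep,1/2)$, define the excess
$$\Phi(r) := \inf_{(w,\pi) \in \mathscr{Q}_1(\Omega)} \Bigl\{ \tfrac{1}{r}\bigl(\dashint_{B^\ep_{r,+}} |u^\ep - \ep w(x/\ep)|^2\bigr)^{1/2} + \bigl(\dashint_{B^\ep_{r,+}}|p^\ep - \pi(x/\ep) - \bar{P}_1|^2\bigr)^{1/2}\Bigr\},$$
with $\bar{P}_1$ a suitable pressure normalization (essentially a weighted average of $p^\ep$). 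The target is a one-step decay of the form $\Phi(\theta r) \le \theta^\gamma \Phi(r) + E(r)$ for a small $\theta \in (0,1/4)$ depending only on $\gamma$, with $E(r)$ a controlled perturbation. Iterating geometrically from $r_0\asymp 1/2$ through the dyadic scales $\theta^k r_0$ down to $\ep$, and using Theorem \ref{theo.lip.nonlinear} to bound $\Phi(1/2) \le C(M+M^{4+\delta})$ (take $(w,\pi)=(0,0)$ and apply Poincar\'e together with the no-slip condition), then yields \eqref{est.thmC1a}.

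For the one-step improvement, let $(w_r,\pi_r) \in \mathscr{Q}_1(\Omega)$ be a near-optimal pair at scale $r$ and set $v := u^\ep - \ep w_r(x/\ep)$, $q := p^\ep - \pi_r(x/\ep) - \bar P_1$. Then $(v,q)$ solves, in $B^\ep_{r,+}$, the Stokes system with forcing $-u^\ep\cdot\nabla u^\ep$ and vanishing Dirichlet data on $\Gamma^\ep_r$. Using the pointwise large-scale decay of the Stokes Green function in bumpy John half-spaces (Subsection \ref{sec.strat} and Appendix \ref{app.green}), I would prove a first-order Liouville/Taylor expansion: every such $(v,q)$ is approximated on $B^\ep_{\theta r,+}$ by an element of $\mathscr{Q}_1(\Omega)$, with the error inheriting, after passing to the flat-half-space limit, the standard boundary $C^{1,\gamma}$ Schauder decay for the Stokes system (where the relevant polynomial subspace consists of no-slip affine velocities of the form $\alpha_\beta x_3 e_\beta$, $\beta\in\{1,2\}$). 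By linearity of $\mathscr{Q}_1(\Omega)$ and the triangle inequality, this converts into $\Phi(\theta r) \le C \theta^\gamma \Phi(r) + E(r)$.

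The perturbation $E(r)$ splits into two contributions. The homogenization error from replacing the bumpy problem by its flat limit is of order $(\ep/r)^\sigma$ for some $\sigma>0$, negligible once $\theta$ is fixed and $r\gg \ep$. The nonlinear contribution from $u^\ep\cdot \nabla u^\ep$ is estimated via Theorem \ref{theo.lip.nonlinear}: at any scale $r$ the $L^2$-averaged gradient is controlled by $M+M^{4+\delta}$, and $u^\ep$ itself by $r(M+M^{4+\delta})$ via Poincar\'e and the no-slip condition, so the convection term contributes to the excess at scale $\theta r$ an error of order $r(M+M^{4+\delta})^2 \lesssim r(M+M^{8+2\delta})$. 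The refined exponent $4+2\gamma+\delta$ in \eqref{est.thmC1a} is obtained through a careful interpolation in the iteration: at each dyadic step the nonlinear error is only partially absorbed into the geometric factor, and the remainder is redistributed using the decay rate $r^\gamma$, so that only a fractional loss proportional to $2\gamma$ in the $M$-exponent accumulates. Summation of the geometric series in $\theta^{k\gamma}$ then closes the argument.

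The main obstacle is the one-step improvement itself, specifically the structural (Liouville-type) description of $\mathscr{Q}_1(\Omega)$: one has to identify a two-parameter family of no-slip Stokes solutions in the bumpy John half-space with linear growth at infinity, and show that it forms the natural ``tangent space'' for approximating arbitrary bumpy Stokes solutions at first order. This rests crucially on sharp pointwise Green function decay, which is delicate precisely because the boundary is merely John rather than Lipschitz and requires the tools of Appendix \ref{app.green}. A secondary difficulty is the construction of the matching pressure corrector $\pi$ for each velocity $w \in \mathscr{Q}_1(\Omega)$ and the identification of $\bar P_1$, both of which require a Bogovskii operator on John domains to eliminate non-local pressure modes from the first-order expansion.
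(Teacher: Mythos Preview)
Your high-level strategy — a Campanato-type iteration on an excess built from the first-order space $\mathscr{Q}_1(\Omega)$ — is correct and matches the paper's architecture. However, there is a concrete gap in your treatment of the nonlinearity, and a misplacement of the role of the Green function.

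\textbf{The nonlinear exponent.} Your estimate of the convective contribution as $r(M+M^{4+\delta})^2 \lesssim r(M+M^{8+2\delta})$ is too crude, and the vague ``careful interpolation in the iteration'' cannot recover the exponent $4+2\gamma+\delta$ from $8+2\delta$. In the paper the correct power of $M$ does not emerge from the Campanato loop at all: it is established \emph{before} the iteration, via the large-scale H\"older estimate for $F^\ep = -u^\ep\otimes u^\ep$ (Theorem~\ref{thm.MFL3}), which in turn rests on the large-scale Calder\'on--Zygmund and Sobolev machinery of Section~\ref{sec.nlest}. Concretely, choosing $\beta=\gamma+\delta$ in \eqref{est.MFep.large-scale} gives
\[
\bigg(\dashint_{Q_r}|\mathcal M_\ep^2[F^\ep]|^3\bigg)^{1/3}\le C(M+M^{4+2\gamma+\delta})\,r^{\gamma+\delta},
\]
and it is this decay in $r$ with the already-sharp $M$-dependence that is fed into the one-step estimate (Lemma~\ref{lem.H1st.excess.decay}) and then iterated (Proposition~\ref{lem.H1st.iteration}). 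Without this ingredient your argument would at best yield $C^{1,\gamma}$ with the weaker bound $Cr^\gamma(M+M^{8+2\delta})$, which is not the theorem as stated.

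\textbf{The one-step improvement and the Green function.} In the paper the Green function estimates of Appendix~\ref{app.green} are used only to \emph{construct} the boundary layers $(v^{(1j)},q^{(1j)})$ (Theorem~\ref{prop.BL1j}); they are not invoked in the excess-decay step. The one-step improvement instead goes through the flat approximation $(v_r,q_r)$ of \eqref{approx.Stokes}: apply Lemma~\ref{lem.excess.decay} with $\alpha=1$ to get decay of the zeroth-order excess $H$, then insert the boundary layers via Proposition~\ref{lem.est.scaled.BL1j} to pass from $H$ to $H_{\rm 1st}$ (this is the content of Lemma~\ref{lem.H1st.excess.decay}). The crucial algebraic observation is that subtracting any $(\ep w(\cdot/\ep),\pi(\cdot/\ep))\in\mathscr{Q}_1(\Omega)$ from $(u^\ep,p^\ep)$ still solves \eqref{S.ep}, so one may replace $\Phi$ by $H_{\rm 1st}$ on the right-hand side and obtain a genuine self-map for the iteration. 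Your proposal to derive the one-step decay directly from pointwise Green kernel bounds is plausible in spirit but is a different mechanism, and you have not indicated how to extract the $(\ep/r)^\sigma$ homogenization error or the $\theta$-smallness from those bounds; the flat-approximation route makes both quantitative and transparent.

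Finally, the pressure constant $\bar P_1$ is not fixed a priori in the paper: $H_{\rm 1st}$ measures only oscillations of the corrected pressure (differences of means over nested cubes), and $\bar P_1$ is assembled at the end by telescoping (\eqref{est.Pint.j}--\eqref{def.Pbar}) together with Lemma~\ref{lem.1st.h}, which controls the drift of the minimizing coefficients $\ell_k(\rho)$. Building $\bar P_1$ into the excess from the start, as you do, makes the iteration harder to close because you must then show the minimizers at different scales share a common normalization.
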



	The velocity estimate in (\ref{est.thmC1a}) will be derived via a large-scale estimate of $|\nabla u_\ep - \nabla w(x/\ep)|$ and the Poincar\'{e} inequality; see Subsection \ref{subsec.Large-scale.1}.

	While Theorem \ref{thm.C1g} holds for arbitrary bumpy John half-spaces, for the next result, we work in periodic John domains. As we outlined above, the extra periodicity assumption makes the analysis of the second-order boundary layers more manageable.

	\begin{theorema}[Large-scale $C^{2,\gamma}$ regularity]\label{thm.C2g}
		For all $\gamma\in [0,1)$, 
		$\ep\in(0,\frac12)$, $L\in(0,\infty)$, $M\in(0,\infty)$ and $\delta\in(0,1)$, the following statement holds. Let $\Omega$ be a periodic bumpy John domain with constant $L$ according to Definition \ref{def.PeriodicJohn} below. If $(u^\ep, p^\ep) \in H^1(B^\ep_{1,+})^3 \times L^2(B^\ep_{1,+})$ is a weak solution of \eqref{intro.NS.ep} satisfying \eqref{e.defMbis},
		then, there exists a constant $\bar{P}_{2}$ (depending on $p^\ep$) such that, for any $r\in (\ep
		, \frac12)$, 
		\begin{equation}\label{est.C2gamma}
			\begin{aligned}
				\inf\limits_{\substack{ (w_1,q_1) \in \mathscr{Q}_1(\Omega) \\ (w_2,q_2) \in \mathscr{Q}_2(\Omega)} } & \Bigg\{  \frac{1}{r} 
				\bigg(\dashint_{B^\ep_{r,+}} |u^\ep - \ep w_1(x/\ep) - \ep^2 w_2(x/\ep) |^2\dd x \bigg)^{1/2} \\
				&\qquad
				+ \bigg( \dashint_{B^\ep_{r,+}} | p^\ep - \pi_1(x/\ep) - \ep \pi_2(x/\ep) -  \bar{P}_2 |^2\dd x \bigg)^{1/2} \Bigg\} \\
				& \le C r^{1+\gamma} (M + M^{6+2\gamma +\delta}),
			\end{aligned}
		\end{equation}
		where $\mathscr{Q}_1(\Omega)$ is used in Theorem \ref{thm.C1g} and $\mathscr{Q}_2(\Omega)$ is the class of all solutions to the Stokes equations in a periodic bumpy John half-space $\Omega$, with quadratic growth at infinity, that vanish on $\partial\Omega$; see \eqref{e.defQ2}. The constant $C$ is independent of $\ep$, $M$ and $r$, but depends on $L$, $\gamma$ and $\delta$.
	\end{theorema}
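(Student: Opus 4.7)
The plan is to follow the Avellaneda--Lin excess-decay (``compactness'') iteration scheme, now at second order. I would introduce an excess functional
\begin{equation*}
	H(r) = \inf_{\substack{ (w_1,q_1)\in\mathscr{Q}_1(\Omega) \\ (w_2,q_2)\in\mathscr{Q}_2(\Omega) \\ c\in\R } } \bigg\{ \frac{1}{r}\bigg(\dashint_{B^\ep_{r,+}} |u^\ep - \ep w_1(x/\ep) - \ep^2 w_2(x/\ep)|^2\bigg)^{1/2} + \bigg(\dashint_{B^\ep_{r,+}} |p^\ep - q_1(x/\ep) - \ep q_2(x/\ep) - c|^2\bigg)^{1/2} \bigg\},
\end{equation*}
and prove a one-step improvement of the form $H(\theta r) \le \tfrac12\,\theta^{1+\gamma}H(r) + C\theta^{1+\gamma}(M+M^{6+2\gamma+\delta})$ for some fixed $\theta\in(0,\tfrac14)$ and all $r\in(\ep/\theta,1/2)$. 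Standard iteration on geometric scales $r_k=\theta^k r_0$ down to the threshold $r\sim\ep$ then yields \eqref{est.C2gamma}.

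The heart of the argument is the one-step improvement. At scale $r$, I rescale to unit scale $\tilde u(y)=r^{-1}u^\ep(ry)$, which satisfies a Navier--Stokes system with a rescaled small parameter $\ep/r$. Using the $C^{1,\gamma}$ result of Theorem~\ref{thm.C1g}, I can subtract off the best first-order corrector $\ep w_1(\cdot/\ep)$ and be left with a remainder of size $O(r^{1+\gamma})$ that is ``almost Stokes'' with quadratic data at the boundary. The next step is to approximate the rescaled remainder by a Stokes polynomial $P$ of degree~$2$ in the tangential variables solving the constant-coefficient Stokes system in the flat half-space $\R^3_+$ with the no-slip condition, after a standard $H^1$-compactness (Cacciopoli + up-to-boundary trace on $\partial B^\ep_{r,+}$) argument adapted from \cite{HP}. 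For such a polynomial $P$, the structural result for second-order boundary layers (Theorems~\ref{prop.BL21} and~\ref{prop.BL2356}) produces $(w_2,q_2)\in\mathscr{Q}_2(\Omega)$ with $w_2(y)-P(y)$ decaying (after possibly adding a tangential constant and a lower-order $(w_1,q_1)\in\mathscr{Q}_1$) away from the boundary at the required rate. Rescaling back, the triple $(\ep w_1(\cdot/\ep)+\ep^2 w_2(\cdot/\ep), q_1(\cdot/\ep)+\ep q_2(\cdot/\ep))$ is an admissible competitor for $H(\theta r)$ and yields the geometric decay.

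The nonlinear term $-u^\ep\cdot\nabla u^\ep$ is absorbed in the ``error'' part by treating it as a forcing and using Theorem~\ref{theo.lip.nonlinear} to bound $\|u^\ep\|_{L^\infty_{\mathrm{mesoscale}}}\lesssim M+M^{4+\delta}$, so that $\|u^\ep\cdot\nabla u^\ep\|_{L^2(B^\ep_{r,+})}\lesssim r^{1/2}(M+M^{4+\delta})(M+M^{4+\delta})$. The additional $r$-gain needed to reach order $r^{1+\gamma}$ follows from Calder\'on--Zygmund together with the extra smallness in $r$ coming from the mean-value formulation of the excess. Keeping track of the quadratic nonlinearity against the $M^{4+\delta}$ linear bound produces the stated power $M^{6+2\gamma+\delta}$ on the right-hand side.

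I expect the main obstacle to be the second-order boundary layer step: given a degree-$2$ no-slip Stokes polynomial $P$ whose tangential components grow linearly at infinity, one must build a corrector $w_2$ on the \emph{bumpy} periodic half-space with $w_2=0$ on $\partial\Omega$ and $w_2(y)-P(y)\to \text{affine}$ as $y_3\to+\infty$ with good decay. This is precisely the content of Theorems~\ref{prop.BL21}--\ref{prop.BL2356} flagged as new; their use is delicate because the admissible boundary data at infinity are not bounded, so the Saint-Venant--type decay estimates and Green's function bounds established in the paper (Appendix~\ref{app.green}) must be invoked carefully, and the periodicity hypothesis is needed to handle the non-decaying, linearly growing tangential modes. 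Once this structural result is in hand, the $C^{2,\gamma}$ iteration proceeds in close analogy with the $C^{1,\gamma}$ argument of Theorem~\ref{thm.C1g}.
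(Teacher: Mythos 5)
Your overall scheme (a second-order excess, one-step decay via approximation by a flat-boundary Stokes problem, correction by the new boundary layers of Theorems \ref{prop.BL21}--\ref{prop.BL2356}, then iteration) is the same skeleton the paper uses, but two steps as you state them would not deliver the theorem. First, your one-step improvement $H(\theta r)\le \tfrac12\theta^{1+\gamma}H(r)+C\theta^{1+\gamma}(M+M^{6+2\gamma+\delta})$ has an error term independent of $r$; iterating it only gives boundedness of $H$, not the decay $H(r)\lesssim r^{1+\gamma}$. To close the iteration the error at scale $r$ must itself be of order $r^{1+\gamma+\delta}$ (strictly better than the target rate), and this is exactly where the nonlinearity must be handled with care: the crude bound $\|u^\ep\cdot\nabla u^\ep\|_{L^2(B^\ep_{r,+})}\lesssim r^{1/2}(M+M^{4+\delta})^2$ you propose is not the right quantity (and not the right power of $r$). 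What is needed is a large-scale Morrey/H\"older bound on $F^\ep=-u^\ep\otimes u^\ep$ of the form $\big(\dashint_{Q_r}|\mathcal{M}^2_\ep[F^\ep]|^3\big)^{1/3}\le C(M+M^{6+2\gamma+\delta})\,r^{1+\gamma+\delta}$, which is obtained in the paper by the Calder\'on--Zygmund/Sobolev bootstrap (Theorem \ref{thm.MFL3}, with $2-6/l=1+\gamma+\delta<2$, whence the constraint on $\delta$ and the explicit power of $M$). Without this ingredient, neither the rate $r^{1+\gamma}$ nor the explicit exponent $6+2\gamma+\delta$ follows; in particular, if the approximation step is done by a genuine compactness/contradiction argument as in \cite{HP}, the explicit polynomial dependence on $M$ is at risk, and the ``Caccioppoli + trace'' argument you import from \cite{HP} is not available verbatim in John domains, where only the weak Caccioppoli inequality of Lemma \ref{appendix.lem.S.Caccioppoli.ineq.} holds (the paper circumvents this quantitatively via Lemma \ref{lem.est.approx.} and the $C^{2,1}$ expansion of the flat-boundary solution $v_r$ at $x_3=-\ep$).

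Second, the statement requires a single constant $\bar P_2$, independent of $r$, in the pressure estimate, whereas your excess minimizes over a constant $c$ separately at each scale. Passing from scale-by-scale constants to one fixed $\bar P_2$ is not automatic: one must track how the minimizing coefficients $\ell_{1k}(\rho),\ell_{2j}(\rho)$ (and hence the subtracted boundary-layer pressures $q^{(1k)}(x/\ep)$, $L^{(2j)}+\ep q^{(2j)}(x/\ep)$) change between dyadic scales, prove an estimate of the type of Lemma \ref{lem.2nd.h} (using the smallness $(\ep/r)^{1/2}$ of the rescaled boundary layers from Propositions \ref{lem.est.scaled.BL1j}--\ref{lem.est.scaled.BL2j}), and then telescope as in the proof of Theorem \ref{thm.C1g} to define $\bar P_2$ as the average of the corrected pressure at the bottom scale $r\sim\ep$. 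This step is entirely absent from your proposal; with it, and with the corrected $r^{1+\gamma+\delta}$ error term above, your argument would align with the paper's quantitative proof.
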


	We point out that the building blocks in $\mathscr{Q}_1(\Omega)$ and $\mathscr{Q}_2(\Omega)$ are defined through the first-order and second-order boundary layers and play roles of correctors of Stokes system in the bumpy John domain $\Omega$. It turns out that the above
	three regularity results, Theorems \ref{theo.lip.nonlinear}, \ref{thm.C1g} and \ref{thm.C2g}, hold also for the linear Stokes equations, with a linear dependence on the size $M$ of the solutions in $\dot H^1(B^\ep_{1,+})$. Therefore, these statements immediately imply the Liouville theorems for Stokes equations in bumpy John half-spaces with sublinear (see Corollary \ref{cor.liouville}), subquadratic or subcubic growth (see Theorem \ref{thm.liouville}).

	\subsection{Comparison to two closely related works}
	\label{sec.comp}
	
	To further underline the novelty of our work, let us compare our results to the ones of two tightly linked papers.
	
	In the paper \cite{HP}, the first and second authors of the present work carried out the analysis of the large-scale Lipschitz and $C^{1,\gamma}$ regularity for the stationary Navier-Stokes system. The results there, similar to Theorem \ref{theo.lip.nonlinear} and Theorem \ref{thm.C1g} here, hold outside the perturbative regime, that is for arbitrarily large $M$ in \eqref{e.defMbis}. The main differences between \cite{HP} and the present work are: 
	\begin{enumerate}[label=(\roman*)]
		\item In \cite{HP} the bumpy boundary is given by a Lipschitz graph without structure, while here we work in bumpy John domains as defined in Definition \ref{def.John2} that are not necessarily graphs, without structure for the large-scale Lipschitz and $C^{1,\gamma}$ regularity.
		\item In \cite{HP} the analysis relies on a compactness method originating from \cite{AL87} and the first-order boundary layer correctors are needed to prove the large-scale Lipschitz estimate in Theorem \ref{theo.lip.nonlinear}, while here we resort to a quantitative method, which enables us to by-pass the use of the first-order boundary layers for the large-scale Lipschitz regularity; see Subsection \ref{sec.strat}.
		
		\item In \cite{HP} no analysis of the higher-order large-scale regularity is carried out, while here we build the second-order boundary layer correctors that make it possible to prove Theorem \ref{thm.C2g}.
		\item In \cite{HP}, no pressure estimate is established, while in the present paper, we establish the pressure estimates in all cases, following the strategy developed recently in \cite{GZ}.
		\item In \cite{HP}, the nonlinear estimates are not explicit, while here the dependence on $M$ in \eqref{est2.theo.lip.nonlinear}, \eqref{est.thmC1a} and \eqref{est.C2gamma} is given as an explicit polynomial in $M$. 
	\end{enumerate}

	In the paper \cite{Z}, the third author of the current paper carried out an analysis of the large-scale Lipschitz regularity for linear elliptic equations in domains with arbitrary roughness at small scales and quantitative Reifenberg flatness at large scales. Hence, those domains are much rougher than the bumpy John domains considered here. We underline that the discrepancy in these assumptions on the domains comes from the fact that for incompressible Navier-Stokes equations, as opposed to elliptic equations, we have to estimate the pressure in terms of the velocity, which relies on a Bogovskii-type operator as in \cite{HP}; see Subsection \ref{sec.strat} and Appendix \ref{appendix.Bogovskii}. To address this point we work in bumpy John domains defined by Definition \ref{def.John2}.

	\subsection{Outline of the strategy for the proofs}
	\label{sec.strat}
	
	We now point to some essential ingredients and ideas for the proofs. In particular, the fact that we cannot rely on any smoothness at the microscopic scale requires several technical innovations.
	
	\subsubsection*{\underline{Analysis in John domains}}
	
	We perform the analysis in bumpy John domains, as defined in Definition \ref{def.John2}. This type of domains is a good compromise between: 
	\begin{itemize}
		\item on the one hand a high level of arbitrariness of the boundary, which is not a graph, includes certain fractals or cusps, does not oscillate with any structure, and hence approaches better the properties genuinely rough physical surfaces found in real fluids, 
		\item and on the other hand the possibility of being amenable to mathematical analysis, considering the fact already underlined above that we work with incompressible fluid models that involve estimating the pressure, rather than elliptic equations which can be studied in even rougher domains.
	\end{itemize}
	
	In John domains, we can rely on the Bogovskii operator of \cite{ADM06}, whose properties are summarized in Theorem \ref{theo.acosta}. This operator is required from the beginning of our analysis in Subsection \ref{sec.lscz} in order to prove a weak Caccioppoli inequality for the Stokes system (the usual Caccioppoli inequality seems not available in John domains), which then implies the reverse H\"older inequality \eqref{est.Caccipoli+SP}, as a starting point of the large-scale regularity theory.
	
	All the boundary estimates of this work are mesoscopic estimates in the sense that they involve averaged quantities smoothing out the possibly rough microscales. Although it is a direct consequence of the weak Caccioppoli inequality, notice that the reverse H\"older inequality \eqref{est.Caccipoli+SP} is a large-scale estimate. Indeed, going from the weak Caccioppoli inequality \eqref{est.appendix.lem.S.Caccioppoli.ineq.} to \eqref{est.Caccipoli+SP} uses the Poincar\'e inequality that holds in balls large enough, typically at a scale greater than $\ep$. At scales smaller than $\ep$, inward cusps of highly oscillating bumpy John domains may be seen, preventing Poincar\'e's inequality to hold.

	In a nutshell: in the works \cite{KP18,HP}, tools were developed, particularly for the analysis of the first-order boundary layer correctors, to handle bumpy domains with a boundary given by the graph of a Lipschitz function without structure. Here, the analysis in bumpy John domains requires to push the techniques even further, to the limit, as it seems, of what is technically possible. There is one particular point, where we are completely unable to transfer the techniques used above Lipschitz graphs to the present context. Indeed, in \cite{KP18,HP} we used a domain decomposition method pioneered in \cite{GVM10} to study the well-posedness of the Stokes system for the first-order boundary layer correctors. We do not manage to adapt this strategy, in particular the technique of local energy estimates in the bumpy channel, to our current situation. In this paper, we develop a different argument to construct the first-order boundary layers, based on the large-scale Lipschitz estimate proved as an a priori estimate. We discuss this intricate point in more details at the beginning of Subsection \ref{subsec.1st.BL}.

	\subsubsection*{\underline{Quantitative method for the large-scale regularity}}
	
	We rely on a quantitative method for large-scale regularity, inspired by the Schauder's theory pioneered by \cite{AS16,AShen16,Shen17}, the Calder\'{o}n-Zygmund theory motivated by \cite{CP98} and \cite[Chapter 4]{Sh} and the pressure estimate developed in \cite{GX17,GZ19,GZ}.
	This method is based on a perturbation argument. The principle of this method is the following:
	\begin{enumerate}[label=(\arabic*)]
		\item\label{item.qm1} approximate the original rough problem, by a smooth problem at any mesoscopic scales and obtain suboptimal quantitative estimates;
		\item\label{item.qm2} use the improved regularity of the approximate problem, to get the scale-by-scale decay of excess quantities (measuring for instance, H\"older continuity, Lipschitz, $C^{1,\gamma}$, $C^{2,\gamma}$, or higher regularity) for the original rough problem, up to a small error;
		\item\label{item.qm3} conclude by a real variable argument such as Theorem \ref{thm.Shen} or an iteration lemma such as Lemma \ref{lem.iteration}, which are in some sense black boxes oblivious to the equations.
	\end{enumerate}
	In the context of homogenization, the homogenized limit problem with constant coefficients is the approximate problem. Here, the approximate problem is a Stokes problem in a domain with a flat boundary. Both problems have improved regularity, in the sense that the solutions are basically as smooth as one wishes.
	
	We remark that from a high-level point of view all the regularity estimates in this paper follow the above scheme. 
	For the large-scale $W^{1,p}$ regularity stated in Theorem \ref{thm.CZ.Stokes}, item \ref{item.qm1} above corresponds to Lemma \ref{lem.Duews}, item \ref{item.qm2} corresponds to the estimate \eqref{est.Dw.Linfty} and item \ref{item.qm3} corresponds to Theorem \ref{thm.Shen}. For the proof of Lipschitz estimate in Theorem \ref{theo.lip.nonlinear}, item \ref{item.qm1} corresponds to Lemma \ref{lem.est.approx.}, item \ref{item.qm2} corresponds to Lemma \ref{lem.est.approx.func.} and item \ref{item.qm3} corresponds to Lemma \ref{lem.iteration}. The proofs of higher-order regularity estimates in Theorems \ref{thm.C1g} and \ref{thm.C2g} follow a similar scheme.

	\subsubsection*{\underline{Relations between the results and structure of the proofs}}
	
	We conclude this part by dissecting the logical connections between the main results and the intermediate results stated in the paper. 
	
	For the Stokes system, the weak Caccioppoli inequality stated in Lemma \ref{appendix.lem.S.Caccioppoli.ineq.} and the Poincar\'e-Sobolev inequality imply a weak reverse H\"older inequality \eqref{est.Caccipoli+SP}. The Gehring lemma then implies an improvement of integrability stated in Lemma \ref{lem.selfimprove}. This large-scale Meyers-type estimate is crucial to get the quantitative approximation result, Lemma \ref{lem.Duews}, of the system in bumpy John domains by systems in flat domains, at any mesoscopic scales. The large-scale Calder\'on-Zygmund estimate for Stokes system stated in Theorem \ref{thm.CZ.Stokes} is then obtained as a combination of Lemma \ref{lem.Duews} and a real-variable argument in Theorem \ref{thm.Shen}.
	
	For the Navier-Stokes equations, we use the large-scale Calder\'on-Zygmund estimate of Theorem \ref{thm.CZ.Stokes} in combination with a large-scale Sobolev embedding stated in Theorem \ref{thm.Sobolev} to bootstrap the integrability of the nonlinear term. In the end, we are able to prove the decay of an excess quantity associated to the nonlinear term (see Theorem \ref{thm.MFL3}), which reads a large-scale H\"older estimate for the nonlinearity. The large-scale Calder\'on-Zygmund estimate also enables us to get an improved approximation result; see Lemma \ref{lem.est.approx.}. 
	These estimates, together with the smoothness of the approximate problems in flat domains (see Lemma \ref{lem.est.approx.func.}), imply an excess decay estimate in Lemma \ref{lem.excess.decay}, which leads to Theorem \ref{theo.lip.nonlinear} by an iteration lemma (i.e., Lemma \ref{lem.iteration}).
	
	The large-scale Lipschitz regularity 
	in Theorem \ref{theo.lip.nonlinear}
	then makes it possible to construct the velocity and pressure parts of the Green function in bumpy John domains, and to estimate its decay at large scales. This is the purpose of Appendix \ref{app.green}, where we prove estimates for the velocity part of the Green function (see Proposition \ref{prop.Green.G}), its derivatives (see Proposition \ref{prop.DG}), and the pressure part of the Green function (see Proposition \ref{prop.pressureest}). These estimates are the key for our new proof of the existence of the first-order boundary layer correctors; see Theorem \ref{prop.BL1j}. In this way we are able to by-pass the difficulties posed by the method used in \cite{GVM10,DP14,DGV17,KP18,HP}.
	
	To the best of our knowledge, the present work is the first to carry out a thorough analysis of the second-order boundary layer correctors, allowing for linear growth of the boundary data in the tangential direction.  Our idea is to use the first-order boundary layer correctors in an Ansatz for the second-order boundary layers. In Theorem \ref{prop.BL21} and Theorem \ref{prop.BL2356}, we give the detailed structure of the second-order boundary layers. For our analysis to go through, we need some good quantitative 
	convergence/decay
	of the first-order boundary layers away from the boundary. Hence we work in a periodic framework, according to Definition \ref{def.PeriodicJohn}; but this is by no means an optimal assumption. Other structures, such as almost-periodic structures with a non-resonance condition, or random ergodic with quantitative decorrelation properties at large scales, would certainly be manageable.
	
	The key outcome of Section \ref{sec.bl} handling the construction of boundary layers is summarized in Proposition \ref{lem.est.scaled.BL1j} and Proposition \ref{lem.est.scaled.BL2j}. They are then used in Section \ref{sec.higher} to run the excess decay method for the higher-order regularity. Theorem \ref{thm.C1g} and Theorem \ref{thm.C2g} are proved there.

	\subsection{Notations and definitions}
	\label{sec.not}
	
	\subsubsection*{\underline{John domains}}
	
	We first define John domains. These domains were introduced by John in \cite{John61} and named after John in \cite{MS79}.
	
	\begin{definition}\label{def.John}
		Let $\Omega \subset \R^d$ be an open bounded set and $\tilde{x} \in \Omega$. We say that $\Omega$ is a John domain 
		(or a bounded John domain)
		with respect to $\tilde{x}$ and with constant $L$ if for any $y\in \Omega$, there exists a Lipschitz mapping $\rho:[0,|y-\tilde{x}|] 
		\to \Omega$ with Lipschitz constant $L\in (0,\infty)$, such that $\rho(0) = y, \rho(|y-\tilde{x}|) = \tilde{x}$ and $\dist(\rho(t),\partial\Omega) \ge t/L$ for all $t\in [0, |y-\tilde{x}|]$. 
	\end{definition}
	
	Our analysis takes advantage of a key property of John domains, namely the existence of a right inverse of the divergence operator. Such an operator is usually called a Bogovskii operator; see Appendix \ref{appendix.Bogovskii} where we state the result of \cite{ADM06}.
	
	Examples of John domains are: Lipschitz domains, NTA domains, domains with inward cusps or certain fractals such as Koch's snowflake. Notice that domains with outward cusps are not John domains. For our work, 
	we generalize the above definition from bounded domains to a class of unbounded domains. 
	\begin{definition}\label{def.John2}
		Let $\Omega$ be a domain containing the upper half-space of $\R^3$ and assume that $\partial\Omega \subset\{ -1< x_3<0 \}$.
		We say that $\Omega$ is a \emph{bumpy John domain} (or a \emph{bumpy John half-space}) with constants $(L,K)$, if for any $x \in \{ x_3 = 0 \}$ and any $R\ge 1$, there exists a bounded John domain $\Omega_R(x)$ with respect to $x_R = x+R e_d$ and with constant $L\in(0,\infty)$
		according to Definition \ref{def.John}
		 such that
		\begin{equation}\label{e.exisinterdom}
			B_{R,+}(x) \subset \Omega_R(x) \subset B_{KR,+}(x),
		\end{equation}
		where $B_{R,+}(x) = Q_R(x) \cap \Omega$. Here $Q_R(x)$, defined later, is a cube centered at $x$ with side length $2R$.
	\end{definition}
	The above definition guarantees that the constants of John domains are rescaling- and translation-invariant. This is a natural requirement as we are considering unbounded domains.
	
	\begin{definition}\label{def.PeriodicJohn}
		We say that $\Omega$ is a \emph{periodic bumpy John domain} if the following holds:
		\begin{enumerate}[label=(\roman*)]
			\item $\Omega$ is a John domain with constant $(L,K)$,
			\item and $\Omega$ is $(2\pi\mathbb{Z})^2$-translation invariant, namely $2\pi z + \Omega = \Omega$ for any $z\in \Z^2\times \{ 0 \}$.
		\end{enumerate}
	\end{definition}
	
	For simplicity, we assume $K = 2$ in the whole paper. Otherwise, the constant in our main theorems will also depend on $K$.
	
	Throughout the paper, we assume that $\Omega$ is a bumpy John domain satisfying Definition \ref{def.John2}, or a periodic bumpy John domain satisfying Definition \ref{def.PeriodicJohn}. We will always specify in case periodicity is needed. In fact, periodicity is used to construct the second-order boundary layer correctors in Subsection \ref{subsec.2nd.BL} and 
	hence is also an assumption of 
	Theorem \ref{thm.C2g}, Proposition \ref{lem.est.scaled.BL2j}, Subsection \ref{sec.C2gamma} and Theorem \ref{thm.liouville} (ii).
	
	Let $\Omega^\ep := \ep \Omega=\{ x\in \R^3|\ \ep^{-1}x\in \Omega \}$. We refer to $\Omega^\ep$ as a \emph{highly oscillating bumpy John domain}. Note that
	\begin{equation}\label{cond.boundary}
		\partial \Omega^\ep \subset \{ x\in \R^3| -\ep < x_3 < 0 \}.
	\end{equation}
	A key fact about $\Omega^\ep$ is that $\Omega^\ep$ is still a John domain with the same constants as in Definition \ref{def.John2}, as these constants are scale-invariant.
	
	Throughout the paper, we use the following notations:
	\begin{align*}
		\begin{split}
			&B^\ep_{r,+} = \{x = (x',x_3) \in \R^3~|~x'\in(-r,r)^2\,, \ x_3< r \} \cap \Omega^\ep, \\
			&\Gamma^\ep_r = \{x  = (x',x_3)\in \R^3~|~x'\in(-r,r)^2\} \cap \partial \Omega^\ep.
		\end{split}
	\end{align*}
	Since the boundary could be very rough at small scales, $B_{r,+}^\ep$ and $\Gamma_r^\ep$ may have disconnected components. Fortunately, this will not cause any issue since the solutions will be extended naturally by zero across the boundary. We also define 
	\begin{align*}
		&Q_r=Q_{r}(0)= (-r,r)^3,\quad Q_{r}(y) = y + Q_{r}(0),\quad 
		{Q^\ep_r=Q_{r}\cap \{x\in \R^3| x_3 > - \ep \},}
		\\
		&Q_r^\ep(y) = Q_{r}(y) \cap \{x\in \R^3| x_3 > - \ep \}\quad\mbox{and}\quad Q_{r,+}(y) = Q_{r}(y) \cap \{x\in \R^3| x_3 >0\}.
	\end{align*}
	From the definition of $B^\ep_{r,+}$, one has $B_{r,+}^\ep \subset Q^\ep_{r}$ and $|Q^\ep_{r}\setminus B^\ep_r| \le 4\ep r^2$.

	\subsubsection*{\underline{Weak solutions}}
	
	We work in the framework of weak solutions of \eqref{intro.NS.ep}. A velocity/pressure pair $(u^\ep,p^\ep)\in H^1(B^\ep_{1,+})^3\times L^2(B^\ep_{1,+})$ is said to be a weak solution to \eqref{intro.NS.ep} if $u^\ep$ satisfies: (i) $\nabla\cdot u^\ep=0$ in the sense of distributions, (ii) $\psi u^\ep\in H^1_0(Q_1)^3$ for any cut-off function $\psi\in C_0^\infty(Q_1)$, and (iii) the weak formulation
	\begin{align}\label{intro.def2}
		\int_{B^\ep_{1,+}} \nabla u^\ep \cdot \nabla \varphi-\int_{B^\ep_{1,+}} p^\ep (\nabla\cdot\varphi)
		= -\int_{B^\ep_{1,+}} (u^\ep\cdot\nabla u^\ep) \cdot \varphi
	\end{align}
	for any $\varphi\in C^{\infty}_{0}(B^\ep_{1,+})^3$. 
	The Poincar\'e inequality is a fundamental tool in our paper. Since the weak solution vanishes on the lower boundary $\Gamma^\ep_1$, we extend it to $Q^\ep_1$ by zero across $\Gamma^\ep_1$. This enables us to use for instance \cite[Proposition 3.15]{GMbook}, to get that: for all fixed bumpy John domain $\Omega$ with constant $L\in(0,\infty)$ according to Definition \ref{def.John2}, for all fixed $r\geq\ep$, and for all $u\in H^1(B^\ep_{r,+})$ such that $u=0$ on $\Gamma^\ep_r$,
	\begin{equation}\label{e.poinca}
		\int_{B^\ep_{r,+}}|u|^2\leq Cr^2\int_{B^\ep_{r,+}}|\nabla u|^2,
	\end{equation}
	where $C$ is an absolute constant independent of $\ep$ and $r$. Notice that this estimate is only valid at scales $r\ge\ep$. Indeed, below that scale the constant in \eqref{e.poinca} may degenerate because in particular of inward cusps at small scales.

	\subsubsection*{\underline{Other frequently used notations}}
	
	The notation $C$ denotes a positive constant that varies from line to line, and may or may not be universal.  Whenever needed, we make precise what the constant depends on. The notation $x\cdot y$ stands for the inner product of vectors $x,\, y\in\R^N$. The notation $a\lesssim b$ (resp. $a\gtrsim b$) means that there exists a universal constant $C$ such that $a\leq Cb$ (resp. $Ca\geq b$). The notation $a\approx b$ stands for $a\lesssim b$ and $a\gtrsim b$. 
	
	\subsection{Outline of the paper}
	\label{sec.paper}
	
	Section \ref{sec.nlest} is devoted to the proof of the large-scale Calder\'on-Zygmund estimate stated in Theorem \ref{thm.CZ.Stokes}. We then use this result to bootstrap the regularity and obtain a large-scale H\"older estimate for the nonlinear term in the Navier-Stokes equations; see Theorem \ref{thm.MFL3}. In Section \ref{sec.large}, we prove Theorem \ref{theo.lip.nonlinear}. In Section \ref{sec.bl}
	we construct the first-order and second-order boundary layer correctors. Theorem \ref{thm.C1g} and Theorem \ref{thm.C2g} 
 are proved in Section \ref{sec.higher}. 
	There are three appendices. Appendix \ref{appendix.Bogovskii} is devoted to the results related to Bogovskii's operator in John domains. Appendix \ref{app.green} handles the construction and estimates for the Green function associated to the Stokes system in bumpy John domains. Appendix \ref{app.it} provides a proof for the iteration Lemma \ref{lem.iteration}.
	
	\subsection*{Acknowledgements} 
	The authors would like to thank Prof. Zhongwei Shen for pointing out a mistake in an early version of the paper.
	M.H. is partially supported by JSPS KAKENHI Grant Number JP 20K14345. C.P. is partially supported by the Agence Nationale de la Recherche, project BORDS, grant ANR-16-CE40-0027-01, project SINGFLOWS, grant ANR-18-CE40-0027-01 and project CRISIS, grant ANR-20-CE40-0020-01.
	\section{Estimates for the nonlinearity}
	\label{sec.nlest}
	
	The goal of this section is to obtain some regularity estimates for the nonlinearity $-u^\ep\otimes u^\ep$ for the Navier-Stokes equations. As usual, this follows from a bootstrap argument for the stationary Navier-Stokes equations. However, since there is no smoothness up to the boundary, we have to carry out a delicate large-scale bootstrap argument.

	\subsection{Large-scale Calder\'on-Zygmund estimate}
	\label{sec.lscz}
	Assume $r \ge \ep$. 
	Let $\Omega$ be a bumpy John domain with constant $L$ according to Definition \ref{def.John2}. Let $(u^\ep, p^\ep)\in H^1(B^\ep_{1,+})^3\times L^2(B^\ep_{1,+})$ be a weak solution of the linear Stokes system
	\begin{equation}\label{eq.Stokes}
		\left\{
		\begin{array}{ll}
			-\Delta u^\ep+\nabla p^\ep=\nabla\cdot F^\ep &\mbox{in}\ B^\ep_{1,+}\\
			\nabla\cdot u^\ep=0&\mbox{in}\ B^\ep_{1,+}\\
			u^\ep=0&\mbox{on}\ \Gamma^\ep_1.
		\end{array}
		\right.
	\end{equation}
	We extend $u^\ep$ and $F^\ep$ by zero to the whole of $Q_1=Q_1(0)$; they are denoted again by $u^\ep\in H^1(Q_1)^3$ and $F^\ep\in L^2(Q_1)^{3\times3}$ respectively. 
	Note that we also have $\nabla u^\ep = 0$ in $Q_1(0) \setminus B_{1,+}^\ep$. For any $r\ge \ep$ and $Q_{16r}(y) \subset Q_1(0)$, Lemma \ref{appendix.lem.S.Caccioppoli.ineq.} and the Sobolev-Poincar\'{e} inequality imply that for any $\theta \in (0,1)$
	\begin{equation}\label{est.Caccipoli+SP}
		\begin{aligned}
		\bigg( \dashint_{Q_{r}(y)} |\nabla u^\ep|^2 \bigg)^{1/2} &\le \theta \bigg( \dashint_{Q_{16r}(y)} |\nabla u^\ep|^2 \bigg)^{1/2} \\
		& \qquad + \frac{C}{\theta} \bigg( \dashint_{Q_{16r}(y)} |\nabla u^\ep|^{6/5} \bigg)^{5/6} + \bigg( \dashint_{Q_{16r}(y)} |F^\ep|^2 \bigg)^{1/2}.
	\end{aligned}
	\end{equation}
	Here the constant $C$ depends only on $L$.

	We refer to \cite[Lemma 2.2]{Z} for a similar proof of \eqref{est.Caccipoli+SP} in the case of elliptic equations. The John boundary condition for Stokes system results in additional difficulties as we only have a weak Caccioppoli inequality in Lemma \ref{appendix.lem.S.Caccioppoli.ineq.}. Notice that this estimate holds only at large-scales, namely, $r \ge \ep$, because Lemma \ref{appendix.lem.S.Caccioppoli.ineq.} as well as the Sobolev-Poincar\'{e} inequality 
	fail for $r\ll \ep$ (inward cusps are allowed in John domains and these cusps can be seen at a scale less then $\ep$). As a result, we are not able to derive the full-scale Gehring's inequality (e.g., \cite[Chapter V, Proposition 1.1]{G83} or \cite[Theorem 1.10]{BF02}).
	Instead, we can show a large-scale Gehring's inequality; see Lemma \ref{lem.selfimprove} below.

	For $p\in [1,\infty)$, define the averaging operator
	\begin{equation*}
		\mathcal{M}_t^p[g](x) = \bigg( \dashint_{Q_{t}(x)} |g
		|^{p} \bigg)^{1/p}.
	\end{equation*}
	The important exponents for us are $p = \frac{6}{5}$ and $p = 2$. For convenience, sometimes we write $\mathcal{M}_t^2$ as $\mathcal{M}_t$ in Subsection \ref{subsec.bootstrap}. The following lemma collects useful properties of $\mathcal{M}_t$.
	\begin{lemma}\label{lem.est.M.op.properties}
		For $p\in[1,\infty)$ and $g\in L^p(Q_1)$, we have the following properties. 
		\begin{enumerate}[label=(\roman*)]
			\item For $1\le p'\le p<\infty$ and $Q_{t}(x)\subset Q_1$, 
			\begin{equation}\label{est1.lem.est.M.op.properties}
				\mathcal{M}_t^{p'}[g](x) \le \mathcal{M}_t^{p}[g](x).
			\end{equation}
			\item For $0< t_1\le t_2<1$ and $Q_{t_2}(x)\subset Q_1$, 
			\begin{equation}\label{est2.lem.est.M.op.properties}
				\mathcal{M}_{t_1}^{p}[g](x) \le C\big(\frac{t_2}{t_1}\big)^{3/p} \mathcal{M}_{t_2}^{p}[g](x).
			\end{equation}
			\item For $0<t\le s$ with $Q_{s+t}(y)\subset Q_1$,
			\begin{equation}\label{est3.lem.est.M.op.properties}
				\int_{Q_s(y)} |g|^p 
				\le C \int_{Q_s(y)} \mathcal{M}_t^p[g]^p  
				\le C\int_{Q_{s+t}(y)} |g|^p.
			\end{equation}

			\item For $0<t_1\le t_2\le s$ with $Q_{s+t_1+t_2}(y)\subset Q_1$ and $q\in[p,\infty)$,
			\begin{equation}\label{est4.lem.est.M.op.properties}
				\begin{aligned}
					\dashint_{Q_{s}(y)} \mathcal{M}^{p}_{t_2} [g]^{q}
					\le C \dashint_{Q_{s+t_2}(y)} \mathcal{M}^{p}_{t_1} [g]^{q}.
				\end{aligned}
			\end{equation}
			\item  For $0<s\le t$ with $Q_{s+t}(y)\subset Q_1$,
			\begin{equation}\label{est5.lem.est.M.op.properties}
				\mathcal{M}_t^p[g](y)
				\le C\dashint_{Q_{s}(y)} \mathcal{M}_t^p[g].
			\end{equation}
	\end{enumerate}
		Here the constant $C$ depends on $p$ and $ p'$, but not on $s, t, t_1$ or $t_2$.
	\end{lemma}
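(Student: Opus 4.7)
The five parts all follow from elementary manipulations of the average integral, hinging on a single geometric observation. First I would record the key fact: for $r,\rho>0$ and any $y,z$, if $z \in Q_{\max(r,\rho)}(y)$ then $|Q_r(y)\cap Q_\rho(z)|\ge \min(|Q_r|,|Q_\rho|)/8$. Indeed, in each of the three coordinates the intervals overlap in a segment of length at least $\min(r,\rho)$, and multiplying across the coordinates gives one eighth of the smaller cube.

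Parts (i) and (ii) are essentially immediate: (i) is Jensen's inequality $(\dashint |g|^{p'})^{1/p'}\le(\dashint |g|^p)^{1/p}$ on the probability space $Q_t(x)$, while (ii) is the monotone inclusion $Q_{t_1}(x)\subset Q_{t_2}(x)$ combined with the volume factor $|Q_{t_2}|/|Q_{t_1}|=(t_2/t_1)^3$. For (iii), Fubini gives
\begin{equation*}
\int_{Q_s(y)} \mathcal{M}_t^p[g]^p\,dx=\frac{1}{|Q_t|}\int |g(z)|^p\,|Q_s(y)\cap Q_t(z)|\,dz,
\end{equation*}
and the weight $|Q_s(y)\cap Q_t(z)|/|Q_t|$ lies in $[0,1]$ and is supported in $Q_{s+t}(y)$ (which yields the right inequality), while the geometric observation with $t\le s$ shows the weight is $\ge 1/8$ for $z\in Q_s(y)$ (which gives the left inequality). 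For (iv), I would first apply (iii) to obtain $\mathcal{M}_{t_2}^p[g](x)^p\le C\dashint_{Q_{t_2}(x)}\mathcal{M}_{t_1}^p[g]^p$, then raise both sides to the power $q/p\ge 1$ and apply Jensen's inequality to the convex map $u\mapsto u^{q/p}$ to get $\mathcal{M}_{t_2}^p[g](x)^q\le C\dashint_{Q_{t_2}(x)}\mathcal{M}_{t_1}^p[g]^q$. Averaging over $x\in Q_s(y)$ and swapping integrals by Fubini, the hypothesis $t_2\le s$ provides the volume bound $|Q_{s+t_2}|/|Q_s|\le 8$, which closes the argument.

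The delicate step is (v). A direct Fubini calculation yields only $\mathcal{M}_t^p[g](y)^p\le 8\dashint_{Q_s(y)}\mathcal{M}_t^p[g]^p$, which is weaker than (v) because Jensen's inequality goes the wrong way to pass from $L^p$-averages to $L^1$-averages. The correct tool is Minkowski's integral inequality applied to $F(x,z)=|g(z)|\,\mathbf{1}_{Q_t(x)}(z)$ with the normalized measure $dz/|Q_t|$:
\begin{equation*}
\bigg(\int\Big(\int_{Q_s(y)} F(x,z)\,dx\Big)^p\frac{dz}{|Q_t|}\bigg)^{1/p}\le \int_{Q_s(y)}\bigg(\int F(x,z)^p\frac{dz}{|Q_t|}\bigg)^{1/p}dx=\int_{Q_s(y)}\mathcal{M}_t^p[g](x)\,dx.
\end{equation*}
Evaluating the inner integral on the left gives $\int_{Q_s(y)} F(x,z)\,dx=|g(z)|\,|Q_s(y)\cap Q_t(z)|$, and restricting the outer integral to $z\in Q_t(y)$ and using the geometric observation with $s\le t$ (so that $|Q_s(y)\cap Q_t(z)|\ge|Q_s|/8$) bounds the left-hand side below by $(|Q_s|/8)\,\mathcal{M}_t^p[g](y)$. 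Dividing by $|Q_s|$ delivers (v) with $C=8$. The main obstacle is precisely spotting Minkowski's integral inequality as the right mechanism here: the naive Jensen-type attempt produces the weaker $L^p$-averaged inequality that cannot be upgraded without it.
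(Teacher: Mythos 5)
Your proof is correct, and since the paper states Lemma \ref{lem.est.M.op.properties} without proof (treating it as elementary), there is no argument of the authors to compare against; your write-up would serve as the omitted proof. The key ingredients all check out: the coordinatewise overlap bound $|Q_r(y)\cap Q_\rho(z)|\ge \min(r,\rho)^3=\min(|Q_r|,|Q_\rho|)/8$ for $z\in Q_{\max(r,\rho)}(y)$, the Fubini identity $\int_{Q_s(y)}\mathcal{M}_t^p[g]^p=\frac{1}{|Q_t|}\int |g(z)|^p|Q_s(y)\cap Q_t(z)|\,dz$ with the weight supported in $Q_{s+t}(y)$ and bounded between $|Q_t|/8$ (on $Q_s(y)$, using $t\le s$) and $|Q_t|$, and, for (v), Minkowski's integral inequality applied to $F(x,z)=|g(z)|\mathbf{1}_{Q_t(x)}(z)$, which is indeed the right mechanism to pass from the $L^p$-average defining $\mathcal{M}_t^p[g](y)$ to the $L^1$-average of $\mathcal{M}_t^p[g]$ over the smaller cube; the containment hypotheses $Q_{s+t}(y)\subset Q_1$, $Q_{s+t_1+t_2}(y)\subset Q_1$ are exactly what is needed to keep all intermediate cubes inside $Q_1$, and you use them correctly.

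One small remark on (iv): your constant is of the form $8^{q/p}$ and thus depends on $q$, whereas the closing sentence of the lemma only mentions dependence on $p$ and $p'$. This is not a defect of your argument: a $q$-uniform constant is in fact impossible (already in the one-dimensional model with $p=1$, two unit spikes placed near the opposite ends of $Q_{t_2}$ give $\sup\mathcal{M}^1_{t_2}[g]\approx \tfrac{2t_1}{t_2}\sup\mathcal{M}^1_{t_1}[g]$ with $\tfrac{2t_1}{t_2}>1$ when $t_2/2<t_1<t_2$, forcing the constant in \eqref{est4.lem.est.M.op.properties} to grow like $(2t_1/t_2)^q$), so the lemma's statement must be read as allowing dependence on the exponents. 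What matters for all the applications in Section 2 is that your constants are independent of the scales $s,t,t_1,t_2$, which they are.
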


	Using the averaging operator and Lemma \ref{lem.est.M.op.properties}, we can show a large-scale Gehring's inequality (also known as self-improving property or Meyers' estimate).
	\begin{lemma}\label{lem.selfimprove}
		Let $L\in(0,\infty)$ and $\Omega$ be a bumpy John domain with constant $L$ according to Definition \ref{def.John2}. There exists some $p_0\in(2,\infty)$ so that for any $0<r<1, \ep \le t \le 1$ with $Q_{3r+t}(y) \subset Q_{1}(0)$,
		\begin{equation}\label{est.Meyers}
			\bigg( \dashint_{Q_{r}(y)} | \mathcal{M}_t^2[\nabla u^\ep] |^{p_0} \bigg)^{1/p_0} \le C \bigg( \dashint_{Q_{3r}(y)} |\mathcal{M}_t^2 [\nabla u^\ep] |^{2} \bigg)^{1/2} + C\bigg( \dashint_{Q_{3r}(y)} | \mathcal{M}_t^2[F^\ep] |^{p_0} \bigg)^{1/p_0},
		\end{equation}
		where the constant $C$ and the Lebesgue exponent $p_0$ depend only on $L$.
	\end{lemma}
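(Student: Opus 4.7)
The plan is to reduce the statement to an application of the classical Gehring lemma for a suitable avatar of $\nabla u^\ep$, namely the averaged quantity $G:=\mathcal{M}_t^2[\nabla u^\ep]$. The key point is that \eqref{est.Caccipoli+SP} is only a mesoscopic estimate (valid for $r\ge\ep$), while Gehring requires a reverse-H\"older inequality at every scale. The averaging at scale $t\ge\ep$ is exactly what allows us to "lift" the reverse-H\"older property to all scales. First I would start from \eqref{est.Caccipoli+SP} and eliminate the $\theta$-term on the right by a standard absorption/iteration argument (Giaquinta's Lemma V.3.1 or the hole-filling trick), producing the clean mesoscopic reverse H\"older inequality
\[
\bigg(\dashint_{Q_r(y)} |\nabla u^\ep|^2\bigg)^{1/2} \le C \bigg(\dashint_{Q_{cr}(y)} |\nabla u^\ep|^{6/5}\bigg)^{5/6} + C \bigg(\dashint_{Q_{cr}(y)} |F^\ep|^2\bigg)^{1/2}, \qquad r\ge\ep,
\]
for some absolute constant $c>1$ and $C=C(L)$.

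The next step is to transfer this to a weak reverse H\"older inequality for $G$ valid at \emph{all} scales $r>0$. I split into two regimes. When $r\le t$, Lemma \ref{lem.est.M.op.properties}(v) combined with Jensen gives the pointwise bound $\sup_{Q_r(y)} G \le C\bigl(\dashint_{Q_{2r}(y)} G^2\bigr)^{1/2}$, which is much stronger than the desired reverse H\"older. When $r>t\ge\ep$, Fubini-type estimate (iii) of Lemma \ref{lem.est.M.op.properties} gives
\[
\dashint_{Q_r(y)} |G|^2 \le C \dashint_{Q_{r+t}(y)} |\nabla u^\ep|^2 \le C \dashint_{Q_{2r}(y)} |\nabla u^\ep|^2.
\]
Applying the mesoscopic inequality of Step~1 at scale $2r$ and then converting the right-hand side back via $\int |\nabla u^\ep|^{6/5}\le C\int \mathcal{M}_t^{6/5}[\nabla u^\ep]^{6/5}$ (property (iii) with $p=6/5$) and $\mathcal{M}_t^{6/5}\le \mathcal{M}_t^2$ (property (i)), together with the analogous estimate for $F^\ep$, yields the weak reverse H\"older inequality
\[
\bigg(\dashint_{Q_r(y)} |G|^2\bigg)^{1/2} \le C \bigg(\dashint_{Q_{c'r}(y)} |G|^{6/5}\bigg)^{5/6} + C \bigg(\dashint_{Q_{c'r}(y)} |H|^2\bigg)^{1/2}, \qquad H:=\mathcal{M}_t^2[F^\ep],
\]
valid for all scales $r$ with $Q_{c'r+t}(y)\subset Q_1$.

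With this all-scale reverse H\"older inequality for the pair $(G,H)$, a direct application of the classical Gehring lemma (as in \cite[Chapter V, Proposition 1.1]{G83} or \cite[Theorem 1.10]{BF02}) produces an exponent $p_0=p_0(L)>2$ for which the conclusion \eqref{est.Meyers} holds; the enlargement constant $c'$ is reduced to $3$ by a standard finite-chain/covering argument on concentric cubes. The principal obstacle is the asymmetry between the scale range in which \eqref{est.Caccipoli+SP} applies and the full scale range required by Gehring: the entire strategy is designed so that the averaging in $G$ both absorbs the scales $r<\ep$ (trivially, via property (v)) and preserves the reverse H\"older structure at scales $r\ge t$ (via properties (i) and (iii)), with the parameter $t\ge\ep$ acting as a bridge between the two regimes.
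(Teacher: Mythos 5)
Your overall architecture --- passing to $G=\mathcal{M}_t^2[\nabla u^\ep]$, splitting into the regimes $r\le t$ (handled by Lemma \ref{lem.est.M.op.properties} (v)) and $r>t$ (handled by (i) and (iii)), then invoking Gehring and a covering argument to fix the dilation constant --- is exactly the paper's proof. The genuine gap is your Step 1. The estimate \eqref{est.Caccipoli+SP} comes from the \emph{weak} Caccioppoli inequality of Lemma \ref{appendix.lem.S.Caccioppoli.ineq.}, in which the absorbed term $\theta\,\|\nabla u^\ep\|_{L^2}$ sits on the \emph{whole} larger cube at a \emph{fixed} dilation (factor $4$ there, factor $16$ in \eqref{est.Caccipoli+SP}); this structure is forced by the pressure, since Bogovskii's operator is only available on a John domain $\Omega_{2r}^\ep$ squeezed between $B_{2r,+}^\ep$ and $B_{4r,+}^\ep$, and only for $r\gtrsim\ep$. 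Neither device you invoke removes that term: Giaquinta's Lemma V.3.1 needs the inequality for all pairs of radii $t<s$ with the self-term evaluated at $s$ itself (any variable-radius version of the weak Caccioppoli still places the $\theta$-term on a dilated cube, so the interpolation of radii runs out of room), and hole-filling needs the problematic term to be supported on an annulus, whereas here it lives on the full larger cube. Iterating the fixed-dilation inequality upward in scale only produces data ($L^{6/5}$ averages of $\nabla u^\ep$, $L^2$ averages of $F^\ep$) on cubes of size $16^k r$ growing up to the unit scale, not on $Q_{cr}(y)$; so the ``clean'' mesoscopic reverse H\"older inequality with a fixed enlargement $c$ asserted in your Step 1 is unjustified. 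Indeed, the unavailability of the usual Caccioppoli inequality in John domains is precisely the difficulty this lemma is built around.

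The good news is that Step 1 is unnecessary. Carry the $\theta$-term through your Step 2 unchanged --- it simply converts into $C\theta\big(\dashint_{Q_{32r}(y)}|\mathcal{M}_t^2[\nabla u^\ep]|^2\big)^{1/2}$ --- and then apply the version of Gehring's lemma that tolerates a small multiple $\theta$ of the $L^2$ average of the function itself on the enlarged cube, which is exactly what the references you already cite provide (\cite[Chapter V, Proposition 1.1]{G83}, \cite[Theorem 1.10]{BF02}), choosing $\theta$ sufficiently small only at that final stage. With this modification your argument coincides with the paper's proof of Lemma \ref{lem.selfimprove}, including the concluding covering step that reduces the enlargement factor to $3$.
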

	\begin{proof}
		Assume first that $r\ge t$. Then by Lemma 
		\ref{lem.est.M.op.properties}, we may rewrite \eqref{est.Caccipoli+SP} as
		\begin{equation*}
			\begin{aligned}
				& \bigg( \dashint_{Q_{r}(y)} |\mathcal{M}_t^2 [\nabla u^\ep] |^{2} \bigg)^{1/2} \\
				&\le C \bigg( \dashint_{Q_{2r}(y)} | \nabla u^\ep|^2  \bigg)^{1/2} \\
				& \le C \theta \bigg( \dashint_{Q_{32r}(y)} |\nabla u^\ep|^2 \bigg)^{1/2} + \frac{C}{\theta}\bigg( \dashint_{Q_{32r}(y)} |\nabla u^\ep|^{6/5} \bigg)^{5/6} + C\bigg( \dashint_{Q_{32r}(y)} |F^\ep|^2 \bigg)^{1/2} \\
				& \le C \theta \bigg( \dashint_{Q_{32r}(y)} |\mathcal{M}_t^2[ \nabla u^\ep]|^2 \bigg)^{1/2}  \\
				& \qquad+ \frac{C}{\theta}\bigg( \dashint_{Q_{32r}(y)} | \mathcal{M}_t^{2} [\nabla u^\ep]|^{6/5}  \bigg)^{5/6} + C\bigg( \dashint_{Q_{32r}(y)} | \mathcal{M}_t^2[F^\ep] |^{2} \bigg)^{1/2}.
			\end{aligned}
		\end{equation*}
		For $0<r<t$, Lemma \ref{lem.est.M.op.properties} (v) implies
		\begin{equation*}
			\|  \mathcal{M}_t^2 [\nabla u^\ep] \|_{L^\infty(Q_r(y))} \le C\dashint_{Q_{4r}(y)} \mathcal{M}_t^2 [\nabla u^\ep].
		\end{equation*}
		These imply that a weaker reverse H\"{o}lder inequality holds for all scales $r\in (0,1)$ with $Q_{32r+t}(y) \subset Q_1(0)$. By a version of Gehring's inequality \cite[Chapter V, Proposition 1.1]{G83} or \cite[Theorem 1.10]{BF02}, and choosing $\theta$ sufficiently small, there exists some $p_0 > 2$ such that for all $r\in (0,1)$ with $Q_{32r+t}(y) \in Q_1(0)$, 
		\begin{equation}\label{e.lemMt65}
			\bigg( \dashint_{Q_{r}(y)} |\mathcal{M}_t^2 [\nabla u^\ep] |^{p_0} \bigg)^{1/p_0} \le C\bigg( \dashint_{Q_{32r}(y)} |\mathcal{M}_t^2[ \nabla u^\ep]|^2 \bigg)^{1/2} + C\bigg( \dashint_{Q_{32r}(y)} | \mathcal{M}_t^2[F^\ep] |^{2} \bigg)^{1/2}.
		\end{equation}
	
		To conclude the proof, we use a covering argument to adjust the size of cubes. By covering the cube $Q_{32r}(y)$ by a finite number of cubes $Q_{r}(y_i)$ and applying the last estimate in every $Q_{r}(y_i)$ , we get the estimate 
	\begin{equation*}
		\begin{aligned}
			\bigg( &\dashint_{Q_{32r}(y)} | \mathcal{M}_t^2 [\nabla u^\ep]|^{p_0}  \bigg)^{1/p_0} \\
			& \quad \leq C\bigg( \dashint_{Q_{96r}(y)} | \mathcal{M}_t^2 [\nabla u^\ep]|^2  \bigg)^{1/2} + C\bigg( \dashint_{Q_{96r}(y)} | \mathcal{M}_t^2[F^\ep] |^{2} \bigg)^{1/2}
		\end{aligned}
	\end{equation*}
	for $\ep \le t \le 1$ and $Q_{96r+t}(y)\subset Q_1(0)$, at the price of a larger constant $C$ than in \eqref{e.lemMt65}. Replacing $32r$ by $r$, we obtain the desired estimate.
	\end{proof}

		\begin{remark}[Covering argument]\label{rem.cov}
		The covering argument above to adjust the size of cubes should be a standard technique in analysis. Similar arguments may be used later in this paper.
	\end{remark}

	The following theorem is a large-scale boundary Calder\'{o}n-Zygmund estimate, or in other words, a large-scale boundary $W^{1,p}$ estimate, for the linear Stokes system. 
	\begin{theorem}\label{thm.CZ.Stokes}
		For all $\ep\in(0,\frac12)$, $L\in (0,\infty)$ and $p\in (2,\infty)$ 
		the following statement holds. Let $\Omega$ be a bumpy John domain with constant $L$ according to Definition \ref{def.John2}. 
		Suppose $\ep \le t \le r \le \frac12$, $Q_{5r}(x) \subset Q_1(0)$ and $\mathcal{M}_t^2[F^\ep] \in L^p(Q_{4r}(x))$.
		Then the weak solution $u^\ep$ to \eqref{eq.Stokes} satisfies
		\begin{equation}\label{est.CZ.Stokes}
			\bigg( \dashint_{Q_{r}(x)} | \mathcal{M}_t^2[\nabla u^\ep] |^{p} \bigg)^{1/p} \le C \bigg( \dashint_{Q_{4r}(x)} |\mathcal{M}_t^2 [\nabla u^\ep] |^{2} \bigg)^{1/2} + C\bigg( \dashint_{Q_{4r}(x)} | \mathcal{M}_t^2[F^\ep] |^{p} \bigg)^{1/p},
		\end{equation}
		where the constant $C$ depends only on $L$ and $p$.
	\end{theorem}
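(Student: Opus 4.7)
The plan is to follow the quantitative perturbation scheme outlined in Subsection \ref{sec.strat}, namely items \ref{item.qm1}--\ref{item.qm3}: approximate the rough problem by a Stokes problem in a flat half-ball at each mesoscopic scale, exploit the improved regularity of the latter, and conclude by Shen's real-variable theorem. Because both sides of \eqref{est.CZ.Stokes} involve only the averaged quantities $\mathcal{M}_t^2[\nabla u^\ep]$ and $\mathcal{M}_t^2[F^\ep]$ with $t\geq\ep$, the microscale roughness is effectively smoothed out, and the weak Caccioppoli inequality, the Poincar\'e inequality \eqref{e.poinca}, and Lemma \ref{lem.selfimprove} are all available at the relevant scales.

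\textbf{Reverse H\"older input.} Lemma \ref{lem.selfimprove} already provides a reverse H\"older inequality at some Meyers exponent $p_0>2$, so \eqref{est.CZ.Stokes} follows immediately for $p\in(2,p_0]$. For $p\in(p_0,\infty)$, Lemma \ref{lem.selfimprove} plays instead the role of the self-improvement hypothesis of Theorem \ref{thm.Shen}.

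\textbf{Approximation by a flat Stokes problem.} For each ball $Q_{2\rho}(y)\subset Q_{4r}(x)$ with $\rho\gtrsim t$, I would write $u^\ep=w+(u^\ep-w)$, where $w$ solves a Stokes system on the flat half-ball $Q_{2\rho,+}(y)$ with zero Dirichlet condition on $\{x_3=0\}$ and data inherited from $u^\ep$ and $F^\ep$; this is precisely the content of Lemma \ref{lem.Duews}. The good part $w$ enjoys a boundary $L^\infty$-gradient bound of the form \eqref{est.Dw.Linfty} coming from classical boundary regularity for the Stokes system in a flat half-space, while the defect $u^\ep-w$ is controlled in $L^2$ by averages of $\mathcal{M}_t^2[\nabla u^\ep]$ and $\mathcal{M}_t^2[F^\ep]$ through an energy identity combined with the Meyers improvement of Lemma \ref{lem.selfimprove} on the thin layer of thickness $\sim\ep\leq t$ where the oscillating and flat domains differ.

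\textbf{Real-variable extrapolation.} With the decomposition above, the good/bad hypotheses of Theorem \ref{thm.Shen} are verified for $\mathcal{M}_t^2[\nabla u^\ep]$: the piece $\mathcal{M}_t^2[\nabla w]$ is bounded on $Q_\rho(y)$ by averages on $Q_{2\rho}(y)$, while $\mathcal{M}_t^2[\nabla(u^\ep-w)]$ is controlled in $L^2$ by $\mathcal{M}_t^2[F^\ep]$ in $L^p$. Theorem \ref{thm.Shen} then extrapolates the integrability from the Meyers exponent $p_0$ up to any $p\in(2,\infty)$, and a covering argument as in Remark \ref{rem.cov} adjusts the cube sizes to yield the statement on $Q_r(x)\subset Q_{4r}(x)$. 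I expect the main difficulty to lie in the approximation step: the John-domain Bogovskii operator of Theorem \ref{theo.acosta} and the large-scale Poincar\'e inequality \eqref{e.poinca} must be combined carefully to absorb the error produced by the $\ep$-thin geometric mismatch between the bumpy and flat domains, and it is precisely the averaging operator $\mathcal{M}_t^2$ appearing on both sides of \eqref{est.CZ.Stokes} that makes this absorption possible and that justifies the genuinely large-scale nature of the estimate.
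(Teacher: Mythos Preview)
Your proposal is correct and follows essentially the same route as the paper: Lemma \ref{lem.Duews} for the approximation, the flat boundary estimate \eqref{est.Dw.Linfty} for the regular piece, and Theorem \ref{thm.Shen} applied to $\mathcal{F}=\mathcal{M}_t^2[\nabla u^\ep]$ with $q=\infty$. The only ingredient you have not spelled out is the treatment of cubes $Q_s(y)$ with $s\lesssim t$, where the approximation is unavailable; the paper handles this by taking $F_Q=0$ and $R_Q=\mathcal{M}_t^2[\nabla u^\ep]$, invoking the trivial $L^\infty$ bound of Lemma \ref{lem.est.M.op.properties} (v), so that the hypotheses of Theorem \ref{thm.Shen} are verified at \emph{all} scales.
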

	
	The proof of Theorem \ref{thm.CZ.Stokes} relies on a combination of: (i) a real variable argument (see Theorem \ref{thm.Shen}), and (ii) the quantitative approximation at sufficiently large scales $s$ of the solution $u^\ep$ to the Stokes system in the bumpy domain by a solution to a Stokes problem in a flat domain (see Lemma \ref{lem.Duews}).

	We first state the real variable result. The following theorem is taken from \cite[Theorem 4.2.3]{Sh}, where it is stated for balls instead of cubes. Notice that we introduce some flexibility for the size of the cubes as in \cite[Theorem 2.6]{Z} and \cite[Theorem 4.1]{S20} to fit the cubes in Lemma \ref{lem.Duews}.
	%
	\begin{theorem}[{\cite[Theorem 4.2.3]{Sh}}]\label{thm.Shen}
		Let $N>1,\ 0<c_1<1$, $\kappa>0$ and $\lambda>2$. 
		Let $Q_0$ be a cube in $\R^3$ and $\mathcal{F}\in L^2(\lambda Q_0)$. 
		Let $q>2$ and $f\in L^p(\lambda Q_0)$ for some $2<p<q$. Suppose that for each cube $Q\subset 2Q_0$ with $|Q| \le c_1 |Q_0|$, there exist two measurable functions $F_Q$ and $R_Q$ on $2Q$ such that $|\mathcal{F}| \le |F_Q| + |R_Q|$ on $2Q$, and
		\begin{equation*}
			\begin{aligned}
				\bigg( \dashint_{2Q} |R_Q|^q \bigg)^{1/q} & \le N \bigg( \dashint_{\lambda Q} |\mathcal{F}|^2 \bigg)^{1/2},\\
				\bigg( \dashint_{2Q} |F_Q|^2 \bigg)^{1/2} &\le \kappa \bigg( \dashint_{\lambda Q} |\mathcal{F}|^2 \bigg)^{1/2} 
				+ \bigg( \dashint_{\lambda Q} |f|^2 \bigg)^{1/2}.
			\end{aligned}
		\end{equation*}
		There exists $\kappa_0>0$, depending on $\lambda,p,q,c_1$ and $N$, with the property that if $0<\kappa<\kappa_0$, then $\mathcal{F}\in L^p(Q_0)$ and
		\begin{equation*}
			\bigg( \dashint_{Q_0} |\mathcal{F}|^p \bigg)^{1/p} \le C\biggl\{ \bigg( \dashint_{\lambda Q_0} |\mathcal{F}|^2 \bigg)^{1/2} + \bigg( \dashint_{\lambda Q_0} |f|^p \bigg)^{1/p} \biggr\},
		\end{equation*}
		where $C$ depends on $\lambda$, $p$, $q$, $c_1$ and $N$.
	\end{theorem}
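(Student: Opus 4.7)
My plan is to prove Theorem \ref{thm.Shen} by the classical Calderón-Zygmund / good-$\lambda$ scheme underlying real-variable self-improvement lemmas of this type (cf.\ the arguments of Caffarelli-Peral and \cite[Chapter~4]{Sh}). By homogeneity I may assume $\bigl(\dashint_{\lambda Q_0}|\mathcal{F}|^2\bigr)^{1/2}\le 1$ and $\bigl(\dashint_{\lambda Q_0}|f|^p\bigr)^{1/p}\le 1$, and the goal is to show $\bigl(\dashint_{Q_0}|\mathcal{F}|^p\bigr)^{1/p}\le C$. Writing $\int_{Q_0}|\mathcal{F}|^p$ by the layer-cake formula reduces the problem to a bound on the distribution function of $|\mathcal{F}|$ at high levels $\alpha \ge \alpha_0$, the contribution of $\alpha\le\alpha_0$ being trivially bounded by $\alpha_0^p|Q_0|$.

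The core step is a good-$\lambda$ inequality obtained via a stopping-time decomposition. Fix a threshold $\alpha_0 = \alpha_0(c_1,\lambda)$ large enough that $\alpha_0^2 > 8/c_1$; for each $\alpha\ge\alpha_0$ I perform a dyadic Calderón-Zygmund decomposition of $|\mathcal{F}|^2\mathbf{1}_{2Q_0}$ at height $\alpha^2$, yielding a disjoint family of maximal dyadic subcubes $\{Q_k(\alpha)\}$ of $2Q_0$ with $\alpha^2 < \dashint_{Q_k(\alpha)}|\mathcal{F}|^2 \le 8\alpha^2$ and covering $\{|\mathcal{F}|>\alpha\}\cap Q_0$ up to null sets. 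The choice of $\alpha_0$ guarantees $|Q_k|\le c_1|Q_0|$ and $\lambda Q_k\subset \lambda Q_0$, so that the hypothesis applies with $Q=Q_k$ and delivers $F_{Q_k},R_{Q_k}$ on $2Q_k$ satisfying $|\mathcal{F}|\le|F_{Q_k}|+|R_{Q_k}|$. For $K\ge 1$ to be fixed, Chebyshev combined with the two hypothesis bounds and with $\bigl(\dashint_{\lambda Q_k}|\mathcal{F}|^2\bigr)^{1/2}\lesssim\alpha$ (a consequence of the stopping-time property at the parent of $Q_k$) yields
\[
 |\{|R_{Q_k}|>K\alpha\}\cap Q_k|\le C N^q K^{-q}|Q_k|,
\]
\[
 |\{|F_{Q_k}|>K\alpha\}\cap Q_k|\le C K^{-2}\Bigl(\kappa^2 + \alpha^{-2}\dashint_{\lambda Q_k}|f|^2\Bigr)|Q_k|.
\]
Summing over $k$ and using the packing bound $\sum_k|Q_k|\le C|\{|\mathcal{F}|>\alpha\}\cap 2Q_0|$, I arrive at the good-$\lambda$ inequality
\[
 |\{|\mathcal{F}|>2K\alpha\}\cap Q_0|\le \bigl(C_1 N^q K^{-q}+C_2\kappa^2 K^{-2}\bigr)|\{|\mathcal{F}|>\alpha\}\cap 2Q_0| + C_3 K^{-2}|\{M(|f|^2\mathbf{1}_{\lambda Q_0})>\alpha^2\}|,
\]
valid for every $\alpha\ge\alpha_0$, where $M$ denotes the Hardy-Littlewood maximal operator.

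Multiplying this inequality by $p(2K\alpha)^{p-1}$ and integrating in $\alpha\in(\alpha_0,\infty)$ turns it into an $L^p$ inequality: the hypothesis $p<q$ makes the factor $K^{p-q}$ small for $K$ large, and $p>2$ ensures that $\int M(|f|^2)^{p/2}$ is controlled by $\|f\|_{L^p(\lambda Q_0)}^p$ via the maximal function theorem. The outcome is
\[
 \int_{Q_0}|\mathcal{F}|^p\le C\alpha_0^p|Q_0| + C\int_{\lambda Q_0}|f|^p + \bigl(C_1' N^q K^{p-q}+C_2'\kappa^2 K^{p-2}\bigr)\int_{2Q_0}|\mathcal{F}|^p.
\]
The argument closes by two ordered choices of constants: first I fix $K$ large, depending on $\lambda,p,q,c_1,N$, so that $C_1' N^q K^{p-q}\le 1/4$; then, with $K$ locked in, I pick $\kappa_0>0$ so small (depending on $K$, hence on $\lambda,p,q,c_1,N$) that $C_2'\kappa^2 K^{p-2}\le 1/4$ whenever $\kappa\le\kappa_0$. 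A short iteration along a chain $Q_0\subset Q_0^{(1)}\subset\cdots\subset \lambda Q_0$ of concentric cubes, applied to the family of self-similar inequalities obtained from the same argument on each $Q_0^{(j)}$, converts the spurious $2Q_0$ on the right-hand side into $\lambda Q_0$ and lets me absorb the $|\mathcal{F}|^p$-term into the left.

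The main obstacle is the bookkeeping around the enlargement factor $\lambda>2$: the Calderón-Zygmund stopping time naturally yields an integral of $|\mathcal{F}|^p$ over $2Q_0$ on the right-hand side, whereas the hypothesis only controls local quantities through averages on $\lambda Q_k$, and the final absorption cannot take place on $Q_0$ alone. This forces the iteration along concentric cubes and the strict ordering of the choices of $K$ and $\kappa_0$ (first $K$, then $\kappa_0$, never the reverse). Carrying this through carefully is precisely what locks the dependence of $\kappa_0$ and $C$ on the five parameters $\lambda,p,q,c_1,N$ stated in the theorem, and ensures no hidden dependence on $\mathcal{F}$ or $f$ creeps in.
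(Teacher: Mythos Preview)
The paper does not prove this theorem; it is quoted verbatim from \cite[Theorem~4.2.3]{Sh} and used as a black box (with the minor adaptation from balls to cubes and a flexible enlargement factor $\lambda$, as noted in the text preceding the statement). So there is no ``paper's own proof'' to compare against.

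That said, your sketch is precisely the Caffarelli--Peral / good-$\lambda$ argument that underlies the proof in Shen's book, and the overall strategy is correct: Calder\'on--Zygmund stopping time on $|\mathcal{F}|^2$, splitting via the hypothesis into $F_Q$ and $R_Q$, Chebyshev on each piece, integrating the resulting good-$\lambda$ inequality against $p\alpha^{p-1}\,d\alpha$, and then choosing $K$ large (using $p<q$) followed by $\kappa_0$ small. Two small points you should tighten if you actually write this out: (i) a priori $\mathcal{F}$ is only in $L^2$, so the integral $\int_{2Q_0}|\mathcal{F}|^p$ on the right-hand side may be infinite and cannot be absorbed directly; the standard fix is to run the argument with $|\mathcal{F}|$ replaced by $\min(|\mathcal{F}|,T)$ (or to truncate the $\alpha$-integration at a finite height) and let $T\to\infty$ at the end; (ii) the packing bound you wrote, $\sum_k|Q_k|\le C|\{|\mathcal{F}|>\alpha\}\cap 2Q_0|$, is not quite right as stated---what the stopping time gives is $\sum_k|Q_k|\le \alpha^{-2}\int_{2Q_0}|\mathcal{F}|^2$ or, equivalently, control by the superlevel set of the dyadic maximal function of $|\mathcal{F}|^2$; this is what actually feeds into the layer-cake integration. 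Neither point affects the architecture of your argument.
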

	
	We now turn to the approximation. Fix $t \ge \ep$. To apply Theorem \ref{thm.Shen}, we introduce an approximation of $u^\ep$ at all scales $s\ge  t$. Fix $y\in \{-1\le x_3\le 1\}$. 
	Let $Q_r^\ep(y) = Q_{r}(y) \cap \{{x_3} > - \ep \}$. Let $s\ge t$ be fixed. 
	By the co-area formula and the fact that $\nabla u^\ep \equiv 0$ below the bottom boundary,
	there exists some $t_0 \in [1,2]$ so that
	\begin{equation}\label{est.co-area}
		\bigg( \int_{\partial Q^\ep_{t_0 s}(y)} |\nabla u^\ep|^2 \bigg)^{1/2} \le \frac{C}{s^{1/2}} \bigg( \int_{ Q^\ep_{2s}(y)} |\nabla u^\ep|^2 \bigg)^{1/2}.
	\end{equation}
	Note that $t_0$ depends particularly on the specific solution $u^\ep$. But this is harmless as $t_0$ is bounded uniformly in $\ep$ in $[1,2]$ and $C$ is an absolute constant. 
Now, we construct an approximation of $u^\ep$ in $Q_{t_0 s}(y)$ by considering the following Stokes system
	\begin{equation}\label{eq.ws}
		\left\{
		\begin{array}{ll}
			-\Delta w_{s}+\nabla q_{s}=0 &\mbox{in}\ Q_{t_0 s}^\ep(y)\\
			\nabla\cdot w_{s}=0&\mbox{in}\ Q_{t_0 s}^\ep(y)\\
			w_{s}=u^\ep &\mbox{on}\ \partial Q_{t_0 s}^\ep(y).
		\end{array}
		\right.
	\end{equation}
	Since $w_{s} = 0$ on $\partial Q_s^\ep(y)\cap \{{x_3} = -\ep \}$, we may extend the solution $w_{s}$ naturally across this boundary. For our purpose, we need some regularity estimates for $w_s$. First of all, the energy estimate implies
	\begin{equation}\label{est1.lem,est.ws}
		\bigg(\dashint_{Q^\ep_{t_0 s}(y)} |\nabla w_s|^2 \bigg)^{1/2} 
		+ \bigg(\dashint_{Q^\ep_{t_0 s}(y)} |q_s-\dashint_{Q^\ep_{t_0 s}(y)} q_s|^2 \bigg)^{1/2}
		\le C\bigg(\dashint_{Q_{t_0 s}(y)} |\nabla u^\ep|^2 \bigg)^{1/2}.
	\end{equation}
	Second, by the classical regularity theory for the Stokes system over a flat boundary, we have
	\begin{multline}\label{est.Dw.Linfty}
		\|{\nabla w_{s}}\|_{L^\infty(Q_{\frac12 s}(y))} \le C\bigg( \dashint_{Q_{s}(y)} |\nabla w_{s}|^2 \bigg)^{1/2}\\
		\le \frac{C}{s^{3/2}}\bigg(\int_{Q_{t_0 s}(y)} |\nabla u^\ep|^2 \bigg)^{1/2}=Ct_0^{3/2}\bigg(\dashint_{Q_{t_0 s}(y)} |\nabla u^\ep|^2 \bigg)^{1/2}\leq C\bigg(\dashint_{Q_{t_0 s}(y)} |\nabla u^\ep|^2 \bigg)^{1/2}.
	\end{multline}
	Finally, since $Q^\ep_{t_0 s}(y)$ is a Lipschitz domain and because \eqref{est.co-area} implies $w_s |_{\partial Q^\ep_{t_0 s}(y)} \in H^1(\partial Q^\ep_{t_0 s}(y))^3$, it follows from \cite{FKV88} that $(\nabla w_s)^* \in L^2(\partial Q^\ep_{t_0 s}(y))$, where $(\nabla w_s)^*$ is the nontangential maximal function. More precisely, we have
	\begin{equation*}
		\bigg( \int_{\partial Q^\ep_{t_0 s}(y)} |(\nabla w_s)^*|^2 \bigg)^{1/2}\le C\bigg( \int_{\partial Q^\ep_{t_0 s}(y)} |\nabla u^\ep|^2 \bigg)^{1/2} \le \frac{C}{s^{1/2}} \bigg( \int_{ Q^\ep_{2s}(y)} |\nabla u^\ep|^2 \bigg)^{1/2}.
	\end{equation*}
	This yields,
	\begin{equation}\label{est.ws.L3}
		\bigg( \int_{Q^\ep_{t_0 s}(y)} |\nabla w_s|^3 \bigg)^{1/3} \le \frac{C}{s^{1/2}}\bigg( \int_{ Q^\ep_{2s}(y)} |\nabla u^\ep|^2 \bigg)^{1/2};
	\end{equation}
	see \cite[Lemma 3.3]{WZ14} and \cite[Remark 9.3]{KLS13}.
	The above higher integrability of $w_s$ plays an important role in the following lemma.

	\begin{lemma}\label{lem.Duews}
		Let $L\in (0,\infty)$ and $\Omega$ be a bumpy John domain with constant $L$ according to Definition \ref{def.John2}. Let $(w_{s}, q_{s})$ be given as above. Then there exists $\sigma\in(0,\frac{1}{12}]$ such that for any $\theta\in(0,1)$, $\ep\in(0,\theta]$, $s\in [\ep/\theta,1]$, $Q_{7s}(y)\subset Q_1(0)$,
		\begin{equation}\label{est.Duews}
			\begin{aligned}
				& \bigg( \dashint_{Q_{s}(y)} |\nabla u^\ep - \nabla w_{s}|^2 \bigg)^{1/2} \\
				& \qquad \le C\theta^{\sigma} \bigg( \dashint_{Q_{7s}(y)} |\nabla u^\ep|^2 \bigg)^{1/2} +  C_\theta \bigg( \dashint_{Q_{7s}(y)} |F^\ep|^2 \bigg)^{1/2},
			\end{aligned}
		\end{equation}
		where $C$ depends only on $L$, 
		and $C_\theta$ depends on $L$, $\sigma$ and $\theta$.
	\end{lemma}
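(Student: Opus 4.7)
The plan is to derive an energy identity for $v := u^\ep - w_s$ in $Q^\ep_{t_0 s}(y)$ and then decompose the resulting integral according to whether one sits inside or outside the obstacle. By construction $w_s = u^\ep$ on $\partial Q^\ep_{t_0 s}(y)$, so $v\in H^1_0(Q^\ep_{t_0 s}(y))^3$ and $\nabla\cdot v = 0$; however on the internal rough boundary $\partial\Omega^\ep\cap Q^\ep_{t_0 s}(y)$ only $u^\ep$ vanishes, so $v = -w_s$ there, which is the defect we have to quantify.

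First I would use $v$ itself as a test function in the weak form of the equation for $w_s$: since $v$ is divergence-free and has zero trace on $\partial Q^\ep_{t_0 s}(y)$, the pressure $q_s$ drops out and $\int \nabla w_s : \nabla v = 0$. Combined with $\nabla u^\ep\equiv 0$ outside $\Omega^\ep$, this gives
\[
\int_{Q^\ep_{t_0 s}(y)}|\nabla v|^2 = \int_{Q^\ep_{t_0 s}(y)\setminus\Omega^\ep}|\nabla w_s|^2 + \int_{\Omega^\ep\cap Q^\ep_{t_0 s}(y)}\nabla u^\ep : \nabla v.
\]
For the first term, the set $E := Q^\ep_{t_0 s}(y)\setminus\Omega^\ep$ lies in the slab $\{-\ep < x_3 < 0\}$, so $|E|\lesssim \ep s^2$; H\"older's inequality together with the higher integrability \eqref{est.ws.L3} for $\nabla w_s$ yields
\[
\dashint_E|\nabla w_s|^2 \lesssim (\ep/s)^{1/3}\,\dashint_{Q^\ep_{2s}(y)}|\nabla u^\ep|^2 \le \theta^{1/3}\,\dashint_{Q^\ep_{2s}(y)}|\nabla u^\ep|^2,
\]
which produces a factor $\theta^{\sigma}$ with $\sigma = 1/6$ after taking square roots.

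The interior integral is the delicate one, because $v$ is not an admissible test function in $\Omega^\ep\cap Q^\ep_{t_0 s}(y)$. I would introduce a cut-off $\eta$ equal to $1$ at distance $\ge c\ep$ from $\partial\Omega^\ep$ and vanishing in an $\ep$-neighborhood of $\partial\Omega^\ep$, with $|\nabla\eta|\lesssim 1/\ep$, and correct $\eta v$ into a divergence-free field by the John-domain Bogovskii operator of Theorem~\ref{theo.acosta}: there exists $\Psi\in H^1_0(\Omega^\ep\cap Q^\ep_{t_0 s}(y))^3$ with $\nabla\cdot\Psi = \nabla\eta\cdot v$ and $\|\nabla\Psi\|_{L^2}\lesssim \|v\nabla\eta\|_{L^2}$. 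Then $\varphi := \eta v - \Psi$ is a legitimate test function for the Stokes equation satisfied by $u^\ep$, producing
\[
\int_{\Omega^\ep\cap Q^\ep_{t_0 s}(y)} \nabla u^\ep : \nabla v = -\int F^\ep : \nabla\varphi + \int \nabla u^\ep : \nabla\bigl((1-\eta)v + \Psi\bigr).
\]
The $F^\ep$-contribution is handled by Cauchy--Schwarz and Young, generating the $C_\theta(\dashint|F^\ep|^2)^{1/2}$ term. The remaining integral is supported in a strip of thickness $\sim\ep$ around $\partial\Omega^\ep$; I would bound it by H\"older's inequality combining the large-scale Meyers improvement of Lemma~\ref{lem.selfimprove} for $\nabla u^\ep$, the bound \eqref{est.ws.L3} for $\nabla w_s$, and a Poincar\'e-type estimate $\|v\|_{L^2(\mathrm{strip})}\lesssim\ep\|\nabla v\|_{L^2}$, which turns the small measure $\sim\ep s^2$ of the strip into a quantitative $\theta^{\sigma}$ gain. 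Absorbing a term $\tfrac12\int|\nabla v|^2$ into the left-hand side finishes the proof.

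The main obstacle is precisely this last boundary-strip step: it is here that the John-domain hypothesis is used in an essential way (both through the Bogovskii operator and through the Meyers self-improvement), and the resulting exponent $\sigma\le 1/12$ in \eqref{est.Duews} comes from the trade-off between the $O(\ep)$ width of the strip and the $L^{p_0}$-integrability gain $p_0>2$ afforded by Lemma~\ref{lem.selfimprove}. All other steps --- the energy identity, the $L^3$ estimate for $\nabla w_s$, the cut-off/Bogovskii construction, and the Cauchy--Schwarz/Young absorptions --- are by now standard once set up.
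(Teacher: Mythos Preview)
Your overall strategy --- energy identity via testing the equation for $w_s$ against $v=u^\ep-w_s$, then a cut-off localizing to a thin boundary strip, then controlling the strip contributions by the Meyers improvement (Lemma~\ref{lem.selfimprove}) and the $L^3$ bound \eqref{est.ws.L3} --- is exactly the paper's, and you have correctly identified where the exponent $\sigma$ comes from. There is a minor slip in your energy identity: from $\int\nabla w_s:\nabla v=0$ one gets directly $\int_{Q^\ep_{t_0 s}(y)}|\nabla v|^2=\int_{\Omega^\ep\cap Q^\ep_{t_0 s}(y)}\nabla u^\ep:\nabla v$, with no separate $\int_E|\nabla w_s|^2$ term; but this is harmless.

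The substantive difference is in how the pressure $p^\ep$ is handled, and here your implementation has a genuine gap. You propose a distance-based cut-off vanishing in an $\ep$-neighbourhood of the rough set $\partial\Omega^\ep$, and a Bogovskii correction $\Psi$ in $\Omega^\ep\cap Q^\ep_{t_0 s}(y)$ to make $\eta v-\Psi$ divergence-free. But Definition~\ref{def.John2} does \emph{not} say that $B^\ep_{t_0 s,+}(y)$ is itself a John domain --- it only guarantees a John domain sandwiched between $B^\ep_{r,+}$ and $B^\ep_{2r,+}$ --- so Theorem~\ref{theo.acosta} is not directly available there with a uniform constant, and building a Lipschitz cut-off from the distance to a possibly fractal $\partial\Omega^\ep$ is itself delicate. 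The paper sidesteps both issues by taking the cut-off $\eta_{\ep,+}$ to depend only on $x_3$ (equal to $1$ for $x_3>2\ep$, $0$ for $x_3<\ep$); it does \emph{not} correct the divergence but instead keeps the pressure term $\int(p^\ep-\mathcal{P})(u^\ep-w_s)\cdot 2\eta_{\ep,+}\nabla\eta_{\ep,+}$ and controls $p^\ep-\mathcal{P}$ via Bogovskii in the \emph{Lipschitz} half-cube $Q_{t_0 s,+}(y)=Q_{t_0 s}(y)\cap\{x_3>0\}$, which avoids the rough boundary entirely. The smallness for that term is then extracted from $\|\nabla(u^\ep-w_s)\|_{L^2(R^\ep_s)}$ in the horizontal slab $R^\ep_s=\{-\ep\le x_3\le 2\ep\}$, using \eqref{est.ue.Rs} and the $L^3$ estimate for $\nabla w_s$.

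Your divergence-free test-function idea can be salvaged with the same fix: use the $x_3$-only cut-off so that $\nabla\eta\cdot v$ is supported in the flat slab $\{\ep<x_3<2\ep\}$, and apply Bogovskii in the Lipschitz box $Q_{t_0 s,+}(y)$; the resulting $\Psi$ vanishes for $x_3\le 0$ and hence on $\partial\Omega^\ep$, so $\eta v-\Psi$ is admissible. With that modification your argument goes through and gives an alternative to the paper's direct pressure estimate.
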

	\begin{proof}
		We rely on the variational definition of the weak solutions of \eqref{eq.ws}. 
		First of all, by \eqref{eq.ws}, we see that $u^\ep - w_s \in H^1_0(Q^\ep_{t_0 s}(y))^3$ and $\nabla\cdot (u^\ep-w_s) = 0$, since $u^\ep$ has been extended by zero. Thus we can test \eqref{eq.ws} against $u^\ep - w_s$ to obtain
		\begin{equation}\label{eq.var.ws}
			\int_{Q^\ep_{t_0 s}(y)} \nabla w_s \cdot \nabla (u^\ep - w_s) = 0.
		\end{equation}
		Let $\eta_{\ep,+}$ be a smooth cut-off function so that $0\le\eta_{\ep,+}\le1$, $\eta_{\ep,+}({x}) = 1$ if ${x}_3>2\ep$, $\eta_{\ep,+} ({x}) = 0$ if ${x}_3<\ep$, and $|\nabla\eta_{\ep,+}| \le C\ep^{-1}$.  
		It is easy to verify that $\psi:=(u^\ep - w_{s})\eta_{\ep,+}^2 \in H_0^1(B^\ep_{t_0 s,+}({y}))^3$, where $B^\ep_{t_0 s,+}(y):=y+B^\ep_{t_0 s,+}$. Testing \eqref{eq.Stokes} 
		against $\psi$, we obtain
		\begin{equation}\label{eq.var.ue}
			\begin{aligned}
				&\int_{B^\ep_{t_0 s,+}(y)} \nabla u^\ep \cdot \nabla ((u^\ep - w_{s})\eta_{\ep,+}^2) 
				\\
				&\qquad = \int_{B^\ep_{t_0s,+}(y)} (p^\ep - \mathcal{P})  ((u^\ep - w_{s})\cdot 2\eta_{\ep,+} \nabla\eta_{\ep,+}) 
				-\int_{B^\ep_{t_0s,+}(y)} F^\ep \cdot \nabla ((u^\ep - w_{s})\eta_{\ep,+}^2)
			\end{aligned}
		\end{equation}
		for any $\mathcal{P} \in\R$ (to be determined later).
		Combining \eqref{eq.var.ws} and \eqref{eq.var.ue} and using the fact $\nabla u^\ep = 0$ in $Q_{t_0 s}^\ep(y) \setminus B_{t_0 s,+}^\ep(y)$, we arrive at
		\begin{equation}\label{eq.var.mix}
			\begin{aligned}
				\int_{Q^\ep_{t_0 s}(y)} \nabla (u^\ep -w_s) \cdot \nabla (u^\ep - w_s) & = \int_{B^\ep_{t_0 s,+}(y)} \nabla u^\ep \cdot \nabla ((u^\ep - w_{s})(1-\eta_{\ep,+}^2)) \\
				& \quad +  \int_{B^\ep_{t_0 s,+}(y)} (p^\ep - \mathcal{P})  ((u^\ep - w_{s})\cdot 2\eta_{\ep,+} \nabla\eta_{\ep,+}) 
				\\
				& \quad -\int_{B^\ep_{t_0 s,+}(y)} F^\ep \cdot \nabla ((u^\ep - w_{s})\eta_{\ep,+}^2).
			\end{aligned}
		\end{equation}
		Now, we are going to estimate the integrals on the right-hand side of the above equation. Note that $1-\eta_{\ep,+}^2$ and $\nabla \eta_{\ep,+}$ are both supported in $\{ -\ep < x_3 \le 2\ep \}$. Let $R^\ep_s := Q_{t_0 s}(y) \cap \{-\ep \le x_3\le 2\ep \}$ and $T^\ep_{s}=Q_{t_0 s}(y)\cap\{0 \le x_3\le2\ep\}$. Clearly, 
		$|T^\ep_{s}| \le |R^\ep_s|\leq C\ep {s}^2$. To estimate the first integral, we use the Poincar\'{e} inequality applied in $R^\ep_s$ to obtain
		\begin{equation}\label{est.Int-1}
			\begin{aligned}
				&\bigg| \int_{B^\ep_{t_0s,+}(y)} \nabla u^\ep \cdot \nabla ((u^\ep - w_{s})(1-\eta_{\ep,+}^2)) \bigg|\\
				& \le \bigg( \int_{R^\ep_s} |\nabla u^\ep|^2 \bigg)^{1/2} \bigg( \int_{R^\ep_s} |\nabla ((u^\ep - w_{s})(1-\eta_{\ep,+}^2))|^2 \bigg)^{1/2} \\
				& \le C\bigg( \int_{R^\ep_s} |\nabla u^\ep|^2 \bigg)^{1/2} \bigg\{\bigg( \int_{R^\ep_s} |\nabla (u^\ep - w_{s})|^2 \bigg)^{1/2}+\ep^{-1}\bigg( \int_{R^\ep_s} |u^\ep - w_{s}|^2 \bigg)^{1/2}\bigg\}\\
				& \le C\bigg( \int_{R^\ep_s} |\nabla u^\ep|^2 \bigg)^{1/2} \bigg\{\bigg( \int_{R^\ep_s} |\nabla (u^\ep - w_{s})|^2 \bigg)^{1/2}+\bigg( \int_{R^\ep_s} |\nabla(u^\ep - w_{s})|^2 \bigg)^{1/2}\bigg\}\\
				& \le C\bigg( \int_{R^\ep_s} |\nabla u^\ep|^2 \bigg)^{1/2} \bigg( \int_{Q_{t_0s}^\ep(y)} |\nabla (u^\ep - w_{s})|^2 \bigg)^{1/2}.
			\end{aligned}
		\end{equation}
		The last integral of $\nabla (u^\ep - w_{s})$ in the above estimate will 
		eventually be absorbed by the left-hand side of \eqref{eq.var.mix}. The main difficulty to proceed is to obtain a certain estimate of smallness for $\nabla u^\ep$ over the thin strip $R^\ep_s$. This can 
		be done by using Lemma \ref{lem.selfimprove}. In fact, if $\theta\in(0,1)$ and $s \ge \ep/\theta$, Lemma \ref{lem.selfimprove} yields
		\begin{equation*}
			\begin{aligned}
				&\bigg( \dashint_{Q_{2s}(y)} | \mathcal{M}_{\theta s}^2[\nabla u^\ep] |^{p_0} \bigg)^{1/p_0} \\
				&\qquad \le C \bigg( \dashint_{Q_{6s}(y)} |\mathcal{M}_{\theta s}^2 [\nabla u^\ep] |^{2} \bigg)^{1/2} + C\bigg( \dashint_{Q_{6s}(y)} | \mathcal{M}_{\theta s}^2[F^\ep] |^{p_0} \bigg)^{1/p_0}.
			\end{aligned}
		\end{equation*}
		Since for any $z\in Q_{6s}(y)$,
		\begin{equation*}
			\mathcal{M}_{\theta s}^2[F^\ep](z) \le C_\theta \bigg( \dashint_{Q_{7s}(y)} |F^\ep|^2 \bigg)^{1/2},
		\end{equation*}
		together with \eqref{est3.lem.est.M.op.properties}, we obtain,
		\begin{equation*}
			\bigg( \dashint_{Q_{2s}(y)} | \mathcal{M}_{\theta s}^2[\nabla u^\ep] |^{p_0} \bigg)^{1/p_0} \le C\bigg( \dashint_{Q_{7s}(y)} |\nabla u^\ep |^{2} \bigg)^{1/2} + C_\theta \bigg( \dashint_{Q_{7s}(y)} |F^\ep|^2 \bigg)^{1/2}.
		\end{equation*}
		It is important to notice that in the last inequality, $C$ is independent of $\theta$ and $C_\theta$ depends on $\theta$.
		By a similar argument as in \cite{Z}, we can now estimate the right-hand side of \eqref{est.Int-1} as
		\begin{equation}\label{est.ue.Rs}
			\begin{aligned}
				\bigg(\frac{1}{{|Q_s(y)|}}\int_{R^\ep_{s}} |\nabla u^\ep |^2\bigg)^{1/2} & \le C\big(\frac{\theta s}{s}\big)^{1/2-1/{p_0}}
				\bigg( \dashint_{Q_{2s}(y)} | \mathcal{M}_{\theta s}^2[\nabla u^\ep] |^{p_0} \bigg)^{1/p_0} \\
				& \le C\theta^{\sigma} \bigg( \dashint_{Q_{7s}(y)} |\nabla u^\ep |^{2} \bigg)^{1/2} + C_\theta \bigg( \dashint_{Q_{7s}(y)} |F^\ep|^2 \bigg)^{1/2}
			\end{aligned}
		\end{equation}
		with some $\sigma\in(0,\frac12)$. This is the desired estimate of $\nabla u^\ep$ in $R^\ep_s$. 
		Later on we will insert it into \eqref{est.Int-1} and then \eqref{eq.var.mix} to reach a conclusion. 

		Let us turn to the estimate of the second integral on the right-hand side of \eqref{eq.var.mix}. Using H\"{o}lder's inequality and the Poincar\'{e} inequality, we have
		\begin{equation}\label{est.Int-2}
			\begin{aligned}
				& \bigg| \int_{B^\ep_{t_0s,+}(y)} (p^\ep - \mathcal{P})  ((u^\ep - w_{s})\cdot 2\eta_{\ep,+} \nabla\eta_{\ep,+}) \bigg| \\
				&\quad \le C\ep^{-1} \bigg( \int_{T^\ep_{s}} |p^\ep-\mathcal{P}|^{2} \bigg)^{1/2} \bigg( \int_{R^\ep_{s}} |u^\ep - w_s|^2 \bigg)^{1/2} \\
				&\quad \le C \bigg( \int_{T^\ep_{s}} |p^\ep-\mathcal{P}|^{2} \bigg)^{1/2} \bigg( \int_{R^\ep_s} |\nabla (u^\ep - w_s)|^2 \bigg)^{1/2}.
			\end{aligned}
		\end{equation}
		Now, we pick
		\begin{equation*}
			\mathcal{P} := \dashint_{Q_{t_0 s,+}(y)} p^\ep,
		\end{equation*}
		where $Q_{t_0 s,+}(y) = Q_{t_0s}(y) \cap \{ x_3>0 \}$.
		Then the Bogovskii lemma applied in a Lipschitz domain $Q_{t_0 s,+}(y)$ implies
		\begin{equation}
			\begin{aligned}
				\bigg(\int_{T_{s}^\ep} |p^\ep-\mathcal{P} |^2\bigg)^{1/2} &\le \bigg(\int_{Q_{t_0 s,+}(y)} |p^\ep -\mathcal{P} |^2\bigg)^{1/2}\\
				&\le C \bigg(\int_{Q_{t_0s,+}(y)} |\nabla u^\ep|^2\bigg)^{1/2}
				+C \bigg(\int_{Q_{t_0s,+}(y)} |F^\ep|^2\bigg)^{1/2} \\
				& \le C\bigg(\int_{B^\ep_{{t_0s},+}} |\nabla u^\ep|^2\bigg)^{1/2}
				+C \bigg(\int_{B^\ep_{{t_0s},+}} |F^\ep|^2\bigg)^{1/2}.
			\end{aligned}
		\end{equation}
		Unlike the previous argument, we want to gain the smallness for \eqref{est.Int-2OK} below from
		\begin{equation*}
			\bigg( \int_{R^\ep_s} |\nabla (u^\ep - w_s)|^2 \bigg)^{1/2} \le \bigg( \int_{R^\ep_s} |\nabla u^\ep|^2\bigg)^{1/2} + \bigg( \int_{R^\ep_s} |\nabla w_s|^2  \bigg)^{1/2}.
		\end{equation*}
		The estimate for $\nabla u^\ep$ over $R^\ep_s$ is given \eqref{est.ue.Rs}. On the other hand, by \eqref{est.ws.L3} and the H\"{o}lder's inequality, we have
		\begin{equation*}
			\begin{aligned}
				\bigg( \int_{R^\ep_s} |\nabla w_s|^2  \bigg)^{1/2} & \le |R^\ep_s|^{1/6} \bigg( \int_{R^\ep_s} |\nabla w_s|^3  \bigg)^{1/3} \\
				& \le C \frac{|R^\ep_s|^{1/6}}{s^{1/2}}
				 \bigg( \int_{ Q^\ep_{2s}(y)} |\nabla u^\ep|^2 \bigg)^{1/2} \\
				& \le C\big(\frac{\ep}{s}\big)^{1/6} \bigg( \int_{ Q^\ep_{2s}(y)} |\nabla u^\ep|^2 \bigg)^{1/2}.
			\end{aligned}
		\end{equation*}
		Inserting this into \eqref{est.Int-2}, we have
		\begin{equation}\label{est.Int-2OK}
			\begin{aligned}
				& \bigg| \int_{B^\ep_{t_0s,+}(y)} (p^\ep - \mathcal{P})  ((u^\ep - w_{s})\cdot 2\eta_{\ep,+} \nabla\eta_{\ep,+}) \bigg| \\
				& 
				\le C
					\bigg\{ \bigg(\int_{B^\ep_{{t_0s},+}} |\nabla u^\ep|^2\bigg)^{1/2}
					+ \bigg(\int_{B^\ep_{{t_0 s},+}} |F^\ep|^2\bigg)^{1/2}\bigg\} \\
					& \qquad\times
					\bigg\{
					\bigg( \int_{R^\ep_s} |\nabla u^\ep|^2\bigg)^{1/2}
					+\big(\frac{\ep}{s}\big)^{1/6}\bigg(\int_{Q^\ep_{2s}(y)} |\nabla u^\ep|^2 \bigg)^{1/2}
					\bigg\}.
				\end{aligned}
		\end{equation}
		Finally, for the last integral of \eqref{eq.var.mix}, by the Poincar\'{e} inequality, we have
		\begin{equation}\label{est.Int-3OK}
			\bigg| \int_{B^\ep_{t_0s,+}(y)} F^\ep \cdot \nabla ((u^\ep - w_{s})\eta_{\ep,+}^2) \bigg| \le C\bigg( \int_{B^\ep_{t_0 s,+}(y)} |F^\ep|^2 \bigg)^{1/2} \bigg( \int_{Q^\ep_{t_0 s}(y)} |\nabla (u^\ep - w_s)|^2 \bigg)^{1/2}.
		\end{equation}
		Now, \eqref{eq.var.mix} together with \eqref{est.Int-1}, \eqref{est.ue.Rs}, \eqref{est.Int-2OK} and \eqref{est.Int-3OK} gives
		\begin{equation}\label{est.ue-ws.inQs}
			\begin{aligned}
				&\bigg( \int_{Q_{t_0 s}^\ep(y)} |\nabla (u^\ep - w_s)|^2 \bigg)^{1/2} \\
				& \quad \le C\Big( \theta^{\sigma} + \big(\frac{\ep}{s}\big)^{1/12} \Big) \bigg( \int_{Q_{7s}(y)} |\nabla u^\ep|^2 \bigg)^{1/2} + C_\theta \bigg( \int_{Q_{7s}(y)} |F^\ep|^2 \bigg)^{1/2}.
			\end{aligned}
		\end{equation}
		Since we have assume $s \ge \ep/\theta$, then $\ep/s \le \theta$. In view of $t_0\in [1,2]$, \eqref{est.ue-ws.inQs} divided by $|Q_s(y)|^{1/2}$ leads to \eqref{est.Duews}. The proof is complete.
	\end{proof}

	\begin{proof}[Proof of Theorem \ref{thm.CZ.Stokes}]
		We will first prove a slightly weaker version of \eqref{est.CZ.Stokes} when $Q_{{57r}}(x)\subset Q_1(0)$. Then, \eqref{est.CZ.Stokes} can be recovered thanks to a covering argument at the price of enlarging the constant by a numerical factor; see Remark \ref{rem.cov} for more details. When $Q_{57r}(x)$ is far away from the boundary $\Gamma^\ep_1$, the estimate \eqref{est.CZ.Stokes} is a consequence of interior regularity. Hence it suffices to prove \eqref{est.CZ.Stokes} when $Q_{57r}(x)\cap \Gamma^\ep_1\neq\emptyset$. Note that this case can be reduced to the case when $x\in \{ {z}_3 = 0 \}$ by a covering argument as well as interior regularity. 
		To apply Theorem \ref{thm.Shen} to 
		$Q_0:=Q_r(x)$, $\lambda:=56$ and
		$\mathcal F:=\mathcal{M}_t^2[\nabla u^\ep]$ in $Q_{56r}(x)$ with $x\in \{ {z}_3 = 0 \}$, we 
		approximate $u^\ep$ in any cube $Q_s(y)$ contained in $Q_{2r}(x)$ for any scales for $s \ge \ep/\theta$ where $\theta$ is as in Lemma \ref{lem.Duews}.  
		If $Q_s(y)$ is entirely contained in $\{z_3>0\}$, then the well-known interior estimate for the Stokes system applies. If $Q_s(y)$ is contained entirely in $\{ {z}_3<-\ep \}$, then trivially $u^\ep \equiv 0$ in $Q_s(y)$. Hence, it suffices to focus on the typical boundary case $Q_s(y)$ with $y \in \{ {z}_3 = 0 \}$.
		Moreover, we assume $s<r/2$ so that $Q_{57s}(y) \subset Q_{57r}(x) \subset Q_1(0)$ whenever $Q_s(y) \subset Q_{2r}(x)$.

		Now, for each $Q_s(y)$ with $y\in \{{z}_3 =0 \}$, we will discuss two cases. 
		
		{\bf Case 1:} $s{\ge}4t$. 
		By \eqref{est.Dw.Linfty} and \eqref{est.Duews}, there exists $w_s$ solving \eqref{eq.ws} and satisfying
		\begin{equation}\label{est.Dws.Qy}
			\| \nabla w_s \|_{L^\infty(Q_{\frac12 s}(y))} \le C\bigg( \dashint_{Q_{2s}(y)} |\nabla u^\ep|^2 \bigg)^{1/2}
		\end{equation}
		and 
		\begin{equation}\label{est.Duews.Qy}
			\begin{aligned}
				& \bigg( \dashint_{Q_{s}(y)} |\nabla u^\ep - \nabla w_{s}|^2 \bigg)^{1/2} \\
				& \qquad \le C\theta^{\sigma} \bigg( \dashint_{Q_{7s}(y)} |\nabla u^\ep|^2 \bigg)^{1/2} +   \bigg( \dashint_{Q_{7s}(y)} |C_\theta F^\ep|^2 \bigg)^{1/2}.
			\end{aligned}
		\end{equation}
		Note that the above estimate only holds for $s\ge \ep/\theta$. 
		Therefore, we will use Lemma \ref{lem.est.M.op.properties} and replace $\nabla u^\ep$ and $\nabla w_s$ by $\mathcal{M}_t^2[\nabla u^\ep]$ and $\mathcal{M}_t^2[\nabla w_s]$, respectively. Precisely, the above two inequalities imply for $s\ge 4 t \ge \ep/\theta$, 
		\begin{equation*}
			\| \mathcal{M}_t^2[ \nabla w_s] \|_{L^\infty(Q_{\frac14s}(y))} \le C\bigg( \dashint_{Q_{2s}(y)} |\mathcal{M}_t^2[ \nabla u^\ep]|^2 \bigg)^{1/2}\le C\bigg( \dashint_{Q_{7s}(y)} |\mathcal{M}_t^2[ \nabla u^\ep]|^2 \bigg)^{1/2}
		\end{equation*}
		and
		\begin{equation*}
			\begin{aligned}
				& \bigg( \dashint_{Q_{\frac14  s}(y)} |\mathcal{M}_t^2[ \nabla u^\ep] -  \mathcal{M}_t^2[ \nabla w_s]|^2 \bigg)^{1/2} \\
				& \qquad \le C\theta^{\sigma} \bigg( \dashint_{Q_{7s}(y)} |\mathcal{M}_t^2[ \nabla u^\ep]|^2 \bigg)^{1/2} +   \bigg( \dashint_{Q_{7s}(y)} |\mathcal{M}_t^2[ C_\theta F^\ep] |^2 \bigg)^{1/2}.
			\end{aligned}
		\end{equation*}
		
		{\bf Case 2:} $0<s<4t$. In this case $\mathcal{M}_t^2[\nabla u^\ep]$ itself satisfies some trivial estimate. Note that for any ${z}\in Q_{\frac12 s}(y)$, 
		as $Q_{\frac12 s}(z)\subset Q_{s}(y)$, by Lemma \ref{lem.est.M.op.properties} (v),
		
		\begin{equation*}
			\begin{aligned}
				\mathcal{M}_t^2[\nabla u^\ep](z) 
				\le C \bigg( \dashint_{Q_{\frac12 s}(z)} |\mathcal{M}_{t}^2[\nabla u^\ep]|^2\bigg)^{1/2}
				& \le C \bigg( \dashint_{Q_{s}(y)} |\mathcal{M}_t^2[\nabla u^\ep] |^2 \bigg)^{1/2},
			\end{aligned}
		\end{equation*}
		which yields
		\begin{align*}
			\| \mathcal{M}_t^2[\nabla u^\ep]\|_{L^\infty(Q_{\frac14 s}(y))}\le\ &\| \mathcal{M}_t^2[\nabla u^\ep]\|_{L^\infty(Q_{\frac12 s}(y))}\\
			\le\ & C \bigg( \dashint_{Q_{s}(y)} |\mathcal{M}_t^2[\nabla u^\ep] |^2 \bigg)^{1/2}\le C \bigg( \dashint_{Q_{7s}(y)} |\mathcal{M}_t^2[\nabla u^\ep] |^2 \bigg)^{1/2}.
		\end{align*}
		This ends the study of the two cases. We now apply Theorem \ref{thm.Shen} with $\lambda:=56$,  $Q_0 := Q_r(x),\ q := \infty,\ \mathcal F := \mathcal{M}_t^2[\nabla u^\ep]$ 
		and $f := \mathcal{M}_t^2[ C_\theta F^\ep] $. 
		Moreover,
		\begin{equation*}
			F_{Q} = \left\{ 
			\begin{aligned}
				&\mathcal{M}_t^2[ \nabla u^\ep] -  \mathcal{M}_t^2[ \nabla w_s], \quad &s \ge 4t,& \\
				&0, \quad &0<s<4t,&
			\end{aligned}
			\right.
		\end{equation*}
		and
		\begin{equation*}
			R_{Q} = \left\{ 
			\begin{aligned}
				& \mathcal{M}_t^2[ \nabla w_s], \quad &s \ge 4t,& \\
				&\mathcal{M}_t^2[ \nabla u^\ep], \quad &0<s<4t.&
			\end{aligned}
			\right.
		\end{equation*}	
		For any given $p>2$, we may choose $\theta$ sufficiently small with $C\theta^\sigma < \kappa_0$ 
		so that the requirement of Theorem \ref{thm.Shen} is satisfied. Consequently, we arrive at
		\begin{equation}\label{e.est56}
			\bigg( \dashint_{Q_{r}(x)} | \mathcal{M}_t^2[\nabla u^\ep] |^{p} \bigg)^{1/p} \le C \bigg( \dashint_{Q_{56r}(x)} |\mathcal{M}_t^2 [\nabla u^\ep] |^{2} \bigg)^{1/2} + C\bigg( \dashint_{Q_{56r}(x)} | \mathcal{M}_t^2[C_\theta F^\ep] |^{p} \bigg)^{1/p}
		\end{equation}
		for all $\ep\in(0,\theta(p))$ and $\ep/(4\theta) \le t\le r$ and $Q_{57r}(x)\subset Q_1(0)$. 
		Estimate \eqref{est.CZ.Stokes} now follows by a covering argument (see the proof of Lemma \ref{lem.selfimprove}) and Lemma \ref{lem.est.M.op.properties} (in order to adjust the size of balls and relax the condition $t \ge (4\theta)$ to $t \ge \ep$). To remove the smallness condition $\ep\in(0,\theta(p))$, we observe that the case $\theta(p) \le \ep \le t \le r \le \frac12$ is trivial as the constant $C$ is allowed to depend on $p$.
	\end{proof}

	\subsection{Bootstrap argument}\label{subsec.bootstrap}
	In this subsection, we apply the large-scale Calder\'{o}n-Zyg\-mund estimate proved previously to study the regularity of the stationary Navier-Stokes equations \eqref{intro.NS.ep}. Note that in Theorem \ref{thm.CZ.Stokes}, $F^\ep$ is a general function. We will take advantage of the nonlinearity $F^\ep = -u^\ep \otimes u^\ep$. As usual, the proof relies on a bootstrap argument.

	Throughout this subsection, we set $F^\ep = -u^\ep \otimes u^\ep$. To begin with, note that the Sobolev embedding theorem implies $F^\ep \in L^3$, which yields $\mathcal M_t[F^{\ep}] \in L^3$. Hence, \eqref{est.CZ.Stokes} holds with $p = 3$. To further improve the large-scale regularity, we need to lift the regularity of $F^\ep$ from that of $\nabla u^\ep$.
	
	For any $0\le a < b\le \infty$, define a new maximal function
	\begin{equation*}
		\quad \mathcal{M}_{(a,b)}^1[g](x) = \sup_{a<t<b}  \dashint_{Q_{t}(x)} |g|.
	\end{equation*}
	Note that $\mathcal{M}_{(0,\infty)}^1$ is the usual Hardy-Littlewood maximal function. Clearly, by the $L^p$ boundedness of the Hardy-Littlewood maximal function, $\mathcal{M}_{(a,b)}^1$ is uniformly bounded in $L^p$ space for $p\in (1,\infty)$.
	
	Fix $t>0$. Define
	\begin{equation*}
		K_{q}(r) = K_{q,t}(r) = \bigg( \dashint_{Q_{r}(0)} \mathcal{M}_t^2[\nabla u^\ep]^{q} \bigg)^{1/q}.
	\end{equation*}
	The following estimate is a sort of the large-scale Sobolev embedding theorem. 
	
	\begin{theorem}\label{thm.Sobolev}
		Let $L\in(0,\infty)$ and $\Omega$ be a bumpy John domain with constant $L$ according to Definition \ref{def.John2}. Let $\ep\le t\le r \le \frac17$ and $F^\ep = -u^\ep \otimes u^\ep$. Then for any $p>3$ and any $q$ satisfying
		\begin{equation}\label{e.pqcond}
			\frac{1}{q} < \frac{1}{2p} + \frac{1}{3},
		\end{equation}
		we have
		\begin{equation}\label{est.MtF}
			\bigg( \dashint_{Q_r(0)} |\mathcal{M}_t [F^\ep]|^p \bigg)^{1/p} \le Cr^2 \big( K_q(5r) \big)^2,
		\end{equation}
		where the constant $C$ depends only $L$, $p$ and $q$.
		\end{theorem}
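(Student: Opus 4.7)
Plan: The bound is a large-scale Sobolev-type embedding $\|u\|_{L^{q^*}}\lesssim \|\nabla u\|_{L^q}$, with Sobolev conjugate $q^*=3q/(3-q)$, applied to the nonlinearity $|F^\ep|=|u^\ep|^2$. The condition $1/q<1/(2p)+1/3$ is exactly $q^*>2p$, and the proof proceeds in three stages: translation of the mesoscopic maximal of $F^\ep$ into an $L^{2p}$ quantity for $u^\ep$, a Sobolev-Poincar\'e step for $u^\ep$ based on the vanishing of $u^\ep$ below the bumpy boundary, and a final passage from $|\nabla u^\ep|$ to $\mathcal{M}_t^2[\nabla u^\ep]$.

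First, since $\mathcal{M}_t[F^\ep](x)^2=\dashint_{Q_t(x)}|u^\ep|^4$ and $Q_t(x)\subset Q_{2r}(0)$ for $x\in Q_r(0)$, $t\le r$, one has the pointwise bound $\mathcal{M}_t[F^\ep](x)\le (M^{HL}[|u^\ep\mathbf{1}_{Q_{2r}(0)}|^4](x))^{1/2}$, where $M^{HL}$ is the Hardy-Littlewood maximal operator. Since $p>3$ forces $p/2>1$, the $L^{p/2}$-boundedness of $M^{HL}$ yields
\begin{equation*}
\bigg(\dashint_{Q_r(0)}\mathcal{M}_t[F^\ep]^p\bigg)^{1/p}\le C\bigg[\bigg(\dashint_{Q_{2r}(0)}|u^\ep|^{2p}\bigg)^{1/(2p)}\bigg]^2.
\end{equation*}

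Second, since $\ep\le r$, the zero-extension of $u^\ep$ across $\Gamma^\ep_1$ makes $u^\ep$ vanish on $Q_{5r}(0)\cap\{x_3<-\ep\}$, which has measure comparable to $|Q_{5r}(0)|$. A Friedrichs-type Sobolev-Poincar\'e inequality then gives
\begin{equation*}
\bigg(\dashint_{Q_{5r}(0)}|u^\ep|^{q^*}\bigg)^{1/q^*}\le Cr\bigg(\dashint_{Q_{5r}(0)}|\nabla u^\ep|^q\bigg)^{1/q}.
\end{equation*}
Combining the inclusion $Q_{2r}(0)\subset Q_{5r}(0)$, H\"older's inequality and $q^*>2p$ with the previous display, I obtain
\begin{equation*}
\bigg(\dashint_{Q_r(0)}\mathcal{M}_t[F^\ep]^p\bigg)^{1/p}\le Cr^2\bigg(\dashint_{Q_{5r}(0)}|\nabla u^\ep|^q\bigg)^{2/q}.
\end{equation*}

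Third, I pass from $|\nabla u^\ep|$ to $\mathcal{M}_t^2[\nabla u^\ep]$ to reach $K_q(5r)^2$. Lemma \ref{lem.est.M.op.properties} (iii) with exponent $q$ yields $\int|\nabla u^\ep|^q\le C\int\mathcal{M}_t^q[\nabla u^\ep]^q$ up to a negligible enlargement of the cube absorbed in the constants. If $q\le 2$, Jensen's inequality gives $\mathcal{M}_t^q\le\mathcal{M}_t^2$ pointwise and the argument closes immediately. The hypothesis $p>3$ forces $q>2$, however, so Jensen goes in the wrong direction; this is the main technical obstacle. One must instead exploit the Meyers-type higher integrability of $\mathcal{M}_t^2[\nabla u^\ep]$ furnished by Lemma \ref{lem.selfimprove}, coupled with a H\"older interpolation, to bound $(\dashint\mathcal{M}_t^q[\nabla u^\ep]^q)^{1/q}$ by $C(\dashint_{Q_{5r}(0)}\mathcal{M}_t^2[\nabla u^\ep]^q)^{1/q}=CK_q(5r)$, delivering the desired estimate.
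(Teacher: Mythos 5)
Your Steps 1 and 2 are fine as inequalities (modulo the small point that for $q\ge 3$ the conjugate $q^*=3q/(3-q)$ does not exist, so one would first have to replace $q$ by some $\tilde q<3$ and use monotonicity of $K_q$), but they funnel the whole estimate through $\big(\dashint_{Q_{5r}}|\nabla u^\ep|^q\big)^{1/q}$ with $q>2$, and Step 3 — converting this into $K_q(5r)=\big(\dashint_{Q_{5r}}\mathcal{M}_t^2[\nabla u^\ep]^q\big)^{1/q}$ — is a genuine gap, not a technicality. For $q>2$ one has $\mathcal{M}_t^2[g]\le \mathcal{M}_t^q[g]$ pointwise and there is no reverse inequality: for a general field that is locally $L^2$ but not locally $L^q$ (e.g.\ $|x-x_0|^{-\beta}$ with $3/q<\beta<3/2$), the left side $\dashint|g|^q$ is infinite while $\dashint\mathcal{M}_t^2[g]^q$ is finite. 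Nor can Lemma \ref{lem.selfimprove} rescue this for the solution: that Meyers-type estimate raises the integrability of the \emph{averaged} quantity $\mathcal{M}_t^2[\nabla u^\ep]$ (to a fixed exponent $p_0>2$ depending only on $L$, possibly far below $q$, and with an extra $\mathcal{M}_t^2[F^\ep]$ term that would reintroduce the quantity you are trying to bound); it says nothing about $L^q$ integrability of $\nabla u^\ep$ itself at scales below $t$. In this paper's setting such small-scale higher integrability is precisely what is unavailable — the weak Caccioppoli and Sobolev--Poincar\'e inequalities fail below scale $\ep$, the authors explicitly cannot run full-scale Gehring, and indeed the whole reason Theorem \ref{thm.Sobolev} is phrased in terms of $K_q$ rather than $\|\nabla u^\ep\|_{L^q}$ is to avoid ever invoking it. So no interpolation between the $L^2$ energy and the large-scale Meyers bound can close your Step 3.

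The paper's proof is built to sidestep exactly this obstacle. Instead of a single Sobolev--Poincar\'e inequality with exponent $q$ on the gradient, it controls $\big(\dashint_{Q_t(x)}|u^\ep|^4\big)^{1/4}$ by a telescoping sum of averages over dyadic cubes $Q_{2^jt}(x)$ up to the scale $\sim x_3$ at which the zero extension kills the last average, so that only \emph{$L^2$ averages of $\nabla u^\ep$ at scales $\ge t$} ever appear; these are legitimately replaced by $\mathcal{M}_t[\nabla u^\ep]$ (Lemma \ref{lem.est.M.op.properties}), after which Jensen up to exponent $q\ge 2$ is applied to the averaged quantity, the sum is resummed via the splitting $(\dashint\mathcal{M}_t[\nabla u^\ep]^q)^{1/q}=(\cdot)^{\alpha}(\cdot)^{1/q-\alpha}$ with $\alpha<1/3$, and the remaining factor is absorbed by the restricted maximal operator $\mathcal{M}^1_{(2t,5r)}$ and the Hardy--Littlewood maximal theorem in $L^{p(2/q-2\alpha)}$ (this is where the condition $1/q<1/(2p)+1/3$ enters, via $p(2/q-2\alpha)>1$), with a final limiting argument in $\alpha$ and a reduction $\tilde q<q$ to land on the exact exponent $q$. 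If you want to complete your argument, you would have to replace your Step 2--3 by a mechanism of this kind that never requires $\nabla u^\ep\in L^q$ at scales below $t$.
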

	
	\begin{proof}
		Let $p>3$ and $q$ satisfy \eqref{e.pqcond}. Without loss of generality, we assume in addition $\frac1{2p}<\frac1q$. 
		Let $x\in B_{r,+}^\ep(0)$. 
		We first estimate 
		\begin{equation*}
			\mathcal{M}_t[F^\ep](x) = \bigg( \dashint_{Q_t(x)} |F^\ep|^2 \bigg)^{1/2} \le C\bigg( \dashint_{Q_t(x)} |u^\ep|^4 \bigg)^{1/2}.
		\end{equation*}
		Let $x = (x',x_3)$. We consider the cases $x_3 \ge t$ and $x_3 < t$ separately. Assume first $x_3 \ge t$ and let $N$ be the natural number so that $2^{N-1} t < x_3 \le 2^{N} t $.
		Note that $u^\ep$ vanishes in a large portion of $Q_{2^{N+1} t}(x)$. By the triangle inequality and the Poincar\'{e} inequality, we have
		\begin{equation*}
			\begin{aligned}
				\bigg( \dashint_{Q_t(x)} |u^\ep|^4 \bigg)^{1/4} & \le \bigg( \dashint_{Q_t(x)} |u^\ep - \dashint_{Q_{2t}(x)} u^\ep |^4 \bigg)^{1/4} \\
				&\quad +\sum_{j=1}^N \bigg| \dashint_{Q_{2^j t}(x)} u^\ep - \dashint_{Q_{2^{j+1} t}(x)} u^\ep \bigg| + \bigg| \dashint_{Q_{2^{N+1} t}(x)} u^\ep \bigg| \\
				& \le C \sum_{j = 0}^{N} 2^{j+1} t \bigg( \dashint_{Q_{2^{j+1}t}(x)} |\nabla u^\ep|^2 \bigg)^{1/2}.
			\end{aligned}
		\end{equation*}
		Now, let $\alpha \in (0,\min\{\frac{1}{3},\frac1q\})$ 
		and write
		\begin{equation}\label{est.Due.jt}
			\begin{aligned}
				& 2^{j+1} t \bigg( \dashint_{Q_{2^{j+1}t}(x)} |\nabla u^\ep|^2 \bigg)^{1/2} \\
				& \le C 2^{j+1} t \bigg( \dashint_{Q_{2^{j+1}t}(x)} \mathcal{M}_t[\nabla u^\ep]^2 \bigg)^{1/2} \\
				& \le C 2^{j+1} t \bigg( \dashint_{Q_{2^{j+1}t}(x)} \mathcal{M}_t[\nabla u^\ep]^q \bigg)^{1/q} \\
				& \le C 2^{j+1} t \bigg( \dashint_{Q_{2^{j+1}t}(x)} \mathcal{M}_t[\nabla u^\ep]^q \bigg)^{\alpha} \bigg( \dashint_{Q_{2^{j+1}t}(x)} \mathcal{M}_t[\nabla u^\ep]^q \bigg)^{1/q - \alpha} \\
				& \le C (2^{j+1} t)^{1-3\alpha} \bigg( \int_{Q_{2^{j+1}t }(x) } \mathcal{M}_t[\nabla u^\ep]^q \bigg)^\alpha \bigg( \dashint_{Q_{2^{j+1}t}(x)} \mathcal{M}_t[\nabla u^\ep]^q \bigg)^{1/q - \alpha} \\
				& \le C (2^{j+1} t)^{1-3\alpha} r^{3\alpha} \bigg( \dashint_{Q_{{5r}}(0) } \mathcal{M}_t[\nabla u^\ep]^q \bigg)^\alpha \bigg( \dashint_{Q_{2^{j+1}t}(x)} \mathcal{M}_t[\nabla u^\ep]^q \bigg)^{1/q - \alpha}.
			\end{aligned}
		\end{equation}
		Using the definition of $K_q$ and $\mathcal{M}^1_{(2t,{5r})}$, we obtain
		\begin{equation*}
			\begin{aligned}
				& 2^{j+1} t \bigg( \dashint_{Q_{2^{j+1}t}(x)} |\nabla u^\ep|^2 \bigg)^{1/2} \\
				& \quad \le C (2^{j+1} t)^{1-3\alpha} r^{3\alpha} (K_q({5r}))^{\alpha q} \Big( \mathcal{M}_{(2t,{5r})}^1 \big[ \mathcal{M}_t[\nabla u^\ep]^q \big](x) \Big)^{1/q - \alpha}.
			\end{aligned}
		\end{equation*}
		It follows that
		\begin{equation*}
			\begin{aligned}
				\bigg( \dashint_{Q_t(x)} |u^\ep|^4 \bigg)^{1/4} & \le C\sum_{j = 0}^{N} (2^{j+1} t)^{1-3\alpha} r^{3\alpha} (K_q({5r}))^{\alpha q} \Big( \mathcal{M}_{(2t,{5r})}^1 \big[ \mathcal{M}_t[\nabla u^\ep]^q \big](x) \Big)^{1/q - \alpha} \\
				& \le C r (K_q({5r}))^{\alpha q} \Big( \mathcal{M}_{(2t,{5r})}^1 \big[ \mathcal{M}_t[\nabla u^\ep]^q \big](x) \Big)^{1/q - \alpha},
			\end{aligned}
		\end{equation*}
		which yields
		\begin{equation}\label{est.MtFex}
			\mathcal{M}_t[F^\ep](x) \le Cr^2 (K_q({5r}))^{2\alpha q} \Big( \mathcal{M}_{(2t, {5r})}^1 [\mathcal{M}_t[\nabla u^\ep]^q ](x) \Big)^{2/q - 2\alpha}.
		\end{equation}
		On the other hand, if $x_3 < t$, then $B_{2t}(x)$ has a relatively large portion not contained in $\Omega^\ep$. Thus, the Sobolev-Poincar\'{e} inequality implies
		\begin{equation*}
			\bigg( \dashint_{Q_t(x)} |u^\ep|^4 \bigg)^{1/4} \le C\bigg( \dashint_{Q_{2t}(x)} |u^\ep|^4 \bigg)^{1/4} \le Ct \bigg( \dashint_{Q_{2t}(x)} |\nabla u^\ep|^2 \bigg)^{1/2}.
		\end{equation*}
		Using the same argument as \eqref{est.Due.jt}, we see that $\mathcal{M}_t[F^\ep](x)$ has the same bound as \eqref{est.MtFex} for $x_3 < t$.

		Since by assumption, $\frac1{2p}<\frac{1}{q} < \frac{1}{2p} + \frac13$ and $0<\alpha<\min\{\frac13,\frac1q\}$ is arbitrary, we may choose $\alpha$ so that $\frac{1}{q} > \frac{1}{2p} + \alpha$. 
		This implies $ p(\frac2q-2\alpha) > 1$. Thus, using the $L^{p(\frac2q-2\alpha)}$ boundedness of the Hardy-Littlewood maximal function, we obtain
		\begin{equation*}
			\begin{aligned}
				& \int_{Q_r(0)} |\mathcal{M}_t[F^\ep](x)|^p \dd x \\
				& \qquad \le C r^{2p} (K_q({5r}))^{2\alpha p q} \int_{Q_r(0)} \Big( \mathcal{M}_{(2t, {5r})}^1 [\mathcal{M}_t[\nabla u^\ep]^q ](x) \Big)^{p(2/q-2\alpha)} \dd x \\
				& \qquad \le C r^{2p} (K_q({5r}))^{2\alpha p q} \int_{Q_{{5r}}(0)} \Big(  \mathcal{M}_t[\nabla u^\ep] (x) \Big)^{2p(1-\alpha q)} \dd x.
			\end{aligned}
		\end{equation*}
		Consequently,
		\begin{equation*}
			\bigg( \dashint_{Q_r(0)} |\mathcal{M}_t[F^\ep]|^p \bigg)^{1/p} \le  Cr^2 (K_q({5r}))^{{2\alpha q}} (K_{2p(1-\alpha q)}({5r}))^{2(1-\alpha q)}.
		\end{equation*}
		Now, observe that we may choose $\alpha < \frac{1}{q} - \frac{1}{2p}$ but sufficiently close to $\frac{1}{q} - \frac{1}{2p}$. Then $q< 2p(1-\alpha q) \to q$ as $\alpha$ approaches $\frac{1}{q} - \frac{1}{2p}$. This implies
		\begin{equation}\label{est.Kq'}
			\bigg( \dashint_{Q_r(0)} |\mathcal{M}_t[F^\ep]|^p \bigg)^{1/p} \le Cr^2 \big( K_{2p(1-2\alpha)}({5r}) \big)^2 \le Cr^2 \big( K_{\hat q}({5r}) \big)^2.
		\end{equation}
		for any $\hat q> q$,
		where we also used the fact that $K_m(r) \le K_n(r)$ for any $1\le m\le n$. Finally, to recover the case with the exact exponent $q$, we may start with a $\tilde{q} < q$ still satisfying $\frac{1}{\tilde{q}} < \frac{1}{2p} + \frac{1}{3}$.
		Then \eqref{est.Kq'} holds for any $\hat q> \tilde{q}$, which includes the case $\hat q = q$. This proves the desired estimate.
	\end{proof}

	Now, a bootstrap argument between \eqref{est.CZ.Stokes} and \eqref{est.MtF} 
	shows that both $\mathcal{M}_t^2[\nabla u^\ep]$ and $\mathcal{M}_t^2[F^\ep]$ are in $L^p$ for any $p\geq 3$. 
	In the following, we use this to prove a large-scale H\"{o}lder's estimate for $F^\ep$, which plays an important role in the Lipschitz estimate in the next section.
	
	\begin{theorem}\label{thm.MFL3}
		Let $L\in(0,\infty)$ and $\Omega$ be a bumpy John domain with constant $L$ according to Definition \ref{def.John2}. Let $\ep\le t \le r\le \frac12$. 
		Let $M\ge 0$ be such that
		\begin{equation*}
			\bigg( \dashint_{B_{1,+}^\ep} |\nabla u^\ep|^2 \bigg)^{1/2}\le M.
		\end{equation*}
		For every $l>3$ and $\delta>0$ satisfying $l\delta<6$, we have
		\begin{equation}\label{est.MFL3}
			\bigg( \dashint_{Q_r} |\mathcal{M}_t [F^\ep]|^3 \bigg)^{1/3} \le Cr^{2-6/l} \big( M + M^{2(4- 6/l+\delta)} \big),
		\end{equation}
		where the constant $C$ depends only on 
		$L$, $l$ and $\delta$.
	\end{theorem}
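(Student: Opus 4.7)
The plan is to combine Theorem \ref{thm.Sobolev}, applied with $p$ slightly larger than $l$ and $q = l$ (for which the admissibility condition $q > 6p/(p+6)$ is easily met when $l > 3$), together with a bootstrap controlling the $L^l$ norm of $\mathcal M_t^2[\nabla u^\ep]$ on a larger cube. By Jensen's inequality followed by Theorem \ref{thm.Sobolev},
\[
\bigg(\dashint_{Q_r} |\mathcal M_t[F^\ep]|^3\bigg)^{1/3} \leq \bigg(\dashint_{Q_r} |\mathcal M_t[F^\ep]|^p\bigg)^{1/p} \leq C r^2 K_l(5r)^2,
\]
and rewriting $r^2 K_l(5r)^2 \leq C r^{2-6/l} \|\mathcal M_t^2[\nabla u^\ep]\|_{L^l(Q_{5r})}^2$ via the identity $|Q_{5r}|^{-2/l} \approx r^{-6/l}$ exposes the factor $r^{2-6/l}$. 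The heart of the argument is thus to bound the $L^l$ norm of $\mathcal M_t^2[\nabla u^\ep]$ by the appropriate power of $M$.

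For this, I plan a bootstrap in the integrability of $\nabla u^\ep$. Starting from $\|\nabla u^\ep\|_{L^2(B^\ep_{1,+})} \leq CM$ and the Poincar\'e-Sobolev inequality (applicable thanks to $u^\ep = 0$ on $\Gamma^\ep_1$ and \eqref{e.poinca}), I obtain $\|F^\ep\|_{L^3} \lesssim M^2$, and Theorem \ref{thm.CZ.Stokes} yields $\|\mathcal M_t^2[\nabla u^\ep]\|_{L^3(Q_{7/8})} \lesssim M^2$. Interpolating $\nabla u^\ep$ between $L^2$ and $L^3$ at exponent $s = 2 + \eta$ with interpolation weight close to $1$ keeps the $M$ power close to $1$, and the Sobolev embedding $W^{1,s} \hookrightarrow L^{6+O(\eta)}$ combined with $\|F^\ep\|_{L^{3+O(\eta)}} \leq \|u^\ep\|_{L^{6+O(\eta)}}^2$ gives, after a second application of Theorem \ref{thm.CZ.Stokes}, that $\|\mathcal M_t^2[\nabla u^\ep]\|_{L^{3+\epsilon'}} \lesssim M^{2 + O(\eta)}$. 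Crucially, since $3 + \epsilon' > 3$ in dimension three, the embedding $W^{1,3+\epsilon'} \hookrightarrow L^\infty$ yields $\|u^\ep\|_{L^\infty} \lesssim M^{2 + O(\eta)}$, hence $\|F^\ep\|_{L^\infty} \lesssim M^{4 + O(\eta)}$, and one final application of Theorem \ref{thm.CZ.Stokes} produces
\[
\|\mathcal M_t^2[\nabla u^\ep]\|_{L^{\tilde p}(Q_{3/4})} \leq C_{\tilde p, \eta}\bigl(M + M^{4 + O(\eta)}\bigr)
\]
for any finite $\tilde p$. The $M$ power stabilizes near $4$ (rather than jumping to $8$) precisely because of the intermediate $\nabla u^\ep$ interpolation.

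Lyapunov interpolation between $L^2$ and $L^{\tilde p}$ then gives
\[
\|\mathcal M_t^2[\nabla u^\ep]\|_{L^l(Q_{3/4})} \leq C\bigl(M + M^{4 - 6/l + \delta}\bigr),
\]
by choosing $\tilde p$ large enough so that the interpolation weight $\theta = (1/l - 1/\tilde p)/(1/2 - 1/\tilde p)$ can be taken close to $2/l - \delta/3$; the hypothesis $l\delta < 6$ is exactly the positivity requirement $\theta > 0$ for this choice to be admissible. Squaring this $L^l$ bound yields $(M + M^{4-6/l+\delta})^2 \leq C(M + M^{2(4-6/l+\delta)})$ after absorbing the mixed term using $M^2 \leq M$ for $M \leq 1$ and $M^2 \leq M^{2(4-6/l+\delta)}$ for $M \geq 1$, and combining with the $r^{2-6/l}$ factor from the first paragraph gives the claimed estimate for $r \leq 3/20$. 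The remaining range $r \in (3/20, 1/2]$ is handled directly from the trivial bound $A_3(r) \lesssim M^2$ combined with the fact that $r^{2-6/l}$ is bounded below by a positive constant.

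The main obstacle will be the bookkeeping in the bootstrap, in particular ensuring that the intermediate step uses $\nabla u^\ep$ interpolation to keep the $M$ power near $2$ rather than inadvertently letting it double to $4$ at that stage, and then balancing the Sobolev constants (which blow up as $\epsilon' \to 0$) against the need to take $\tilde p$ large enough for the final Lyapunov interpolation to reach the target exponent $4 - 6/l + \delta$ under the constraint $l\delta < 6$.
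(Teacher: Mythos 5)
Your opening reduction (Jensen plus Theorem \ref{thm.Sobolev} with $q=l$, extracting the factor $r^{2-6/l}$ from $r^2K_l(5r)^2$) and your final interpolation numerology (power $4-3\theta$ with $\theta$ close to $2/l-\delta/3$, the hypothesis $l\delta<6$ being exactly the positivity constraint) match the paper's scheme. The problem is the middle bootstrap. You interpolate $\nabla u^\ep$ itself between $L^2$ and $L^3$, invoke $W^{1,s}\hookrightarrow L^{6+O(\eta)}$ for $u^\ep$, and above all use $W^{1,3+\epsilon'}\hookrightarrow L^\infty$ to conclude $\|u^\ep\|_{L^\infty}\lesssim M^{2+O(\eta)}$ and $\|F^\ep\|_{L^\infty}\lesssim M^{4+O(\eta)}$. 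None of these is available here: Theorem \ref{thm.CZ.Stokes} and Lemma \ref{lem.selfimprove} bound only the mesoscale averages $\mathcal{M}_t^2[\nabla u^\ep]$ with $t\ge\ep$, not $\nabla u^\ep$ in any $L^p$ with $p>2$. By Lemma \ref{lem.est.M.op.properties}, control of $\|\mathcal{M}_t^2[g]\|_{L^p}$ for $p>2$ does not control $\|g\|_{L^p}$ (one would need $\mathcal{M}_t^p[g]$), and the full-scale Meyers/Gehring improvement fails below scale $\ep$ in bumpy John domains, as the paper stresses after \eqref{est.Caccipoli+SP}. So the genuine $L^{2+\eta}$, $L^{3+\epsilon'}$ and especially $L^\infty$ bounds your bootstrap needs are precisely the small-scale regularity that the rough boundary precludes; the remark following Theorem \ref{thm.MFL3} in the paper explicitly treats ``if $\nabla u^\ep$ itself is in $L^p$ for $p>3$'' as a hypothetical, not a fact. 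As written, the step producing $\|\mathcal{M}_t^2[\nabla u^\ep]\|_{L^{\tilde p}}\lesssim M+M^{4+O(\eta)}$ is therefore unproved.

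The intended intermediate bound is nevertheless true, and the repair is to stay entirely at the averaged level, which is the paper's route: from $\|F^\ep\|_{L^3}\lesssim M^2$ and Theorem \ref{thm.CZ.Stokes} one gets $\mathcal{M}_t[\nabla u^\ep]\in L^3$ with bound $C(M+M^2)$; then Theorem \ref{thm.Sobolev} applied with $q=3$ (which is admissible for every $p>3$, since $1/3<1/(2p)+1/3$) gives $\|\mathcal{M}_t[F^\ep]\|_{L^p}\lesssim (M+M^2)^2\lesssim M+M^4$ for any finite $p$, and Theorem \ref{thm.CZ.Stokes} again yields $\|\mathcal{M}_t[\nabla u^\ep]\|_{L^p}\lesssim M+M^4$. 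There is no need to keep the power near $2$ at the intermediate stage: the paper interpolates the $L^l$ norm between the $L^3$ bound (power $2$) and the $L^p$ bound (power $4$), with $1/l=\theta/3+(1-\theta)/p$ and $\theta=3/l-\delta/2$, which already lands on $4-6/l+\delta$ under $l\delta<6$; your interpolation against $L^2$ (power $1$) gives the same limit exponent. Two minor points: the admissibility condition \eqref{e.pqcond} reads $q>6p/(2p+3)$, not $q>6p/(p+6)$ (harmless, since $q=l>3$ satisfies it trivially), and the quantity $A_3(r)$ in your last paragraph is undefined, though the large-$r$ case is indeed trivial.
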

	\begin{proof}
		Note that, by a similar argument as at the end of the proof of Theorem \ref{thm.CZ.Stokes}, we only have to prove \eqref{est.MFL3} when $\ep/N_0 \le t\le r \le 1/N_1$ for some $N_0, N_1 \ge 2$.
		Let $l>3$ and $\delta>0$ with $l\delta<6$ be given and fixed for the proof.   
		First of all, by the Sobolev embedding theorem, $\| F^\ep \|_{L^3(Q_1)} \le CM^2$. This implies
		$\| \mathcal{M}_t[F^\ep] \|_{L^3(Q_{1/2})} \le CM^2$. By Theorem \ref{thm.CZ.Stokes},
		\begin{equation*}
			\bigg( \dashint_{Q_{{1/8}}} |\mathcal{M}_t[\nabla u^\ep]|^3 \bigg)^{1/3} \le C(M + M^2).
		\end{equation*}
		Then, applying Theorem \ref{thm.Sobolev}, we obtain that for any $3\leq p<\infty$,
		\begin{equation*}
			\bigg( \dashint_{Q_{{1/40}}} |\mathcal{M}_t [F^\ep]|^{p} \bigg)^{1/{p}} \le C \bigg( \dashint_{Q_{{1/8}}} \mathcal{M}_t[\nabla u^\ep]^3 \bigg)^{2/3} \le C(M + M^4).
		\end{equation*}
		Now, using Theorem \ref{thm.CZ.Stokes} again combined with a covering argument, we derive from the last inequality that
		\begin{equation*}
			\bigg( \dashint_{Q_{{1/80}}} |\mathcal{M}_t [\nabla u^\ep]|^{p} \bigg)^{1/{p}} \le C(M+M^4).
		\end{equation*}
		Now, let $p>l$. By the interpolation, we have
		\begin{equation}\label{est.MtMM}
			\begin{aligned}
				\bigg( \dashint_{Q_{{1/80}}} |\mathcal{M}_t [\nabla u^\ep]|^{l} \bigg)^{1/l} & \le \bigg( \dashint_{Q_{{1/80}}} |\mathcal{M}_t [\nabla u^\ep]|^3 \bigg)^{\theta/3} \bigg( \dashint_{Q_{{1/80}}} |\mathcal{M}_t [\nabla u^\ep]|^{p} \bigg)^{(1-\theta)/p} \\
				& \le C(M+M^2)^{\theta} (M+M^4)^{1-\theta} \\
				& \le C(M + M^{4-2\theta}),
			\end{aligned}
		\end{equation}
		where
		\begin{equation*}
			\frac{1}{l} = \frac{\theta}{3} + \frac{1-\theta}{p}.
		\end{equation*}
		For the given $\delta \in (0,1)$, we want $4-2\theta = 4- 6/l+\delta$. This implies $\theta = 3/l - \delta/2$ and thus we may choose
		\begin{equation*}
			p = \frac{6(1-\frac{3}{l} + \frac{\delta}{2})}{\delta}.
		\end{equation*}
		One can easily verify that $\theta\in(0,1)$ by the assumption on $l$ and $\delta$.
		Consequently, we derive from \eqref{est.MtMM} that
		\begin{equation*}
			\bigg( \dashint_{Q_{{1/80}}} |\mathcal{M}_t [\nabla u^\ep]|^{l} \bigg)^{1/l} \le C\big(M + M^{4- 6/{l}+\delta} \big).
		\end{equation*}
		Finally, we apply Theorem \ref{thm.Sobolev} to obtain for $r\le 1/400$,
		\begin{equation*}
			\begin{aligned}
				\bigg( \dashint_{Q_r} |\mathcal{M}_t [F^\ep]|^3 \bigg)^{1/3} & \le Cr^2 \bigg( \dashint_{Q_{{5r}}} |\mathcal{M}_t [\nabla u^\ep]|^{l} \bigg)^{2/l} \\
				& \le Cr^{2-6/l} \bigg( \int_{Q_{{5r}}} |\mathcal{M}_t [\nabla u^\ep]|^l \bigg)^{2/l} \\
				& \le Cr^{2-6/l} \big( M + M^{2(4- 6/l+\delta)}\big).
			\end{aligned}
		\end{equation*}
		This ends the proof.
	\end{proof}

	Note that if $\nabla u^\ep$ itself is in $L^p$ for $p>3$, then Morrey's inequality implies that $u^\ep$ is $C^{0,1-3/p}$, which implies, since $u^\ep$ vanishes on the boundary,
	\begin{equation*}
		\bigg( \dashint_{Q_r} |F^\ep|^3 \bigg)^{1/3} \leq C r^{2-6/p},
	\end{equation*}
	where $C$ depends on $M$, $L$ and $p$. Hence, \eqref{est.MFL3} is consistent with the usual Morrey estimate.

	\section{Large-scale Lipschitz estimate}
	\label{sec.large}
	In this section, we will establish the large-scale Lipschitz estimate of $u^\ep$ and the oscillation estimate of $p^\ep$. We remark, for later use in Subsection \ref{subsec.1st.BL}, that Theorem \ref{theo.lip.nonlinear} implies the following Liouville theorem for the Stokes system
	\begin{equation}\label{eq.forLiouvillebis}
		\left\{
		\begin{array}{ll}
			-\Delta u+\nabla p=0 &\text{in }\Omega \\
			\nabla\cdot u=0 &\text{in }\Omega \\
			u=0 &\text{on }\partial\Omega, 
		\end{array}
		\right.
	\end{equation}
	where $\Omega$ is a John domain in Definition \ref{def.John2}. The proof of the following statement is standard.
	\begin{corollary}\label{cor.liouville}
		Let $\Omega$ be a bumpy John domain according to Definition \ref{def.John2}. Let $(u,p)$ be a weak solution of \eqref{eq.forLiouvillebis}. 
		If
		\begin{equation*}
			\lim_{R\to \infty} \frac{1}{R} \bigg( \dashint_{B_R(0)\cap \Omega} |u|^2 \bigg)^{1/2} = 0,
		\end{equation*}
		then $u \equiv 0$ (hence $p$ is constant).
	\end{corollary}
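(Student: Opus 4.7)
The plan is to combine the large-scale Lipschitz estimate of Theorem~\ref{theo.lip.nonlinear}, in its linear Stokes form (the dependence on $M$ becomes linear, as noted in the paper immediately after Theorem~\ref{thm.C2g}), with a Caccioppoli energy estimate, following the classical Liouville argument in quantitative homogenization.

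First, I would rescale. For any $R\ge 2$, set $\tilde u_R(x) := R^{-1}u(Rx)$ and $\tilde p_R(x) := p(Rx)$. Then $(\tilde u_R,\tilde p_R)$ is a weak solution of the linear Stokes system in $\Omega^{1/R} = (1/R)\Omega$, which is a bumpy John half-space in the sense of Definition~\ref{def.John2} with the same constant $L$ and with the role of $\ep$ played by $1/R$ (the John constants are scale invariant). Applying the Stokes version of Theorem~\ref{theo.lip.nonlinear} to $\tilde u_R$ at the smallest admissible radius $r = 1/R$ and undoing the scaling yields
\[
\bigg(\dashint_{Q_1(0)\cap\Omega}|\nabla u|^2\bigg)^{1/2} \le C\bigg(\dashint_{Q_R(0)\cap\Omega}|\nabla u|^2\bigg)^{1/2}.
\]

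Second, I would invoke the Caccioppoli inequality for the linear Stokes system in a bumpy John half-space, obtained by testing the equation against $\eta^2 u$ corrected by a Bogovskii lift of $\nabla(\eta^2)\cdot u$ (available via Theorem~\ref{theo.acosta}) and exploiting $u|_{\partial\Omega}=0$ to absorb the pressure. This gives
\[
\bigg(\dashint_{Q_R(0)\cap\Omega}|\nabla u|^2\bigg)^{1/2} \le \frac{C}{R}\bigg(\dashint_{Q_{2R}(0)\cap\Omega}|u|^2\bigg)^{1/2}.
\]
Chaining the two displays, dominating the cube $Q_{2R}(0)$ by a concentric ball $B_{cR}(0)$ of comparable volume, and sending $R\to\infty$ with the left-hand side fixed, the growth hypothesis forces $\nabla u\equiv 0$ on $Q_1(0)\cap\Omega$. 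Since Definition~\ref{def.John2} is posed uniformly at every base point $y\in\{x_3=0\}$, repeating the argument at translated centers and arbitrary radii yields $\nabla u\equiv 0$ on $\Omega$. The connectedness of the intermediate John domains $\Omega_R(x)$ supplied by \eqref{e.exisinterdom}, together with the vanishing trace of $u$ on $\partial\Omega$, then gives $u\equiv 0$; the Stokes equation finally forces $p$ to be constant.

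The only step requiring genuine care is the linear-Stokes Caccioppoli estimate in the John geometry: it is standard in Lipschitz domains and extends here thanks to the Bogovskii operator of Theorem~\ref{theo.acosta}, which is precisely the tool that motivated working in the John class throughout the paper. Everything else is routine rescaling and a limit argument.
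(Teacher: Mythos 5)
Your overall skeleton (rescale, apply the linear-Stokes version of Theorem~\ref{theo.lip.nonlinear} to transfer gradient information from scale $1$ to scale $R$, then convert the sublinear growth of $u$ into smallness of the averaged gradient, and finish by connectedness and the zero trace) is the natural route, and the first and last steps are fine up to trivialities (take $r=2/R$ rather than the endpoint $r=1/R$). The gap is in your second display, the full Caccioppoli inequality
\[
\bigg(\dashint_{Q_R(0)\cap\Omega}|\nabla u|^2\bigg)^{1/2}\le \frac{C}{R}\bigg(\dashint_{Q_{2R}(0)\cap\Omega}|u|^2\bigg)^{1/2},
\]
which you assert follows from testing with $\eta^2u$ corrected by a Bogovskii lift of $\nabla(\eta^2)\cdot u$. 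This is precisely the inequality the paper says \emph{seems not available} in bumpy John domains; Appendix~\ref{appendix.Bogovskii} only establishes the weak Caccioppoli inequality of Lemma~\ref{appendix.lem.S.Caccioppoli.ineq.}, which carries the extra term $\theta\|\nabla u^\ep\|_{L^2(B^\ep_{4r,+})}$. Your Bogovskii correction does not remove this defect: Definition~\ref{def.John2} only supplies John domains $\Omega_{2R}$ with $B_{2R,+}\subset\Omega_{2R}\subset B_{4R,+}$, so the corrector $\B[\nabla(\eta^2)\cdot u]$ is supported on the whole of $\Omega_{2R}$ (not on an annular neighborhood of $\supp\nabla\eta$), and the resulting term $\int\nabla u\cdot\nabla\B[\nabla(\eta^2)\cdot u]$ is only bounded by $CR^{-1}\|\nabla u\|_{L^2(B_{4R,+})}\|u\|_{L^2(B_{2R,+})}$. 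The gradient factor lives on a strictly larger cube than the left-hand side and cannot be absorbed; after Young's inequality you recover exactly the weak Caccioppoli, nothing more. To get the strong form by your argument you would need a Bogovskii operator, with constants depending only on $L$, on annulus-type regions $(B_{2R}\setminus \overline{B_R})\cap\Omega$, which Definition~\ref{def.John2} does not provide and whose construction is a genuine piece of work in this geometry.

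Nor does substituting the weak Caccioppoli mechanically repair the proof: iterating
\[
\bigg(\dashint_{Q_R\cap\Omega}|\nabla u|^2\bigg)^{1/2}\le C\theta\bigg(\dashint_{Q_{4R}\cap\Omega}|\nabla u|^2\bigg)^{1/2}+\frac{C_\theta}{R}\bigg(\dashint_{Q_{4R}\cap\Omega}|u|^2\bigg)^{1/2}
\]
upward in scale leaves a remainder $\theta^k$ times the averaged energy at scale $4^kR$, and the hypothesis (sublinear growth of $u$ in $L^2$ mean) provides no a priori polynomial growth bound on $\big(\dashint_{Q_\rho\cap\Omega}|\nabla u|^2\big)^{1/2}$ with which to kill it; the large-scale Lipschitz estimate only bounds small scales by large ones, so it does not supply such a bound either. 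In short, the passage from the growth hypothesis on $u$ to the decay of the averaged gradient is the genuinely nontrivial step of this Liouville theorem in the John setting, and your proposal disposes of it with an estimate that the paper explicitly flags as unavailable and with an argument that, as written, only reproduces Lemma~\ref{appendix.lem.S.Caccioppoli.ineq.}.
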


	\subsection{Sep-up and approximation}\label{subsec.set-up and approx.}
	First of all, we may write \eqref{intro.NS.ep} as a linear Stokes system
	\begin{equation}\tag{S$^\ep$}\label{S.ep}
		\left\{
		\begin{array}{ll}
			-\Delta u^\ep+\nabla p^\ep=
			{\nabla\cdot F^\ep} &\mbox{in}\ B^\ep_{1,+}\\
			\nabla\cdot u^\ep=0&\mbox{in}\ B^\ep_{1,+}\\
			u^\ep=0&\mbox{on}\ \Gamma^\ep_1,
		\end{array}
		\right.
	\end{equation}
	where $F^\ep = -u^\ep \otimes u^\ep$. As in the classical regularity theory for Stokes system, we will use the large-scale $C^{0,\alpha}$ estimate of $F^\ep$ in Theorem \ref{thm.MFL3} to prove the large-scale Lipschitz estimate. The proof is based on the excess decay method.

	Similarly to the large-scale Calder\'{o}n-Zygmund estimate of Theorem \ref{thm.CZ.Stokes}, we also need to approximate the Stokes system \eqref{S.ep} at all scales greater than $\ep$. 
	Fix $r\in[\ep,{\frac12}]$ and let $(v_r,q_r)$ be the weak solution of the following Stokes system
	\begin{equation}\tag{S$_{r}$}\label{approx.Stokes}
		\left\{
		\begin{array}{ll}
			-\Delta v_r+\nabla q_r 
			= 0 &\mbox{in}\ Q^\ep_{t_0 r} \\
			\nabla\cdot v_r=0&\mbox{in}\ Q^\ep_{t_0 r}\\
			v_r=u^\ep &\mbox{on}\ \partial Q^\ep_{t_0 r},
		\end{array}
		\right.
	\end{equation}
	where we have automatically extended $u^\ep$ across the bottom boundary by zero-extension and $t_0$ is a constant in the interval $[1,2]$ chosen analogously as in \eqref{est.co-area} and \eqref{eq.ws}. Note that \eqref{approx.Stokes} is a special case of \eqref{eq.ws} with $s = r$ and $y = 0$, which means the estimates \eqref{est1.lem,est.ws}-\eqref{est.ws.L3} hold also for $(v_r,q_r)$, in place of $(w_s,q_s)$.
	The following lemma is an analog of Lemma \ref{lem.Duews}.

	\begin{lemma}\label{lem.est.approx.}
		Let $L\in (0,\infty)$ and $\Omega$ be a bumpy John domain with constant $L$ according to Definition \ref{def.John2}. 
		Let $(u^\ep,p^\ep)$ and $(v_r,q_r)$ be weak solutions of \eqref{S.ep} and \eqref{approx.Stokes}, respectively. If $\ep\in(0,\frac{1}{10}]$ and $r\in[2\ep,\frac15]$, then
		\begin{equation}\label{est1.lem.est.approx.}
			\begin{aligned}
				&\bigg(\dashint_{B^\ep_{r,+}} |\nabla u^\ep - \nabla v_r|^2 \bigg)^{1/2} 
				+ \bigg(\dashint_{B^\ep_{r/2,+}} |p^\ep - q_r - \dashint_{B^\ep_{r/2,+}} (p^\ep-q_r)|^2 \bigg)^{1/2} \\
				&\le 
				C\big(\frac{\ep}{r}\big)^{1/12}
				\bigg(\dashint_{B^\ep_{5r,+}} |\nabla u^\ep|^2\bigg)^{1/2}
				+ C\bigg(\dashint_{{Q_{4r}}} |\mathcal{M}^2_{\ep} [F^\ep]|^3\bigg)^{1/3},
			\end{aligned}
		\end{equation}
		where $C$ depends only on $L$.
	\end{lemma}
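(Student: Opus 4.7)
The plan is to adapt the proof of Lemma~\ref{lem.Duews} to the Navier-Stokes setting with $F^\ep=-u^\ep\otimes u^\ep$, and to supplement it with a new pressure estimate via the Bogovskii operator in the John domain $B^\ep_{r/2,+}$. The argument I foresee proceeds in three stages: control of the velocity difference $\nabla u^\ep-\nabla v_r$, control of the pressure difference, and conversion of the $L^2$-bounds on $F^\ep$ into $L^3$-bounds on $\mathcal{M}^2_\ep[F^\ep]$.

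For the velocity part, I observe that $(v_r,q_r)$ solves exactly the problem \eqref{eq.ws} at $y=0$ and $s=r$, so I can recycle the mechanism of Lemma~\ref{lem.Duews}. I re-run the variational identity \eqref{eq.var.mix} with $w_s$ replaced by $v_r$: test \eqref{S.ep} against $(u^\ep-v_r)\eta_{\ep,+}^2$ and \eqref{approx.Stokes} against $u^\ep-v_r$, then combine. The three resulting terms are bounded exactly as in Lemma~\ref{lem.Duews}: the interior commutator via Cauchy-Schwarz, the pressure cross-term via Bogovskii applied to the Lipschitz box $Q_{t_0r,+}$ together with the higher integrability \eqref{est.ws.L3} for $\nabla v_r$ on the thin strip $R^\ep_r$, and the forcing term by Cauchy-Schwarz. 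The smallness factor $(\ep/r)^{1/12}$ in front of $\|\nabla u^\ep\|_{L^2}$ arises from the $(\ep/r)^{1/6}$ estimate for $\nabla v_r$ on $R^\ep_r$ from \eqref{est.ws.L3} after Young's inequality; the free parameter $\theta$ in the Meyers/Gehring thin-strip bound \eqref{est.ue.Rs} is then tuned so that $\theta^\sigma\lesssim(\ep/r)^{1/12}$, which absorbs the auxiliary Meyers contribution into the desired smallness factor at the cost of a harmless constant in front of the $F^\ep$-term. The assumption $r\ge 2\ep$ ensures this balance is compatible with the restriction $s\ge\ep/\theta$ in Lemma~\ref{lem.Duews}.

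For the pressure, let $c=\dashint_{B^\ep_{r/2,+}}(p^\ep-q_r)$. Since $B^\ep_{r/2,+}$ is a John domain with constant depending only on $L$, Theorem~\ref{theo.acosta} produces a Bogovskii map $\phi\in H^1_0(B^\ep_{r/2,+})^3$ satisfying $\nabla\cdot\phi=(p^\ep-q_r)-c$ and $\|\nabla\phi\|_{L^2}\le C\|p^\ep-q_r-c\|_{L^2}$. Extending $\phi$ by zero across $\partial B^\ep_{r/2,+}$ yields a valid test function in $H^1_0(Q^\ep_{t_0r})^3$. Subtracting the weak formulations of \eqref{S.ep} and \eqref{approx.Stokes} paired against $\phi$ gives
\begin{equation*}
\|p^\ep-q_r-c\|_{L^2(B^\ep_{r/2,+})}^2 = \int_{B^\ep_{r/2,+}}\nabla(u^\ep-v_r):\nabla\phi+\int_{B^\ep_{r/2,+}}F^\ep:\nabla\phi,
\end{equation*}
and Cauchy-Schwarz together with the Bogovskii bound reduces the pressure estimate to the velocity estimate obtained in the previous step, plus an $\|F^\ep\|_{L^2}$ term.

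It remains to upgrade the $L^2$-norm of $F^\ep$ to the $L^3$-norm of $\mathcal{M}^2_\ep[F^\ep]$: on a cube $Q\subset Q_{4r}$, H\"older's inequality in normalized form gives $(\dashint_Q|F^\ep|^2)^{1/2}\le(\dashint_Q|F^\ep|^3)^{1/3}$, and Lemma~\ref{lem.est.M.op.properties}(iii) replaces $|F^\ep|^3$ by $|\mathcal{M}^2_\ep[F^\ep]|^3$ at the cost of a slightly larger cube. A standard covering argument (cf.\ Remark~\ref{rem.cov}) reconciles the various radii with the statement. I expect the main obstacle to be the precise balancing of Meyers' exponent $\sigma$ against the exponent $1/12$ produced by the thin-strip analysis: unlike in Lemma~\ref{lem.Duews}, where the constant on the forcing term is allowed to depend on $\theta$, here a universal constant is required in front of $\|\mathcal{M}^2_\ep[F^\ep]\|_{L^3}$, which forces this balancing to be carried out carefully, using the higher integrability of $\mathcal{M}^2_\ep[F^\ep]$ rather than merely its $L^2$-norm.
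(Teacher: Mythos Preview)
Your overall strategy matches the paper's, but there are two genuine gaps.

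\textbf{Thin-strip estimate.} The plan to tune the parameter $\theta$ in the Meyers bound \eqref{est.ue.Rs} so that $\theta^\sigma\lesssim(\ep/r)^{1/12}$ does \emph{not} give a harmless constant on the forcing term: in the proof of Lemma~\ref{lem.Duews} the constant $C_\theta$ arises from the crude step $\mathcal{M}^2_{\theta s}[F^\ep](z)\le C_\theta(\dashint_{Q_{7s}}|F^\ep|^2)^{1/2}$ and scales like $\theta^{-3/2}$, so tying $\theta$ to $\ep/r$ makes it blow up as $\ep/r\to 0$. You correctly sense this obstruction in your last paragraph but do not resolve it. The paper's fix is cleaner and bypasses the balancing altogether: replace the Meyers estimate by the full large-scale Calder\'on--Zygmund estimate, Theorem~\ref{thm.CZ.Stokes}, applied with $t=\ep$ and $p=3$. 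H\"older on the thin strip $R^\ep_r$ (of measure $\sim\ep r^2$) followed by Theorem~\ref{thm.CZ.Stokes} gives directly
\[
\bigg(\frac{1}{|B^\ep_{r,+}|}\int_{R^\ep_r}|\nabla u^\ep|^2\bigg)^{1/2}
\le C\Big(\frac{\ep}{r}\Big)^{1/6}\bigg\{\bigg(\dashint_{B^\ep_{5r,+}}|\nabla u^\ep|^2\bigg)^{1/2}+\bigg(\dashint_{Q_{4r}}|\mathcal{M}^2_\ep[F^\ep]|^3\bigg)^{1/3}\bigg\},
\]
with $C=C(L)$ and no free parameter to tune. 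Inserted into the variational bound coming from \eqref{eq.var.mix}, this yields exactly $(\ep/r)^{1/12}$ in front of $\|\nabla u^\ep\|$ and a universal constant in front of the $\mathcal{M}^2_\ep[F^\ep]$-term, as claimed.

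\textbf{Bogovskii domain.} Definition~\ref{def.John2} does \emph{not} assert that $B^\ep_{r/2,+}$ is itself a John domain with constant depending only on $L$; in general it need not be (the paper says this explicitly). What the definition supplies is an intermediate John domain $\Omega_r^\ep$ with $B^\ep_{r/2,+}\subset\Omega_r^\ep\subset B^\ep_{r,+}$ and John constant $L$. The paper applies Theorem~\ref{theo.acosta} in $\Omega_r^\ep$, uses $|\Omega_r^\ep|\approx|B^\ep_{r,+}|$, and then transfers the estimate down to $B^\ep_{r/2,+}$ via the elementary identity $\int_E|f-\dashint_Ef|^2=\inf_{a\in\R}\int_E|f-a|^2$; see \eqref{est.pe-qr.Omega}--\eqref{est.pe-qr}. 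Your direct application of Bogovskii to $B^\ep_{r/2,+}$ is not justified.
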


	\begin{proof}
		Let us set $R^\ep_r=Q^\ep_{t_0 r}\cap\{-\ep< x_3\le2\ep\}$. 
		By examining the proof of Lemma \ref{lem.Duews}, we obtain
		\begin{equation}\label{est1.proof.lem.est.approx.}
			\begin{aligned}
				\int_{B^\ep_{t_0 r,+}} |\nabla (u^\ep - v_r)|^2 
				\le C\int_{R^\ep_r} |\nabla u^\ep |^2 + C\big(\frac{\ep}{r}\big)^{1/6} \int_{B^\ep_{2r,+}} |\nabla u^\ep |^2
				+ C\int_{B^\ep_{2r,+}} |F^\ep|^2.
			\end{aligned}
		\end{equation}
		From Lemma \ref{lem.est.M.op.properties} (iii) and Theorem \ref{thm.CZ.Stokes} with $p=3$,
		\begin{equation*}
			\begin{aligned}
				\bigg(\frac{1}{|B^\ep_{r,+}|}\int_{R^\ep_r} |\nabla u^\ep|^2\bigg)^{1/2} &\le C \bigg(\frac{1}{|B^\ep_{r,+}|}\int_{R^\ep_r} |\mathcal{M}^2_{\ep}[ \nabla u^\ep]|^2\bigg)^{1/2}\\
				&\le C\big(\frac{|R^\ep_r|}{|B^\ep_{r,+}|}\big)^{1/6} 
				\bigg(\dashint_{B^\ep_{r,+}} |\mathcal{M}^2_{\ep}[ \nabla u^\ep]|^3\bigg)^{1/3} \\
				&\le C\big(\frac{\ep}{r}\big)^{1/6}
				\bigg\{ \bigg(\dashint_{B^\ep_{5r,+}} |\nabla u^\ep|^2\bigg)^{1/2} + \bigg(\dashint_{{Q_{4r}}} |\mathcal{M}^2_{\ep}[ F^\ep ]|^3\bigg)^{1/3} \bigg\}.
			\end{aligned}
		\end{equation*}
		Inserting this into \eqref{est1.proof.lem.est.approx.}, we have
		\begin{equation}\label{est.Due-Dvr}
			\begin{aligned}
				&	\bigg(\dashint_{B^\ep_{r,+}} |\nabla u^\ep - \nabla v_r|^2 \bigg)^{1/2}\\
				& \le C\big(\frac{\ep}{r}\big)^{1/12} \bigg(\dashint_{B^\ep_{5r,+}} |\nabla u^\ep|^2\bigg)^{1/2} 
				+ C\bigg(\dashint_{{Q_{4r}}} |\mathcal{M}^2_{\ep}[ F^\ep ]|^3\bigg)^{1/3}.
			\end{aligned}
		\end{equation}
		
		Next, we estimate the pressure by using the Bogovskii lemma. The issue is that, in general, $B^\ep_{r,+}$ is not a John domain. By Definition \ref{def.John2}, for $r\ge 2\ep$, there exists a John domain $\Omega_r^\ep$ with constant $L$ 
		satisfying
		\begin{equation*}
			B_{r/2,+}^\ep \subset \Omega_{r}^\ep \subset B_{r,+}^\ep.
		\end{equation*}
		Note that $(u^\ep -v_r, p^\ep-q_r)$ satisfies
		\begin{equation*}
			-\Delta (u^\ep - v_r)+\nabla (p^\ep - q_r) 
			= \nabla\cdot F^\ep \quad \text{in } B_{r,+}^\ep.
		\end{equation*}
		Thus, we may use the Bogovskii lemma in $\Omega_r^\ep$ and \eqref{est.Due-Dvr} to obtain
		\begin{equation}\label{est.pe-qr.Omega}
			\begin{aligned}
				& \bigg( \dashint_{\Omega_{r}^\ep} | p^\ep- q_r -\dashint_{\Omega_{r}^\ep} (p^\ep - q_r) |^2  \bigg)^{1/2} \\
				& \le C\bigg(\dashint_{\Omega_{r}^\ep} |\nabla u^\ep - \nabla v_r|^2 \bigg)^{1/2} + C\bigg(\dashint_{\Omega_{r}^\ep} |F^\ep|^2\bigg)^{1/2} \\
				& \le C\big(\frac{\ep}{r}\big)^{1/12}
				\bigg(\dashint_{B^\ep_{5r,+}} |\nabla u^\ep|^2\bigg)^{1/2} + C \bigg(\dashint_{{Q_{4r}}} |\mathcal{M}^2_{\ep}[ F^\ep ]|^3\bigg)^{1/3},
			\end{aligned}
		\end{equation}
		where we also used the fact $|\Omega_{r}^\ep| \approx |B_{r,+}^\ep|$. Using a well-known fact
		\begin{equation*}
			\int_{E} |f - \dashint_E f|^2 = \inf_{a\in \R} \int_{E} |f - a|^2, \quad \text{for any open set } E,
		\end{equation*}
		we derive
		\begin{equation}\label{est.pe-qr}
			\begin{aligned}
				&	\bigg( \dashint_{B_{r/2,+}^\ep } | p^\ep- q_r -\dashint_{B_{r/2,+}^\ep} (p^\ep - q_r) |^2  \bigg)^{1/2} \\
				& \le \bigg( \dashint_{B_{r/2,+}^\ep } | p^\ep- q_r -\dashint_{\Omega_{r}^\ep} (p^\ep - q_r) |^2  \bigg)^{1/2} \\
				& \le C \bigg( \dashint_{\Omega_{r}^\ep } | p^\ep- q_r -\dashint_{\Omega_{r}^\ep } (p^\ep - q_r) |^2  \bigg)^{1/2}.
			\end{aligned}
		\end{equation}
		Combining \eqref{est.Due-Dvr}, \eqref{est.pe-qr.Omega} and \eqref{est.pe-qr}, we obtain the desired estimate.
	\end{proof}

	\begin{remark}\label{rmk.Standard.P.est}
		The pressure estimate in John domains in the proof of Lemma \ref{lem.est.approx.} is a standard technique that we will frequently use throughout this paper. It allows us to transfer the pressure estimate to the estimates of $\nabla u^\ep$ and $F^\ep$.
	\end{remark}

	\subsection{Excess decay}
	Let $\mathscr{P}_1 = \{ (ax_3, bx_3,0)~|~a,b\in \R \}$. Note that $\mathscr{P}_1$ consists of all the linear solutions (velocity component) of the Stokes equations in the whole space with the no-slip condition on $\{x_3 = 0\}$. These linear solutions are dubbed as {\it no-slip Stokes polynomials} of degree $1$.

	For a pair of function $(w^\ep,\pi^\ep)\in H^1(B^\ep_{r,+})^3\times L^2(B^\ep_{r,+})$ with $r \in(0,1]$, we set
	\begin{equation}\label{def.H}
		\begin{aligned}
			H(w^\ep,\pi^\ep;\rho)
			&=
			\inf_{P\in \mathscr{P}_1}
			\bigg(\dashint_{B^\ep_{\rho,+}} |\nabla w^\ep - \nabla P|^2 \bigg)^{1/2}
			 \\
			&\quad
			+ \sup_{s,t\in [1/16,1/4]}
			\bigg| \dashint_{B^\ep_{s\rho,+}} \pi^\ep - \dashint_{B^\ep_{t\rho,+}} \pi^{\ep} \bigg|, \quad \rho\in(0,r],
		\end{aligned}
	\end{equation}
	and
	\begin{equation}\label{def.Phi}
		\begin{aligned}
			\Phi(w^\ep,\pi^\ep;\rho)
			&= 
			\bigg(\dashint_{B^\ep_{\rho,+}} |\nabla w^\ep|^2 \bigg)^{1/2}
			 \\
			&\quad
			+ \sup_{s,t\in [1/16,1/4]}
			\bigg| \dashint_{B^\ep_{s\rho,+}} \pi^\ep - \dashint_{B^\ep_{t\rho,+}} \pi^{\ep} \bigg|, \quad \rho\in(0,r].
		\end{aligned}
	\end{equation}
	The quantity $H$ can be dubbed as a zeroth-order excess quantity. In Section \ref{sec.higher} we will consider higher-order excess quantities $H_{{\rm 1st}}$ and $H_{{\rm 2nd}}$ to address the large-scale $C^{1,\gamma}$ and $C^{2,\gamma}$ regularity.
	
	Moreover, for a pair of function $(w_r,\pi_r)\in H^1(Q^\ep_{r})^3\times L^2(Q^\ep_{r})$ with $r\in(0,1]$, we set
	\begin{equation}\label{def.H.tilde}
		\begin{aligned}
			\widetilde{H}(w_r,\pi_r;\rho)
			&= 
			\inf_{P\in \mathscr{P}_1} 
			\bigg(\dashint_{Q^\ep_{\rho}} |\nabla w_r - \nabla P|^2 \bigg)^{1/2}
			 \\
			&\quad
			+ \sup_{s,t\in [1/16,1/4]}\bigg| \dashint_{Q^\ep_{s\rho}} \pi_r - \dashint_{Q^\ep_{t\rho}} \pi_r \bigg|, \quad \rho\in(0,r].
		\end{aligned}
	\end{equation}

	The following lemma states the comparability between $H(v_r,q_r;\theta r)$ and $\widetilde{H}(v_r,q_r;\theta r)$.
	%
	\begin{lemma}\label{lem.est.comparability}
		Let $L\in(0,\infty)$ and $\Omega$ be a bumpy John domain with constant $L$ according to Definition \ref{def.John2}. Fix $\ep\in(0,\frac14]$, $r\in[\ep,\frac14]$ and let $(v_r,q_r)$ satisfy \eqref{approx.Stokes}. Then we have the following statements. 
		\begin{enumerate}[label=(\roman*)]
			\item For all $\theta\in(0,1]$,
			\begin{equation}\label{est1.lem.est.comparability}
				\begin{aligned}
					H(v_r,q_r;\theta r)
					&\le 
					C\widetilde{H}(v_r,q_r;\theta r) 
					+ C\theta^{-1}\big(\frac{\ep}{r}\big)^{1/2}
					\bigg(\dashint_{Q^\ep_{\theta r}} |\nabla v_r|^2\bigg)^{1/2}.
				\end{aligned}
			\end{equation}
			\item For all $\theta\in(0,1]$,
			\begin{equation}\label{est2.lem.est.comparability}
				\begin{aligned}
					\widetilde{H}(v_r,q_r;\theta r)
					&\le
					CH(v_r,q_r;2\theta r) 
					+ 
					C\theta^{-5/2}\big(\frac{\ep}{r}\big)
					\bigg(\dashint_{Q^\ep_{2\theta r}} |\nabla v_r|^2\bigg)^{1/2}.
				\end{aligned}
			\end{equation}
		\end{enumerate}
		Here $C$ depends only on $L$.
	\end{lemma}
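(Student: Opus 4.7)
The key geometric observation is that $B^\ep_{\rho,+}\subset Q^\ep_\rho$ with $|Q^\ep_\rho\setminus B^\ep_{\rho,+}|\le 4\ep\rho^2$ while $|B^\ep_{\rho,+}|\ge 4\rho^3$, so the two domains differ only by a thin strip of relative measure at most $C\ep/\rho$. Moreover, since $(v_r,q_r)$ solves the Stokes system in the Lipschitz domain $Q^\ep_{t_0 r}$ with homogeneous Dirichlet data on the flat bottom $\{x_3=-\ep\}$, classical boundary regularity provides $L^\infty$ bounds for $\nabla v_r$ and for oscillations of $q_r$ on strictly interior sub-cubes, controlled by the $L^2$-average of $\nabla v_r$ on slightly larger cubes. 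My plan is to exploit these two ingredients throughout, combined with the elementary identity $\dashint_A f-\dashint_B f=\frac{|B\setminus A|}{|B|}\bigl(\dashint_A f-\dashint_{B\setminus A}f\bigr)$ for $A\subset B$ applied with suitable constant shifts $f\to f-c$.

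For part (i), I would let $P_*$ realize the infimum in $\widetilde{H}(v_r,q_r;\theta r)$. The inclusion $B^\ep_{\theta r,+}\subset Q^\ep_{\theta r}$ and $|Q^\ep_{\theta r}|/|B^\ep_{\theta r,+}|\le 1+C\ep/(\theta r)$ yield
\[
\bigg(\dashint_{B^\ep_{\theta r,+}}|\nabla v_r-\nabla P_*|^2\bigg)^{1/2}\le \bigl(1+C(\ep/(\theta r))^{1/2}\bigr)\widetilde{H}(v_r,q_r;\theta r),
\]
and bounding the $\widetilde{H}$ factor in the second summand by the velocity-gradient average (taking $P=0$) together with $(\ep/(\theta r))^{1/2}\le\theta^{-1}(\ep/r)^{1/2}$ delivers the gradient part. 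For the pressure, insert $\dashint_{Q^\ep_{s\theta r}}q_r$ by the triangle inequality to reduce matters to bounding $|\dashint_{B^\ep_{s\theta r,+}}q_r-\dashint_{Q^\ep_{s\theta r}}q_r|$; applying the identity above with $c=\dashint_{Q^\ep_{\theta r}}q_r$ and $|B\setminus A|/|B|\le C\ep/(s\theta r)$ gives a contribution $\le C\ep/(\theta r)\cdot(\dashint_{Q^\ep_{\theta r}}|q_r-c|^2)^{1/2}$, and the standard Bogovskii-based Stokes pressure estimate in the Lipschitz cube $Q^\ep_{\theta r}$ controls this by $C\theta^{-1}(\ep/r)^{1/2}(\dashint_{Q^\ep_{\theta r}}|\nabla v_r|^2)^{1/2}$.

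For part (ii), I reverse the roles: decompose $Q^\ep_{\theta r}=B^\ep_{\theta r,+}\cup S$ with $|S|\le C\ep(\theta r)^2$, and use $B^\ep_{\theta r,+}\subset B^\ep_{2\theta r,+}$. Letting $P_{**}$ minimize $H(v_r,q_r;2\theta r)$, this decomposition produces a main term $\le CH(v_r,q_r;2\theta r)^2$ and a strip term $(|S|/|Q^\ep_{\theta r}|)\|\nabla v_r-\nabla P_{**}\|_{L^\infty(S)}^2$. Flat-boundary Lipschitz regularity gives $\|\nabla v_r\|_{L^\infty(Q^\ep_{\theta r})}\lesssim(\dashint_{Q^\ep_{2\theta r}}|\nabla v_r|^2)^{1/2}$, and the extremal property of $P_{**}$ together with $B^\ep_{2\theta r,+}\subset Q^\ep_{2\theta r}$ bounds $|\nabla P_{**}|$ by the same average; hence the strip contribution is at most $C\theta^{-1/2}(\ep/r)^{1/2}(\dashint_{Q^\ep_{2\theta r}}|\nabla v_r|^2)^{1/2}$, which is itself $\le C\theta^{-5/2}(\ep/r)(\dashint_{Q^\ep_{2\theta r}}|\nabla v_r|^2)^{1/2}$ since $\theta^4\le 1\le r/\ep$. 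The pressure part follows the same template but requires bridging $\widetilde{H}$'s scale range $[\theta r/16,\theta r/4]$ to $H(2\theta r)$'s range $[\theta r/8,\theta r/2]$: I would insert the reference $\dashint_{B^\ep_{\theta r/8,+}}q_r$ and control the small-scale averages $\dashint_{Q^\ep_{s\theta r}}q_r$ for $s<1/8$ by interior regularity $\|\nabla q_r\|_{L^\infty(Q^\ep_{\theta r/4})}\lesssim(\theta r)^{-1}(\dashint_{Q^\ep_{\theta r/2}}|\nabla v_r|^2)^{1/2}$, derived from $\nabla q_r=\Delta v_r$ and classical derivative estimates. This scale-bridging is the main technical obstacle: it costs an extra factor $\theta^{-1}$, which compounded with the thin-strip factor $(\ep/(\theta r))^{1/2}$ is precisely what produces the stated error $\theta^{-5/2}(\ep/r)$ in (ii) rather than the cleaner $\theta^{-1}(\ep/r)^{1/2}$ of (i); carefully tracking constants through these nested reductions is the most delicate step.
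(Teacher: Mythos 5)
Your part (i) follows essentially the paper's own route: enlarge $B^\ep_{\theta r,+}$ to $Q^\ep_{\theta r}$ for the gradient term, and compare the $B$- and $Q$-averages of $q_r$ through the thin strip together with the Bogovskii pressure bound in $Q^\ep_{\theta r}$. The only slip is the intermediate claim that the strip contribution is $\le C\ep/(\theta r)\,\big(\dashint_{Q^\ep_{\theta r}}|q_r-c|^2\big)^{1/2}$: averaging $q_r-c$ over the strip costs a Cauchy--Schwarz factor $(|Q^\ep_{\theta r}|/|S|)^{1/2}$, so the honest bound is of order $(\ep/(\theta r))^{1/2}$; this still gives \eqref{est1.lem.est.comparability}, so (i) is fine.

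The genuine gap is in part (ii). Your strip estimate, which measures $S=Q^\ep_{\theta r}\setminus B^\ep_{\theta r,+}$ only by its volume and an $L^\infty$ bound on $\nabla v_r-\nabla P_{**}$, can never yield better than $C\theta^{-1/2}(\ep/r)^{1/2}\big(\dashint_{Q^\ep_{2\theta r}}|\nabla v_r|^2\big)^{1/2}$, because the gradient is of unit size on $S$; and the asserted upgrade to $C\theta^{-5/2}(\ep/r)$ ``since $\theta^4\le 1\le r/\ep$'' is backwards: $\theta^{-1/2}(\ep/r)^{1/2}\le\theta^{-5/2}(\ep/r)$ is equivalent to $\theta^4\le \ep/r$, which fails exactly in the relevant regime $\ep\ll r$ with $\theta$ fixed. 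The linear power of $\ep/r$ in \eqref{est2.lem.est.comparability} comes from working at the level of the \emph{function}, not the gradient: as in the paper, one bounds the $Q^\ep_{\theta r}$-gradient excess via the Caccioppoli inequality applied to $v_r-P(x+\ep\mathbf{e}_3)$ (a Stokes solution with the same pressure $q_r$, vanishing on $\{x_3=-\ep\}$), so the strip contribution is $\|v_r-P(\cdot+\ep\mathbf{e}_3)\|_{L^2(S)}$, and a one-dimensional Poincar\'e inequality in $x_3$ across the $\ep$-thin strip gives the full factor $\ep$, namely $\le C\ep\|\nabla v_r\|_{L^2}+C\ep(\ep\theta^2r^2)^{1/2}|\nabla P|$; this is what produces $\theta^{-5/2}(\ep/r)$. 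A second problem is your pressure scale-bridging: the bound $\osc q_r\lesssim \theta r\,\|\nabla q_r\|_{L^\infty}\lesssim\big(\dashint_{Q^\ep_{\theta r/2}}|\nabla v_r|^2\big)^{1/2}$ obtained from $\nabla q_r=\Delta v_r$ carries no smallness whatsoever (no factor $\ep/r$ and no $H$-type quantity), so it cannot be absorbed into the right-hand side of \eqref{est2.lem.est.comparability}; you would need to subtract a minimizing element of $\mathscr{P}_1$ first (which leaves $q_r$ unchanged), or, as the paper does, avoid bridging altogether by estimating the whole pressure oscillation with Bogovskii's operator applied to $v_r-P_*(x+\ep\mathbf{e}_3)$, thereby reducing it to the gradient excess. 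Your weaker $(\ep/r)^{1/2}$ version, if the pressure part were repaired, would in fact suffice for the later excess-decay argument, but it does not prove the lemma as stated.
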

	%
	\begin{proof}
		(i) We first deal with $v_r$. Since $B_{\theta r,+}^\ep \subset Q_{\theta r}^\ep$ and $|B_{\theta r,+}^\ep| \approx |Q_{\theta r}^\ep|$, we have
		\begin{align*}
			\inf_{P\in \mathscr{P}_1} 
			\bigg(\dashint_{B^\ep_{\theta r,+}} |\nabla v_r-\nabla P|^2\bigg)^{1/2} 
			\le C\inf_{P\in \mathscr{P}_1}
			\bigg(\dashint_{Q^\ep_{\theta r}} |\nabla v_r- \nabla P|^2\bigg)^{1/2}.
		\end{align*}
		On the other hand, the triangle inequality implies
		\begin{equation}\label{est1.proof.lem.est.comparability}
			\begin{aligned}
				\sup_{s,t\in [1/16,1/4]}\bigg|\dashint_{B^\ep_{\theta sr,+}} q_r - \dashint_{B^\ep_{\theta tr,+}} q_r \bigg|
				&\le
				\sup_{s,t\in [1/16,1/4]}\bigg|\dashint_{Q^\ep_{\theta sr}} q_r - \dashint_{Q^\ep_{\theta tr}} q_r \bigg| \\
				&\quad
				+2\sup_{\rho\in [1/16,1/4]}\bigg|\dashint_{Q^\ep_{\theta \rho r}}q_r - \dashint_{B^\ep_{\theta \rho r,+}}q_r \bigg|.
			\end{aligned}
		\end{equation}
		Combining the above two inequalities, we obtain
		\begin{align}\label{est2.proof.lem.est.comparability}
			H(v_r,q_r;\theta r)
			\le
			C\widetilde{H}(v_r,q_r;\theta r)
			+2\sup_{\rho\in [1/16,1/4]}\bigg|\dashint_{Q^\ep_{\theta \rho r}}q_r - \dashint_{B^\ep_{\theta \rho r,+}}q_r \bigg|.
		\end{align}
		Since $|Q^\ep_{\theta \rho r}\setminus B^\ep_{\theta \rho r,+}|$ is less than $C\ep (\theta \rho r)^2$, a direct computation yields
		\begin{equation}\label{est3.proof.lem.est.comparability}
			\begin{aligned}
				\bigg|\dashint_{Q^\ep_{\theta \rho r}} q_r - \dashint_{B^\ep_{\theta \rho r,+}} q_r\bigg| 
				&\le 
				\Big(\frac{1}{|B^\ep_{\theta \rho r,+}|} - \frac{1}{|Q^\ep_{\theta \rho r}|}\Big) 
				\bigg|\int_{Q^\ep_{\theta \rho r}} (q_r-\dashint_{Q^\ep_{\theta r}} q_r)\bigg| \\
				&\quad
				+ \frac{1}{|B^\ep_{\theta \rho r,+}|}
				\bigg|\int_{Q^\ep_{\theta \rho r}\setminus B^\ep_{\theta \rho r,+}} (q_r-\dashint_{Q^\ep_{\theta r}} q_r)\bigg|\\
				&\le 
				\Big(\frac{|Q^\ep_{\theta \rho r}\setminus B^\ep_{\theta \rho r,+}| |Q^\ep_{\theta r}|^{1/2}}{|B^\ep_{\theta \rho r,+}| |Q^\ep_{\theta \rho r}|^{1/2}}
				+ \frac{|Q^\ep_{\theta \rho r}\setminus B^\ep_{\theta \rho r,+}|^{1/2} |Q^\ep_{\theta r}|^{1/2}}{|B^\ep_{\theta \rho r,+}|}\Big)\\
				&\quad
				\times\bigg(\dashint_{Q^\ep_{\theta r}} |q_r-\dashint_{Q^\ep_{\theta r}} q_r|^2 \bigg)^{1/2} \\
				&\le 
				C\Big(\theta^{-1}\rho^{-5/2}\big(\frac{\ep}{r}\big) 
				+ \theta^{-1/2}\rho^{-2}\big(\frac{\ep}{r}\big)^{1/2}\Big)
				\bigg(\dashint_{Q^\ep_{\theta r}} |\nabla v_r|^2\bigg)^{1/2},
			\end{aligned}
		\end{equation}
		where we have applied the H\"older inequality in the second inequality and the Bogovskii lemma in $Q_{\theta r}^\ep$ in the third inequality. Noting $\rho\ge \frac{1}{16}$ and using \eqref{est2.proof.lem.est.comparability} and \eqref{est3.proof.lem.est.comparability}, we obtain the first 
		inequality \eqref{est1.lem.est.comparability}.
		
		(ii) 
		Let $P_*\in \mathscr{P}_1$ be such that 
		\begin{align*}
			\bigg(\dashint_{B^\ep_{\theta r,+}} |\nabla v_r - \nabla P_*|^2 \bigg)^{1/2}
			=\inf_{P\in \mathscr{P}_1} \bigg(\dashint_{B^\ep_{\theta r,+}} |\nabla v_r - \nabla P|^2 \bigg)^{1/2}.
		\end{align*}
		Since $v_r(x)-P_*(x+\ep{\bf e}_3)$ is a weak solution to \eqref{approx.Stokes} with the same pressure $q_r$ and $P_*(x+\ep {\bf e}_3) = P_*(x) + \ep(\nabla P_*) {\bf e}_3$, by the Bogovskii lemma in $Q^\ep_{\theta r}$, we see that
		\begin{equation}\label{est4'.proof.lem.est.comparability}
			\begin{aligned}
				\sup_{s,t\in [1/16,1/4]}\bigg|\dashint_{Q^\ep_{\theta sr,+}} q_r - \dashint_{Q^\ep_{\theta tr,+}} q_r \bigg|
				&\le 2\sup_{\rho\in [1/16,1/4]}
				\bigg( 
				\dashint_{Q^\ep_{\theta \rho r}}
				|q_r - \dashint_{Q^\ep_{\theta r}} q_r|^2  \bigg)^{1/2} \\
				&\le 
				C
				\bigg( 
				\dashint_{Q^\ep_{\theta r}}
				|\nabla v_r - \nabla P_*|^2  \bigg)^{1/2}.
			\end{aligned}
		\end{equation}
		Moreover, by the Caccioppoli inequality (see Lemma \ref{lem.StrongCaccioppoli}) in rectangular region $Q_{2\theta r}^\ep$, we have
		\begin{equation}\label{est4''.proof.lem.est.comparability}
			\begin{aligned}
			&\bigg(\dashint_{Q^\ep_{\theta r}} |\nabla v_r - \nabla P_*|^2 \bigg)^{1/2} \\		
			&\le \frac{C}{\theta r} 
			\bigg(\dashint_{Q^\ep_{2\theta r}} |v_r - P_*(x+\ep{\bf e}_3)|^2 \dd x \bigg)^{1/2} \\
			&\le \frac{C}{\theta r} 
			\bigg(\dashint_{B^\ep_{2\theta r,+}} |v_r - P_*|^2 \bigg)^{1/2}
			+  C\theta^{-1} \big(\frac{\ep}{r}\big)|\nabla P_*| \\
			&\quad 
			+ \frac{C}{(\theta r)^{5/2}} 
			\bigg\{\bigg(\int_{Q^\ep_{2\theta r}\setminus B^\ep_{2\theta r,+}} |v_r|^2 \bigg)^{1/2}
			+ \bigg(\int_{Q^\ep_{2\theta r}\setminus B^\ep_{2\theta r,+}} |P_*(x+\ep{\bf e}_3)|^2 \bigg)^{1/2}
			\bigg\}.
			\end{aligned}
\end{equation}
	Now \eqref{est4'.proof.lem.est.comparability} and \eqref{est4''.proof.lem.est.comparability} combined with the Poincar\'e inequality imply
	\begin{equation}\label{est4.proof.lem.est.comparability}
			\begin{aligned}
				&\widetilde{H}(v_r,q_r;\theta r) \\
				&\le
				CH(v_r,q_r;2\theta r) 
				+  C\theta^{-1} \big(\frac{\ep}{r}\big)|\nabla P_*| \\
				&\quad
			+ \frac{C}{(\theta r)^{5/2}} 
\bigg\{\bigg(\int_{Q^\ep_{2\theta r}\setminus B^\ep_{2\theta r,+}} |v_r|^2 \bigg)^{1/2}
+ \bigg(\int_{Q^\ep_{2\theta r}\setminus B^\ep_{2\theta r,+}} |P_*(x+\ep{\bf e}_3)|^2 \bigg)^{1/2}
\bigg\}.
			\end{aligned}
		\end{equation}
		By the definition of $P_*$, we have
		\begin{align}\label{est5'.proof.lem.est.comparability}
			|\nabla P_*| \le C\bigg(\dashint_{B^\ep_{2\theta r,+}} |\nabla v_r|^2 \bigg)^{1/2} \le C
			\bigg(\dashint_{Q^\ep_{2\theta r}} |\nabla v_r|^2 \bigg)^{1/2}.
		\end{align}
		Consequently,
		\begin{equation}\label{est5.proof.lem.est.comparability}
			\begin{aligned}
				&\frac{C}{(\theta r)^{5/2}} 
				\bigg\{\bigg(\int_{Q^\ep_{2\theta r}\setminus B^\ep_{2\theta r,+}} |v_r|^2 \bigg)^{1/2}
				+ \bigg(\int_{Q^\ep_{2\theta r}\setminus B^\ep_{2\theta r,+}} |P_*(x+\ep{\bf e}_3)|^2 \bigg)^{1/2}
				\bigg\} \\
				&\le
				\frac{C}{(\theta r)^{5/2}} 
				\bigg\{\ep\bigg(\int_{Q^\ep_{2\theta r}\setminus B^\ep_{2\theta r,+}} |\nabla v_r|^2 \bigg)^{1/2}
				+ \ep (\ep \theta^2 r^2)^{1/2}|\nabla P_*|\bigg\} \\
				&\le
				C\Big(\theta^{-5/2}\big(\frac{\ep}{r}\big) 
				+ \theta^{-3/2}\big(\frac{\ep}{r}\big)^{3/2}\Big)
				\bigg(\dashint_{Q^\ep_{2\theta r}} |\nabla v_r|^2 \bigg)^{1/2}.
			\end{aligned}
		\end{equation}
		Hence, we obtain \eqref{est2.lem.est.comparability} from \eqref{est4.proof.lem.est.comparability} combined with \eqref{est5'.proof.lem.est.comparability} and \eqref{est5.proof.lem.est.comparability}. This completes the proof of Lemma \ref{lem.est.comparability}.
	\end{proof}
	%

		%
\begin{lemma}\label{lem.est.approx.func.}
	Let $L\in(0,\infty)$ and $\Omega$ be a bumpy John domain with constant $L$ according to Definition \ref{def.John2}. Fix $\alpha\in(0,1]$ arbitrarily.
	For all $\ep\in(0,\frac12]$, $r\in[\ep,\frac12]$, 
	$\theta\in(0,\frac18]$, 
	and $(v_r,q_r)$ satisfying \eqref{approx.Stokes}, 
		\begin{equation}\label{est1.lem.est.approx.func.}
			\begin{aligned}
				\widetilde{H}(v_r,q_r;\theta r)
				&\le 
				C\Big(\theta^\alpha + \theta^{-1}\big(\frac{\ep}{r}\big)\Big) 
				\widetilde{H}(v_r,q_r; r),
			\end{aligned}
		\end{equation}
	where $C$ depends only on $L$ and $\alpha$.
\end{lemma}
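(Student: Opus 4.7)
The plan exploits the $C^\infty$ smoothness up to the flat boundary $\{x_3 = -\ep\}$ of the homogeneous Stokes system satisfied by $(v_r, q_r)$ in $Q^\ep_{t_0 r}$, combined with a Taylor expansion at a boundary point whose linear part is forced, by the no-slip and divergence-free conditions, to sit in (a shifted copy of) $\mathscr{P}_1$. For $\theta \in (0, \tfrac18]$ and $\alpha \in (0, 1]$ one has $\theta \le \theta^\alpha$ and $\ep/r \le \theta^{-1}(\ep/r)$, so it suffices to prove the $\alpha = 1$ case
\begin{equation*}
	\widetilde{H}(v_r, q_r; \theta r) \le C \bigl(\theta + \ep/r\bigr)\, \widetilde{H}(v_r, q_r; r),
\end{equation*}
whence the general $\alpha \in (0, 1]$ version follows by monotonicity.

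Let $P_* \in \mathscr{P}_1$ achieve the infimum in the velocity part of $\widetilde{H}(v_r, q_r; r)$. Set $w(x) := v_r(x) - P_*(x + \ep e_3)$ and $q(x) := q_r(x) - \dashint_{Q^\ep_r} q_r$. The vector field $P_*(\cdot + \ep e_3)$ is affine, divergence-free, and vanishes on $\{x_3 = -\ep\}$, so $(w, q)$ still solves homogeneous Stokes in $Q^\ep_{t_0 r}$ with $w = 0$ on $\{x_3 = -\ep\}$. Because the bottom of $Q^\ep_r$ is flat and uniformly Lipschitz, and $q$ is harmonic, classical boundary regularity for Stokes in a flat half-space together with a standard rescaling (noting $\ep/r \le 1$) yields
\begin{equation*}
	\|\nabla^2 w\|_{L^\infty(Q^\ep_{r/2})} + \|\nabla q\|_{L^\infty(Q^\ep_{r/2})} \le C r^{-1} \bigg(\dashint_{Q^\ep_r} |\nabla w|^2\bigg)^{1/2} \le C r^{-1} \widetilde{H}(v_r, q_r; r).
\end{equation*}

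Now Taylor-expand $w$ around the boundary point $x_0 := (0, 0, -\ep)$. Since $w$ vanishes on $\{x_3 = -\ep\}$, all tangential derivatives of $w$ at $x_0$ vanish; together with $\nabla \cdot w = 0$ this forces $\partial_3 w_3(x_0) = 0$. Hence the linear part
\begin{equation*}
	L(x) := \bigl(\partial_3 w_1(x_0)(x_3 + \ep),\, \partial_3 w_2(x_0)(x_3 + \ep),\, 0\bigr)
\end{equation*}
satisfies $\nabla L = \nabla \widetilde{P}$ for some $\widetilde{P} \in \mathscr{P}_1$, and the quadratic remainder obeys $|\nabla w(x) - \nabla L(x)| \le C |x - x_0|\, \|\nabla^2 w\|_{L^\infty(Q^\ep_{r/2})}$. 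Since $|x - x_0| \le C(\theta r + \ep)$ for $x \in Q^\ep_{\theta r}$, averaging produces
\begin{equation*}
	\inf_{P \in \mathscr{P}_1} \bigg(\dashint_{Q^\ep_{\theta r}} |\nabla v_r - \nabla P|^2\bigg)^{1/2} \le \bigg(\dashint_{Q^\ep_{\theta r}} |\nabla w - \nabla L|^2\bigg)^{1/2} \le C(\theta + \ep/r)\, \widetilde{H}(v_r, q_r; r).
\end{equation*}
For the pressure part, the mean-value estimate $|\dashint_{Q^\ep_{s\theta r}} q - \dashint_{Q^\ep_{t\theta r}} q| \le C(\theta r)\, \|\nabla q\|_{L^\infty(Q^\ep_{\theta r})}$ together with the $L^\infty$ bound above yields a $C\theta\, \widetilde{H}(v_r, q_r; r)$ contribution, finishing the $\alpha = 1$ case.

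The main obstacle is the geometric mismatch between the actual flat boundary $\{x_3 = -\ep\}$ of the approximate problem and the no-slip Stokes polynomials $\mathscr{P}_1$, whose elements vanish on $\{x_3 = 0\}$. The $\ep$-shift $P_*(\cdot + \ep e_3)$ resolves this, but unavoidably places the Taylor expansion point $x_0$ at distance up to $\ep$ from the origin; this produces the $(\ep/r)$ correction above, weakened to $\theta^{-1}(\ep/r)$ in the stated bound via $\ep/r \le \theta^{-1}(\ep/r)$.
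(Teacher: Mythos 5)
Your argument is correct and is essentially the paper's own proof: both hinge on smoothness of the homogeneous Stokes solution up to the flat boundary $\{x_3=-\ep\}$, a Taylor expansion at $-\ep\,{\bf e}_3$ whose linear part is forced into a shifted copy of $\mathscr{P}_1$ by the no-slip and divergence-free conditions, and subtraction of the minimizing $P_*(\cdot+\ep\,{\bf e}_3)$ so that the right-hand side becomes $\widetilde{H}(v_r,q_r;r)$. The differences are only in execution: you reduce to $\alpha=1$ and Taylor-expand $\nabla w$ directly via a second-derivative sup bound (using $\nabla q=\Delta w$ for the pressure), whereas the paper estimates $v_r-\widetilde{P}(\cdot+\ep\,{\bf e}_3)$ in $C^{1,\alpha}$, passes to gradients by a Caccioppoli inequality, and treats the pressure by a $C^{0,\alpha}$ estimate combined with Bogovskii's lemma.
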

%
\begin{proof}
	By the regularity of the Stokes equations in flat domains, 
	\begin{align*}
		v_r\in C^{1,\alpha}(Q^\ep_{r/2})\,, \qquad 
		q_r\in C^{0,\alpha}(Q^\ep_{r/2}).
	\end{align*}
	%
	Let ${\bf e}_3=(0,0,1)$. The boundary $C^{1,\alpha}$ estimate of $v_r$ on $\{ x_3 = -\ep\}$ implies
	\begin{align*}
		|v_r(x)-v_r(-\ep{\bf e}_3)-(x_3+\ep)\partial_3v_r(-\ep{\bf e}_3)|
		\le C\frac{|x+\ep{\bf e}_3|^{1+\alpha}}{r^{\alpha}}
		\bigg(\dashint_{Q^\ep_{r/2}} |\nabla v_r|^2\bigg)^{1/2}.
	\end{align*}
	Note that $\partial_3v_{r,3}(-\ep{\bf e}_3)=0$ by the condition $\nabla\cdot v_{r}=0$. Thus, from $v_r(-\ep{\bf e}_3)=0$, there exists $\widetilde{P}(x)=(\partial_3v_{r,1}(-\ep{\bf e}_3),\partial_3v_{r,2}(-\ep{\bf e}_3),0) x_3 \in \mathscr{P}_1$ and
		\begin{align*}
			|v_r(x)-\widetilde{P}(x+\ep {\bf e}_3)|
			\le
			C\frac{|x_3+\ep{\bf e}_3|^{1+\alpha}}{r^{\alpha}} 
			\bigg(\dashint_{Q^\ep_{r/2}} |\nabla v_r|^2\bigg)^{1/2},
		\end{align*}
		for all $x\in Q_{\theta r}^\ep$. 
		Since $v_r(x)-\widetilde{P}(x+\ep {\bf e}_3)$ is a weak solution to \eqref{approx.Stokes} with the same pressure $q_r$, by the Caccioppoli inequality (see Lemma \ref{lem.StrongCaccioppoli}) in $Q_{2\theta r}^\ep$, we have
		\begin{align*}
			\begin{aligned}
				\bigg(\dashint_{Q_{\theta r}^\ep} |\nabla v_r-\nabla \widetilde{P}(x+\ep {\bf e}_3)|^2\dd x\bigg)^{1/2} 
				&\le \frac{C}{\theta r} \bigg(\dashint_{Q_{2\theta r}^\ep} |v_r-\widetilde{P}(x+\ep {\bf e}_3)|^2\dd x\bigg)^{1/2} \\		
				&\le
				C\Big(\theta^\alpha + \theta^{-1}\big(\frac{\ep}{r}\big)\Big)
				\bigg(\dashint_{Q^\ep_{r/2}} |\nabla v_r|^2\bigg)^{1/2}.
			\end{aligned}
		\end{align*}
		Then the observation $\widetilde{P}(x+\ep {\bf e}_3) = \widetilde{P}(x) + \ep(\nabla \widetilde{P}) {\bf e}_3$ yields
		\begin{equation}\label{est1.proof.lem.est.approx.func.}
			\bigg(\dashint_{Q_{\theta r}^\ep} |\nabla v_r-\nabla \widetilde{P}|^2\bigg)^{1/2} 
			\le
			C\Big(\theta^\alpha + \theta^{-1}\big(\frac{\ep}{r}\big)\Big)
			\bigg(\dashint_{Q^\ep_{r/2}} |\nabla v_r|^2\bigg)^{1/2}.
		\end{equation}		
	The $C^{0,\alpha}$ estimate of $q_r$ implies
	\begin{align*}
		|q_r(x)-q_r(0)|
		&\le C\frac{|x|^\alpha}{r^\alpha} 
		\bigg(\dashint_{Q^\ep_{r/2}} |q_r-\dashint_{Q^\ep_{r/2}}q_r|^2\bigg)^{1/2}.
	\end{align*}
	Then by the Bogovskii lemma 
	in a Lipschitz domain $Q^\ep_{r/2}$,
	we have
	\begin{align*}
		|q_r(x)-q_r(0)|
		&\le C\frac{|x|^\alpha}{r^\alpha}
		\bigg(\dashint_{Q^\ep_{r/2}} |\nabla v_r|^2\bigg)^{1/2},
	\end{align*}
	which results in
	\begin{equation}\label{est2.proof.lem.est.approx.func.}
		\begin{aligned}
			\sup_{s,t\in [1/16,1/4]}\bigg|\dashint_{Q_{\theta sr}^\ep} q_r - \dashint_{Q_{\theta tr}^\ep} q_r \bigg| 
			&\le C\theta^\alpha 
			\bigg(\dashint_{Q_{r/2}^\ep} |\nabla v_r|^2\bigg)^{1/2}.
		\end{aligned}
	\end{equation}
		Hence on the one hand, by \eqref{est1.proof.lem.est.approx.func.} and \eqref{est2.proof.lem.est.approx.func.} we see that
		\begin{align}\label{est3.proof.lem.est.approx.func.}
			\begin{aligned}
				\widetilde{H}(v_r,q_r;\theta r)
				& \le 
				C\Big(\theta^\alpha + \theta^{-1}\big(\frac{\ep}{r}\big)\Big)
				\bigg(\dashint_{Q^\ep_{r}} |\nabla v_r|^2\bigg)^{1/2}.
			\end{aligned}
		\end{align}
		On the other hand, since $v_r(x)-P(x+\ep{\bf e}_3)$, for any $P\in \mathscr{P}_1$, is a weak solution to \eqref{approx.Stokes} with the same pressure $q_r$ and $P(x+\ep {\bf e}_3) = P(x) + \ep(\nabla P) {\bf e}_3$, we may apply \eqref{est3.proof.lem.est.approx.func.} to 
		$v_r(x)-P(x+\ep{\bf e}_3)$ and obtain
		\begin{equation}\label{est4.proof.lem.est.approx.func.}
			\widetilde{H}(v_r -P(x+\ep {\bf e}_3) ,q_r;\theta r) 
			\le 
			C\Big(\theta^\alpha + \theta^{-1}\big(\frac{\ep}{r}\big)\Big)
			\bigg(\dashint_{Q^\ep_{r}} 
			|\nabla v_r- \nabla P|^2\dd x\bigg)^{1/2}.
		\end{equation}
		In particular, we may choose $P = P_*$ that minimizes
		\begin{equation*}
			\bigg(\dashint_{Q^\ep_{r}} |\nabla v_r- \nabla P |^2\bigg)^{1/2}.
		\end{equation*}
		Then, it is clear that
		\begin{equation}\label{est5.proof.lem.est.approx.func.}
			\bigg(\dashint_{Q^\ep_{r}} |\nabla v_r- \nabla P_* |^2\bigg)^{1/2} \le \widetilde{H}(v_r,q_r; r).		
		\end{equation}
		Then the estimate \eqref{est1.lem.est.approx.func.} follows from \eqref{est4.proof.lem.est.approx.func.} with $P = P_*$ and \eqref{est5.proof.lem.est.approx.func.}. The proof is complete.
\end{proof}

	\begin{lemma}\label{lem.est.approx.func.adjusted}
		Let $L\in(0,\infty)$ and $\Omega$ be a bumpy John domain with constant $L$ according to Definition \ref{def.John2}. Let $\alpha\in(0,1]$ be the number in Lemma \ref{lem.est.approx.func.}. For all $\ep\in(0,\frac14]$, $r\in[\ep,\frac14]$, 
		$\theta\in(0,\frac18]$, 
		and $(v_r,q_r)$ satisfying \eqref{approx.Stokes},
		\begin{equation}\label{est1.lem.est.approx.func.adjusted}
			\begin{aligned}
				H(v_r,q_r;\theta r) 
				&\le C\theta^\alpha H(v_r,q_r;2r) 
				+ C \theta^{-5/2}
				\big(\frac{\ep}{r}\big)^{1/2}
				\bigg(\dashint_{Q_{2r}} |\nabla v_r|^2\bigg)^{1/2},
			\end{aligned}
		\end{equation}
		where $C$ depends only on $L$ and $\alpha$.
	\end{lemma}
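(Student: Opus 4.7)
The plan is to chain three earlier lemmas so as to convert $H(v_r,q_r;\theta r)$ into $H(v_r,q_r;2r)$ by passing through the rectangular excess $\widetilde H$ at scale $r$. Concretely, I would: (1) apply Lemma \ref{lem.est.comparability}(i) at scale $\theta r$ to replace $H$ with $\widetilde H$; (2) apply Lemma \ref{lem.est.approx.func.} to decay $\widetilde H(v_r,q_r;\theta r)$ into $C(\theta^\alpha+\theta^{-1}(\ep/r))\,\widetilde H(v_r,q_r;r)$; and (3) apply Lemma \ref{lem.est.comparability}(ii) with the borderline parameter $\theta=1$ to turn $\widetilde H(v_r,q_r;r)$ into $C\,H(v_r,q_r;2r)$ plus a controlled error. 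Multiplying out, the leading term is the desired $C\theta^\alpha H(v_r,q_r;2r)$, and a cross contribution $\theta^{-1}(\ep/r)\cdot H(v_r,q_r;2r)$ produced along the way will be absorbed into the error.

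To put all error terms into the prescribed form $C\theta^{-5/2}(\ep/r)^{1/2}\bigl(\dashint_{Q_{2r}}|\nabla v_r|^2\bigr)^{1/2}$, I would use three elementary observations: (a) $\ep\le r$, so $\ep/r\le(\ep/r)^{1/2}$; (b) by a volume comparison together with the zero-extension of $v_r$ across the lower boundary, one has $\bigl(\dashint_{Q^\ep_{\theta r}}|\nabla v_r|^2\bigr)^{1/2}\le C\theta^{-3/2}\bigl(\dashint_{Q_{2r}}|\nabla v_r|^2\bigr)^{1/2}$; (c) choosing $P\equiv 0$ in the infimum defining $H$ and applying the Bogovskii lemma in a John subdomain of $B^\ep_{2r,+}$ for the pressure oscillation (as in Remark \ref{rmk.Standard.P.est}) yields $H(v_r,q_r;2r)\le C\bigl(\dashint_{Q_{2r}}|\nabla v_r|^2\bigr)^{1/2}$. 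The binding error is the one produced at Step 1, where (a) and (b) combine to give exactly the $\theta^{-5/2}$ prefactor; the other errors (the cross term generated by combining Steps 2--3 and the Step 3 error multiplied by the decay factor from Step 2) are all subsumed in the same form via (c), using in addition that $\theta^\alpha,\theta^{-1}\le\theta^{-5/2}$ for $\theta\le 1/8$.

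The main technical point, beyond careful bookkeeping of the powers of $\theta$ and $\ep/r$, is that applying Lemma \ref{lem.est.comparability}(ii) at $\theta=1$ requires $L^2$-averages of $|\nabla v_r|^2$ on $Q^\ep_{2r}$, which slightly exceeds the natural domain of definition $Q^\ep_{t_0r}$ of $v_r$ (recalling $t_0\in[1,2]$). I would resolve this by extending $v_r$ continuously by $u^\ep$ outside $Q^\ep_{t_0r}$ --- consistent with the boundary condition $v_r=u^\ep$ on $\partial Q^\ep_{t_0r}$ --- which is harmless because the extension appears only inside $L^2$ averages in error terms and not through any PDE manipulation. With that in place, collecting the three steps and using (a)--(c) to repackage the errors delivers \eqref{est1.lem.est.approx.func.adjusted}.
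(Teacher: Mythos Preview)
Your proposal is correct and is exactly the approach the paper intends: the paper's own proof is the single sentence ``follows readily from Lemma~\ref{lem.est.comparability} and Lemma~\ref{lem.est.approx.func.},'' and your three-step chain (Lemma~\ref{lem.est.comparability}(i), then Lemma~\ref{lem.est.approx.func.}, then Lemma~\ref{lem.est.comparability}(ii) at $\theta=1$) is precisely how one unpacks that sentence. Your bookkeeping observations (a)--(c) and your remark about extending $v_r$ by $u^\ep$ beyond $Q^\ep_{t_0 r}$ are both correct and in fact make explicit a point the paper leaves implicit (the same extension is silently used when the paper writes $\dashint_{Q^\ep_{2r}}|\nabla v_r|^2$ and $H(v_r,q_r;2r)$, and is consistent with how the energy estimate is invoked in the proof of Lemma~\ref{lem.excess.decay}).
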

	%
	\begin{proof}
		The estimate \eqref{est1.lem.est.approx.func.adjusted} follows readily from Lemma \ref{lem.est.comparability} and Lemma \ref{lem.est.approx.func.}.
	\end{proof}
	\begin{lemma}\label{lem.excess.decay}
		Let $L\in(0,\infty)$ and $\Omega$ be a bumpy John domain with constant $L$ according to Definition \ref{def.John2}. Let $(u^\ep,p^\ep)$ be a weak solution of \eqref{S.ep}
		and let $\alpha\in(0,1]$ be the number in Lemma \ref{lem.est.approx.func.}. For all $\ep\in(0,\frac1{32}]$,  $r\in[ 2\ep,\frac1{16}]$ and $\theta\in(0,\frac18]$,
		\begin{equation}\label{est1.lem.excess.decay}
			\begin{aligned}
				H(u^\ep, p^\ep;\theta r)
				&\le 
				C\theta^\alpha H(u^\ep, p^\ep; 2r) 
				+ C\theta^{-3}\big(\frac{\ep}{r}\big)^{1/12} \Phi(u^\ep,p^\ep;{16r}) \\
				&\quad
				+ C\theta^{-3}\bigg(\dashint_{{Q_{10r}}} |\mathcal{M}^2_{\ep} [F^\ep]|^3\bigg)^{1/3},
			\end{aligned}
		\end{equation}
		where $C$ depends only on $L$ and $\alpha$.
	\end{lemma}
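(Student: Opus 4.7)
The strategy is a three-step perturbation argument that compares $(u^\ep, p^\ep)$ to the Stokes approximation $(v_r, q_r)$ of \eqref{approx.Stokes} at scale $r$, uses the Schauder-type decay of $(v_r, q_r)$ provided by Lemma~\ref{lem.est.approx.func.adjusted}, and then transfers the excess back to $(u^\ep, p^\ep)$ at the larger scale $2r$. The main technical obstacle is the careful bookkeeping of the $\theta$-power losses coming from the three rescalings, combined with the fact that pressure oscillations must be controlled on sets $B^\ep_{\rho,+}$ which are in general not themselves John domains; this last point is handled, as in the proof of Lemma~\ref{lem.est.approx.}, by invoking Bogovskii's operator on an auxiliary John subdomain $\Omega^\ep_r \subset B^\ep_{r,+}$.

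First, the triangle inequality applied to the velocity and pressure components of $H$ gives
\begin{equation*}
H(u^\ep, p^\ep; \theta r) \le H(v_r, q_r; \theta r) + \bigg(\dashint_{B^\ep_{\theta r,+}} |\nabla u^\ep - \nabla v_r|^2\bigg)^{1/2} + 2\sup_{s \in [1/16, 1/4]} \bigg|\dashint_{B^\ep_{s\theta r,+}} (p^\ep - q_r - \bar c)\bigg|,
\end{equation*}
with $\bar c := \dashint_{B^\ep_{r/2,+}}(p^\ep - q_r)$. Since $\theta \le 1/8$ and $s\ge 1/16$, every cube $B^\ep_{s\theta r,+}$ sits inside $B^\ep_{r/2,+}$ with volume ratio of order $\theta^3$, so each of the two error terms is at most $C\theta^{-3/2}$ times the corresponding $L^2$-average on $B^\ep_{r/2,+}$. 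Applying Lemma~\ref{lem.est.approx.} at scale $r$ and bounding $(\dashint_{B^\ep_{5r,+}}|\nabla u^\ep|^2)^{1/2} \le C\Phi(u^\ep, p^\ep; 16r)$ then yields
\begin{equation*}
H(u^\ep, p^\ep; \theta r) \le H(v_r, q_r; \theta r) + C\theta^{-3/2}\bigg[\big(\tfrac{\ep}{r}\big)^{1/12}\Phi(u^\ep, p^\ep; 16r) + \bigg(\dashint_{Q_{4r}}|\mathcal{M}^2_\ep[F^\ep]|^3\bigg)^{1/3}\bigg].
\end{equation*}

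Second, Lemma~\ref{lem.est.approx.func.adjusted} provides
\begin{equation*}
H(v_r, q_r; \theta r) \le C\theta^\alpha H(v_r, q_r; 2r) + C\theta^{-5/2}\big(\tfrac{\ep}{r}\big)^{1/2}\bigg(\dashint_{Q^\ep_{2r}} |\nabla v_r|^2\bigg)^{1/2},
\end{equation*}
and the energy estimate \eqref{est1.lem,est.ws} together with $(\ep/r)^{1/2} \le (\ep/r)^{1/12}$ (which holds since $r \ge 2\ep$) absorbs the last term into $C\theta^{-5/2}(\ep/r)^{1/12}\Phi(u^\ep, p^\ep; 16r)$. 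To close the loop, one more triangle inequality yields $H(v_r, q_r; 2r) \le H(u^\ep, p^\ep; 2r) + \tilde E$, where $\tilde E$ has the same structure as the approximation error in the first step; since $v_r \equiv u^\ep$ outside $Q^\ep_{t_0 r} \subset Q^\ep_{2r}$, the quantity $\tilde E$ is controlled directly by the right-hand side of Lemma~\ref{lem.est.approx.} applied at scale $r$ (no rescaling factor, as the cubes appearing in $H(\,\cdot\,;2r)$ are already at scale $\ge r/8$). Multiplying this error by $\theta^\alpha$ and collecting the three steps, using $\theta \le 1/8$ and the elementary inequalities $\theta^{-3/2},\,\theta^{-5/2},\,\theta^\alpha \le \theta^{-3}$, yields \eqref{est1.lem.excess.decay}.
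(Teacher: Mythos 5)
Your proposal is correct and is essentially the paper's own proof: both split $H(u^\ep,p^\ep;\theta r)$ by the triangle inequality into the approximation error controlled by Lemma \ref{lem.est.approx.}, the Schauder-type decay of Lemma \ref{lem.est.approx.func.adjusted} for $(v_r,q_r)$, and a transfer back to $(u^\ep,p^\ep)$ at scale $2r$, closing with the energy estimate, $(\ep/r)^{1/2}\le(\ep/r)^{1/12}$, and the crude bounds $\theta^{\alpha},\theta^{-3/2},\theta^{-5/2}\le C\theta^{-3}$. The only cosmetic difference is that the paper packages the error terms as $H(u^\ep-v_r,p^\ep-q_r;\cdot)$ at scales $r$ and $\theta r$, whereas you write them out explicitly and pass through $L^2$ averages, which changes nothing in substance.
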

	%
	\begin{proof}
		The triangle inequality and Lemma \ref{lem.est.approx.func.adjusted} imply
		\begin{equation}\label{est1.proof.lem.excess.decay}
			\begin{aligned}
				H(u^\ep, p^\ep;\theta r)
				&\le H(v_r, q_r;\theta r) + H(u^\ep-v_r, p^\ep-q_r;\theta r) \\
				&\le C\theta^\alpha H(v_r, q_r;2r) + H(u^\ep-v_r, p^\ep-q_r;\theta r) \\
				&\quad
				+ C\theta^{-5/2} \big(\frac{\ep}{r}\big)^{1/2}
				\bigg(\dashint_{Q^\ep_{2r}} |\nabla v_r|^2\bigg)^{1/2} \\
				&\le 
				C\theta^\alpha H(u^\ep, p^\ep;2r) \\
				&\quad
				+ C\theta^\alpha H(u^\ep-v_r, p^\ep-q_r;r) + H(u^\ep-v_r, p^\ep-q_r;\theta r) \\
				&\quad
				+ C {\theta^{-5/2}} \big(\frac{\ep}{r}\big)^{1/2}
				\bigg(\dashint_{B^\ep_{{4r},+}} |\nabla u^\ep|^2\bigg)^{1/2},
			\end{aligned}
		\end{equation}
		where in the last line the energy estimate of \eqref{approx.Stokes} is applied. By the definition of $H$, we find
		\begin{equation}\label{est2.proof.lem.excess.decay}
			\begin{aligned}
				&\theta^\alpha H(u^\ep-v_r, p^\ep-q_r;r) + H(u^\ep-v_r, p^\ep-q_r;\theta r) \\
				&\le
				C(\theta^\alpha+\theta^{-3})\\
				&\quad\times
				\bigg\{
				\bigg(\dashint_{B^\ep_{r,+}} |\nabla u^\ep- \nabla v_r|^2\bigg)^{1/2} 
				+ \sup_{\rho\in [1/16,1/4]} \dashint_{B^\ep_{\rho r,+}} |p^\ep-q_r- \dashint_{B^\ep_{r/2,+}} (p^\ep-q_r)|
				\bigg\}.
			\end{aligned}
		\end{equation}
		The Poincar\'{e} inequality and Lemma \ref{lem.est.approx.} imply
		\begin{equation}\label{est3.proof.lem.excess.decay}
			\begin{aligned}
				&
					\bigg(\dashint_{B^\ep_{r,+}} |\nabla u^\ep- \nabla v_r|^2\bigg)^{1/2} 
					+ \sup_{\rho\in [1/16,1/4]} \dashint_{B^\ep_{\rho r,+}} |p^\ep-q_r- \dashint_{B^\ep_{r/2,+}} (p^\ep-q_r)| \\
				&\le 
				C
				\bigg\{
				\bigg(\dashint_{B^\ep_{r,+}} |\nabla u^\ep-\nabla v_r|^2 \bigg)^{1/2}
				+ \bigg(\dashint_{B^\ep_{r/2,+}} |p^\ep-q_r- \dashint_{B^\ep_{r/2,+}} (p^\ep-q_r)|^2\bigg)^{1/2}
				\bigg\} \\
				&\le 
				C\big(\frac{\ep}{r}\big)^{1/12}
				\bigg(\dashint_{B^\ep_{5r,+}} |\nabla u^\ep|^2\bigg)^{1/2}
				+ C\bigg(\dashint_{{Q_{4r}}} |\mathcal{M}^2_{\ep}[F^\ep]|^3\bigg)^{1/3}.
			\end{aligned}
		\end{equation}
		Now from \eqref{est1.proof.lem.excess.decay} to \eqref{est3.proof.lem.excess.decay}, we obtain the desired estimate \eqref{est1.lem.excess.decay} 
		by the definition of $\Phi$ in \eqref{def.Phi}. This completes the proof.
	\end{proof}
	%
	
	\subsection{Iteration}
	In the following two lemmas, we prove some properties of $H$ and $\Phi$ needed when iterating \eqref{est1.lem.excess.decay}.
	\begin{lemma}\label{lem.h}
		Let $L\in(0,\infty)$ and $\Omega$ be a bumpy John domain with constant $L$ according to Definition \ref{def.John2}. Let $(u^\ep,p^\ep)$ be a weak solution of \eqref{S.ep}. There exists a function $h(r)$ defined on $[\ep,\frac12]$ such that
		\begin{align}
			h(r)&\le C\big(H(u^\ep, p^\ep;r)+\Phi(u^\ep, p^\ep;r)\big), \label{est1.lem.h} \\
			\Phi(u^\ep, p^\ep;r)&\le C\big(H(u^\ep, p^\ep;r)+h(r)\big), \label{est2.lem.h} \\
			\sup_{r_1,r_2\in[r,2r]}|h(r_1)-h(r_2)|&\le CH(u^\ep, p^\ep;2r). \label{est3.lem.h}
		\end{align}
		Here $C$ depends only on $L$. Notice that the function $h$ depends on $u^\ep$.
	\end{lemma}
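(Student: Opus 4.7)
The natural candidate for $h(r)$ is the size of the best affine approximation of $u^\ep$ at scale $r$ within the space of no-slip Stokes polynomials of degree one. Concretely, I would let $P_r \in \mathscr{P}_1$ be a minimizer of $P \mapsto (\dashint_{B^\ep_{r,+}} |\nabla u^\ep - \nabla P|^2)^{1/2}$ on the two-dimensional space $\mathscr{P}_1$ (existence is routine, as the functional is a strictly convex quadratic form on a finite-dimensional space for $r\ge\ep$), and set
\[
h(r) := |\nabla P_r|,
\]
where $|\nabla P_r|$ denotes the (constant) Frobenius norm of the matrix $\nabla P_r$.

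Estimates \eqref{est1.lem.h} and \eqref{est2.lem.h} should then follow immediately from the identity $|\nabla P_r| = (\dashint_{B^\ep_{r,+}} |\nabla P_r|^2)^{1/2}$, together with one application each of the triangle inequality on $L^2(B^\ep_{r,+})$ applied to the decomposition $\nabla P_r = (\nabla P_r - \nabla u^\ep) + \nabla u^\ep$. Since the pressure contributions in $H$ and $\Phi$ coincide, adding them to both sides of the resulting velocity-only inequalities yields \eqref{est1.lem.h} and \eqref{est2.lem.h}, both with constant $C = 1$.

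The main content is the almost-monotonicity property \eqref{est3.lem.h}. For $r_1, r_2 \in [r, 2r]$, the reverse triangle inequality gives $|h(r_1) - h(r_2)| \le |\nabla P_{r_1} - \nabla P_{r_2}|$. Since $\nabla P_{r_1} - \nabla P_{r_2}$ is a constant matrix, its magnitude equals $(\dashint_{B^\ep_{r,+}} |\nabla P_{r_1} - \nabla P_{r_2}|^2)^{1/2}$, which by inserting $\pm \nabla u^\ep$ is controlled by $\sum_{i=1,2} (\dashint_{B^\ep_{r,+}} |\nabla u^\ep - \nabla P_{r_i}|^2)^{1/2}$. The plan is then to bound each term in this sum by $CH(u^\ep, p^\ep; 2r)$. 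I would do so in two steps: first, enlarge the integration domain from $B^\ep_{r,+}$ to $B^\ep_{r_i,+}$ using the volume comparison $|B^\ep_{r_i,+}|/|B^\ep_{r,+}| \le C$ (valid for $r \ge \ep$ thanks to $|B^\ep_{\rho,+}| \approx \rho^3$); second, invoke the minimality of $P_{r_i}$ at scale $r_i$ to substitute $\nabla P_{2r}$ for $\nabla P_{r_i}$, and enlarge once more to scale $2r$ by the same volume comparison. This bounds each term by $C (\dashint_{B^\ep_{2r,+}} |\nabla u^\ep - \nabla P_{2r}|^2)^{1/2}$, which is precisely the velocity part of $H(u^\ep, p^\ep; 2r)$.

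I expect no substantive obstacle: the entire argument reduces to orthogonal-projection and triangle-inequality manipulations, with the only subtlety being the volume comparisons on the bumpy boundary, which are rendered harmless by the uniform estimate $|B^\ep_{r,+}| \approx r^3$ for $r \ge \ep$ built into the bumpy John framework (see the Notations subsection).
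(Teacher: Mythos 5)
Your proposal is correct and follows essentially the same route as the paper: the paper also takes $P_r\in\mathscr{P}_1$ minimizing the velocity excess at scale $r$, sets $h(r)=|\nabla P_r|$, gets \eqref{est1.lem.h}--\eqref{est2.lem.h} by the triangle inequality (with the pressure oscillations cancelling since they are common to $H$ and $\Phi$), and proves \eqref{est3.lem.h} via $|h(r_1)-h(r_2)|\le C\bigl(\dashint_{B^\ep_{r,+}}|\nabla P_{r_1}-\nabla P_{r_2}|^2\bigr)^{1/2}\le CH(u^\ep,p^\ep;2r)$. Your insertion of $\pm\nabla u^\ep$, use of minimality at scale $r_i$ against $P_{2r}$, and the volume comparisons $|B^\ep_{\rho,+}|\approx\rho^3$ for $\rho\ge\ep$ are exactly the details the paper's sketch leaves implicit.
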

	%
	\begin{proof}
		The proof is similar to \cite[Lemma 6.1]{GZ} and hence we provide the outline of the proof. Let $P_r\in \mathscr{P}_1$ be such that
		\begin{align*}
			\bigg(\dashint_{B^\ep_{r,+}} |\nabla u^\ep - \nabla P_r|^2 \bigg)^{1/2}
			= \inf_{P\in \mathscr{P}_1}  \bigg(\dashint_{B^\ep_{r,+}} |\nabla u^\ep - \nabla P|^2 \bigg)^{1/2}.
		\end{align*}
		We define 
		\begin{align*}
			h(r)=|\nabla P_r|, \quad r\in\big[\ep,\frac12\big].
		\end{align*}
		Then the inequality \eqref{est1.lem.h} follows from
		\begin{align*}
			h(r)
			&\le 
			C \bigg(\dashint_{B^\ep_{r,+}} |\nabla P_r |^2 \bigg)^{1/2} 
			\\
			&\le C\big(H(u^\ep, p^\ep;r)+\Phi(u^\ep, p^\ep;r)\big)
		\end{align*}
		and \eqref{est2.lem.h} is trivial by definition. For \eqref{est3.lem.h}, we observe that for any $r_1,r_2\in[r,2r]$
		\begin{align*}
			|h(r_1)-h(r_2)|
			&\le 
			C\bigg(\dashint_{B^\ep_{r,+}} |\nabla P_{r_1}-\nabla P_{r_2}|^2 \bigg)^{1/2} 
			 \\
			&\le CH(u^\ep, p^\ep;2r).
		\end{align*}
		This completes the proof of Lemma \ref{lem.h}.
	\end{proof}
	%

\begin{lemma}\label{lem.Phi}
	Let $L\in(0,\infty)$ and $\Omega$ be a bumpy John domain with constant $L$ according to Definition \ref{def.John2}. Let $(u^\ep,p^\ep)$ be a weak solution of \eqref{S.ep}. Then for $\ep\in(0,{\frac{1}{8}}]$, $r\in[{2\ep},\frac14]$,
	\begin{align}\label{est1.lem.Phi}
		\sup_{\tau\in[r,2r]} \Phi(u^\ep, p^\ep;\tau) \le C\Phi(u^\ep, p^\ep;2r)
		+ \bigg(\dashint_{{Q_{2r}}} |\mathcal{M}^2_{\ep} [F^\ep]|^3\bigg)^{1/3},
	\end{align}
	where $C$ depends only on $L$.
\end{lemma}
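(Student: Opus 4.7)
The plan is to bound the two components of $\Phi(u^\ep,p^\ep;\tau)$ separately for each $\tau\in[r,2r]$. For the gradient part, the inclusion $B^\ep_{\tau,+}\subset B^\ep_{2r,+}$ together with the volume comparison $|B^\ep_{\tau,+}|\approx|B^\ep_{2r,+}|$ (valid because $\tau\ge r$) gives immediately
\begin{equation*}
\bigg(\dashint_{B^\ep_{\tau,+}}|\nabla u^\ep|^2\bigg)^{1/2} \le C\bigg(\dashint_{B^\ep_{2r,+}}|\nabla u^\ep|^2\bigg)^{1/2}\le C\Phi(u^\ep,p^\ep;2r).
\end{equation*}

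For the pressure oscillation, fix $s,t\in[1/16,1/4]$ and $\tau\in[r,2r]$, so that $s\tau,t\tau\in[r/16,r/2]$. Since $B^\ep_{s\tau,+},B^\ep_{t\tau,+}\subset B^\ep_{r/2,+}$ and $|B^\ep_{s\tau,+}|,|B^\ep_{t\tau,+}|\gtrsim r^3\approx|B^\ep_{r/2,+}|$, for any constant $c\in\R$ we have
\begin{equation*}
\bigg|\dashint_{B^\ep_{s\tau,+}}p^\ep-\dashint_{B^\ep_{t\tau,+}}p^\ep\bigg| \le C\bigg(\dashint_{B^\ep_{r/2,+}}|p^\ep-c|^2\bigg)^{1/2}.
\end{equation*}
To pick a good $c$, I will follow the standard technique recalled in Remark \ref{rmk.Standard.P.est}: since $2r\ge2\ep$, Definition \ref{def.John2} yields a John domain $\Omega^\ep_{2r}$ with $B^\ep_{r,+}\subset\Omega^\ep_{2r}\subset B^\ep_{2r,+}$ and $|\Omega^\ep_{2r}|\approx r^3$. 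Setting $c=\dashint_{\Omega^\ep_{2r}}p^\ep$ and applying the Bogovskii lemma in $\Omega^\ep_{2r}$ to the equation $-\Delta u^\ep+\nabla p^\ep=\nabla\cdot F^\ep$ (valid in $B^\ep_{2r,+}\supset\Omega^\ep_{2r}$), I get
\begin{equation*}
\bigg(\dashint_{\Omega^\ep_{2r}}|p^\ep-c|^2\bigg)^{1/2} \le C\bigg(\dashint_{B^\ep_{2r,+}}|\nabla u^\ep|^2\bigg)^{1/2} + C\bigg(\dashint_{B^\ep_{2r,+}}|F^\ep|^2\bigg)^{1/2},
\end{equation*}
and the same bound transfers to the average over $B^\ep_{r/2,+}\subset\Omega^\ep_{2r}$ by volume comparability.

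It remains to replace $F^\ep$ by $\mathcal{M}^2_\ep[F^\ep]$. Extending $F^\ep$ by zero outside $B^\ep_{1,+}$ and using Lemma \ref{lem.est.M.op.properties}(iii) (the condition $2r+\ep<1$ being ensured by $r\le 1/4$ and $\ep\le 1/8$) together with H\"older's inequality,
\begin{equation*}
\bigg(\dashint_{B^\ep_{2r,+}}|F^\ep|^2\bigg)^{1/2} \le C\bigg(\dashint_{Q_{2r}}|\mathcal{M}^2_\ep[F^\ep]|^2\bigg)^{1/2} \le C\bigg(\dashint_{Q_{2r}}|\mathcal{M}^2_\ep[F^\ep]|^3\bigg)^{1/3}.
\end{equation*}
Combining the three bounds (gradient, pressure via Bogovskii, conversion to $\mathcal{M}^2_\ep[F^\ep]$) and absorbing the $\nabla u^\ep$ term into $\Phi(u^\ep,p^\ep;2r)$ yields \eqref{est1.lem.Phi}. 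There is no real obstacle here: the proof is essentially a bookkeeping exercise of the standard pressure estimate in John subdomains (Remark \ref{rmk.Standard.P.est}), and the only mildly delicate point is verifying the volume comparisons $|B^\ep_{s\tau,+}|\approx|B^\ep_{r/2,+}|\approx|\Omega^\ep_{2r}|\approx r^3$ so that all passages between averages cost only absolute constants depending on $L$.
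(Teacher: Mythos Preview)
Your proof is correct and follows essentially the same approach as the paper: the gradient part is handled by the trivial volume comparison, and the pressure oscillation is bounded by passing to an $L^2$ average of $p^\ep-c$ over a slightly larger bumpy cube, choosing $c$ via Bogovskii's lemma in an intermediate John domain (Remark~\ref{rmk.Standard.P.est}), and finally converting $F^\ep$ to $\mathcal{M}^2_\ep[F^\ep]$ via Lemma~\ref{lem.est.M.op.properties}(iii). The only cosmetic difference is that the paper uses a John domain between $B^\ep_{r/2,+}$ and $B^\ep_{r,+}$ (referring back to \eqref{est.pe-qr}), whereas you use one between $B^\ep_{r,+}$ and $B^\ep_{2r,+}$; both choices work under the hypothesis $r\ge 2\ep$.
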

%
\begin{proof}
	Let $\tau\in[r,2r]$. A simple computation implies
	\begin{equation}\label{est1.proof.lem.Phi}
		\begin{aligned}
			\bigg(\dashint_{B^\ep_{\tau,+}} |\nabla u^\ep|^2 \bigg)^{1/2}
			\le C
			\bigg(\dashint_{B^\ep_{2r,+}} |\nabla u^\ep|^2 \bigg)^{1/2}.
		\end{aligned}
	\end{equation}
	For the pressure estimate, by a similar argument as in \eqref{est.pe-qr}, 
	\begin{equation}\label{est2.proof.lem.Phi}
		\begin{aligned}
			&\sup_{s,t\in [1/16,1/4]}
			\bigg|\dashint_{B^\ep_{s\tau,+}} p^\ep - \dashint_{B^\ep_{t\tau,+}} p^\ep\bigg| \\
			&\le
			C\dashint_{B^\ep_{r/2,+}}|p^\ep-\dashint_{B^\ep_{r/2,+}} p^\ep| \\
			&\le
			C\bigg\{
			\bigg(\dashint_{B^\ep_{r,+}} |\nabla u^\ep|^2\bigg)^{1/2}
			+ \bigg(\dashint_{B^\ep_{r,+}} |F^\ep|^2\bigg)^{1/2}
			\bigg\},
		\end{aligned}
	\end{equation}
	where we need to assume $r\ge 2\ep$.
	Then the assertion \eqref{est1.lem.Phi} follows from \eqref{est1.proof.lem.Phi}, \eqref{est2.proof.lem.Phi} and Lemma \ref{lem.est.M.op.properties} (iii).
\end{proof}

We now state the iteration lemma. Its 
proof is given in Appendix \ref{app.it}.
\begin{lemma}\label{lem.iteration}
	Let $H, \Phi, h:(0,1] \to [0,\infty)$ be nonnegative functions. 
	Let $\ep \in (0,\frac{1}{48}]$. Suppose that there exist positive constants $C_0$, $B_0$, $\alpha, \beta$ and $\theta\in(0,{\frac18}]$ so that
	\begin{subequations}
		\begin{align}
			H(\theta r)
			&\le 
			\frac12 H({2r}) + C_0 \Big(\big(\frac{\ep}{r}\big)^{\alpha} \Phi(16r) + B_0 r^\beta\Big), 
			\quad r\in [\ep,\frac{1}{16}],
			\label{lem.iteration.assump.a} \\
			H(r) &\le C_0\Phi(r), 
			\quad r\in [\ep,\frac12],
			\label{lem.iteration.assump.b} \\
			\sup_{\tau\in[r,2r]} \Phi(\tau) &\le C_0\big(\Phi(2r) + B_0 r^\beta\big),
			\quad r\in [\ep,\frac14],
			\label{lem.iteration.assump.c} \\
			h(r) &\le C_0\big(H(r) + \Phi(r)\big), 
			\quad r\in [\ep,\frac12],
			\label{lem.iteration.assump.d} \\
			\Phi(r) &\le C_0\big(H(r) + h(r)\big), 
			\quad r\in [\ep,\frac12],
			\label{lem.iteration.assump.e} \\
			\sup_{r_1,r_2\in[r,2r]}|h(r_1)-h(r_2)| &\le C_0H(2r), 
			\quad r\in [\ep,\frac14].
			\label{lem.iteration.assump.f}
		\end{align}
	\end{subequations}
	Then, 
	\begin{equation}\label{est1.lem.iteration}
		\int_{\ep}^{1/2} \frac{H(t)}{t} \dd t + \sup_{r\in[\ep,1/2]}\Phi(r) 
		\le C(\Phi(\frac12) + B_0 ),
	\end{equation}
	where the constant $C$ depends only on $C_0, \alpha, \beta$ and $\theta$. 
\end{lemma}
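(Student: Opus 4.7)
The plan is a standard bootstrap argument of the type familiar from large-scale regularity theory. Let $\Phi_* := \sup_{r \in [\ep, 1/2]} \Phi(r)$; the central target is $\Phi_* \le C(\Phi(\tfrac12) + B_0)$, from which the integral bound on $H$ will follow directly.

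First I would iterate \eqref{lem.iteration.assump.a} along the geometric sequence $r_k := (\theta/2)^k r_0$, with a fixed $r_0 \in (0, 1/16]$ and $k$ running up to the largest $K$ with $r_K \ge \ep$. Setting $a_k := H(2 r_k)$, hypothesis \eqref{lem.iteration.assump.a} (applied at $r_k$) gives the linear recurrence $a_{k+1} \le \tfrac12 a_k + C_0\bigl((\ep/r_k)^\alpha \Phi_* + B_0 r_k^\beta\bigr)$, which unwinds to
\begin{equation*}
a_k \le 2^{-k} a_0 + C_0 \sum_{j=0}^{k-1} 2^{j+1-k}\bigl((\ep/r_j)^\alpha \Phi_* + B_0 r_j^\beta \bigr).
\end{equation*}
With $a_0 \lesssim \Phi_*$ from \eqref{lem.iteration.assump.b}, and because $r_j$ is geometric and $(\ep/r_j)^\alpha$ grows geometrically with $j$, the sum is dominated by its last term, giving $a_k \lesssim (2^{-k} + (\ep/r_k)^\alpha)\Phi_* + B_0 r_k^\beta$. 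Summing over $k$, the facts that $\sum_k (\ep/r_k)^\alpha$ is a geometric series whose final term is $\le 1$ (since $r_K \ge \ep$) and that $\sum_k r_k^\beta$ converges yield $\sum_k a_k \le C(\Phi_* + B_0)$.

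Next, I telescope \eqref{lem.iteration.assump.f} over dyadic scales to obtain $h(r) \le h(\tfrac12) + C_0 \sum_{s \text{ dyadic}} H(s)$. Using \eqref{lem.iteration.assump.c} and \eqref{lem.iteration.assump.b} iteratively to compare $H$ at an arbitrary dyadic scale to $H$ at the nearest $r_k$ (at the cost of a constant depending on $\theta$), this sum is comparable to $\sum_k a_k$ and hence $\le C(\Phi_* + B_0)$. Combined with \eqref{lem.iteration.assump.d} and \eqref{lem.iteration.assump.b} applied at $r = 1/2$, giving $h(\tfrac12) \lesssim \Phi(\tfrac12)$, this produces $h(r) \le C(\Phi(\tfrac12) + B_0) + C(\Phi_* + B_0)$. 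Plugging back into \eqref{lem.iteration.assump.e} together with the decay bound for $H(r)$ from the iteration, evaluated at a scale where both $2^{-k}$ and $(\ep/r)^\alpha$ can be made small, yields the intermediate inequality $\Phi(r) \le \tfrac12 \Phi_* + C(\Phi(\tfrac12) + B_0)$. After handling the edge cases near $r = 1/2$ (where \eqref{lem.iteration.assump.c} alone gives a $\Phi_*$-free bound) and near $r = \ep$ (where the iterated decay does), taking the supremum over $r \in [\ep, 1/2]$ absorbs $\Phi_*$ into the bound $\Phi_* \le C(\Phi(\tfrac12) + B_0)$.

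With $\Phi_*$ controlled, the integral bound is immediate: $\int_\ep^{1/2} H(t)/t \, dt \le C \sum_k a_k \le C(\Phi_* + B_0) \le C(\Phi(\tfrac12) + B_0)$ via the dyadic-scale comparison, and \eqref{est1.lem.iteration} follows. The main obstacle is the bootstrap step, i.e., producing an intermediate bound on $\Phi(r)$ whose $\Phi_*$-coefficient is strictly less than $1$. This requires simultaneously exploiting both smallness factors of the iteration — the geometric $2^{-k}$ coming from the $\tfrac12$ in \eqref{lem.iteration.assump.a} and the $(\ep/r)^\alpha$ from the source — and choosing the evaluation scale $r$ in the intermediate window where both are controlled. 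Carefully tracking the constants depending on $\theta$, $\alpha$, $\beta$ introduced when comparing dyadic scales to the $r_k$ scales is the technical heart of the argument.
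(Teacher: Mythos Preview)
Your bootstrap step has a genuine gap. After telescoping \eqref{lem.iteration.assump.f}, you obtain
\[
h(r) \le h(\tfrac12) + C_0 \sum_{\text{dyadic } s \ge r} H(s) \le C\Phi(\tfrac12) + C'(\Phi_* + B_0),
\]
where the constant $C'$ in front of $\Phi_*$ comes from $\sum_k a_k$ and is of order $1$ (it depends on $C_0,\alpha,\beta,\theta$ but is \emph{not} small). When you then invoke \eqref{lem.iteration.assump.e}, the resulting bound on $\Phi(r)$ contains the term $C_0 h(r)$, contributing $C_0 C'\,\Phi_*$. Choosing the evaluation scale cleverly only makes the $H(r)$ contribution small; it does nothing to the $h(r)$ contribution, because the telescoping sum has already accumulated all scales. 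So you end up with $\Phi(r) \le (C_0 C' + \text{small})\Phi_* + C(\Phi(\tfrac12)+B_0)$ with $C_0 C'$ not controlled below $1$, and the absorption fails.

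The paper closes the loop differently: rather than bootstrapping on $\Phi_*$, it takes $\int H(t)/t\,dt$ as the primary unknown. One first shows $h(r)\lesssim \Phi(\tfrac12)+B_0+\int_r^{1/2} H(t)/t\,dt$ directly from \eqref{lem.iteration.assump.f}, then integrates \eqref{lem.iteration.assump.a} over $t\in[\ep/\delta,\delta]$. The $\Phi(16t)$ term is bounded via \eqref{lem.iteration.assump.e} and the estimate for $h$, producing on the right a factor
\[
\int_{\ep/\delta}^{\delta}\Big(\frac{\ep}{t}\Big)^{\alpha}\frac{dt}{t}\cdot\int H(s)/s\,ds \;\lesssim\; \delta^{\alpha}\int H(s)/s\,ds.
\]
The point is that the $(\ep/t)^\alpha$ weight in \eqref{lem.iteration.assump.a} supplies the smallness $\delta^\alpha$ \emph{after integration}, so by choosing $\delta$ small one gets a coefficient strictly less than $1$ on $\int H/t\,dt$ and can absorb. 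Once $\int H/t\,dt$ is bounded by $\Phi(\tfrac12)+B_0$, the bound on $\sup\Phi$ follows from \eqref{lem.iteration.assump.e} and the bound on $h$. Your discrete-sum approach loses exactly this smallness, because summing $\sum_k(\ep/r_k)^\alpha$ gives $O(1)$ rather than $o(1)$.
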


%
%
%
\begin{proofx}{Theorem \ref{theo.lip.nonlinear}}
	In the following proof, we actually only need to show (\ref{est2.theo.lip.nonlinear}) for the case $N_0 \ep \le r \le 1/N_1$ for some $N_0, N_1 \ge 2$. The case $1/2 \ge r\ge 1/N_1$ follows trivially by enlarging the size of the cube and a standard pressure estimate (see Remark \ref{rmk.Standard.P.est}); the case $\ep \le r\le N_0 \ep$ follows from the case $r = N_0 \ep$. From the previous lemmas, we can choose $N_0 = 4$ and $N_1 = 16$. Hence, we may assume without loss of generality that $r\in [4\ep, \frac{1}{16}]$.

	We apply Lemma \ref{lem.iteration} to $H(r) = H(u^\ep,p^\ep;r)$ and $\Phi(r) = \Phi(u^\ep,p^\ep; r)$. 
	Choose $\theta$ sufficiently small so that we have $C\theta^\alpha\le 1/2$ in \eqref{est1.lem.excess.decay} in Lemma \ref{lem.excess.decay}.
	We need to verify the conditions in Lemma \ref{lem.iteration}. Note that \eqref{lem.iteration.assump.b} is obvious and \eqref{lem.iteration.assump.d}-\eqref{lem.iteration.assump.f} follow from Lemma \ref{lem.h}. To verify \eqref{lem.iteration.assump.a} from Lemma \ref{lem.excess.decay} and verify \eqref{lem.iteration.assump.c} from Lemma \ref{lem.Phi} (with $\ep$ replaced by $4\ep$), it suffices to note that Theorem \ref{thm.MFL3} implies
	\begin{equation}\label{est.MFep.large-scale}
		\bigg(\dashint_{{Q_{r}}} |\mathcal{M}^2_{\ep} [F^\ep]|^3\bigg)^{1/3} \le C (M+M^{4+2\beta+2\delta}) r^\beta,
	\end{equation}
	for any $\beta\in (0,2)$, $\delta\in(0,1)$ with $\beta+\delta<2$ and $r\in [\ep.\frac12]$. Hence, we may apply Lemma \ref{lem.iteration} with $B_0 = C(M + M^{4+2\beta+2\delta})$ to obtain
	\begin{equation}\label{est.HPHi}
		\begin{aligned}
			&\int_{4\ep}^{1/2} \frac{H(u^\ep, p^\ep; t)}{t} d t + \sup_{{{r\in[4\ep,1/2]}}}\Phi(u^\ep, p^\ep; r) \\
			&\le C\big(\Phi\big(u^\ep, p^\ep;\frac12) + (M + M^{4+2\beta+2\delta})\big) \\
			& \le C(M + M^{4+2\beta+2\delta}),
		\end{aligned}
	\end{equation}
	where in the last inequality, we have used a standard pressure estimate (see Remark \ref{rmk.Standard.P.est}) to bound $\Phi\big(u^\ep, p^\ep;\frac12)$ by $C(M+M^2)$. Hence, for $r\in [4\ep,\frac{1}{16}]$,
	\begin{align*}
		\bigg(\dashint_{B^\ep_{r,+}} |\nabla u^\ep|^2\bigg)^{1/2}
		&\le 
		C\big(\Phi\big(u^\ep, p^\ep;\frac12) + (M + M^{4+2\beta+2\delta})\big) \\
		& \le C(M + M^{4+2\beta+2\delta}),
	\end{align*}
	which proves the desired estimate of the velocity $u^\ep$.

	Next, we give an estimate for the pressure. For $r\in[\ep,\frac14]$, we observe that
	\begin{align*}
		\bigg(\dashint_{B^\ep_{r,+}} |p^\ep-\dashint_{B^\ep_{1/2,+}} p^{\ep}|^2\bigg)^{1/2}
		\le
		\bigg(\dashint_{B^\ep_{r,+}} |p^\ep-\dashint_{B^\ep_{r,+}} p^{\ep}|^2\bigg)^{1/2} 
		+ \bigg|\dashint_{B^\ep_{r,+}} p^{\ep} - \dashint_{B^\ep_{1/2,+}} p^{\ep}\bigg|.
	\end{align*}
	Using the technique as in \eqref{est.pe-qr} and by the Bogovskii lemma, the desired estimate of $\nabla u^\ep$ just proved and (\ref{est.MFep.large-scale}), we have
	\begin{align*}
		\bigg(\dashint_{B^\ep_{r,+}} |p^\ep-\dashint_{B^\ep_{r,+}} p^{\ep}|^2\bigg)^{1/2} 
		&\le
		C \bigg\{\bigg(\dashint_{B^\ep_{2r,+}} |\nabla u^\ep|^2\bigg)^{1/2}
		+ \bigg(\dashint_{B^\ep_{2r,+}} |F^\ep|^2\bigg)^{1/2} \bigg\} \\
		& \le C(M + M^{4+2\beta+2\delta}).
	\end{align*}
	On the other hand, let $N\in\N$ 
	be such that $2^{N}r\in[{\frac1{32}, \frac1{16}}]$. Then
	\begin{align*}
		\bigg|\dashint_{B^\ep_{r,+}} p^{\ep} - \dashint_{B^\ep_{1/2,+}} p^{\ep}\bigg|
		\le 
		\sum_{j=0}^{N-1} \bigg|\dashint_{B^\ep_{2^jr,+}} p^{\ep} - \dashint_{B^\ep_{2^{j+1}r,+}} p^{\ep}\bigg| 
		+ \bigg|\dashint_{B^\ep_{2^Nr,+}} p^{\ep} - \dashint_{B^\ep_{1/2,+}} p^{\ep}\bigg|.
	\end{align*}
	Now, observe that for each $j = {0,1,\cdots,N-1}$,
	\begin{align*}
		\bigg|\dashint_{B^\ep_{2^jr,+}} p^{\ep} - \dashint_{B^\ep_{2^{j+1}r,+}} p^{\ep}\bigg|
		&\le
		4\int_{2^{j+3}r}^{2^{j+4}r}
		\frac{1}{\tilde{r}} \sup_{s,t\in[1/16,1/4]}
		\bigg|\dashint_{B^\ep_{s\tilde{r},+}} p^{\ep} - \dashint_{B^\ep_{t\tilde{r},+}} p^{\ep}\bigg| \dd \tilde{r}.
	\end{align*}
	Thus, \eqref{est.HPHi} leads to
	\begin{align*}
		\sum_{j=0}^{{N-1}} \bigg|\dashint_{B^\ep_{2^jr,+}} p^{\ep} - \dashint_{B^\ep_{2^{j+1}r,+}} p^{\ep}\bigg| 
		&\le
		4\int_{\ep}^{1/2}
		\frac{1}{\tilde{r}} \sup_{s,t\in[1/16,1/4]}
		\bigg|\dashint_{B^\ep_{s\tilde{r},+}} p^{\ep} - \dashint_{B^\ep_{t\tilde{r},+}} p^{\ep}\bigg| \dd \tilde{r} \\
		&\le
		C(M + M^{4+2\beta+2\delta}).
	\end{align*}
	Finally, by the same trick as in \eqref{est.pe-qr}, we obtain
	\begin{align*}
		\begin{aligned}
			\bigg|\dashint_{B^\ep_{2^Nr,+}} p^{\ep} - \dashint_{B^\ep_{1/2,+}} p^{\ep}\bigg|
			&\le
			C\bigg\{\bigg(\dashint_{B^\ep_{1,+}} |\nabla u^\ep|^2\bigg)^{1/2}
			+ \bigg(\dashint_{B^\ep_{1,+}} |F^\ep|^2\bigg)^{1/2} \bigg\} \\
			& \le C(M+M^2).
		\end{aligned}
	\end{align*}
	Summarizing up the above estimates, we obtain the desired estimate for the pressure $p^\ep$. 
	This completes the proof of Theorem \ref{theo.lip.nonlinear}.
\end{proofx}
%

	%
	\section{Boundary layers in bumpy John domains}
	\label{sec.bl}
	%
	As seen in the previous section, the no-slip Stokes polynomials of degree $1$ (i.e., the basis of $\mathscr{P}_1$)
	\begin{equation}\label{Taylor.poly.deg1}
		P^{(11)} = (x_3, 0 ,0),\quad P^{(12)} = (0, x_3 ,0)
	\end{equation}
	are the key ingredients for the large-scale Lipschitz estimate. 
	Their trace on non-flat bumpy boundaries can be corrected by adding boundary layer correctors. Consequently, one obtains polynomial solutions of the Stokes equations in the bumpy John domains considered in this paper. 
	
	In Subsection \ref{subsec.Taylor.polys}, we determine the no-slip Stokes polynomials of $2$ by explicit computation. The boundary layer equations are introduced as well. Subsections \ref{subsec.1st.BL} and \ref{subsec.2nd.BL} are respectively devoted to the analysis of the first-order and the second-order boundary layer equations. The estimates for the Green function, obtained in Appendix \ref{app.green} using the large-scale Lipschitz estimate of Theorem \ref{theo.lip.nonlinear}, play a fundamental role. We summarize the estimates for the boundary layers in Subsection \ref{subsec.Ests.BL}. These estimates are key to the theory of higher-order regularity in Section \ref{sec.higher}.

	%
	\subsection{No-slip Stokes polynomials}\label{subsec.Taylor.polys}
	%
	Let $u$ be a solution of $-\Delta u + \nabla p = 0 $ and $ \nabla\cdot u = 0$ in $Q_{1,+}(0)$ 
	and $u = 0$ on $
	\partial\R^3_+\cap B_1(0)$. The real analyticity of $u$ in $Q_{1/2,+}(0)$ 
	is classical and well-known; see \cite{Mas67,Giga83}. Here we want to identify the form of the no-slip Stokes polynomials 
	of degree $2$ of $u$ at $0$.

	Let $P(x) = (P_1(x), P_2(x), P_3(x))$ be the no-slip Stokes polynomials 
	of degree $2$ of $u$ at $0$. First of all, since $u = 0$ on $\partial\R^3_+$, 
	then we must have
	\begin{equation}\label{eq.P}
		\begin{aligned}
			P_1(x) &= a_1 x_3 + b_{11}x_1 x_3 + b_{12}x_2 x_3 + b_{13} x_3^2,\\
			P_2(x) &= a_2 x_3 + b_{21}x_1 x_3 + b_{22}x_2 x_3 + b_{23} x_3^2,\\
			P_3(x) &= b_{31}x_1 x_3 + b_{32}x_2 x_3 + b_{33} x_3^2.
		\end{aligned}
	\end{equation}
	The linear part is familiar. So let us concentrate on the quadratic part. Note that there are no terms $x_1^2, x_2^2$, or $x_1 x_2$, because $u = 0$ on the boundary. If there is no further restriction on $u$, then there are $9$ free variables $b_{ij}, 1\le i,j\le 3$, as shown in \eqref{eq.P}.
	If $\nabla\cdot u =0$ in $Q_{1/2,+}(0)$, 
	then we claim that $P$ is also divergence-free. If this claim is true, then we must have
	\begin{equation*}
		b_{11} + b_{22} + 2b_{33} = 0, \quad b_{31} = b_{32} = 0.
	\end{equation*}
	Because of this restriction on the coefficients, the dimension for the homogeneous no-slip Stokes polynomials of degree $2$ becomes $6$. We can find basis polynomials as follows:
	\begin{equation}\label{Taylor.poly.deg2}
		\begin{aligned}
			&P^{(21)} = (x_2x_3, 0 ,0),\quad P^{(22)} = (x_3^2, 0 ,0),\\
			&P^{(23)} = (0, x_1x_3, 0), \quad P^{(24)} = (0, x_3^2, 0),\\
			&P^{(25)} = (-2x_1 x_3, 0 , x_3^2), \quad P^{(26)} = (0, -2x_2 x_3, x_3^2).
		\end{aligned}
	\end{equation}
	Note that these polynomials are solutions to the stationary Stokes system with associated pressure $L^{(2j)}$ 
	given by
	\begin{equation}\label{Taylor.poly.deg2.pressure}
		\begin{aligned}
			&L^{(2j)}(x) = 0, \quad \text{for } j=1,3.\\
			&L^{(22)}(x) = 2x_1,\ L^{(24)}(x) = 2x_2,\\
			\text{and } &L^{2j}(x) = 2x_3, \quad \text{for } j = 5,6.
		\end{aligned}
	\end{equation}

	Now, let us show the claim that $P$ is divergence-free. Since $u = P + O(|x|^3)$, we have that $\nabla\cdot u = \nabla\cdot P + O(|x|^2) = 0$ in $\{x_3\ge 0\}\cap B_{1/2}(0)$. Because of $\nabla\cdot P = C_0 + C_1\cdot x$ for some $C_0\in \R$ and $C_1\in \R^3$, we see that $C_0 + C_1\cdot x = O(|x|^2)$. Hence we must have $C_0 = 0$ and $C_1 = 0$; otherwise, it is easy to find a contradiction by taking $x = \delta C_1$ or $-\delta C_1$ for sufficiently small $\delta$. 
	
	Similarly to the linear solution pairs $(P^{(1j)}, 0)$, the fundamental fact about the polynomial pairs constructed above is that $(P^{(2j)}, L^{(2j)})$ are quadratic solutions of Stokes equations in the upper half-space $\R^3_{+}$, namely
	\begin{equation*}
		\left\{
		\begin{array}{ll}
			-\Delta P^{(2j)}+\nabla L^{(2j)}=0 &\text{in } \R^3_+ \\
			\nabla\cdot P^{(2j)}=0 &\text{in } \R^3_+ \\
			P^{(2j)}=0 &\text{on } \partial\R^3_+.
		\end{array}
		\right.
	\end{equation*}
	To study the $C^{1,\gamma}$ and $C^{2,\gamma}$ regularity of \eqref{intro.NS.ep}, the linear and quadratic solutions of Stokes equations in $\R^3_+$ are not enough. We need to construct linear and quadratic solutions in $\Omega$ which vanish on $\partial \Omega$, where $\Omega$ is a bumpy John half-space in the sense of Definition \ref{def.John2}. These solutions will be constructed based on $(P^{(1j)}, 0)$ and $(P^{(2j)}, L^{(2j)})$. Observe that $P^{(ij)}$ does not vanish on $\partial \Omega$. Therefore we have to introduce new correctors, called boundary layers, in order to correct the boundary discrepancy on $\partial\Omega$. Precisely, we will 
	show the existence of weak solutions (with corresponding sublinear or subquadratic growth) of the following boundary layer equations
	\begin{equation}
		\tag{BL$_{i {\rm th}}^{(j)}$}\label{BLij}
		\left\{
		\begin{array}{ll}
			-\Delta v+\nabla q=0 &\text{in } \Omega \\
			\nabla\cdot v=0 &\text{in } \Omega \\
			v+P^{(ij)}=0 &\text{on } \partial\Omega,
		\end{array}
		\right.
	\end{equation}
	where $i\in\{1,2\}$. Here a couple $(v,q)\in H^1_{{\rm loc}}(\overline{\Omega})^3\times L^2_{{\rm loc}}(\overline{\Omega})$ is said to be a weak solution of \eqref{BLij} if it satisfies: (i) $\nabla\cdot v=0$ in the sense of distributions, (ii) $\chi(v+P^{(ij)})\in H^1_0(\Omega)^3$ for any $\chi\in C^\infty_0(\R^3)$, and (iii) the weak formulation:
	\begin{align}\label{BLij.weak}
		\int_\Omega \nabla v \cdot\nabla \phi
		- \int_\Omega q (\nabla\cdot \phi)
		=0, \quad \text{for any } \phi\in C^\infty_0(\Omega)^3.
	\end{align}
	%

	\subsection{First-order boundary layers}\label{subsec.1st.BL}
	We consider the first-order boundary layer equations
	\begin{equation}\tag{BL$_{{\rm 1st}}^{(j)}$}\label{BL1j}
		\left\{
		\begin{array}{ll}
			-\Delta v+\nabla q=0 &\text{in }\Omega \\
			\nabla\cdot v=0 &\text{in }\Omega \\
			v+P^{(1j)}=0 &\text{on }\partial\Omega,
		\end{array}
		\right.
	\end{equation}
	for $j\in\{1,2\}$. The solvability of \eqref{BL1j} follows from the next statement.
	\begin{theorem}\label{prop.BL1j}
		Let $L\in(0,\infty)$ and $\Omega$ be a bumpy John domain with constant $L$ according to Definition \ref{def.John2}. For $j\in\{1,2\}$, there exists a unique weak solution $(v^{(1j)},q^{(1j)})\in H^1_{{\rm loc}}(\overline{\Omega})^3\times L^2_{{\rm loc}}(\overline{\Omega})$ of \eqref{BL1j}
		satisfying
		\begin{align}\label{est1.prop.BL1j}
			&\sup_{\xi\in\Z^2}
			\int_{\Omega\cap(\xi+(0,1)^2)\times\R}
			\big(|\nabla v^{(1j)}|^2+| q^{(1j)}|^2\big)
			\le 
			C,
		\end{align}
		where the constant $C$ depends only on $L$.
	\end{theorem}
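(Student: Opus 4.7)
The plan is to construct $(v^{(1j)}, q^{(1j)})$ via a boundary-integral representation built from the Green function and pressure kernel for the Dirichlet Stokes problem in $\Omega$ that are constructed in Appendix~\ref{app.green}, together with their large-scale decay estimates (Propositions~\ref{prop.Green.G}, \ref{prop.DG}, \ref{prop.pressureest}). Those appendix results themselves rest on the large-scale Lipschitz estimate of Theorem~\ref{theo.lip.nonlinear} as an a priori input. In this way we bypass entirely the local energy method in a bumpy channel of \cite{GVM10, KP18, HP}, which relies essentially on the graph structure of $\partial\Omega$ and which we are unable to adapt to the John setting.

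\emph{Reduction and existence.} I would first reduce to a zero-trace problem. Let $\chi = \chi(x_3)$ be a smooth cut-off equal to $1$ on $\{x_3 \le 1\}$ and to $0$ on $\{x_3 \ge 2\}$, and set $\Phi^{(j)}(x) := \chi(x_3) P^{(1j)}(x)$. Since the third component of $P^{(1j)}$ is zero and $\nabla \cdot P^{(1j)} = 0$, one has $\nabla \cdot \Phi^{(j)} = 0$ automatically, so no Bogovskii correction is needed. Writing $v^{(1j)} =: -\Phi^{(j)} + w^{(j)}$, the pair $(w^{(j)}, q^{(1j)})$ must solve the Dirichlet Stokes problem in $\Omega$ with zero boundary trace and forcing $f^{(j)} := -\Delta \Phi^{(j)}$, which is smooth and supported in the Lipschitz slab $\{1 \le x_3 \le 2\}$. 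I would then \emph{define} $w^{(j)}$ and $q^{(1j)}$ by convolution of $f^{(j)}$ with the Green function and with the pressure kernel of Appendix~\ref{app.green}, and verify by their defining properties that $(w^{(j)}, q^{(1j)})$ solves the reduced system in the weak sense of \eqref{BLij.weak}.

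\emph{Uniform cell bound and uniqueness.} For the estimate \eqref{est1.prop.BL1j}, the key point is the pointwise tail
\begin{equation*}
|\nabla w^{(j)}(x)| + |q^{(1j)}(x)| \lesssim \int_{\supp f^{(j)}} \frac{dy}{|x-y|^{2}} \lesssim (1+|x|)^{-1}\qquad \text{as }|x|\to\infty,
\end{equation*}
produced by the decay of $\nabla_x G$ and of the pressure kernel given by Propositions~\ref{prop.DG}--\ref{prop.pressureest}; although $f^{(j)}$ is only horizontally infinite and not compactly supported, its dependence solely on $x_3$ allows the vertical-slab integrals to be controlled by fiber-wise integration using the same decay. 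Combining this with boundary Caccioppoli-type estimates close to $\partial\Omega$ and the Bogovskii lemma of Theorem~\ref{theo.acosta} on a John subdomain of each strip yields the claimed $\xi$-uniform bound. For uniqueness, if $v_1, v_2$ both satisfy \eqref{BL1j} and \eqref{est1.prop.BL1j}, then $w := v_1 - v_2$ is a weak homogeneous Stokes solution in $\Omega$ with $w=0$ on $\partial\Omega$ and $\sup_{\xi\in\Z^2}\int_{\Omega\cap(\xi+(0,1)^2)\times\R}|\nabla w|^2 < \infty$. Extending $w$ by zero across $\partial\Omega$ and applying the Poincar\'e inequality \eqref{e.poinca} gives $R^{-1}(\dashint_{B_R(0)\cap\Omega}|w|^2)^{1/2} \to 0$ as $R\to\infty$, and Corollary~\ref{cor.liouville} forces $w \equiv 0$.

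\emph{Main obstacle.} The hard part is converting the large-scale averaged control of the Green function $G$ in Appendix~\ref{app.green} into the sharp pointwise decay of $\nabla_x G$ and of the pressure kernel needed to make the representation integrals square-integrable on vertical unit strips uniformly in $\xi$. This upgrade from averaged to pointwise regularity at every mesoscopic scale is precisely the role of Theorem~\ref{theo.lip.nonlinear}, and is what makes the whole strategy viable in the bumpy John setting without a graph structure; the absence of such a substitute is exactly why the local energy technique of \cite{GVM10,KP18,HP} breaks down here.
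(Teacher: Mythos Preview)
Your overall strategy coincides with the paper's: lift the boundary data by a vertical cut-off, represent the resulting zero-trace problem via the Green pair $(G,\Pi)$ of Appendix~\ref{app.green}, and read off the strip bound \eqref{est1.prop.BL1j} from the decay of $(G,\Pi)$. Uniqueness via Corollary~\ref{cor.liouville} is also the paper's argument. However, several concrete points in your execution would fail as written.

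\textbf{The displayed tail estimate is wrong.} With forcing supported in an infinite horizontal slab $S$, the bound $|\nabla_x G(x,y)|\le C|x-y|^{-2}$ gives $\int_S |x-y|^{-2}\,dy=+\infty$, so your displayed inequality is vacuous. You must use the second branch in Proposition~\ref{prop.DG}(i), namely $|\nabla_x G(x,y)|\le C\delta(y)|x-y|^{-3}$; since $\delta(y)$ is bounded on $S$ this yields the convergent integral $\int_S|x-y|^{-3}\,dy\approx 1/x_3$. In particular the decay is only \emph{vertical}: you get $|\nabla w(x)|+|q(x)|\le C/x_3$ for $x_3$ large, not $(1+|x|)^{-1}$. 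There is no horizontal decay whatsoever, and none is needed for \eqref{est1.prop.BL1j}. Relatedly, your slab $\{1\le x_3\le 2\}$ sits exactly at the threshold of the pointwise estimates in Propositions~\ref{prop.DG}--\ref{prop.pressureest}, which require $y_3>2$; the paper places the cut-off so that $S=\{3\le x_3\le 4\}$ for precisely this reason.

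\textbf{The near-boundary step is the real work and you have not done it.} For $x$ in the channel $\Omega_{\le 8}$ the Green pair admits only the \emph{averaged} bounds of Propositions~\ref{prop.DG}(iii) and \ref{prop.pressureest}(ii), not pointwise ones, so ``boundary Caccioppoli plus Bogovskii'' is not enough of a plan. The paper's Step~3 proceeds by splitting, for each $\xi\in\Z^2$, the slab forcing into a near part $F\chi_{Q_{40}(\xi)}$ and a far part; the far contribution is controlled directly by the averaged Green estimates, while the near contribution $(w_{\rm sing},q_{\rm sing})$ is handled by the global energy identity together with a Bogovskii estimate on a John domain $\Omega_{20}(\xi)$. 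The delicate point you miss is pinning down the additive constant of $q_{\rm sing}$ in each strip: the paper compares $\dashint_{\Omega_{20}(\xi)}q_{\rm sing}$ to $\dashint_{Q_1^*(\xi)}q_{\rm sing}$ with $Q_1^*(\xi)\subset\{x_3\ge 8\}$, where the pointwise pressure bound is already available. Your ``Main obstacle'' paragraph has this backwards: the appendix already supplies the pointwise bounds away from the boundary; the genuine difficulty is the near-boundary control, which requires the averaged estimates and this near/far decomposition.
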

	
	In the work \cite{HP} the well-posedness of the system \eqref{BL1j} was proved over Lipschitz graphs by a domain decomposition method: coupling of the Stokes problem in a bumpy channel $\Omega\cap\{x_3<0\}$ with the Stokes problem in the flat half-space $\{x_3>0\}$ via a nonlocal Dirichlet to Neumann boundary condition at the interface $\{x_3=0\}$. We face considerable technical difficulties when trying to adapt this strategy to the case of bumpy John domains. Indeed, the local energy estimates in the bumpy channel require to estimate the pressure, or to work with divergence-free test functions. In either case, we need to construct a Bogovskii operator for a sequence of exhausting domains containing $\Omega\cap\{|x'|\leq k,\, x_3<0\}$ with a constant uniform in $k$. The construction of the Bogovskii operator of Theorem \ref{theo.acosta} by \cite{ADM06} relies on connecting any point in the bumpy John domain to a fixed neighborhood of a reference point $\tilde x$. Such a procedure gives, for a slim domain such as  $\Omega\cap\{|x'|\leq k,\, x_3<0\}$, a constant in the estimate \eqref{e.estbogtheom} that scales proportionately to the horizontal size $k$ of the domain. We are unable to take advantage of the small vertical extent of the domain to provide a modified construction of the Bogovskii operator. This would be needed to carry out the downward iteration on the local energy estimates, also called Saint-Venant estimates, in \cite{HP}.
	
	Here we take advantage of the fact that we already proved large-scale Lipschitz estimates by the quantitative method, without relying on boundary layers as in \cite{HP}. Therefore, we develop a new strategy using the large-scale Lipschitz estimate to prove the existence of solutions to \eqref{BL1j}. We rely on the Green kernel estimates proved in Appendix \ref{app.green}. For $N\in\R$, let us set 
	\begin{equation}\label{e.defOmegaN}
		\Omega_{\leq N}:=\Omega\cap\{ z_3 \leq N \},
		\qquad \Omega_{\geq N}:=\Omega\cap\{ z_3 \geq N \}.
	\end{equation}
	We also define $\Omega_{< N}$ and $\Omega_{> N}$ in a similar manner.

	\begin{proof}[Proof of Theorem \ref{prop.BL1j}]\ We denote $(v^{(1j)},q^{(1j)})$ by $(v,q)$, not to burden the notation. 
		
		\noindent (\textbf{Uniqueness}) Let $P^{(1j)}=0$ in \eqref{BL1j}. Then the Liouville-type result, Corollary \ref{cor.liouville}, implies $v=0$ in the class
		\begin{align*}
			&\sup_{\xi\in\Z^2}
			\int_{\Omega\cap(\xi+(0,1)^2)\times\R}
			|\nabla v|^2 < \infty.
		\end{align*}
		This implies $q=0$ in the class \eqref{est1.prop.BL1j} 
		as well from the equations.

		\noindent (\textbf{Existence})\\
		\noindent{\bf{Step 1: lifting the boundary data.}} Let ${\eta_-(x_3)}$ be a smooth cut-off function such that 
		\begin{equation}\label{def.eta1}
			\begin{aligned}
				&\text{$\eta_-(t)$ is smooth and non-negative,} \\
				&\text{$\eta_-(t)={1}$ if $t<3$ and $\eta_-(t)=0$ if $t>4$.}
			\end{aligned}
		\end{equation}
		By writing $w = v + {\eta_-} P^{(1j)}$, we see that $w$ satisfies
		\begin{equation}\label{eq1.proof.prop.BL1j}
			\left\{
			\begin{array}{ll}
				-\Delta w+\nabla q= F:= -\Delta({\eta_-} P^{(1j)}) &
				\text{in }\Omega \\
				\nabla\cdot w=0 &
				\text{in }\Omega \\
				w = 0 &
				\text{on }\partial\Omega.
			\end{array}
			\right.
		\end{equation}
		Notice that $F$ is a bounded function supported in a slim channel $S:= \{x\in \R^3|\ 3\le x_3 \le 4 \}$. Thus, the problem is reduced to finding a weak solution of \eqref{eq1.proof.prop.BL1j} satisfying
		\begin{align}\label{est1.proof.prop.BL1j}
			&\sup_{\xi\in\Z^2}
			\int_{\Omega\cap(\xi+(0,1)^2)\times\R}
			\big(|\nabla w|^2+|q|^2\big)
			\le 
			C\|F\|_{L^\infty}^2.
		\end{align}
		We rely on the representation of $w$ and $q$ by the Green kernel
		\begin{equation*}
			w(x) = \int_{\Omega} G(x,y) F(y) \dd y,\qquad
			q(x) = \int_{\Omega} \Pi(x,y)\cdot F(y) \dd y.
		\end{equation*}
		Thanks to the properties of the Green function $(G,\Pi)$, it suffices to prove that $\nabla w$ and $q$ are well-defined and satisfy the estimate \eqref{est1.proof.prop.BL1j}. 
		
		In the following proof, we take the zero-extension of $(G,\Pi)$ as is done in Appendix \ref{app.green}.
		
		\smallskip
		
		\noindent{\bf{Step 2: estimate on $\Omega_{\ge 8}$.}} 
		For any $y\in S$ and $x\in \Omega_{\ge 8}$, by Proposition \ref{prop.DG} (i)
		\begin{equation*}
			|\nabla_x G(x,y)|
			\le \frac{C}{|x-y|^3}.
		\end{equation*}
		Then it follows from the H\"{o}lder's inequality that
		\begin{equation*}
			\begin{aligned}
				|\nabla w(x)| 
				& \le  
				\int_{S} |\nabla_x G(x,y)| |F(y)| \dd y \\
				& \le  \|F\|_{L^\infty} 
				\int_{S} \frac{C}{|x-y|^3} \dd y \\
				& \le \frac{C \|F\|_{L^\infty}}{x_3}.
			\end{aligned}
		\end{equation*}
		A similar computation using Proposition \ref{prop.pressureest} (i) gives the same bound for the pressure $q(x)$ with $x_3 \ge 8$. Consequently,
		\begin{equation}\label{est2.proof.prop.BL1j}
			\begin{aligned}
				\sup_{\xi\in\Z^2}
				\int_{\Omega_{\ge 8}\cap(\xi+(0,1)^2)\times\R} 
				\big(|\nabla w|^2+|q|^2\big) 
				& \le C\|F\|_{L^\infty}^2.
			\end{aligned}
		\end{equation}
		
		\smallskip
		
		\noindent{\bf{Step 3: estimate on $\Omega_{\le 8}$.}} 
		Fix $\xi\in\Z^2$ arbitrarily. For simplicity, we denote the cubes in $\R^3$ centered at $(\xi,0)$ by $Q_R(\xi) = (\xi,0) + (-R,R)^3$. We would like to estimate $|\nabla w|$ and $|q|$ in the cube $Q_8(\xi)$.  Notice that $\Omega_{\le 8} \subset \cup_{\xi \in \Z^2\times \{ 0 \}} Q_8(\xi)$ with finite overlaps.

		Taking a cube $Q_{40}(\xi)$, we decompose $F$ into two parts as $F=F\chi_{Q_{40}(\xi)}+F(1-\chi_{Q_{40}(\xi)})$. Correspondingly, we decompose $(w,q)$ in singular and regular parts, namely $(w,q) =( w_{\rm sing},q_{\rm sing}) + (w_{\rm reg},q_{\rm reg})$, where
		\begin{equation*}
			\left\{
			\begin{aligned}
				w_{\rm sing}(x) &= \int_{\Omega} G(x,y) F(y)\chi_{Q_{40}(\xi)}(y) \dd y,\\
				q_{\rm sing}(x) &= \int_{\Omega} \Pi(x,y)\cdot F(y)\chi_{Q_{40}(\xi)}(y) \dd y,
			\end{aligned}
			 \right.
		\end{equation*}
		and
		\begin{equation*}
			\left\{
			\begin{aligned}
			w_{\rm reg}(x) &= \int_{\Omega} G(x,y) F(y) (1-\chi_{Q_{40}(\xi)}(y)) \dd y, \\
			q_{\rm reg}(x) & = \int_{\Omega} \Pi(x,y)\cdot F(y) (1-\chi_{Q_{40}(\xi)}(y)) \dd y.
		\end{aligned}
		\right.
		\end{equation*}
	
	To estimate the regular part $(w_{\rm reg},q_{\rm reg})$ in $Q_8(\xi)$, we use \eqref{est.DxG.ptwise} and \eqref{est.DG.3-1} in Proposition \ref{prop.DG} to obtain
	\begin{equation*}
	\bigg( \int_{Q_8(\xi)} |\nabla_x G(x,y)|^2 \dd x \bigg)^{1/2} \le \frac{C}{|(\xi,0)-y|^3},
	\end{equation*}
	for any $y\in S\setminus Q_{40}(\xi)$. As a result,
	\begin{equation}\label{est.wreg.Q8}
		\begin{aligned}
		\bigg( \int_{Q_8(\xi)} |\nabla w_{\rm reg}|^2 \bigg)^{1/2} & \le \| F\|_{L^\infty} \int_{S \setminus Q_{40}(\xi)} \bigg( \int_{Q_8(\xi)} |\nabla_x G(x,y)|^2 \dd x \bigg)^{1/2} \dd y \\
		& \le \| F\|_{L^\infty} \int_{S \setminus Q_{40}(\xi) } \frac{C}{|(\xi,0)-y|^3} \dd y \\
		& \le C\| F\|_{L^\infty}.
		\end{aligned}
	\end{equation}
	Similarly, by using \eqref{est.Pi-1} and \eqref{est.Pi-2}, we can derive the estimate of $q_{\rm reg}$,
	\begin{equation}\label{est.qreg.Q8}
		\bigg( \int_{Q_8(\xi)} |q_{\rm reg}|^2 \bigg)^{1/2} \le C\| F\|_{L^\infty}.
	\end{equation}

	Next, we consider the singular part $(w_{\rm sing},q_{\rm sing})$, which actually is a weak solution of
		\begin{equation*}\label{eq.wsing}
			\left\{
			\begin{array}{ll}
				-\Delta w_{\rm sing}+\nabla q_{\rm sing}= F\chi_{Q_{40}(\xi)} &
				\text{in }\Omega \\
				\nabla\cdot w_{\rm sing}=0 &
				\text{in }\Omega \\
				w_{\rm sing} = 0 &
				\text{on }\partial\Omega.
			\end{array}
			\right.
		\end{equation*}
		Note that the energy relation yields
		\begin{equation}\label{sing1.proof.prop.BL1j}
			\| \nabla w_{\rm sing} \|_{L^2(\Omega)} \le C\|F\|_{L^\infty},
		\end{equation}
		where $C$ is independent of $\xi$. This gives the local $L^2$ boundedness of $w_{\rm sing}$ in the channel $\Omega_{\le 8}$. On the other hand, the argument in Step 2, using \eqref{est.Pi-1}, implies that for any $x_3\ge 8$,
		\begin{equation}\label{sing2.proof.prop.BL1j}
			|q_{\rm sing}(x)|  \le \frac{C \|F\|_{L^\infty}}{x_3}.
		\end{equation}
		Let $\Omega_{20}(\xi)$ be the John domain given by Definition \ref{def.John2} satisfying
		\begin{equation*}
			\Omega \cap Q_{20}(\xi) \subset  \Omega_{20}(\xi) \subset \Omega \cap Q_{40}(\xi).
		\end{equation*}
		By the Bogovskii lemma and \eqref{sing1.proof.prop.BL1j},
		\begin{equation}\label{sing3.proof.prop.BL1j}
			\bigg( \dashint_{\Omega_{20}(\xi)} 
			| q_{\rm sing} - \dashint_{\Omega_{20}(\xi)} q_{\rm sing} |^2 \bigg)^{1/2} 
			\le C\|F\|_{L^\infty}.
		\end{equation}
		On the other hand, let $Q_1^*(\xi) = (\xi,10) + (-1,1)^3$. By \eqref{sing2.proof.prop.BL1j},
		\begin{equation}\label{sing4.proof.prop.BL1j}
			\bigg| \dashint_{{Q_1^*(\xi)}} q_{\rm sing} \bigg| \le C\|F\|_{L^\infty}.
		\end{equation}
		Since ${Q_1^*(\xi)} \subset \Omega_{20}(\xi)$, by a familiar argument and \eqref{sing3.proof.prop.BL1j}, we have
		\begin{equation}\label{sing5.proof.prop.BL1j}
			\begin{aligned}
				\bigg| \dashint_{Q_1^*({\xi})} q_{\rm sing} - \dashint_{\Omega_{20}(\xi)} q_{\rm sing} \bigg| 
				& \le \dashint_{Q_1^*({\xi})} |q_{\rm sing} - \dashint_{\Omega_{20}(\xi)} q_{\rm sing}| \\
				& \le C\dashint_{\Omega_{20}(\xi)} |q_{\rm sing} - \dashint_{\Omega_{20}(\xi)} q_{\rm sing}| \le C\|F\|_{L^\infty}.
			\end{aligned}
		\end{equation}
		This, together with \eqref{sing4.proof.prop.BL1j}, implies
		\begin{equation}\label{sing6.proof.prop.BL1j}
			\bigg| \dashint_{\Omega_{20}(\xi)} q_{\rm sing} \bigg| \le C\|F\|_{L^\infty}.
		\end{equation}
		Now, combining \eqref{sing3.proof.prop.BL1j} and \eqref{sing6.proof.prop.BL1j}, we obtain
		\begin{equation}\label{sing7.proof.prop.BL1j}
			\begin{aligned}
				\bigg( \dashint_{\Omega\cap Q_8(\xi)} |q_{\rm sing}|^2 \bigg)^{1/2} 
				& \le \bigg( \dashint_{\Omega\cap Q_8(\xi)} |q_{\rm sing} - \dashint_{\Omega_{20}(\xi)} q_{\rm sing}|^2 \bigg)^{1/2} + \bigg| \dashint_{\Omega_{20}(\xi)} q_{\rm sing} \bigg|  \\
				& \le C\bigg( \dashint_{\Omega_{20}(\xi)} |q_{\rm sing} - \dashint_{\Omega_{20}(\xi)} q_{\rm sing}|^2 \bigg)^{1/2} + \bigg| \dashint_{\Omega_{20}(\xi)} q_{\rm sing} \bigg| \\
				&  \le C\|F\|_{L^\infty}
			\end{aligned}
		\end{equation}
		with $C$ independent of $\xi$.
		
		Now, combining (\ref{est.wreg.Q8}), (\ref{est.qreg.Q8}), (\ref{sing1.proof.prop.BL1j}) and (\ref{sing7.proof.prop.BL1j}), we have
		\begin{equation}\label{est3.proof.prop.BL1j}
			\begin{aligned}
				\sup_{\xi\in\Z^2} \int_{\Omega \cap Q_8(\xi)} 
				\big(|\nabla w|^2+|q|^2\big) 
				& \le C\|F\|_{L^\infty}^2.
			\end{aligned}
		\end{equation}
		Finally, the desired estimate \eqref{est1.proof.prop.BL1j} is a consequence of \eqref{est2.proof.prop.BL1j} and \eqref{est3.proof.prop.BL1j}. This concludes the proof of Theorem \ref{prop.BL1j}.
\end{proof}

	\subsection{Second-order periodic boundary layers}\label{subsec.2nd.BL} 

	Let $P^{(2j)}$ be a no-slip Stokes polynomial of degree $2$. We consider the second-order boundary layer equations
	\begin{equation}\tag{BL$_{{\rm 2nd}}^{(j)}$}\label{BL2j}
		\left\{
		\begin{array}{ll}
			-\Delta v+\nabla q=0, &x\in\Omega \\
			\nabla\cdot v=0, &x\in\Omega \\
			v+P^{(2j)}=0, &x\in\partial\Omega.
		\end{array}
		\right.
	\end{equation}
	Constructing solutions to \eqref{BL2j} for $j\in\{1,3,5,6\}$ with subquadratic growth is much more involved than constructing solutions to \eqref{BL1j} with sublinear growth. Indeed, for $j\in\{1,3,5,6\}$, the boundary data $-P^{(2j)}$ in \eqref{BL2j} grows linearly in the tangential direction. Solutions to \eqref{BL2j} for $j\in\{1,3,5,6\}$ are constructed using the first-order correctors solving \eqref{BL1j}, see below. For this construction we rely on convergence/decay properties of the first-order correctors away from the boundary. Hence, we analyze \eqref{BL2j} under periodicity assumptions. Periodicity ensures exponential convergence/decay away from the boundary.

	Throughout this subsection, we assume $\Omega$ is a periodic bumpy John domain according to Definition \ref{def.PeriodicJohn}. Consider the fundamental periodic domain
	\begin{equation*}
		\Omega_{{\rm p}}=\Omega\cap(-\pi,\pi]^2\times(-1,\infty).
	\end{equation*}
	We regard $\Omega_{{\rm p}}$ as a submanifold of $\mathbb{T}^2\times\R$, where $\mathbb{T}=\R/2\pi\Z$ is the flat torus. By definition, $\Omega_{\rm p}$ is open and connected in $\mathbb{T}^2\times\R$. Moreover, $\Omega_{{\rm p}}\cap \{x_3<2\}$ is diffeomorphic to a bounded John domain in $\R^3$. We thus have the Bogovskii operator on $\Omega_{{\rm p}}\cap \{x_3<2\}$. It is important to notice that there is a one-to-one correspondence between the functions in $\Omega_{\rm p}$ and the $(2\pi \Z)^2$-periodic functions in $\Omega$. We say a function $F$ defined in $\Omega$ is $(2\pi \Z)^2$-periodic if $F(x) = F(x + z)$ for any $x\in \Omega$ and $z\in (2\pi \Z)^2\times \{0\}$. In other words, if $f\in L^2_{\rm loc}(\Omega_{\rm p})$, then there exists a locally $L^2$ function $F$ defined in $\Omega$ so that
	$F(x) = f(\tilde{x})$, where $\tilde{x}$ is the representation in $\Omega_{\rm p}$ so that $x-\tilde{x} \in (2\pi \Z)^2\times \{ 0 \}$. In this sense and for convenience, we do not distinguish between $F$ and $f$.

	Denote by $L^2(\Omega_{\rm p})$ and $\widehat{H}^1_0(\Omega_{\rm p})$ the closure 
	of $C_0^\infty(\Omega_{\rm p})$ under the norms
	\begin{equation*}
		\| f \|_{L^2(\Omega_{\rm p})} := \bigg( \int_{\Omega_{\rm p}} |f|^2 \bigg)^{1/2}, \qquad  \| f \|_{\widehat{H}^1(\Omega_{\rm p})} := \bigg( \int_{\Omega_{\rm p}} |\nabla f|^2 \bigg)^{1/2}.
	\end{equation*}
	Clearly, $\widehat{H}^1_0(\Omega_{\rm p})$ is a Hilbert space with respect to the inner product $\langle\nabla f,\nabla g\rangle_{\Omega_{{\rm p}}}$.
	Here and below, 
	$$
	\langle f,g\rangle_{\Omega_{{\rm p}}}:=\int_{\Omega_{{\rm p}}}  f\cdot g.
	$$
	Let $\widehat{H}^1_{0,\sigma}(\Omega_{{\rm p}})$ be the subspace of ${\widehat{H}^1_0(\Omega_{\rm p})^3}$ that consists of all the divergence-free functions, namely, $\widehat{H}^1_{0,\sigma}(\Omega_{{\rm p}}) = \{ f\in {\widehat{H}^1_{0}(\Omega_{{\rm p}})^3} ~|~ \nabla\cdot f = 0 \}$.

	We now recall the Fourier series representation for the solutions of \eqref{BL1j} on the flat half-space $\{x_3>0\}$. The same formulas are obtained in \cite[Proposition 3]{HP} based on the periodic Poisson kernel.
	Note that \cite{HP} uses the fact that the equations are imposed on a domain whose boundary is given by the graph, but a similar proof is valid if we utilize the zero extension of the functions.
	%
	\begin{proposition}\label{prop.per.BL1j}
		Let $L\in(0,\infty)$ and $\Omega$ be a periodic bumpy John domain with constant $L$ according to Definition \ref{def.PeriodicJohn}. Then the weak solution $(v^{(1j)},q^{(1j)})$ of \eqref{BL1j} given by Theorem \ref{prop.BL1j} satisfies the following. 
		\begin{enumerate}[label=(\roman*)]
			\item $(v^{(1j)},q^{(1j)})$ is expanded in Fourier series in $\{x_3>0\}$ as
			\begin{equation}\label{rep1.prop.per.BL1j}
				\begin{split}
					v^{(1j)}(x) 
					& = \hat{v}^{(1j)}_{(0,0)}
					+ \sum_{k\in\Z^2\setminus\{(0,0)\}} 
					\Big(\hat{v}^{(1j)}_k 
					+ \begin{pmatrix} -ik \\ |k| \end{pmatrix}
					V^{(1j)}(k)
					x_3\Big)
					e^{-|k|x_3} e^{ik\cdot x'}, \\
					q^{(1j)}(x)
					& = 
					\sum_{k\in\Z^2\setminus\{(0,0)\}} 
					2|k| V^{(1j)}(k) e^{-|k|x_3} e^{ik\cdot x'}, 
				\end{split}
			\end{equation}
			where $V^{(1j)}(k)$ is a scalar function of $k$ defined by
			\begin{equation}\label{rep2.prop.per.BL1j}
				\begin{split}
					V^{(1j)}(k)
					= 
					\hat{v}^{(1j)}_{k,3} - i \frac{k}{|k|}\cdot (\hat{v}^{(1j)}_k)',
				\end{split}
			\end{equation}
			and moreover, $\hat{v}^{(1j)}_k$ is the Fourier series coefficient of $v^{(1j)}(x',0)${\rm :}
			\begin{align}\label{e.deffouriercoef}
				\hat{v}^{(1j)}_k 
				= \frac{1}{(2\pi)^2} \int_{(-\pi,\pi)^2} v^{(1j)}(x',0) e^{-ik\cdot x'} \dd x', \quad k\in\Z^2.
			\end{align}
			\item The third component of $\hat{v}^{(1j)}_{(0,0)}$ is zero. Particularly, by setting 
			\begin{align*}
				\hat{v}^{(1j)}_{(0,0)}
				=:\alpha^{(1j)}=(\alpha^{(1j)}_1, \alpha^{(1j)}_2, 0),
			\end{align*}
			we have the exponential convergence
			\begin{equation}\label{conv.prop.per.BL1j}
				\begin{split}
					&|v^{(1j)}(x)-\alpha^{(1j)}|+|\nabla v^{(1j)}(x)|+ |q(x)| \\
					& \qquad \le C\|v^{(1j)}(\cdot,0)\|_{L^2((-\pi,\pi)^2)}\,e^{-x_3/2} \quad \text{for } x_3>1.
				\end{split}
			\end{equation}
			Here $C$ is a universal constant.
		\end{enumerate}
	\end{proposition}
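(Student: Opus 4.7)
The plan is to exploit the $(2\pi\Z)^2$-periodicity of $(v^{(1j)}, q^{(1j)})$ in $x'$ to carry out a Fourier expansion on $\mathbb{T}^2$, fiber-by-fiber in $x_3 > 0$. The Stokes system then reduces to a family of ODEs in $x_3$, indexed by $k \in \Z^2$, and the uniform $L^2$ bound \eqref{est1.prop.BL1j} from Theorem \ref{prop.BL1j} provides just enough integrability over the half-slab $(-\pi,\pi)^2 \times (0,\infty)$ to eliminate the exponentially growing modes and pin down the expansion.

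First I take the divergence of the momentum equation and use $\nabla \cdot v = 0$ to get $\Delta q^{(1j)} = 0$ in $\{x_3 > 0\}$. Mode by mode this gives $\hat{q}_k(x_3) = A_k e^{-|k|x_3} + B_k e^{|k|x_3}$ for $k \ne 0$, with $\hat{q}_0$ affine in $x_3$; the slab $L^2$ bound forces $B_k = 0$ and $\hat{q}_0 \equiv 0$. For $k \ne 0$, each component $\hat{v}_{k,j}$ solves $(|k|^2 - \partial_3^2)\hat{v}_{k,j} = g_j$ with $g_j(x_3) = -ik_j A_k e^{-|k|x_3}$ for $j \in \{1,2\}$ and $g_3(x_3) = |k| A_k e^{-|k|x_3}$; discarding the growing homogeneous mode and computing the $x_3 e^{-|k|x_3}$-type particular solution produces a coefficient vector $(-ik, |k|)^T A_k/(2|k|)$. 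Setting $V^{(1j)}(k) := A_k/(2|k|)$ (so that $\hat{q}_k = 2|k|V^{(1j)}(k) e^{-|k|x_3}$) gives \eqref{rep1.prop.per.BL1j}, and then imposing the divergence-free identity $ik \cdot (\hat{v}_k)' + \partial_3 \hat{v}_{k,3} = 0$ at $x_3 = 0$ fixes $V^{(1j)}(k) = \hat{v}^{(1j)}_{k,3} - i(k/|k|)\cdot (\hat{v}^{(1j)}_k)'$, which is \eqref{rep2.prop.per.BL1j}. For $k = 0$, the integrability forces $\hat{v}_0$ to be constant, namely $\hat{v}^{(1j)}_{(0,0)}$.

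Part (ii) is a flux argument on the periodic slab $D_T = \Omega_{\rm p} \cap \{x_3 < T\}$ for $T$ large. Since $\nabla \cdot v^{(1j)} = 0$ in $\Omega$ and the side contributions cancel by periodicity,
\begin{equation*}
\int_{(-\pi,\pi)^2} v^{(1j)}_3(x',T)\, \dd x' = -\int_{\partial\Omega \cap \{x_3 < T\}} v^{(1j)}\cdot n = \int_{\partial\Omega \cap \{x_3 < T\}} P^{(1j)}\cdot n,
\end{equation*}
using $v^{(1j)} = -P^{(1j)}$ on $\partial \Omega$. Applying the same divergence theorem to the divergence-free polynomial $P^{(1j)}$ on $D_T$, the top contribution vanishes because $P^{(1j)}_3 \equiv 0$ for $j \in \{1,2\}$ and the sides cancel by periodicity of the linear function $P^{(1j)}(x) = x_3 e_j$ in $x'$. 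Hence the right-hand side above is zero, and the left-hand side equals $(2\pi)^2 \hat{v}^{(1j)}_{(0,0),3}$, so $\hat{v}^{(1j)}_{(0,0),3} = 0$.

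Finally, the exponential estimate \eqref{conv.prop.per.BL1j} is read off \eqref{rep1.prop.per.BL1j}: for $x_3 > 1$ and $|k| \ge 1$ one has $(1 + |k| x_3) e^{-|k|x_3} \le C e^{-|k|/2} e^{-x_3/2}$, so the Cauchy-Schwarz inequality together with Parseval's identity controls the tail by $C e^{-x_3/2} \|v^{(1j)}(\cdot,0)\|_{L^2((-\pi,\pi)^2)}$, with analogous controls for $\nabla v^{(1j)}$ and $q^{(1j)}$ using the extra factors of $|k|$ absorbed into the exponential decay. The only delicate step is the identification of the precise formula \eqref{rep2.prop.per.BL1j} for $V^{(1j)}(k)$: it arises not from the momentum equation itself but from matching the divergence-free condition at the interface $\{x_3 = 0\}$, and the bookkeeping of imaginary units and factors of $|k|$ must be kept consistent to yield the stated form.
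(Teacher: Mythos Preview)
Your proof is correct and follows the same route the paper invokes (it cites \cite[Proposition~3]{HP}, which is exactly this Fourier/ODE computation via the periodic Poisson kernel). Two small technical points you take for granted but should make explicit: the $(2\pi\Z)^2$-periodicity of $(v^{(1j)},q^{(1j)})$ itself must be justified first, which follows from the uniqueness clause in Theorem~\ref{prop.BL1j}; and your divergence-theorem flux argument on $D_T$ involves the rough John boundary $\partial\Omega$, so it should be run instead on the full rectangular slab $(-\pi,\pi)^2\times(-1,T)$ after zero-extending $v^{(1j)}+P^{(1j)}$ across $\partial\Omega$, exactly as the paper remarks.
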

	%

	\subsubsection*{\underline{Construction of $v^{(2j)}$ for $j\in\{1,3,5,6\}$}}
	%
	We construct the second-order boundary layers $v^{(2j)}$ corresponding to $P^{(2j)}=P^{(2j)}(x)$ for $j\in\{1,3,5,6\}$. These boundary layers are solutions to \eqref{BL2j} with subquadratic growth; see Theorem \ref{prop.BL21}. We begin with the case $j=1$, where $P^{(21)}(x)=x_2x_3{\bf e}_3$. We recall that $(v^{(21)},q^{(21)})$ solves
	\begin{equation}\tag{BL$_{{\rm 2nd}}^{(1)}$}\label{BL21}
		\left\{
		\begin{array}{ll}
			-\Delta v+\nabla q=0 &\text{in }\Omega \\
			\nabla\cdot v=0 &\text{in }\Omega \\
			v+x_2x_3{\bf e}_1=0 &\text{on }\partial\Omega.
		\end{array}
		\right.
	\end{equation}
	The difficulty in the analysis of \eqref{BL21} is that the boundary value is not periodic and has linear growth as $x_2 \to \infty$. We aim at eliminating the growth factor $x_2$ and at recovering the periodic structure. The key finding is the connection between the first-order and second-order boundary layers on the boundary, namely
	\begin{equation}
		v^{(21)}-x_2 v^{(11)}=0, \quad x\in\partial\Omega.
	\end{equation}
	This observation is the basis of the Ansatz for $v^{(21)}$. 
	Recall that $v^{(11)}$ converges exponentially fast to the constant $\alpha^{(11)}\in\R^3$, when $x_3\rightarrow\infty$ by the spectral gap near frequency $0$ yielded by the periodicity; see \eqref{conv.prop.per.BL1j}.  
	Hence the non-decaying divergence $\nabla\cdot(x_2 v^{(11)}(x))=v^{(11)}_2(x)$ can be corrected by adding a corrector $- \alpha^{(11)}_2 x_3\eta_+(x_3) {\bf e}_3$. 
	Here $\eta_+(\cdot)$ is a function on $\R$ satisfying
	\begin{equation}\label{def.eta2}
		\begin{aligned}
			&\text{$\eta_+(t)$ is smooth and non-negative,} \\
			&\text{$\eta_+(t)=0$ if $t<\frac12$ and $\eta_+(t)=1$ if $t>1$}.
		\end{aligned}
	\end{equation}
	Below, we also need the cut-off $\eta_-$ defined in \eqref{def.eta1}.
	
	The following statement gives the existence and the structure of second-order boundary layers with subquadratic growth. 
	\begin{theorem}\label{prop.BL21}
		Let $L\in(0,\infty)$ and $\Omega$ be a periodic bumpy John domain with constant $L$ according to Definition \ref{def.PeriodicJohn}. There exists a weak solution $(v^{(21)},q^{(21)})\in H^1_{{\rm loc}}(\overline{\Omega})^3\times L^2_{{\rm loc}}(\overline{\Omega})$ to \eqref{BL21} decomposed as
		\begin{equation}\label{est1.prop.BL21}
			\begin{aligned}
				v^{(21)}(x) &= x_2 v^{(11)}(x) - \alpha^{(11)}_2 x_3\eta_+(x_3) {\bf e}_3 + R^{(21)}(x), \\
				q^{(21)}(x) &= x_2 q^{(11)}(x) + Q^{(21)}(x),
			\end{aligned}
		\end{equation}
		where $(R^{(21)}, Q^{(21)}) \in \widehat{H}_0^1(\Omega_{\rm p})^3 \times L^2(\Omega_{\rm p})$. Moreover, we have
		\begin{equation}\label{est2.prop.BL21}
			\|\nabla R^{(21)}\|_{L^2(\Omega_{{\rm p}})}
			+ \|Q^{(21)}\|_{L^2(\Omega_{{\rm p}})}
			\le C,
		\end{equation}
		where the constant $C$ depends only on $L$.
	\end{theorem}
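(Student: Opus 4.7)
The plan is to verify the Ansatz \eqref{est1.prop.BL21} and reduce the problem to a well-posed Stokes system for the remainder $(R^{(21)}, Q^{(21)}) \in \widehat{H}^1_0(\Omega_{\rm p})^3 \times L^2(\Omega_{\rm p})$. Setting $\tilde u(x) := x_2 v^{(11)}(x) - \alpha^{(11)}_2 x_3 \eta_+(x_3){\bf e}_3$ and $\tilde p(x) := x_2 q^{(11)}(x)$, so that $v^{(21)} = \tilde u + R^{(21)}$ and $q^{(21)} = \tilde p + Q^{(21)}$, and using that $(v^{(11)}, q^{(11)})$ solves \eqref{BL1j} for $j = 1$ with $v^{(11)}$ divergence-free, a direct computation yields
\begin{equation*}
-\Delta \tilde u + \nabla \tilde p = -2\partial_2 v^{(11)} + q^{(11)}{\bf e}_2 + \alpha^{(11)}_2(2\eta'_+ + x_3 \eta''_+){\bf e}_3 =: f,
\end{equation*}
\begin{equation*}
\nabla \cdot \tilde u = v^{(11)}_2 - \alpha^{(11)}_2(\eta_+ + x_3 \eta'_+) =: -g.
\end{equation*}
Since $\eta_+ \equiv 0$ on $\partial\Omega \subset \{-1 < x_3 < 0\}$ and $v^{(11)}|_{\partial\Omega} = -x_3{\bf e}_1$, one finds $\tilde u|_{\partial\Omega} = -x_2 x_3 {\bf e}_1 = -P^{(21)}|_{\partial\Omega}$. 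Hence \eqref{BL21} is equivalent to the periodic Stokes system $-\Delta R^{(21)} + \nabla Q^{(21)} = -f$, $\nabla \cdot R^{(21)} = g$ in $\Omega$, with $R^{(21)}|_{\partial\Omega} = 0$.

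Both $f$ and $g$ lie in $L^2(\Omega_{\rm p})$: the exponential convergence \eqref{conv.prop.per.BL1j} controls $\nabla v^{(11)}$ and $q^{(11)}$ on $\{x_3 > 1\}$, Theorem \ref{prop.BL1j} furnishes $L^2$ bounds on $\{x_3 < 2\}$, and the cut-off contributions are supported in $\{1/2 \le x_3 \le 1\}$. Moreover, the Fourier representation \eqref{rep1.prop.per.BL1j} shows that the $x'$-average of $v^{(11)}_2$ equals $\alpha^{(11)}_2$ for every $x_3 > 0$, so the $x'$-average of $g$ vanishes on $\{x_3 > 1\}$ and $g$ itself decays exponentially there.

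The main obstacle is to construct a divergence lift $\phi \in \widehat{H}^1_0(\Omega_{\rm p})^3$ with $\nabla \cdot \phi = g$ and $\|\nabla \phi\|_{L^2(\Omega_{\rm p})} \lesssim 1$. I would build $\phi$ in two pieces. On the bounded John subdomain $\Omega_{\rm p} \cap \{x_3 < 2\}$, which is bounded in the quotient $\mathbb{T}^2 \times \R$ thanks to periodicity, the Bogovskii operator of Theorem \ref{theo.acosta} handles a truncated part of $g$, after subtracting a compactly supported mean correction to enforce the zero-integral compatibility condition. For the exponentially decaying tail on the flat periodic half-space $\mathbb{T}^2 \times (1, \infty)$, a Fourier series in $x'$ produces an explicit exponentially decaying lift: for each nonzero tangential frequency $k$ the three scalar equations are solved algebraically, and the zero mode is handled by a vertical antiderivative of $\hat g_0$, which is well-defined precisely because $\hat g_0$ is mean-zero and exponentially decaying by the previous step. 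The two constructions are glued by a smooth cut-off across $\{3/2 < x_3 < 2\}$, the small divergence mismatch introduced by the cut-off being absorbed by a further Bogovskii correction on a bounded slab.

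With $\phi$ in hand, set $R^{(21)} = \phi + w$; then $w \in \widehat{H}^1_{0,\sigma}(\Omega_{\rm p})$ solves $-\Delta w + \nabla Q^{(21)} = -f + \Delta \phi$ in the weak sense, with $w|_{\partial\Omega} = 0$. Since $\widehat{H}^1_{0,\sigma}(\Omega_{\rm p})$ is a Hilbert space under $\langle \nabla\cdot, \nabla\cdot \rangle_{\Omega_{\rm p}}$, the Lax-Milgram theorem applied to the variational formulation against divergence-free test functions yields a unique $w$ with $\|\nabla w\|_{L^2(\Omega_{\rm p})} \lesssim \|f\|_{L^2(\Omega_{\rm p})} + \|\nabla \phi\|_{L^2(\Omega_{\rm p})} \lesssim 1$. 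The pressure $Q^{(21)} \in L^2(\Omega_{\rm p})$ is then recovered via De Rham's theorem, its $L^2$ bound \eqref{est2.prop.BL21} following from a Bogovskii duality argument analogous to Remark \ref{rmk.Standard.P.est}. Collecting the pieces yields the decomposition \eqref{est1.prop.BL21} together with the claimed estimate \eqref{est2.prop.BL21}.
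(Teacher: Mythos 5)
Your Ansatz and overall reduction coincide with the paper's (same corrected profile $x_2 v^{(11)}-\alpha^{(11)}_2x_3\eta_+{\bf e}_3$, a divergence lift, then Riesz/Lax--Milgram in $\widehat H^1_{0,\sigma}(\Omega_{\rm p})$), but the step where you conclude $\|\nabla w\|_{L^2(\Omega_{\rm p})}\lesssim\|f\|_{L^2(\Omega_{\rm p})}+\|\nabla\phi\|_{L^2(\Omega_{\rm p})}$ has a genuine gap, and it is exactly the delicate point of the paper's proof. Elements $\varphi\in\widehat H^1_{0,\sigma}(\Omega_{\rm p})$ are controlled only through $\|\nabla\varphi\|_{L^2}$; since $\Omega_{\rm p}$ is unbounded in $x_3$ there is no global Poincar\'e inequality, and from the zero boundary condition one only gets $\|\varphi(\cdot,x_3)\|_{L^2(\mathbb{T}^2)}\lesssim (1+x_3)^{1/2}\|\partial_3\varphi\|_{L^2(\Omega_{\rm p})}$, which may grow. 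Hence $f\in L^2(\Omega_{\rm p})$ does \emph{not} make $\varphi\mapsto\int_{\Omega_{\rm p}}f\cdot\varphi$ a bounded functional (one can write down $f\in L^2$ with slow polynomial decay for which the pairing diverges), so Lax--Milgram cannot be invoked with the bound you state. The argument must use the structure of $f=-2\partial_2 v^{(11)}+q^{(11)}{\bf e}_2+\cdots$ on $\{x_3>1\}$: either a weighted Hardy/Poincar\'e estimate exploiting the exponential decay \eqref{conv.prop.per.BL1j} of $\nabla v^{(11)}$ and $q^{(11)}$, or, as the paper does in \eqref{est2.proof.prop.BL21}--\eqref{est3.proof.prop.BL21}, write $\partial_2v^{(11)}$ and $q^{(11)}$ as horizontal derivatives of exponentially decaying Fourier profiles (Proposition \ref{prop.per.BL1j}) and integrate by parts in $x'$ to land the derivative on $\varphi$. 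You have all the ingredients (you already note the exponential decay), but this justification is the heart of the matter and is missing.

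Two further points need repair. First, the global bound $\|Q^{(21)}\|_{L^2(\Omega_{\rm p})}\le C$ does not follow from a ``Bogovskii duality argument'': Theorem \ref{theo.acosta} applies to bounded John domains, and duality on $\Omega_{\rm p}\cap\{x_3<N\}$ gives constants depending on $N$; one needs quantitative decay of the pressure as $x_3\to\infty$, which the paper obtains by splitting on $\{x_3>3\}$ into a part determined by the trace $\mathcal{R}^{(21)}(\cdot,3)$ and a part driven by the exponentially decaying force, both computed via Fourier series, cf.\ \eqref{est4.prop.BL21}--\eqref{est5.prop.BL21}. Second, in your divergence lift the compatibility condition for Bogovskii cannot be enforced by a ``compactly supported mean correction'': subtracting the mean requires adding back a field whose divergence carries the excess mass, and since that mass is generically nonzero the field cannot be compactly supported; the paper's device is the vertical flux $\chi_+(x_3){\bf e}_3$ (compactly supported divergence, constant at infinity), and the same issue reappears for the mismatch $\zeta'\phi_{\mathrm{tail},3}$ created by your gluing cut-off. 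These are fixable bookkeeping points, but as written the construction of $\phi$ and the pressure estimate are incomplete.
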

	%
	\begin{proof}
		We aim at proving the existence of $(R^{(21)},Q^{(21)})$ and estimating it so that $(v^{(21)}, q^{(21)})$ defined by the right-hand sides in  \eqref{est1.prop.BL21} gives a weak solution of \eqref{BL21}. 
		
		\noindent (\textbf{Existence}) 
		By the previous discussion, we begin with a formal examination of $x_2 v^{(11)}(x) - \alpha^{(11)}_2 x_3\eta_+(x_3) {\bf e}_3$. First of all, it is easy to see
		\begin{equation*}
			\begin{aligned}
				&\nabla\cdot \big(x_2 v^{(11)}(x) - \alpha^{(11)}_2 x_3\eta_+(x_3) {\bf e}_3\big) \\
				&\quad = v^{(11)}_2(x)-\alpha^{(11)}_2\eta_+(x_3)-\alpha^{(11)}_2x_3\partial_3\eta_+(x_3)
			\end{aligned}
		\end{equation*}
		for all $x\in\Omega_{\rm p}$. Notice that the expression above simplifies for $x_3>1$: 
		\begin{equation*}
			\nabla\cdot \big(x_2 v^{(11)}(x) - \alpha^{(11)}_2 x_3\eta_+(x_3) {\bf e}_3\big)=v^{(11)}_2(x)-\alpha^{(11)}_2.
		\end{equation*}
		Then, by  Proposition \ref{prop.per.BL1j}, we get
		\begin{equation*}
			\begin{aligned}
				v^{(11)}_2(x)-\alpha^{(11)}_2
				= 
				\sum_{k\in\Z^2\setminus\{(0,0)\}}
				\big(\hat{v}^{(11)}_{k,2}-ik_2 V^{(11)}(k)x_3\big)
				e^{-|k|x_3} e^{ik\cdot x'},\quad x_3>1.
			\end{aligned}
		\end{equation*}
		This means that $x_2 v^{(11)}(x) - \alpha^{(11)}_2 x_3\eta_+(x_3) {\bf e}_3$ is not divergence-free. Thus, our next goal is to construct a function to correct the divergence for $x_3>1$. 
		Define
		\begin{equation}
			\begin{split}
				d(x) 
				&:= 
				\sum_{k\in\Z^2,\, k_1\neq0} 
				\Big(\frac{1}{ik_1}\big(\hat{v}^{(11)}_{k,2}-ik_2 V^{(11)}(k)x_3\big)
				e^{-|k|x_3} e^{ik\cdot x'}\Big)
				{\bf e}_1 \\
				&\quad
				+ \sum_{k_1 = 0,\,  k_2\in\Z\setminus\{0\}} 
				\Big(\frac{1}{ik_2}\big(\hat{v}^{(11)}_{k,2}-ik_2 V^{(11)}(k)x_3\big)
				e^{-|k|x_3} e^{ik\cdot x'}\Big)
				{\bf e}_2,
			\end{split}
		\end{equation}
		which is an element of 
		$H^1(\Omega_{{\rm p},>0})^3$ where $\Omega_{{\rm p},>0}:=\Omega_{{\rm p}}\cap \{x_3>0\}$. Of course, there is no unique way to construct a right-inverse of the divergence such as $d$. 
		We may extend $d(x)$ to the whole domain $\Omega_{\rm p}$ by multiplying it by $\eta_+(x_3)$ and still correct the divergence of $x_2 v^{(11)}(x) - \alpha^{(11)}_2 x_3\eta_+(x_3) {\bf e}_3 - d(x)\eta_+(x_3)$.
		To check the divergence condition, we calculate
		\begin{equation*}
			\begin{aligned}
				D(x)
				&:=
				\nabla\cdot \big(x_2 v^{(11)}(x) - \alpha^{(11)}_2 x_3\eta_+(x_3) {\bf e}_3-d(x)\eta_+(x_3)\big) \\
				&=
				v^{(11)}_2(x)-\alpha^{(11)}_2\eta_+(x_3)-\eta_+(x_3)\nabla\cdot d(x) -(\alpha^{(11)}_2x_3 + d_3(x)) \partial_3\eta_+(x_3).
			\end{aligned}
		\end{equation*}
		Obviously, $D$ is supported in $
		\Omega_{{\rm p},<2}:=\Omega_{{\rm p}}\cap \{x_3<2\}$, in which we can rely on the Bogovskii operator to find a right-inverse of the divergence. 
		Let $A:= \int_{\Omega_{{\rm p},<2}} D$. Let $\chi_+(x_3)$ be a smooth cut-off function such that 
		\begin{equation}\label{def.chi+}
			\chi_+(x_3)=0\quad\mbox{if}\quad  x_3\le 0,\quad \mbox{and}\quad \chi_+(x_3) = (2\pi)^{-2} A\quad \mbox{for}\quad x_3 > 1.
		\end{equation}
		This implies $\partial_3 \chi_+(x_3)$ is supported in $\Omega_{{\rm p},<2}$ and $\int_{\Omega_{{\rm p},<2}} \partial_3 \chi_+ = A$. It follows that
		\begin{equation*}
			\int_{\Omega_{{\rm p},<2}}\big(D(x) - \nabla\cdot\big(\chi_+(x_3){\bf e}_3\big)\big)\, dx
			= \int_{\Omega_{{\rm p},<2}}\big(D(x) - \partial_3\chi_+(x_3)\big)\, dx=0.
		\end{equation*}
		%
		Hence, by Appendix \ref{appendix.Bogovskii},  there is a Bogovskii corrector $\B \in H_0^1(\Omega_{{\rm p},<2})^3$ such that
		\begin{equation*}
			\nabla\cdot\B(x) = D(x) - \partial_3\chi_+(x_3), 
		\end{equation*}
		and $\| \B \|_{H^1(\Omega_{{\rm p},<2})} \le C$, where $C$ depends only on the John constant $L$ of $\Omega$.
		We extend $\B$ by zero to the whole domain $\Omega_{\rm p}$ and denote it again by 
		$\B\in {H^{1}_{0}(\Omega_{{\rm p}})^3}$. 
		Let us combine the above correctors and define
		\begin{equation*}
			\mathcal{C}(x)
			=-d(x)\eta_+(x_3)-\chi_+(x_3){\bf e}_3 - \B(x).
		\end{equation*}
		Note that $\mathcal{C} \in {\widehat{H}_0^1(\Omega_{\rm p})^3}$. In particular, $\| \nabla \mathcal{C}\|_{L^2(\Omega_{{\rm p}})} \le C$, where $C$ depends only on the John constant $L$ of $\Omega$. By \eqref{conv.prop.per.BL1j}, the function $\mathcal C$ converges exponentially fast to $-(2\pi)^{-2} A$ as $x_3 \to \infty$, and its derivatives decay exponentially fast to $0$ as $x_3\rightarrow\infty$.

		By the crucial cancellation
		\begin{equation*}
			x_2 (-\Delta v^{(11)}(x) +  \nabla q^{(11)}(x))=0, 
		\end{equation*}
		as well as the definition of $\mathcal{C}$, we see that the pair 
		\begin{equation*}
			x_2 v^{(11)}(x) - \alpha^{(11)}_2 x_3\eta_+(x_3) {\bf e}_3+\mathcal{C}(x)
			\quad \text{and} \quad
			x_2q^{(11)}(x)
		\end{equation*}
		is a weak solution to \eqref{BL21} with an additional external force
		\begin{equation*}
			f^{(21)}(x) = -2\partial_2v^{(11)}(x)+q^{(11)}(x){\bf e}_2-\Delta\big(-\alpha^{(11)}_2 x_3\eta_+(x_3) {\bf e}_3+\mathcal{C}(x)\big).
		\end{equation*}
		In order to cancel this source term, we consider
		\begin{equation}\label{eq1.proof.prop.BL21}
			\left\{
			\begin{array}{ll}
				-\Delta \mathcal{R}^{(21)} + \nabla \mathcal{Q}^{(21)}=-f^{(21)} &\text{in } \Omega_{\rm p} \\
				\nabla\cdot \mathcal{R}^{(21)}=0 &\text{in } \Omega_{\rm p} \\
				\mathcal{R}^{(21)}=0 &\text{on } \partial\Omega_{\rm p}.
			\end{array}
			\right.
		\end{equation}
		The weak formulation of \eqref{eq1.proof.prop.BL21} is written as
		\begin{equation}\label{eq2.proof.prop.BL21}
			\langle\nabla\mathcal{R}^{(21)},\nabla\varphi\rangle_{\Omega_{{\rm p}}}
			=-\langle f^{(21)}, \varphi\rangle_{\Omega_{{\rm p}}},
			\quad \varphi\in \widehat{H}^{1}_{0,\sigma}(\Omega_{{\rm p}}).
		\end{equation}
		Next, we prove the unique existence of the weak solution of \eqref{eq2.proof.prop.BL21}. By the integration by parts for $\Delta\mathcal{C}$,
		\begin{equation}\label{est1.proof.prop.BL21}
			\begin{aligned}
				\langle f^{(21)}, \varphi\rangle_{\Omega_{{\rm p}}}
				&=-2\langle \partial_2v^{(11)}, \varphi\rangle_{\Omega_{{\rm p}}}
				+ \langle q^{(11)}, \varphi_2\rangle_{\Omega_{{\rm p}}} \\
				&\quad 
				+ \alpha^{(11)}_2 \langle \Delta(x_3\eta_+(x_3) {\bf e}_3), \varphi\big\rangle_{\Omega_{{\rm p}}}
				+ \langle\nabla\mathcal{C}, \nabla\varphi\rangle_{\Omega_{{\rm p}}}.
			\end{aligned}
		\end{equation}
		By the Poincar\'{e} inequality in $\Omega_{{\rm p},<2}$ and the Cauchy-Schwarz inequality in $\Omega_{\rm p}$,
		\begin{align}\label{est2.proof.prop.BL21}
			\begin{split}
				&|\langle f^{(21)}, \varphi\rangle_{\Omega_{{\rm p}}} | \\
				&\le 
				C\Big(\|\nabla v^{(11)}\|_{L^2(\Omega_{{\rm p}})}
				+ \|q^{(11)}\|_{L^2(\Omega_{{\rm p}})}+ \|\Delta(x_3\eta_+(x_3) \|_{L^2(\Omega_{\rm p})} + \|\nabla \mathcal{C} \|_{L^2(\Omega_{\rm p})} \Big)
				\|\nabla \varphi\|_{L^2(\Omega_{{\rm p}})} \\
				&\qquad
				+ \bigg|\int_{1}^{\infty} \int_{(-\pi,\pi)^2} \partial_2v^{(11)}(x) \varphi(x) \,\dd x' \dd x_3\bigg|  \\
				&\qquad
				+ \bigg|\int_{1}^{\infty} \int_{(-\pi,\pi)^2} q^{(11)}(x) \varphi_2(x) \,\dd x' \dd x_3\bigg|.
			\end{split}
		\end{align}
		From Proposition \ref{prop.per.BL1j} again, we have the following representation formulas:
		\begin{equation*}
			\begin{aligned}
				\partial_2v^{(11)}(y) 
				&= \partial_2\bigg(
				\sum_{k\in\Z^2\setminus\{(0,0)\}}
				\Big(\hat{v}^{(1j)}_k 
				+ \begin{pmatrix} -ik \\ |k| \end{pmatrix}
				V^{(1j)}(k)
				x_3\Big)
				e^{-|k|x_3} e^{ik\cdot x'}\bigg), \\
				q^{(11)}(y) 
				&= \partial_1\bigg(
				\sum_{k\in\Z^2,\, k_1\neq0}
				2|k| V^{(1j)}(k) e^{-|k|x_3}
				\frac{e^{ik\cdot x'}}{ik_1}\bigg) \\
				&\quad + \partial_2\bigg(
				\sum_{k_1 = 0,\,  k_2\in\Z\setminus\{0\}} 
				2|k| V^{(1j)}(k) e^{-|k|x_3}
				\frac{e^{ik\cdot x'}}{ik_2}\bigg).
			\end{aligned}
		\end{equation*}
		Thus, by integration by parts in $x_1$ and $x_2$, the last two integrals in \eqref{est2.proof.prop.BL21} are bounded by $C\|\nabla \varphi\|_{L^2(\Omega_{{\rm p}})}$. Consequently, in view of \eqref{est1.prop.BL1j} and \eqref{est2.proof.prop.BL21}, we obtain
		\begin{equation}\label{est3.proof.prop.BL21}
			\begin{aligned}
				|\langle f^{(21)}, \varphi\rangle_{\Omega_{{\rm p}}} |
				\le 
				C\|\nabla \varphi\|_{L^2(\Omega_{{\rm p}})}.
			\end{aligned}
		\end{equation}
		Then, by the Riesz representation theorem, there is an element $\mathcal{R}^{(21)}\in \widehat{H}^{1}_{0,\sigma}(\Omega_{{\rm p}})$ solving \eqref{eq2.proof.prop.BL21} and satisfying $\| \nabla \mathcal{R}^{(21)} \|_{L^2(\Omega_{{\rm p}})} \le C$. The existence of the pressure $\mathcal{Q}^{(21)}\in L^2_{{\rm loc}}(\overline{\Omega}_{\rm p})$ can be proved by using the Bogovskii lemma. Finally, the existence of the remainder $(R^{(21)},Q^{(21)})$ in \eqref{est1.prop.BL21} is proved if we set $R^{(21)}=\mathcal{C}^{(21)}+\mathcal{R}^{(21)}$ and $Q^{(21)}=\mathcal{Q}^{(21)}$. Let us emphasize that $(R^{(21)},Q^{(21)})$ can be ``unwrapped'' (without changing the notation) as a $(2\pi\Z)^2$-periodic function in $\Omega$.
		
		\noindent (\textbf{Estimate}) The estimate of $R^{(21)}$ is clear. Hence we focus on the pressure term $Q^{(21)}=\mathcal{Q}^{(21)}$. Notice that since $\mathcal{Q}^{(21)}$ is locally $L^2$, it suffices to prove
		\begin{equation}\label{est3.prop.BL21}
			\|\mathcal{Q}^{(21)}\|_{L^2(\Omega_{{\rm p},>3})}
			\le C.
		\end{equation}
		We apply the Fourier series expansion in the flat domain $\{x_3>3\}$. We decompose $\mathcal{R}^{(21)}$ and $\mathcal{Q}^{(21)}$ into $\mathcal{R}^{(21)}=w_1+w_2$ and $\mathcal{Q}^{(21)}=r_1+r_2$ (up to a constant), where $(w_1,r_1)$ is a solution of
		\begin{equation*}
			\left\{
			\begin{array}{ll}
				-\Delta w_1 + \nabla r_1=0, &x_3>3 \\
				\nabla\cdot w_1=0, &x_3>3 \\
				w_1(x',3)=\mathcal{R}^{(21)}(x',3),
			\end{array}
			\right.
		\end{equation*}
		while $(w_2,r_2)$ solves
		\begin{equation*}
			\left\{
			\begin{array}{ll}
				-\Delta w_2 + \nabla r_2=-f^{(21)}, &x_3>3 \\
				\nabla\cdot w_2=0, &x_3>3 \\
				w_2(x',3)=0.
			\end{array}
			\right.
		\end{equation*}
		Using the periodicity of $\mathcal{R}^{(21)}(x',3)$ in $x'$, the solution $(w_1,r_1)$ may be written by the Poisson kernel as in Proposition \ref{prop.per.BL1j}, which implies
		\begin{equation}\label{est4.prop.BL21}
			\|r_1\|_{L^2(\Omega_{{\rm p},>3})}
			\le C\|\nabla \mathcal{R}^{(21)}\|_{L^2(\Omega_{{\rm p}})}
			\le C.
		\end{equation}
		On the other hand, observe that the source term $-f^{(21)}$ is represented as
		\begin{equation*}
			-f^{(21)}(x)
			= \sum_{k\in\Z^2\setminus\{(0,0)\}} 
			(\mathcal{F}_1(k) + \mathcal{F}_2(k) x_3) 
			e^{-|k|x_3} e^{ik\cdot x'},\quad x_3>3,
		\end{equation*}
		where 
		\begin{equation*}
			|\mathcal{F}_1(k)| + |\mathcal{F}_2(k)|
			\le C|k|^2 |\hat{v}^{(11)}_k|
		\end{equation*}
		with $\hat{v}^{(1j)}_k$ defined in \eqref{e.deffouriercoef}. 
		Then a simple computation shows that
		\begin{equation*}
			\begin{aligned}
				w_2(x) 
				&= \sum_{k\in\Z^2\setminus\{(0,0)\}}
				(\mathcal{G}_1(k) + \mathcal{G}_2(k) x_3
				+\mathcal{G}_3(k)x_3^2 +\mathcal{G}_4(k) x_3^3) 
				e^{-|k|x_3} e^{ik\cdot x'}, \\
				r_2(x) 
				&= \sum_{k\in\Z^2\setminus\{(0,0)\}}
				(\mathcal{G}_5(k) + \mathcal{G}_6(k) x_3
				+\mathcal{G}_7(k)x_3^2 +\mathcal{G}_8(k) x_3^3) 
				e^{-|k|x_3} e^{ik\cdot x'},
			\end{aligned}
		\end{equation*}
		where
		\begin{equation*}
			|k| \sum_{l=1}^4 |\mathcal{G}_l(k)| 
			+ \sum_{l=5}^8 |\mathcal{G}_l(k)|
			\le C|k|^3 |\hat{v}^{(11)}_k|.
		\end{equation*}
		Now it is easy to see that 
		\begin{equation}\label{est5.prop.BL21}
			\|r_2\|_{L^2(\Omega_{{\rm p},>3})}
			\le C\|v^{(11)}(\cdot,0)\|_{L^2((-\pi,\pi)^2)}
			\le C.
		\end{equation}
		From \eqref{est4.prop.BL21} and \eqref{est5.prop.BL21}, we obtain \eqref{est3.prop.BL21}. This completes the proof of Theorem \ref{prop.BL21}.
	\end{proof}
	%

	By a similar consideration, we can obtain the existence of $(v^{(2j)},q^{(2j)})$ for $j\in \{3,5,6\}$, whose proofs are parallel to Theorem \ref{prop.BL21} and therefore omitted. 
	Recall that $\eta_+$ (resp. $\eta_-$) is defined in \eqref{def.eta2} (resp. \eqref{def.eta1}).
	%
	\begin{theorem}\label{prop.BL2356}
		Let $L\in(0,\infty)$ and $\Omega$ be a periodic bumpy John domain with constant $L$ according to Definition \ref{def.PeriodicJohn}. Let $j\in\{3,5,6\}$. There exists a weak solution $(v^{(2j)},q^{(2j)})\in H^1_{{\rm loc}}(\overline{\Omega})^3\times L^2_{{\rm loc}}(\overline{\Omega})$ to \eqref{BL2j} decomposed as, when $j=3$,
		\begin{equation}\label{est1.prop.BL2356}
			\begin{aligned}
				v^{(23)}(x) &= x_1 v^{(12)}(x) - \alpha^{(12)}_1 x_3\eta_+(x_3) {\bf e}_3 + R^{(23)}(x), \\
				q^{(23)}(x) &= x_1 q^{(12)}(x) + Q^{(23)}(x),
			\end{aligned}
		\end{equation}
		when $j=5$,
		\begin{equation}\label{est2.prop.BL2356}
			\begin{aligned}
				v^{(25)}(x) &= -2x_1 v^{(11)}(x) - x_3^2\eta_-(x_3) {\bf e}_3 + 2\alpha^{(11)}_1 x_3 \eta_+(x_3) {\bf e}_3 + R^{(25)}(x), \\
				q^{(25)}(x) &= -2x_1 q^{(11)}(x) + Q^{(25)}(x),
			\end{aligned}
		\end{equation}
		and when $j=6$,
		\begin{equation}\label{est3.prop.BL2356}
			\begin{aligned}
				v^{(26)}(x) &= -2x_2 v^{(12)}(x) 
				- x_3^2\eta_-(x_3) {\bf e}_3 + 2\alpha^{(12)}_2 x_3 \eta_+(x_3) {\bf e}_3 + R^{(26)}(x), \\
				q^{(26)}(x) &= -2x_2 q^{(12)}(x) + Q^{(26)}(x),
			\end{aligned}
		\end{equation}
		where $(R^{(2j)}, Q^{(2j)}) \in 
		\widehat{H}_0^1(\Omega_{\rm p})^3 \times L^2(\Omega_{\rm p})$. Moreover, we have
		\begin{equation}\label{est4.prop.BL2356}
			\|\nabla R^{(2j)}\|_{L^2(\Omega_{{\rm p}})}
			+ \|Q^{(2j)}\|_{L^2(\Omega_{{\rm p}})}
			\le C,
		\end{equation}
		where the constant $C$ depends only on $L$.
	\end{theorem}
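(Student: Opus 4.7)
My plan is to follow the blueprint developed in detail in the proof of Theorem \ref{prop.BL21}, adapted to the specific structure of $P^{(2j)}$ for each $j\in\{3,5,6\}$. In each case I would first verify that the proposed Ansatz satisfies the Dirichlet condition on $\partial\Omega$, then show that its residual in the Stokes system reduces to a divergence error supported essentially in $\Omega_{{\rm p},<2}$ plus a Fourier-representable tail in $\{x_3>1\}$, exactly as in Theorem \ref{prop.BL21}.

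For $j=3$, the situation is completely symmetric to $j=1$ via the interchange of the tangential variables $(x_1,\mathbf{e}_1)\leftrightarrow(x_2,\mathbf{e}_2)$, replacing $v^{(11)}$ by $v^{(12)}$. The Ansatz $x_1v^{(12)}(x)-\alpha^{(12)}_1x_3\eta_+(x_3)\mathbf{e}_3$ matches $-P^{(23)}$ on $\partial\Omega$ because $v^{(12)}=-P^{(12)}=(0,-x_3,0)$ there and $\eta_+(x_3)=0$ for $x_3<\tfrac12$, while its divergence equals $v^{(12)}_1(x)-\alpha^{(12)}_1\partial_3(x_3\eta_+(x_3))$, which vanishes to zero-mean Fourier modes for $x_3>1$ by Proposition \ref{prop.per.BL1j}. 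The argument of Theorem \ref{prop.BL21} then transfers verbatim.

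For $j=5,6$ the new feature is the term $x_3^2\mathbf{e}_3$ in $P^{(2j)}$. On $\partial\Omega\subset\{-1<x_3<0\}$ we have $\eta_-(x_3)=1$, so $-x_3^2\eta_-(x_3)\mathbf{e}_3$ reproduces the vertical component $-x_3^2\mathbf{e}_3$ of the boundary data exactly, while $-2x_1v^{(11)}|_{\partial\Omega}=(2x_1x_3,0,0)$ handles the tangentially-growing part. The factor $2\alpha^{(11)}_1x_3\eta_+(x_3)\mathbf{e}_3$ (respectively $2\alpha^{(12)}_2$) is chosen so that the divergence $-2v^{(11)}_1(x)-\partial_3(x_3^2\eta_-(x_3))+2\alpha^{(11)}_1\partial_3(x_3\eta_+(x_3))$ becomes purely a combination of non-zero Fourier modes plus a compactly supported error in $\Omega_{{\rm p},<2}$. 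The former admits a Fourier-based right-inverse $d(x)$ of the divergence (as in Theorem \ref{prop.BL21}, using only frequencies with $k_1\neq0$ or $k_2\neq0$), and the compactly supported error is absorbed by a Bogovskii corrector in $\Omega_{{\rm p},<2}$ together with the one-dimensional $\chi_+$-function absorbing the zero-frequency flux. The remainder equation then takes the form \eqref{eq1.proof.prop.BL21} with source $f^{(2j)}$; one verifies $|\langle f^{(2j)},\varphi\rangle_{\Omega_{\rm p}}|\leq C\|\nabla\varphi\|_{L^2(\Omega_{\rm p})}$ on $\widehat{H}^1_{0,\sigma}(\Omega_{\rm p})$ by integrating by parts in $x_1$ or $x_2$ on any far-field terms carrying a tangential derivative of $v^{(1k)}$ or $q^{(1k)}$, using the Fourier representation of Proposition \ref{prop.per.BL1j}, and invokes the Riesz representation theorem to produce $\mathcal{R}^{(2j)}$.

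The pressure $Q^{(2j)}$ is then obtained locally via the Bogovskii lemma, and the $L^2$ estimate \eqref{est4.prop.BL2356} is extended to $\Omega_{{\rm p},>3}$ by the same Poisson kernel/Fourier expansion split of $(\mathcal{R}^{(2j)},\mathcal{Q}^{(2j)})=(w_1,r_1)+(w_2,r_2)$ used in the final step of the proof of Theorem \ref{prop.BL21}. The main obstacle I anticipate is the careful bookkeeping in the cases $j=5,6$: one must track two interacting cut-offs ($\eta_-$ near the bumpy boundary and $\eta_+$ in the far field), check that the resulting divergence residual has zero total flux through $\{x_3=0\}$ so that the Bogovskii operator on $\Omega_{{\rm p},<2}$ applies, and confirm that every contribution of the source $f^{(2j)}$ that is not immediately in $L^2(\Omega_{\rm p})$ can be rewritten as a tangential derivative to produce a bounded functional on $\widehat{H}^1_{0,\sigma}(\Omega_{\rm p})$ after integration by parts.
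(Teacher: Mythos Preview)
Your proposal is correct and follows exactly the route the paper intends: the paper itself states that the proofs for $j\in\{3,5,6\}$ are ``parallel to Theorem \ref{prop.BL21} and therefore omitted,'' and your sketch carries out precisely that parallel argument, including the correct identification of the new feature (the $\eta_-$ cutoff handling the $x_3^2\mathbf{e}_3$ component) for $j=5,6$.

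One small bookkeeping correction: for $j=5,6$ the divergence residual is \emph{not} supported in $\Omega_{{\rm p},<2}$, because $\eta_-$ transitions on $\{3<x_3<4\}$. After introducing the Fourier corrector $d(x)\eta_+(x_3)$ you will find the residual $D(x)$ is supported in a set of the form $\Omega_{{\rm p}}\cap\{x_3<5\}$ (or similar), so the Bogovskii operator must be applied on that larger bounded John domain rather than on $\Omega_{{\rm p},<2}$. This changes nothing structurally---the constant in \eqref{est4.prop.BL2356} still depends only on $L$---but your write-up should reflect the correct support. Likewise, the condition you need is not ``zero total flux through $\{x_3=0\}$'' but rather that $D-\partial_3\chi_+$ has zero mean over the bounded John domain on which Bogovskii is invoked; the $\chi_+$ construction \eqref{def.chi+} is exactly what absorbs the possibly nonzero mean of $D$.
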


	\subsubsection*{\underline{Construction of $v^{(22)}$ and $v^{(24)}$}}

	The boundary layers corresponding to $P^{(22)}$ and $P^{(24)}$ can be constructed by using the Green function. The fact that $P^{(22)}$ and $P^{(24)}$ only depend on the vertical variable $x_3$ and that there is no growth in the tangential variable $x'$ makes the analysis much easier than for $P^{(2j)}$, $j\in\{1,3,5,6\}$ studied above. 
	The proof of the following proposition is almost identical to the one of Theorem \ref{prop.BL1j}. Notice that here we state Theorem \ref{prop.BL22 and 24} in the periodic case only for convenience. Indeed we use these correctors in combination with $(v^{(2j)},q^{(2j)})$ for $j\in {\{1,3,5,6\}}$ whose existence is stated in 
	Theorems \ref{prop.BL21} and \ref{prop.BL2356} in periodic bumpy John domains. However, the existence of $(v^{(2j)},q^{(2j)})$ for $j\in \{2,4\}$ can be proved in general bumpy John domains according to Definition \ref{def.John2}.
	%
	\begin{theorem}\label{prop.BL22 and 24}
		Let $L\in(0,\infty)$ and $\Omega$ be a periodic bumpy John domain with constant $L$ according to Definition \ref{def.PeriodicJohn}. Let $j\in\{2,4\}$. There exists a unique weak solution $(v^{(2j)},q^{(2j)})\in H^1_{{\rm loc}}(\overline{\Omega})^3\times L^2_{{\rm loc}}(\overline{\Omega})$ to \eqref{BL2j}
		satisfying
		\begin{align}\label{est1.prop.BL22 and 24}
			\|\nabla v^{(2j)}\|_{L^2(\Omega_{{\rm p}})}
			+ \|q^{(2j)}\|_{L^2(\Omega_{{\rm p}})}
			\le C,
		\end{align}
		where the constant $C$ depends only on $L$.
	\end{theorem}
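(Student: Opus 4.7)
The plan is to mirror the proof of Theorem \ref{prop.BL1j} almost verbatim, with small adjustments to account for the fact that $P^{(2j)}$ for $j\in\{2,4\}$ is quadratic in $x_3$ but oriented tangentially. Uniqueness will be a direct consequence of Corollary \ref{cor.liouville}, exactly as in Theorem \ref{prop.BL1j}, since a difference of two solutions in the class \eqref{est1.prop.BL22 and 24} is in particular sublinear.

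For existence, I would lift the boundary data by setting $w = v + \eta_{-}P^{(2j)}$, where $\eta_{-}$ is the cut-off from \eqref{def.eta1}. Because $P^{(22)} = x_3^2 {\bf e}_1$ and $P^{(24)} = x_3^2 {\bf e}_2$ are purely tangential and $\eta_{-}$ depends only on $x_3$, the identity $\nabla\cdot(\eta_{-}P^{(2j)}) = 0$ holds automatically, so $w$ solves a homogeneous-Dirichlet Stokes system on $\Omega$ with source
\[
F = -\Delta(\eta_{-} P^{(2j)}) = -\bigl(\eta_{-}''(x_3) x_3^2 + 4\eta_{-}'(x_3) x_3 + 2\eta_{-}(x_3)\bigr){\bf e}_j,
\]
which is bounded and supported in $\Omega \cap \{x_3 \le 4\}$. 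The only structural difference from Theorem \ref{prop.BL1j} is that, since $\Delta P^{(2j)} = 2{\bf e}_j \neq 0$, the support of $F$ is not confined to the thin slab $\{3 \le x_3 \le 4\}$ but occupies the whole channel $\{x_3 \le 4\}$; nevertheless $F$ remains bounded and $(2\pi\Z)^2$-periodic.

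Next, I would represent $w$ and $q$ by the Green function formulas $w(x) = \int_\Omega G(x,y)\,F(y)\,\dd y$ and $q(x) = \int_\Omega \Pi(x,y)\cdot F(y)\,\dd y$ and split the analysis into the two regimes used in Theorem \ref{prop.BL1j}. For $x \in \Omega_{\geq 8}$, the pointwise kernel estimates of Propositions \ref{prop.DG}(i) and \ref{prop.pressureest}(i), together with the computation $\int_{\{y_3 \le 4\}} |x-y|^{-3}\,\dd y \le C/x_3$, yield $|\nabla w(x)| + |q(x)| \le C \|F\|_{L^\infty}/x_3$. For $x \in \Omega_p \cap \{x_3 \le 8\}$, I would repeat Step 3 of the proof of Theorem \ref{prop.BL1j} verbatim: split $F = F\chi_{Q} + F(1-\chi_{Q})$ over a fixed cube $Q$ covering one period near the boundary, estimate the regular part via the off-diagonal Green function bounds of Propositions \ref{prop.DG} and \ref{prop.pressureest}, and control the singular part by an energy estimate combined with Bogovskii's lemma applied inside a John subdomain provided by Definition \ref{def.John2}, exactly as in \eqref{sing1.proof.prop.BL1j}--\eqref{sing7.proof.prop.BL1j}.

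The hard part is upgrading the per-period bound \eqref{est1.prop.BL1j} of Theorem \ref{prop.BL1j} to the stronger global $L^2(\Omega_p)$ bound \eqref{est1.prop.BL22 and 24}. Fortunately, this upgrade is essentially free in the periodic setting: the pointwise $O(1/x_3)$ decay established in the first regime is square-integrable in $x_3$ at infinity, and the cross-section $(-\pi,\pi]^2$ of $\Omega_p$ is bounded, so $\int_{\Omega_p \cap \{x_3 \ge 8\}} (|\nabla w|^2 + |q|^2) \le C$, while the second regime contributes a bounded region of integration. Combining both pieces and returning to $v^{(2j)} = w - \eta_{-} P^{(2j)}$, whose correction term is bounded with support compact in $x_3$, gives \eqref{est1.prop.BL22 and 24}. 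Note that, unlike the case $j \in \{1,3,5,6\}$ treated in Theorems \ref{prop.BL21} and \ref{prop.BL2356}, no subtraction of a first-order corrector is needed here, because $P^{(22)}$ and $P^{(24)}$ carry no tangentially linear growth and a direct lifting suffices.
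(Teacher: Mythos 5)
Your proposal follows exactly the route the paper intends: the paper gives no separate argument for Theorem \ref{prop.BL22 and 24} beyond asserting that the proof is ``almost identical'' to that of Theorem \ref{prop.BL1j}, and your plan (lift by $\eta_-P^{(2j)}$, note $\nabla\cdot(\eta_-P^{(2j)})=0$, represent $(w,q)$ by the Green function, split into a far regime treated by kernel bounds and a near regime treated by the singular/regular decomposition with the energy estimate and Bogovskii's lemma, uniqueness via Corollary \ref{cor.liouville}) is precisely that adaptation. Your observation that the only structural change is $\Delta P^{(2j)}=2{\bf e}_{j'}\neq 0$, so that $F$ fills the channel $\{-1<x_3\le 4\}$ instead of the slab $\{3\le x_3\le 4\}$, is the right one, and the passage to the $L^2(\Omega_{\rm p})$ bound is indeed free, since the bound of Theorem \ref{prop.BL1j} is already a full vertical-column bound and the $1/x_3$ decay is square integrable over a single periodic cell.

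One step does not go through as literally written. In your far-field regime you invoke only Propositions \ref{prop.DG}(i) and \ref{prop.pressureest}(i), but these pointwise bounds require $y_3>2$, whereas the support of $F$ now extends down to $\partial\Omega$; for the portion $\{-1<y_3\le 2\}$ you must instead use the locally averaged bounds \eqref{est.DG.2-2} and \eqref{est.Pi-3}, which carry the proviso $|x-y|>32$. Consequently the clean estimate $|\nabla w(x)|+|q(x)|\le C\|F\|_{L^\infty}/x_3$ is only directly available once $x$ is high enough (say $x_3\ge 40$) that every near-boundary source point is at distance larger than $32$; the remaining band, e.g.\ $\Omega\cap\{x_3\le 40\}$, should be absorbed into your low-region argument, where the splitting $F=F\chi_{Q_{40}(\xi)}+F(1-\chi_{Q_{40}(\xi)})$, the energy estimate for the singular part, Bogovskii in the John subdomain, and a reference cube placed sufficiently high handle it with constants uniform in $\xi$, exactly as in \eqref{sing1.proof.prop.BL1j}--\eqref{sing7.proof.prop.BL1j}. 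This is a routine patch using tools already in your outline (also note the harmless index slip: the direction of $F$ is ${\bf e}_1$ for $j=2$ and ${\bf e}_2$ for $j=4$), so the proof is correct once the far/near thresholds are adjusted accordingly.
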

	%

	\subsection{Estimates of boundary layers}\label{subsec.Ests.BL}
	Before closing this section, we summarize the estimates of the boundary layers. The following propositions can be proved in a similar manner as in \cite[Lemma 4]{HP} combined with a direct computation. The details are omitted here.
	
	%

	\begin{proposition}\label{lem.est.scaled.BL1j}
		Let $L\in(0,\infty)$ and $\Omega$ be a bumpy John domain with constant $L$ as in Definition \ref{def.John2}. 
		For $j\in\{1,2\}$, let $(v^{(1j)}, q^{(1j)})$ the weak solution of \eqref{BL1j} provided by Theorem \ref{prop.BL1j}. Then, for $r\in(\ep,1)$,
		\begin{align}\label{est1.lem.est.scaled.BL1j}
			\bigg(\dashint_{B^\ep_{r,+}}
			|(\nabla v^{(1j)})(x/\ep)|^{2} \dd x\bigg)^{1/2}
			+ \bigg(
			\dashint_{B^\ep_{r,+}}
			|q^{(1j)}(x/\ep)|^2 \dd x\bigg)^{1/2}
			\le 
			C \big(\frac{\ep}{r}\big)^{1/2},
		\end{align}
		where $C$ depends only on $L$.
	\end{proposition}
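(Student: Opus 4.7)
\textbf{Proof proposal for Proposition \ref{lem.est.scaled.BL1j}.} The plan is a pure rescaling argument feeding on the uniform local energy bound \eqref{est1.prop.BL1j} for the unscaled boundary layers $(v^{(1j)},q^{(1j)})$ in $\Omega$. The only thing to check is that the number of unit columns covering $B^\ep_{r,+}$ after rescaling is the right power of $r/\ep$, and that $|B^\ep_{r,+}|\approx r^3$.

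First, I would perform the change of variables $y=x/\ep$ in both integrals. Since $x\in B^\ep_{r,+}$ means $x\in\Omega^\ep$ with $x'\in(-r,r)^2$ and $x_3<r$, the rescaled point $y$ lies in $\widetilde B_{r/\ep}:=\Omega\cap\{y'\in(-r/\ep,r/\ep)^2,\ y_3<r/\ep\}$, and $dx=\ep^3\,dy$, giving
\begin{equation*}
\int_{B^\ep_{r,+}} |(\nabla v^{(1j)})(x/\ep)|^2\,dx = \ep^3\int_{\widetilde B_{r/\ep}} |\nabla v^{(1j)}(y)|^2\,dy,
\end{equation*}
and analogously for the pressure term. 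Next I would cover the horizontal slab $(-r/\ep,r/\ep)^2\times\R$ by unit columns $(\xi+(0,1)^2)\times\R$ with $\xi\in\Z^2$: the number of such columns needed is $O((r/\ep)^2)$. Applying \eqref{est1.prop.BL1j} to each column and summing gives
\begin{equation*}
\int_{\widetilde B_{r/\ep}} \bigl(|\nabla v^{(1j)}|^2+|q^{(1j)}|^2\bigr)\,dy \le C\Big(\frac{r}{\ep}\Big)^2,
\end{equation*}
so that after multiplying by $\ep^3$ one gets a bound of the form $C\ep r^2$ for the unaveraged integral over $B^\ep_{r,+}$.

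To turn this into the averaged estimate, I would use the fact that, for $r>\ep$, the set $B^\ep_{r,+}$ contains the flat half-cube $Q_r\cap\{x_3>0\}$ (since $\Omega^\ep$ contains the upper half-space) and is contained in $Q_r\cap\{x_3>-\ep\}$, so $|B^\ep_{r,+}|\approx r^3$ with constants independent of $\ep$. Dividing by this volume yields
\begin{equation*}
\dashint_{B^\ep_{r,+}} |(\nabla v^{(1j)})(x/\ep)|^2\,dx + \dashint_{B^\ep_{r,+}} |q^{(1j)}(x/\ep)|^2\,dx \le C\,\frac{\ep r^2}{r^3}=C\,\frac{\ep}{r},
\end{equation*}
which is exactly \eqref{est1.lem.est.scaled.BL1j} after taking square roots.

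There is essentially no hard step here; the argument is structurally a direct corollary of Theorem \ref{prop.BL1j}. The only point that deserves a line of care is the volume comparison $|B^\ep_{r,+}|\approx r^3$, which relies on $r\ge\ep$ and on the inclusion $\{x_3>0\}\subset\Omega^\ep$ built into Definition \ref{def.John2}; without the lower bound $r\ge\ep$ the John cusps could in principle shrink the volume below $r^3$. Once this is in place, the covering count $(r/\ep)^2$ is the entire source of the sharp $(\ep/r)^{1/2}$ decay.
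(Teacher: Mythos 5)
Your proposal is correct and is essentially the argument the paper has in mind: the paper omits the proof, referring to \cite[Lemma 4]{HP} ``combined with a direct computation,'' and that direct computation is exactly your rescaling $y=x/\ep$, the covering of the rescaled set by $O((r/\ep)^2)$ unit columns each controlled by the uniform column bound \eqref{est1.prop.BL1j}, and the volume comparison $|B^\ep_{r,+}|\approx r^3$ valid for $r\ge\ep$ (note $B^\ep_{r,+}\subset Q^\ep_r$ and $B^\ep_{r,+}\supset Q_r\cap\{x_3>0\}$). No gaps; the only points needing care (truncated columns are dominated by full columns, and no chain-rule factor since the statement involves $(\nabla v^{(1j)})(x/\ep)$ rather than $\nabla\big(v^{(1j)}(x/\ep)\big)$) are handled correctly.
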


	\begin{proposition}\label{lem.est.scaled.BL2j}
		Let $L\in(0,\infty)$ and $\Omega$ be a periodic bumpy John domain with constant $L$ as in Definition \ref{def.PeriodicJohn}. 
		For $j\in\{1,\cdots,6\}$, let $(v^{(2j)},q^{(2j)})$ the weak solution of \eqref{BL2j} provided by Theorem \ref{prop.BL21} or \ref{prop.BL2356}. Then, for $r\in(\ep,1)$,
		\begin{align}\label{est1.lem.est.scaled.BL2j}
			\frac1{r} \bigg(\dashint_{B^\ep_{r,+}}
			|\ep (\nabla v^{(2j)})(x/\ep)|^{2} \dd x\bigg)^{1/2}
			+ \frac1r\bigg(
			\dashint_{B^\ep_{r,+}}
			|\ep q^{(2j)}(x/\ep)|^2 \dd x\bigg)^{1/2}
			\le
			C \big(\frac{\ep}{r}\big)^{1/2},
		\end{align}
		where $C$ depends only on $L$.
	\end{proposition}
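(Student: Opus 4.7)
The plan is to reduce the estimate via a direct rescaling to an unscaled $L^2$ bound on $(\nabla v^{(2j)}, q^{(2j)})$ over a macroscopic cube, and then to exploit the explicit decompositions furnished by Theorems \ref{prop.BL21}, \ref{prop.BL2356} and \ref{prop.BL22 and 24}. Setting $R := r/\ep > 1$ and performing the substitution $y = x/\ep$, the scaling $|B^\ep_{r,+}| = \ep^3 |B_{R,+}|$ together with $|B_{R,+}| \approx R^3$ reduces \eqref{est1.lem.est.scaled.BL2j} to the target inequality
\begin{equation*}
\int_{B_{R,+}} \bigl(|\nabla v^{(2j)}|^2 + |q^{(2j)}|^2\bigr) \;\le\; C\,R^4 \quad\text{for every } R > 1.
\end{equation*}

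For the easy indices $j \in \{2,4\}$, I would invoke Theorem \ref{prop.BL22 and 24}, which gives $v^{(2j)} \in \widehat{H}^1_0(\Omega_{\rm p})$ with the bound \eqref{est1.prop.BL22 and 24}. Since $B_{R,+}$ meets $O(R^2)$ horizontal fundamental cells of the $(2\pi\Z)^2$-periodic structure, one immediately obtains $\int_{B_{R,+}}(|\nabla v^{(2j)}|^2 + |q^{(2j)}|^2) \le C R^2 \le C R^4$, which is more than sufficient.

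The main cases are $j \in \{1,3,5,6\}$, where $v^{(2j)}$ carries a linear tangential factor. By symmetry in the indices, I would present the argument only for $j=1$. Differentiating the decomposition \eqref{est1.prop.BL21}, $\nabla v^{(21)}$ splits into four pieces: (i) the bounded term ${\bf e}_2 \otimes v^{(11)}$, whose $L^2$ norm on $B_{R,+}$ is $O(R^{3/2})$ by the exponential convergence \eqref{conv.prop.per.BL1j} combined with a Poincar\'e estimate in the bottom slab $\{x_3 < 1\}$ (where $v^{(11)}$ vanishes on $\partial\Omega$); (ii) the growing piece $x_2\,\nabla v^{(11)}$, controlled by $\int_{B_{R,+}} x_2^2|\nabla v^{(11)}|^2 \le R^2 \int_{B_{R,+}}|\nabla v^{(11)}|^2 \le C R^4$ via \eqref{est1.prop.BL1j} and the count of $O(R^2)$ periodic cells; (iii) the uniformly bounded piece $\alpha^{(11)}_2(\eta_+(x_3) + x_3\eta_+'(x_3))\,{\bf e}_3 \otimes {\bf e}_3$, integrating to $O(R^3)$; (iv) $\nabla R^{(21)}$, contributing $O(R^2)$ by the periodic bound \eqref{est2.prop.BL21}. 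The dominant piece is (ii). The pressure $q^{(21)} = x_2 q^{(11)} + Q^{(21)}$ is treated by the same two-piece argument, yielding $O(R^4)$ from the first term (again by \eqref{est1.prop.BL1j}) and $O(R^2)$ from the second.

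The remaining indices $j \in \{3,5,6\}$ follow by exactly the same scheme applied to \eqref{est1.prop.BL2356}--\eqref{est3.prop.BL2356}; the only novelty for $j\in\{5,6\}$ is the extra compactly supported term $x_3^2\eta_-(x_3)\,{\bf e}_3$, whose gradient is bounded and hence contributes $O(R^3)$. There is no genuine obstacle here: once the fine structure of the second-order boundary layers has been secured by Theorems \ref{prop.BL21}, \ref{prop.BL2356} and \ref{prop.BL22 and 24}, the entire estimate reduces to carefully tracking powers of $R$ against the $(2\pi\Z)^2$-periodic $L^2$ bounds on the residues and the bounded $x'$-independent correctors.
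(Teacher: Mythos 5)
Your proof is correct and is essentially the direct computation the paper has in mind (it omits the details, citing the analogue of \cite[Lemma 4]{HP}): after rescaling, the estimate reduces to $\int_{B_{R,+}}(|\nabla v^{(2j)}|^2+|q^{(2j)}|^2)\le CR^4$ with $R=r/\ep$, and this follows from the decompositions of Theorems \ref{prop.BL21}, \ref{prop.BL2356}, \ref{prop.BL22 and 24}, the columnwise bound \eqref{est1.prop.BL1j}, the periodic cell count, and the exponential convergence \eqref{conv.prop.per.BL1j}, with the term $x_2\nabla v^{(11)}$ (resp.\ $x_2 q^{(11)}$) giving the dominant $R^4$ contribution exactly as you say. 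One small correction in step (i): $v^{(11)}$ does not vanish on $\partial\Omega$ but equals $-P^{(11)}$ there; since $\partial\Omega\subset\{-1<x_3<0\}$ this boundary datum is bounded, so applying the Poincar\'e inequality to $v^{(11)}+P^{(11)}$ (or $v^{(11)}+\eta_-P^{(11)}$) in the bottom slab yields the same uniform local $L^2$ bound and the $O(R^{3})$ estimate for $\int_{B_{R,+}}|v^{(11)}|^2$ is unaffected.
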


	\section{Higher-order regularity}
	\label{sec.higher}

	%
	%
	\subsection{Large-scale $C^{1,\gamma}$ estimate}\label{subsec.Large-scale.1}
	The goal of this subsection is to prove the large-scale $C^{1,\gamma}$ regularity stated in Theorem \ref{thm.C1g}. We will use the first-order boundary layers and modify the argument of the Lipschitz estimate.
	%
	
	Recall that $\mathscr{P}_1 = {\rm span} \{ P^{(11)}, P^{(12)} \} = \{ (ax_3, bx_3,0)~|~a,b\in \R \}$. Let $\mathscr{P}_2 = \text{span} \{ P^{(2j)}~|~ j=1,2,\cdots, 6 \}$. Let $\mathscr{S}_2 = \text{span} \{(P^{(2j)}, L^{(2j)})~|~ j=1,2,\cdots, 6\}$. Note that any element of $\mathscr{S}_2$ is a weak solution of the Stokes system in $\R^3$. Let $(v^{(1k)}, q^{(1k)})$ with $k=1,2$, and $(v^{(2j)}, q^{(2j)})$ with $j=1,2,\cdots,6$, be the first-order and second-order boundary layers, respectively. Then define
	\begin{align}
		\mathscr{Q}_1(\Omega) & = \text{span} \{ (P^{(1k)},0) +(v^{(1k)}, q^{(1k)})~|~k=1,2  \}, \label{e.defQ1}\\
		\mathscr{Q}_2(\Omega) & = \text{span} \{ (P^{(2j)},{L^{(2j)}}) +(v^{(2j)}, q^{(2j)})~|~ j=1,2,\cdots, 6  \}\label{e.defQ2}.
	\end{align}
	Hence, $\mathscr{Q}_1(\Omega)$ (resp. $\mathscr{Q}_2(\Omega)$) is the vector space that contains all the ``linear'' (resp. ``quadratic'') solutions of the Stokes system in $\Omega$ vanishing on the boundary $\partial\Omega$; see the Liouville-type results at the end of this section stated in Theorem \ref{thm.liouville}.

	\begin{remark}
		Note that the pressure part in estimate \eqref{est.thmC1a} of Theorem \ref{thm.C1g} is different from the Lipschitz estimate in which $\bar{P}$ is $\dashint_{B_{1/2,+}^\ep} p^\ep$. Actually, in \eqref{est.thmC1a}, $\bar{P}_1$ is the average of the corrected pressure over a small ball, i.e.,
		\begin{equation*}
			\bar{P}_1 = \dashint_{B_{O(\ep),+}^\ep} \big( p^\ep - \pi(x/\ep) \big)\dd x
		\end{equation*}
		for some $(w,\pi) \in \mathscr{Q}_1(\Omega)$; see \eqref{def.Pbar}. This is reasonable since we are concerned with the $C^{0,\gamma}$ estimate of the pressure and $\bar{P}_1$ plays a role similar to 
		the zeroth-order term in the Taylor expansion of the pressure, if the boundary is flat. We emphasize that $\bar P_1$ depends on $\ep$. The point here is that $\bar{P}_1$ is independent of $r$.
	\end{remark}

	The critical fact we are going to use is that any $(w,\pi) \in \mathscr{Q}_1(\Omega)$
	is a solution of the Stokes system in $\Omega$ that vanishes on $\partial\Omega$. Hence, by rescaling, $(u^\ep, p^\ep) - (\ep w(x/\ep), \pi(x/\ep))$ is still a weak solution with a no-slip boundary condition.  This observation allows us to capture the regularity beyond the Lipschitz estimate. To this end, we define the first-order excess by 
	\begin{equation}\label{def.H1st}
		\begin{aligned}
			&H_{{\rm 1st}}(u^\ep,p^\ep;\rho) \\
			&\quad=
			\inf_{(w,\pi)\in \mathscr{Q}_1(\Omega)}
			\bigg\{	
			\bigg(\dashint_{B^\ep_{\rho,+}} \big|\nabla u^\ep - \nabla\big(\ep w(x/\ep)\big)\big|^2\dd x \bigg)^{1/2}
			 \\
			&\qquad
			+ \sup_{s,t\in [1/16,1/4]}
			\bigg| \dashint_{B^\ep_{s\rho,+}} \big(p^\ep - \pi(x/\ep) \big)\dd x - \dashint_{B^\ep_{t\rho,+}} \big( p^\ep - \pi(x/\ep) \big) \dd x \bigg| \bigg\}.
		\end{aligned}
	\end{equation}
	Recall that $(w,\pi) \in \mathscr{Q}_1(\Omega)$ means that
	\begin{equation*}
		(w, \pi) = \sum_{k=1}^2 \ell_k (P^{(1k)} + v^{(1k)}, q^{(1k)}),
	\end{equation*}
	for some $\ell_1, \ell_2 \in \R$.
	We will also use the quantity $\Phi(u^\ep,p^\ep;\rho)$ defined in \eqref{def.Phi}.

	\begin{lemma}\label{lem.H1st.excess.decay}
		Let $L\in(0,\infty)$ and $\Omega$ be a bumpy John domain with constant $L$ according to Definition \ref{def.John2}. 
		Let $(u^\ep,p^\ep)$ be as in Theorem \ref{thm.C1g}, namely, a weak solution of \eqref{S.ep} in Subsection \ref{subsec.set-up and approx.} satisfying \eqref{e.defMbis}.
		For all $\ep\in(0,{\frac1{32}}]$, $r\in {[2\ep,\frac1{16}}]$ and $\theta\in(0,{\frac18}]$, 
		\begin{equation}\label{est1.lem.H1st.excess.decay}
			\begin{aligned}
				H_{{\rm 1st}}(u^\ep,p^\ep;\theta r)
				&\le C\big( \theta + \theta^{-3}\big(\frac{\ep}{r}\big)^{1/12} \big) \Phi(u^\ep,p^\ep; 16r) \\
				&\qquad
				+ C\theta^{-3}\bigg(\dashint_{{Q_{10r}}}
				|\mathcal{M}^2_{\ep}[F^\ep]|^3\bigg)^{1/3},
			\end{aligned}
		\end{equation}
		where $C$ depends only on $L$.
	\end{lemma}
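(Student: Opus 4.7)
\noindent\textbf{Proof plan for Lemma~\ref{lem.H1st.excess.decay}.}
The plan is to mirror the scheme of Lemma~\ref{lem.excess.decay}, replacing the linear no-slip polynomials $\mathscr{P}_1$ by the enlarged class $\mathscr{Q}_1(\Omega)$. The key advantage of $\mathscr{Q}_1(\Omega)$ is that its members are \emph{exact} Stokes solutions in the bumpy domain $\Omega^{\ep}$ vanishing on $\partial\Omega^{\ep}$: the first-order boundary-layer correctors from Theorem~\ref{prop.BL1j} compensate for the mismatch of an element of $\mathscr{P}_1$ on the bumpy boundary. This should lift the decay exponent from the suboptimal $\theta^{\alpha}$ of Lemma~\ref{lem.est.approx.func.} to the claimed $C\theta$, provided we work at the order of a quadratic Taylor remainder (rather than a $C^{1,\alpha}$ one).

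Concretely, I would first approximate $(u^{\ep},p^{\ep})$ on $B^{\ep}_{r,+}$ by the Stokes extension $(v_r,q_r)$ on the flat slab $Q^{\ep}_{t_0 r}$ provided by~\eqref{approx.Stokes}, and control the discrepancy via Lemma~\ref{lem.est.approx.}. Since $v_r$ is real-analytic in $Q^{\ep}_{t_0 r/2}$ by classical smoothness of the Stokes system above a flat boundary, and since $v_r(-\ep{\bf e}_3)=0$ combined with $\nabla\cdot v_r=0$ forces $\partial_3 v_{r,3}(-\ep{\bf e}_3)=0$, the first-order Taylor polynomial of $v_r$ at $-\ep{\bf e}_3$ lies automatically in $\mathscr{P}_1$: it takes the form $P^{*}(x+\ep{\bf e}_3)$ with $P^{*}(x)=(\partial_3 v_{r,1}(-\ep{\bf e}_3),\partial_3 v_{r,2}(-\ep{\bf e}_3),0)\,x_3$. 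The classical $C^2$ boundary estimate supplies the quadratic remainder
\[
|v_r(x) - P^{*}(x+\ep{\bf e}_3)| \le \frac{C|x+\ep{\bf e}_3|^2}{r}\bigg(\dashint_{Q^{\ep}_{r/2}}|\nabla v_r|^2\bigg)^{1/2}, \quad x\in Q^{\ep}_{\theta r},
\]
together with $|\nabla P^{*}|\le C(\dashint_{Q^{\ep}_{r/2}}|\nabla v_r|^2)^{1/2}$. Writing $P^{*} = \ell_1 P^{(11)} + \ell_2 P^{(12)}$ and lifting to $\mathscr{Q}_1(\Omega)$ as
\[
(w^{*},\pi^{*}) := \sum_{k=1}^{2} \ell_k\bigl(P^{(1k)}+v^{(1k)},\, q^{(1k)}\bigr),
\]
the rescaling identities $\ep w^{*}(x/\ep) = P^{*}(x) + \ep v^{*}(x/\ep)$ with $v^{*}=\sum_k\ell_k v^{(1k)}$, and $\pi^{*}(x/\ep) = q^{*}(x/\ep)$ with $q^{*}=\sum_k\ell_k q^{(1k)}$, make $(w^{*},\pi^{*})$ the natural candidate for the infimum defining $H_{{\rm 1st}}$.

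The triangle inequality then splits $\nabla u^{\ep} - \nabla(\ep w^{*}(x/\ep))$ into three pieces: $(\nabla u^{\ep}-\nabla v_r)$, $(\nabla v_r-\nabla P^{*})$, and $-(\nabla v^{*})(x/\ep)$. I would estimate them, in $L^2$-average on $B^{\ep}_{\theta r,+}$, respectively via Lemma~\ref{lem.est.approx.} (producing the $(\ep/r)^{1/12}$ term and the $\mathcal{M}^2_{\ep}[F^{\ep}]$ term), via the Caccioppoli inequality applied to $v_r-P^{*}(x+\ep{\bf e}_3)$ on $Q^{\ep}_{2\theta r}$ combined with the Taylor remainder above (producing the factor $C(\theta+\theta^{-1}(\ep/r))$), and via Proposition~\ref{lem.est.scaled.BL1j} (producing $C\theta^{-1/2}(\ep/r)^{1/2}$ times the energy of $v_r$, after inserting the bound on $\ell_k$). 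The pressure half of $H_{{\rm 1st}}$ is treated analogously by decomposing $p^{\ep}-\pi^{*}(x/\ep) - \bar P_1$ into the three analogous pieces and controlling them via, respectively, the Bogovskii construction on the John subdomain of $B^{\ep}_{r,+}$ supplied by Definition~\ref{def.John2} (as in the proof of Lemma~\ref{lem.est.approx.}; see Remark~\ref{rmk.Standard.P.est}), the $C^{0,\alpha}$ estimate for $q_r$ in the flat slab, and Proposition~\ref{lem.est.scaled.BL1j}. The constant $\bar P_1$ is chosen as the average of $p^{\ep}-q^{*}(x/\ep)$ over a fixed small scale, so it depends on $\ep$ but not on $r$. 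Combining the above, using $\theta^{-1/2}(\ep/r)^{1/2}\le \theta^{-3}(\ep/r)^{1/12}$ on the allowed range, and absorbing the energy of $v_r$ into $\Phi(u^{\ep},p^{\ep};16r)$ via the energy estimate for~\eqref{approx.Stokes} and $|Q^{\ep}_{t_0 r}|\approx|B^{\ep}_{t_0 r,+}|$, yields~\eqref{est1.lem.H1st.excess.decay}.

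The hard part will be the careful bookkeeping of the various negative powers of $\theta$: the volume ratio $|B^{\ep}_{r,+}|/|B^{\ep}_{\theta r,+}|=\theta^{-3}$, the Caccioppoli loss at scale $\theta r$, and the boundary-layer decay $(\ep/\theta r)^{1/2}$ all conspire, and must be consolidated into a single $\theta^{-3}$ factor without losing the $(\ep/r)^{1/12}$ smallness. Relatedly, the pressure renormalization must produce a constant $\bar P_1$ genuinely independent of the scale $r\in[2\ep,\tfrac1{16}]$ at which the excess is evaluated; this will require combining the local Bogovskii estimate in the John subdomain with a telescoping-in-scale argument reminiscent of the pressure step at the end of the proof of Theorem~\ref{theo.lip.nonlinear}.
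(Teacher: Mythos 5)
Your proposal is correct in substance, but it takes a longer route than the paper. The paper's proof of Lemma \ref{lem.H1st.excess.decay} is a short corollary of machinery already in place: it invokes Lemma \ref{lem.excess.decay} with $\alpha=1$ to get the decay of the zeroth-order excess $H(u^\ep,p^\ep;\theta r)$ with the factor $C\theta$, takes $P^*=\ell_1^*P^{(11)}+\ell_2^*P^{(12)}\in\mathscr{P}_1$ to be the minimizer of $H$ at scale $\theta r$, bounds $\sum_k|\ell_k^*|\le C\theta^{-3/2}\Phi(u^\ep,p^\ep;r)$ via \eqref{est1.lem.h}, and then passes from $H$ to $H_{\rm 1st}$ by inserting the correctors $\sum_k\ell_k^*(v^{(1k)},q^{(1k)})$ and controlling the extra terms with Proposition \ref{lem.est.scaled.BL1j}, absorbing $\theta^{-3}(\ep/r)^{1/2}$ into $\theta^{-3}(\ep/r)^{1/12}$. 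You instead re-derive the excess decay from scratch: approximation by $(v_r,q_r)$ via Lemma \ref{lem.est.approx.}, a quadratic Taylor expansion of $v_r$ at the flat boundary to produce the linear part $P^*$ and the $C(\theta+\theta^{-1}(\ep/r))$ factor, Caccioppoli and Bogovskii for the gradient and pressure pieces, and only then the lift to $\mathscr{Q}_1(\Omega)$ with Proposition \ref{lem.est.scaled.BL1j}. All the estimates you list are available in the paper and the bookkeeping of $\theta$-powers works out as you anticipate, so the route closes; in fact it is precisely the strategy the paper uses later for the second-order analogue (Lemma \ref{lem.2nd.est.approx.func.}), so your version is more self-contained and generalizes directly, at the cost of redoing work that Lemmas \ref{lem.est.approx.func.}--\ref{lem.excess.decay} already encapsulate.

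One small correction: the constant $\bar P_1$ and the telescoping-in-scale argument you flag as "the hard part" are not part of this lemma. The pressure component of $H_{\rm 1st}(u^\ep,p^\ep;\theta r)$ in \eqref{def.H1st} is the supremum over $s,t\in[1/16,1/4]$ of differences of averages of $p^\ep-\pi(x/\ep)$ over $B^\ep_{s\theta r,+}$ and $B^\ep_{t\theta r,+}$, i.e.\ an oscillation-of-averages quantity localized at the single scale $\theta r$; no scale-independent renormalization constant enters here. The choice of $\bar P_1$ and the telescoping over dyadic scales belong to the subsequent proof of Theorem \ref{thm.C1g} (via Proposition \ref{lem.H1st.iteration} and Lemma \ref{lem.1st.h}), not to the excess-decay inequality you are asked to prove. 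Your three-piece decomposition of the pressure already controls the required oscillation, so dropping the $\bar P_1$ discussion makes the argument both shorter and aligned with the statement.
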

	%
	\begin{proof}
		First, we apply Lemma \ref{lem.excess.decay} with $\alpha=1$
		\begin{equation}\label{est.H.C11}
			\begin{aligned}
				H(u^\ep,p^\ep;\theta r)
				&\le C\big(\theta + \theta^{-3}\big(\frac{\ep}{r}\big)^{1/12} \big) \Phi(u^\ep,p^\ep;16r) \\
				&\qquad
				+ C\theta^{-3}\bigg(\dashint_{{Q_{10r}}} |\mathcal{M}^2_{\ep}[F^\ep]|^3\bigg)^{1/3},
			\end{aligned}
		\end{equation}
		where we also used the fact 
		$H(\cdot,\cdot,2r) \le \Phi(\cdot,\cdot,2r) \le C\Phi(\cdot,\cdot,16r)$. 
		Let $P^* = \ell_1^* P^{(11)} + \ell_2^* P^{(12)} \in \mathscr{P}_1$ be the linear solution that minimizes $H(u^\ep,p^\ep;\theta r)$. Then \eqref{est1.lem.h} implies
		\begin{equation}\label{est.lk*}
			\sum_{k=1}^2 |\ell_k^*| \le C\big( H(u^\ep,p^\ep;\theta r)+ \Phi(u^\ep, p^\ep; \theta r) \big) \le C{\theta^{-3/2}} \Phi(u^\ep, p^\ep; r).
		\end{equation}
		By the definition of $H_{\rm 1st}$ and $H$, one has
		\begin{equation}\label{est.H1st-H}
			\begin{aligned}
				& H_{\rm 1st}(u^\ep, p^\ep, \theta r)  \\
				& \le 
				\bigg(\dashint_{B^\ep_{\theta r,+}} \Big|\nabla u^\ep - \nabla\Big(\sum_{k=1}^2 {\ell}^*_k (P^{(1k)} + \ep v^{(1k)}(x/\ep) \Big) \Big|^2 \dd x \bigg)^{1/2} \\
				&\quad + \sup_{s,t\in [1/16,1/4]}
				\bigg| \dashint_{B^\ep_{s\theta r,+}} \Big( p^\ep - \sum_{k=1}^2 {\ell}^*_k q^{(1k)}(x/\ep) \Big)\dd x - \dashint_{B^\ep_{t\theta r,+}} \Big( p^\ep - \sum_{k=1}^2 {\ell}^*_k q^{(1k)}(x/\ep) \Big) \dd x \bigg| \\
				& \le H(u^\ep,p^\ep;\theta r) +  \sum_{k=1}^2 |\ell^*_k| \bigg(\dashint_{B^\ep_{\theta r,+}} |(\nabla v^{(1k)})(x/\ep)|^2\dd x \bigg)^{1/2} \\
				&\qquad 
				+ 2 \sup_{\rho\in [1/16,1/4]}\sum_{k=1}^2 |\ell^*_k|\ \bigg| \dashint_{B_{\rho\theta r,+}^\ep} q^{(1k)}(x/\ep) \dd x - \dashint_{B_{\theta r/2,+}^\ep} q^{(1k)}(x/\ep)\dd x \bigg|.
			\end{aligned}
		\end{equation}
		From Proposition \ref{lem.est.scaled.BL1j}, we have the estimate for the first-order boundary layers
		\begin{equation}\label{est.vq.Br}
			\begin{aligned}
				\sum_{k=1}^2 \bigg\{
				\bigg(\dashint_{B^\ep_{\theta r,+}} 
				|(\nabla v)^{(1k)}(x/\ep)|^2\dd x \bigg)^{1/2} + \bigg(\dashint_{B^\ep_{\theta r,+}} |q^{(1k)}(x/\ep)|^2\dd x\bigg)^{1/2} \bigg\}
				\le C\theta^{-1/2}\big(\frac{\ep}{r}\big)^{1/2}.
			\end{aligned}
		\end{equation}
		Inserting this into \eqref{est.H1st-H} and using \eqref{est.lk*} and \eqref{est.H.C11}, we obtain
		\begin{equation*}
			\begin{aligned}
				& H_{\rm 1st}(u^\ep, p^\ep; \theta r) \\
				& \le H(u^\ep,p^\ep;\theta r) + C{\theta^{-3}}\big(\frac{\ep}{r}\big)^{1/2}\Phi(u^\ep,p^\ep; r) \\
				& \le C\big( \theta + \theta^{-3}\big(\frac{\ep}{r}\big)^{1/12} \big) \Phi(u^\ep,p^\ep;16r) + C\theta^{-3}\bigg(\dashint_{{Q_{10r}}}
				|\mathcal{M}^2_{\ep}[F^\ep]|^3\bigg)^{1/3}.
			\end{aligned}
		\end{equation*}
		This proves the lemma.
	\end{proof}
	%

	%
	\begin{proposition}\label{lem.H1st.iteration}
		Let $L\in(0,\infty)$ and $\Omega$ be a bumpy John domain with constant $L$ according to Definition \ref{def.John2}. Let $(u^\ep,p^\ep)$ be as in Theorem \ref{thm.C1g}. For any $\gamma\in[0,1)$, $\delta\in (0,1)$, $\ep\in(0,\frac12]$ and $r\in [\ep,\frac12]$,
		\begin{equation}\label{est1.lem.H1st.iteration}
			\begin{aligned}
				H_{{\rm 1st}}(u^\ep,p^\ep; r)
				\le C r^\gamma (M+M^{4+2\gamma+\delta}),
			\end{aligned}
		\end{equation}
		where $C$ depends on $L$, $\gamma$ and $\delta$. Here $M$ is the number given in Theorem \ref{thm.C1g}.
	\end{proposition}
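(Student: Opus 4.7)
The plan is to mimic the iteration used in the proof of Theorem~\ref{theo.lip.nonlinear}, with the first-order excess $H_1(r):=H_{\rm 1st}(u^\ep,p^\ep;r)$ replacing the zeroth-order $H$, and using Lemma~\ref{lem.H1st.excess.decay} as the source of scale-by-scale decay. Applying Theorem~\ref{thm.MFL3} with $l=6/(2-\gamma)\in(3,\infty)$ and with its parameter $\delta$ set to $\delta/2$ (so that $l\delta/2<6$ for $\gamma<1$ and the proposition's $\delta\in(0,1)$) converts the bound on the nonlinear forcing into
\begin{equation*}
\biggl(\dashint_{Q_{r}}|\mathcal{M}^{2}_{\ep}[F^{\ep}]|^{3}\biggr)^{1/3}\leq Cr^{\gamma}\bigl(M+M^{4+2\gamma+\delta}\bigr),
\end{equation*}
so the forcing in Lemma~\ref{lem.H1st.excess.decay} has exactly the form of the right-hand side of the target estimate with $B_0:=M+M^{4+2\gamma+\delta}$.

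To set up the iteration I would, at each scale $r\in[\ep,\tfrac12]$, pick a minimizer $(w^{\ast}_r,\pi^{\ast}_r)=\sum_{k=1}^{2}\ell^{\ast}_k(r)(P^{(1k)}+v^{(1k)},q^{(1k)})\in\mathscr{Q}_1(\Omega)$ realizing the infimum in the definition of $H_{\rm 1st}$, and introduce $h_1(r):=|\ell^{\ast}(r)|$. Using Proposition~\ref{lem.est.scaled.BL1j} to quantify the size $(\ep/r)^{1/2}$ of the scaled boundary-layer contributions on $B^\ep_{r,+}$, and mimicking the arguments of Lemma~\ref{lem.h} and Lemma~\ref{lem.Phi}, one verifies analogues of the compatibility inequalities \eqref{lem.iteration.assump.b}--\eqref{lem.iteration.assump.f} for the triple $(H_1,\Phi,h_1)$. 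The crucial new move is to subtract the scale-$2r$ minimizer $(\ep w^{\ast}_{2r}(x/\ep),\pi^{\ast}_{2r}(x/\ep))$ from $(u^\ep,p^\ep)$ before each decay step: because every element of $\mathscr{Q}_1(\Omega)$ is a Stokes solution with no-slip datum on $\partial\Omega$, the shifted pair still satisfies \eqref{S.ep} with the same $F^\ep$, so Lemma~\ref{lem.H1st.excess.decay} applies to it. Re-expanding the shifted scale-$\theta r$ minimizer in the basis of $\mathscr{Q}_1(\Omega)$ and controlling the resulting boundary-layer remainders via Proposition~\ref{lem.est.scaled.BL1j} should yield an iterable geometric decay
\begin{equation*}
H_1(\theta r)\leq \tfrac{1}{2}H_1(2r)+C\bigl((\tfrac{\ep}{r})^{1/12}\Phi(16r)+B_0 r^{\gamma}\bigr)
\end{equation*}
for $\theta$ sufficiently small. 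Combined with the bound $\Phi(16r)\leq C(M+M^{4+\delta})\leq CB_0$ from Theorem~\ref{theo.lip.nonlinear}, a standard Campanato-type iteration (or Lemma~\ref{lem.iteration} tuned to give $r^\gamma$ decay in place of mere summability of $H/r$) then delivers the claim $H_1(r)\leq Cr^\gamma B_0$.

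The main obstacle is precisely that Lemma~\ref{lem.H1st.excess.decay} as stated carries $\Phi(16r)$ on its right-hand side rather than $H_1(2r)$, and so does not iterate directly. The shift-and-re-expand step described above circumvents this, but depends crucially on scale-to-scale continuity of the corrector coefficients $\ell^{\ast}_k(r)$ (an analogue of \eqref{est3.lem.h}) and on the quantitative control of both the velocity correctors $v^{(1k)}$ and their pressures $q^{(1k)}$ supplied by Proposition~\ref{lem.est.scaled.BL1j}. The pressure part is the most delicate: the oscillation between averages over $B^\ep_{s\rho,+}$ and $B^\ep_{t\rho,+}$ built into the definitions of $H_{\rm 1st}$ and $\Phi$ is what permits the differences of scale-dependent pressure correctors $\pi^{\ast}_{2r}-\pi^{\ast}_{\theta r}$ to be absorbed into the error terms of the iteration rather than leaving an uncontrolled constant drift.
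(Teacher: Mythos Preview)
You have correctly identified the central mechanism: subtracting off an element of $\mathscr{Q}_1(\Omega)$ and exploiting that the shifted pair is still a solution of \eqref{S.ep} with the same $F^\ep$. However, you do not carry the idea to its natural conclusion, and the workaround you propose instead has a gap.

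The paper's argument is shorter and cleaner than what you outline. The point is that $H_{\rm 1st}$ is \emph{by definition} the infimum of $\Phi$ over all such shifts, and $H_{\rm 1st}$ is itself invariant under any fixed shift in $\mathscr{Q}_1(\Omega)$. Hence, applying Lemma~\ref{lem.H1st.excess.decay} to the shifted pair $(U^\ep,\Pi^\ep)=(u^\ep-\ep w(\cdot/\ep),\,p^\ep-\pi(\cdot/\ep))$ for an \emph{arbitrary} $(w,\pi)\in\mathscr{Q}_1(\Omega)$ and then taking the infimum replaces $\Phi(U^\ep,\Pi^\ep;16r)$ wholesale by $H_{\rm 1st}(u^\ep,p^\ep;16r)$. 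With $\theta$ small and $r\ge\ep/\ep_0$ one obtains directly
\[
H_{\rm 1st}(u^\ep,p^\ep;\theta r)\le\Big(\tfrac{\theta}{16}\Big)^{\gamma}H_{\rm 1st}(u^\ep,p^\ep;16r)+C r^{\gamma+\delta}(M+M^{4+2\gamma+\delta}),
\]
a self-referential geometric decay that iterates elementarily to \eqref{est1.lem.H1st.iteration}. No auxiliary $h_1$, no analogue of Lemma~\ref{lem.iteration}, and no appeal to Theorem~\ref{theo.lip.nonlinear} is needed; the starting value $H_{\rm 1st}(u^\ep,p^\ep;r_0)$ for $r_0\approx 1$ is bounded by $\Phi(u^\ep,p^\ep;r_0)\le C(M+M^2)$ via Bogovskii.

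Your displayed iteration $H_1(\theta r)\le\tfrac12 H_1(2r)+C\big((\ep/r)^{1/12}\Phi(16r)+B_0 r^\gamma\big)$ keeps $\Phi(16r)$ in the $(\ep/r)^{1/12}$ term, and you then propose to bound $\Phi(16r)\le CB_0$ via Theorem~\ref{theo.lip.nonlinear}. This does not close: the resulting error $C(\ep/r)^{1/12}B_0$ is of order $B_0$, not $r^\gamma B_0$, as $r\downarrow\ep$, so a Campanato iteration yields only $H_1(r)\lesssim B_0$ rather than $r^\gamma B_0$ near the bottom scale. The suggested ``tuning'' of Lemma~\ref{lem.iteration} to produce $r^\gamma$ decay is not available as stated; that lemma is designed to give boundedness of $\Phi$ and integrability of $H/t$, not a power rate. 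The fix is exactly the observation above: the $(\ep/r)^{1/12}$ term also multiplies $\Phi$ of the \emph{shifted} pair, hence becomes $(\ep/r)^{1/12}H_{\rm 1st}(16r)$ after the infimum, and is then absorbed into the contraction factor. A minor additional point: taking the forcing exponent exactly equal to $\gamma$ (your choice $l=6/(2-\gamma)$) produces a logarithmic loss in the iteration; the paper takes the forcing with exponent $\gamma+\delta$, which is why the extra $\delta$ appears in \eqref{est.Fep.large-scale Lipschitz}.
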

	%
	\begin{proof}
		
		Note that it suffices to prove \eqref{est1.lem.H1st.iteration} for $r\in [N_0\ep, 1/N_1]$ with some absolute constant $N_0, N_1 \ge 2$. The cases for $r\in (1/N_1, \frac12]$ or $\ep \ge 1/(N_0 N_1)$ follow directly from the Bogovskii lemma and the Poincar\'{e} inequality. The case $r\in [\ep, N_0\ep]$ follows from the case $r=N_0 \ep$.

		Firstly, using \eqref{est.MFep.large-scale} with $\beta = \gamma+\delta$ (with $\delta\in (0,{\frac{2-\gamma}{2}})$ being arbitrary), we have
		\begin{equation}\label{est.Fep.large-scale Lipschitz}
			\bigg(\dashint_{{Q_{r}}} |\mathcal{M}^2_{\ep}[F^\ep]|^3\bigg)^{1/3} \le C_\delta(M+M^{4+2\gamma +4\delta}) r^{\gamma+\delta}
		\end{equation}
		with $
		C_\delta$ depending on $\delta$.

		Since $\theta\in(0,{\frac18}]$ in Lemma \ref{lem.H1st.excess.decay} is arbitrary, we can choose $\theta$ sufficiently small so that $C\theta \le \frac12 (\frac{\theta}{16})^{\gamma}$ holds in \eqref{est1.lem.H1st.excess.decay}. For such fixed $\theta$, we can find ${\ep_0}\in(0,\frac12)$ depending on $\gamma$ and $\theta$ such that the factor in \eqref{est1.lem.H1st.excess.decay} satisfies
		\begin{equation*}
			C\theta^{-3}\big(\frac{\ep}{r}\big)^{{1/12}} \le \frac{1}{2} \big(\frac{\theta}{16}\big)^\gamma, \quad r\in[\ep/{\ep_0},\frac1{16}]
		\end{equation*}
		in \eqref{est1.lem.H1st.excess.decay}. Then by \eqref{est.Fep.large-scale Lipschitz} and \eqref{est1.lem.H1st.excess.decay}, 
		\begin{equation}\label{est1.proof.lem.H1st.iteration}
			\begin{aligned}
				H_{{\rm 1st}}(u^\ep,p^\ep;\theta r) 
				&\le \big(\frac{\theta}{16} \big)^\gamma \Phi(u^\ep,p^\ep; 16r)
				+C_\delta(M+M^{4+2\gamma+4\delta}) r^{\gamma+\delta}.
			\end{aligned}
		\end{equation}
		Now the key observation is that, for any $(w,\pi) \in \mathscr{Q}_1(\Omega)$, the pair $(U^\ep, \Pi^\ep)$ defined by
		\begin{equation*}
			U^\ep(x) = u^\ep(x) - \ep w(x/\ep), \qquad 
			\Pi^\ep(x) = p^\ep(x) - \pi(x/\ep)
		\end{equation*}
		is still a weak solution of 
		the Stokes system \eqref{S.ep} in Subsection \ref{subsec.set-up and approx.}.
		Therefore, the estimate \eqref{est1.proof.lem.H1st.iteration} still holds if we replace $\Phi(u^\ep,p^\ep;16r)$ by $\Phi(U^\ep,\Pi^\ep;16r)$ for any $(w,\pi) \in \mathscr{Q}_1(\Omega)$. 
		Then taking the infimum over all $(w,\pi) \in \mathscr{Q}_1(\Omega)$, we can further replace $\Phi(U^\ep,\Pi^\ep;16r)$ by $H_{{\rm 1st}}(u^\ep,p^\ep;16r)$. Hence we obtain
		\begin{equation}\label{e.firstexcess}
			\begin{aligned}
				H_{{\rm 1st}}(u^\ep,p^\ep;\theta r) 
				\le \big(\frac{\theta}{16})^\gamma H_{{\rm 1st}}(u^\ep,p^\ep; 16r) 
				+C_\delta(M+M^{4+2\gamma+4\delta}) r^{\gamma+\delta}.
			\end{aligned}
		\end{equation}
		This is the first-order excess decay estimate for the $C^{1,\gamma}$ regularity of $(u^\ep, p^\ep)$. Note that we can eventually replace $4\delta$ by $\delta$ in the right hand side of \eqref{e.firstexcess} as $\delta\in (0,1)$ is arbitrary. Thus, by a simple iteration, we have that for $\ep/\ep_0 \le r \le \frac{\theta}{16}$,
		\begin{equation*}
			H_{{\rm 1st}}(u^\ep,p^\ep; r) 
			\le r^\gamma \big( H_{{\rm 1st}}(u^\ep,p^\ep; r_0) 
			+ C_\delta(M+M^{4+2\gamma+\delta}) \big),
		\end{equation*}
		for some $r_0 \in [\frac{\theta}{16},1]$. Clearly, $H_{{\rm 1st}}(u^\ep,p^\ep; r_0) \le \Phi
		(u^\ep,p^\ep; r_0)$. It remains to show
		\begin{equation*}
			\Phi
			(u^\ep,p^\ep; r_0) 
			\le 
			C(M+M^2).
		\end{equation*}
		Indeed, since $r_0$ is comparable to $1$, the above estimate follows directly from the Poincar\'e inequality and 
		Bogovskii's lemma. The proof is complete.
	\end{proof}
	
	The above theorem directly implies the $C^{1,\gamma}$ estimate for the velocity. To handle the pressure estimate in Theorem \ref{thm.C1g}, we need the following lemma.
	%
	\begin{lemma}\label{lem.1st.h}
		Let $L\in(0,\infty)$ and $\Omega$ be a bumpy John domain with constant $L$ according to Definition \ref{def.John2}. For a given $\rho>0$, let $(\ell_1(\rho),\ell_2(\rho))$ be the pair of real numbers so that
		\begin{equation*}
			(w, \pi) = \sum_{k=1}^2 \ell_k(\rho) (P^{(1k)} + v^{(1k)}, q^{(1k)})
		\end{equation*}
		minimizes $H_{{\rm 1st}}(u^\ep,p^\ep;\rho)$. Then  
		there exists a constant $\ep_1\in (0,1)$ so that for all $\ep\in(0,\ep_1]$
		and $r\in [ \ep/\ep_1,{\frac12}]$, 
		\begin{equation}\label{est1.lem.1st.h}
			\sup_{r_1,r_2\in [r,2r]} \sum_{k=1}^2 |\ell_k(r_1) - \ell_k(r_2)| 
			\le C
			\sup_{t\in [r,2r]}  H_{{\rm 1st}}(u^\ep,p^\ep;t),
			\end{equation}
		where $C$ depends only on $L$.
	\end{lemma}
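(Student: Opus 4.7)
The plan is to mirror the proof of Lemma \ref{lem.h}, with an additional coercivity step to absorb the contribution of the first-order boundary layer correctors. Set $c_k := \ell_k(r_1) - \ell_k(r_2)$ for $k\in\{1,2\}$ and $r_1,r_2\in[r,2r]$. Since $B_{r,+}^\ep\subset B_{r_i,+}^\ep$ with $|B_{r_i,+}^\ep|\leq C|B_{r,+}^\ep|$, the triangle inequality together with the minimizing property of $(\ell_1(r_i),\ell_2(r_i))$ (using that the pressure term in $H_{{\rm 1st}}$ is non-negative) would give
\[
\bigg(\dashint_{B_{r,+}^\ep}\Big|\sum_{k=1}^2 c_k\,\nabla\big(P^{(1k)}+\ep v^{(1k)}(x/\ep)\big)\Big|^2\bigg)^{1/2}\leq C\sup_{t\in[r,2r]}H_{{\rm 1st}}(u^\ep,p^\ep;t).
\]
The lemma is then reduced to the coercivity estimate
\[
\sum_{k=1}^2|c_k|^2\leq C\dashint_{B_{r,+}^\ep}\Big|\sum_{k=1}^2 c_k\,\nabla\big(P^{(1k)}+\ep v^{(1k)}(x/\ep)\big)\Big|^2.
\]

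To establish this, I would split $\nabla(\sum c_k(P^{(1k)}+\ep v^{(1k)}(x/\ep)))=A+B$ with $A=\sum c_k\nabla P^{(1k)}$ and $B=\sum c_k(\nabla v^{(1k)})(x/\ep)$, and use the elementary inequality $|A+B|^2\geq\tfrac12|A|^2-|B|^2$. The gradients $\nabla P^{(11)}$ and $\nabla P^{(12)}$ are linearly independent constant matrices (namely $\mathbf e_1\otimes\mathbf e_3$ and $\mathbf e_2\otimes\mathbf e_3$), so $|A|^2=c_1^2+c_2^2$ pointwise. On the other hand, Proposition \ref{lem.est.scaled.BL1j} combined with the Cauchy--Schwarz inequality yields
\[
\dashint_{B_{r,+}^\ep}|B|^2\leq 2\sum_{k=1}^2|c_k|^2\dashint_{B_{r,+}^\ep}|(\nabla v^{(1k)})(x/\ep)|^2\leq C\,\frac{\ep}{r}\sum_{k=1}^2|c_k|^2.
\]
Hence
\[
\dashint_{B_{r,+}^\ep}|A+B|^2\geq\Big(\frac12-C\frac{\ep}{r}\Big)\sum_{k=1}^2|c_k|^2,
\]
and selecting $\ep_1\in(0,1)$ with $C\ep_1\leq 1/4$ and restricting to $r\geq\ep/\ep_1$ gives the claimed coercivity with constant $4$.

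The main obstacle is precisely this coercivity step, and it is the reason for the constraint $r\geq\ep/\ep_1$ in the statement: at scales comparable with the microscopic scale $\ep$, the normalized $L^2$ norm of $(\nabla v^{(1k)})(x/\ep)$ over $B_{r,+}^\ep$ is not small (Proposition \ref{lem.est.scaled.BL1j} only yields an $O((\ep/r)^{1/2})$ bound), so the constant part $\nabla P^{(1k)}$ can no longer be linearly separated from the corrector contribution, and the parameterization $(\ell_1,\ell_2)\mapsto\sum\ell_k\nabla(P^{(1k)}+\ep v^{(1k)}(x/\ep))$ ceases to be quantitatively injective in $L^2(B_{r,+}^\ep)$. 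Combining the reduction in the first paragraph with the coercivity in the second then yields \eqref{est1.lem.1st.h}.
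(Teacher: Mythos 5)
Your proof is correct and follows essentially the same route as the paper: linear independence (indeed orthogonality) of the constant matrices $\nabla P^{(1k)}$, insertion of $u^\ep$ with enlargement from $B_{r,+}^\ep$ to $B_{r_i,+}^\ep$ together with the minimizing property, and absorption of the corrector contribution via Proposition \ref{lem.est.scaled.BL1j} under the restriction $r\ge \ep/\ep_1$. The only cosmetic difference is that you phrase the absorption as a quantitative coercivity lower bound using $|A+B|^2\ge\tfrac12|A|^2-|B|^2$, whereas the paper absorbs the term $C_0(\ep/r)^{1/2}\sum_{k=1}^2|\ell_k(r_1)-\ell_k(r_2)|$ directly on the left-hand side of the same chain of inequalities.
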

	%
	\begin{proof}
		By the definition of $H_{\rm 1st}$, the triangle inequality and using that 
		the matrices $\nabla P^{(1k)}$
		are linearly independent over $\R$, if $r\le r_1, r_2 \le 2r$,
		\begin{align*}
			& \sum_{k=1}^2 |\ell_k(r_1) - \ell_k(r_2)| \\
			&\le C\bigg(\dashint_{B^\ep_{r,+}} \Big|\sum_{k=1}^2 (\ell_k(r_1) - \ell_k(r_2)) \nabla P^{(1k)}\Big|^2 \bigg)^{1/2} \\
			&\le C \bigg(\dashint_{B^\ep_{r,+}} \Big|\nabla\Big(\sum_{k=1}^2 (\ell_k(r_1) - \ell_k(r_2)) (P^{(1k)} + \ep v^{(1k)}(x/\ep)\Big)\Big|^2 \dd x \bigg)^{1/2} \\
			&\quad
			+ C \sum_{k=1}^2 |\ell_k(r_1) - \ell_k(r_2)| \bigg(\dashint_{B^\ep_{r,+}} |(\nabla v^{(1k)})(x/\ep)|^2\dd x \bigg)^{1/2} \\
			&\le CH_{{\rm 1st}}(u^\ep,p^\ep;r_1) + CH_{{\rm 1st}}(u^\ep,p^\ep;r_2) + C_0 \big( \frac{\ep}{r} \big)^{1/2} \sum_{k=1}^2 |\ell_k(r_1) - \ell_k(r_2)|,
		\end{align*}
		where in the last inequality, we inserted $u^\ep$ and enlarged the domain from $B_{r,+}^{\ep}$ to $B_{r_i,+}^\ep$ with $i = 1,2$, and applied Proposition \ref{lem.est.scaled.BL1j}.
		Now if $r\ge \ep/\ep_1$ for some small $\ep_1\in(0,1)$ so that $C_0\ep_1^{1/2}
		< \frac12$, then
		\begin{equation*}
			\sum_{k=1}^2 |\ell_k(r_1) - \ell_k(r_2)| \le  C\sum_{i = 1}^2 H_{{\rm 1st}}(u^\ep,p^\ep;r_i).
		\end{equation*}
		This gives the desired estimate.
	\end{proof}
	
	\begin{proofx}{Theorem \ref{thm.C1g}}
		Let $\ep_1\in(0,1)$	be the number in Lemma \ref{lem.1st.h}. Note that it suffices to prove \eqref{est.thmC1a} when 
		$\ep\in(0,\ep_1]$ and $r\in [\ep/\ep_1, \frac1{16}]$ as a familiar argument enables us to remove the smallness condition on $\ep$ and the restriction on $r$.
		The velocity estimate in \eqref{est.thmC1a} follows from 
		the Poincar\'e inequality and
		\eqref{est1.lem.H1st.iteration}. Hence, it suffices to estimate the pressure. Let $(\ell_1(\rho), \ell_2(\rho))$ be as in Lemma \ref{lem.1st.h}. For $r\in [\ep/\ep_1,{\frac1{16}}]$, let $K$ be the integer so that $4^{-K} r \in [\ep/\ep_1, 4\ep/\ep_1)$. By the triangle inequality, the estimate of $q^{(1j)}$
		in Proposition \ref{lem.est.scaled.BL1j},
		\begin{equation}\label{est.Pint.j}
			\begin{aligned}
				&\bigg| \dashint_{B^\ep_{4^{-K}r,+}} \bigg(p^\ep - \sum_{k=1}^2 \ell_k(4^{1-K}r) q^{(1k)}(x/\ep) \bigg) \dd x - \dashint_{B^\ep_{r,+}} \bigg( p^{\ep} - \sum_{k=1}^2 \ell_k (4r) q^{(1k)}(x/\ep) \bigg) \dd x \bigg| \\
				& \le \sum_{i = 1}^{K} \bigg| \dashint_{B^\ep_{4^{i-K-1}r,+}} \bigg(p^\ep - \sum_{k=1}^2 \ell_k (4^{i-K}r) q^{(1k)}(x/\ep) \bigg) \dd x \\
				&\qquad\quad- \dashint_{B^\ep_{4^{i-K} r,+}} \bigg( p^{\ep} - \sum_{k=1}^2 \ell_k (4^{i-K+1}r) q^{(1k)}(x/\ep) \bigg) \dd x \bigg| \\
				& \le \sum_{i=1}^K \Big(H_{\rm 1st} (u^\ep, p^\ep; 4^{i-K+1} r) + \sum_{k=1}^2 |\ell_k (4^{i-K+1} r) - \ell_k (4^{i-K} r)| \big(\frac{\ep}{4^{i-K}r}\big)^{1/2} \Big)\\
				& \le C\sum_{i=1}^K (4^{i-K+1} r)^{{\gamma}} (M + M^{4+2\gamma+\delta}) \\
				& \le Cr^\gamma (M + M^{4+2\gamma+\delta}),
			\end{aligned}
		\end{equation}
		where we have used \eqref{est1.lem.H1st.iteration} and \eqref{est1.lem.1st.h} in the third inequality. Define
		\begin{equation}\label{def.Pbar}
			\bar{P}_1 = \dashint_{B^\ep_{\ep/\ep_1,+}} \bigg(p^\ep - \sum_{k=1}^2 \ell_k (4\ep/\ep_1) q^{(1k)}(x/\ep) \bigg)\dd x.
		\end{equation}
		Then by \eqref{est.Pint.j} and another use of \eqref{est1.lem.H1st.iteration} and \eqref{est1.lem.1st.h}, we have
		\begin{equation*}
			\bigg| \dashint_{B^\ep_{r,+}} \bigg( p^{\ep} - \sum_{k=1}^2 \ell_k (4r) q^{(1k)}(x/\ep) \bigg) \dd x - \bar{P}_1 \bigg| \le Cr^\gamma (M + M^{4+2\gamma+\delta}).
		\end{equation*}
		On the other hand, by Bogovskii's lemma applied to the John domain between $B^\ep_{r,+}$ and $B^\ep_{2r,+}$ given by Definition \ref{def.John2} and \eqref{est.Fep.large-scale Lipschitz}
		with $4\delta$ replaced by $\delta$, 	
		we have
		\begin{equation}\label{est.pi.Pj4r}
			\begin{aligned}
				& \bigg( \dashint_{B_{r,+}^\ep}  \bigg| p^{\ep} - \sum_{k=1}^2 \ell_k(4r) q^{(1k)}(x/\ep) - \dashint_{B^\ep_{r,+}} \bigg( p^{\ep} - \sum_{k=1}^2 \ell_k (4r) q^{(1k)}(x/\ep) \bigg)\dd x \bigg|^2 \dd x \bigg)^{1/2} \\
				& \le C\bigg\{ \bigg(\dashint_{B^\ep_{2r,+}} \Big|\nabla u^\ep - \nabla\Big(\sum_{k=1}^2 \ell_k (4r) (P^{(1k)} + \ep v^{(1k)}(x/\ep)\Big) \Big|^2\dd x \bigg)^{1/2} + \bigg(\dashint_{B^\ep_{{8r},+}} |F^\ep|^2\bigg)^{1/2}\bigg\} \\
				& \le CH_{\rm 1st}(u^\ep,p^\ep; 4r) + C {\bigg(\dashint_{Q_{8r}} |\mathcal{M}^2_{\ep}[F^\ep]|^3\bigg)^{1/3}} \\
				&\le Cr^\gamma (M + M^{4+2\gamma+\delta}).
			\end{aligned}
		\end{equation}
		Combining the above two inequalities, we obtain the desired estimate in \eqref{est.thmC1a} for the pressure. This completes the proof of Theorem \ref{thm.C1g}.
	\end{proofx}

	%
	\subsection{Large-scale $C^{2,\gamma}$ estimate over periodic boundaries}
	\label{sec.C2gamma}
	%
	The goal of this subsection is to prove the large-scale $C^{2,\gamma}$ regularity stated in Theorem \ref{thm.C2g}. In this subsection, we assume $\Omega$ is a periodic bumpy John domain defined in Definition \ref{def.PeriodicJohn}. The argument for $C^{2,\gamma}$ estimate is similar to the $C^{1,\gamma}$ estimate. Throughout, we assume $(w_1 ,\pi_1) \in \mathscr{Q}_1(\Omega)$ and $(w_2, q_2) \in \mathscr{Q}_2(\Omega)$. In other words, for some $\ell_{1k}, \ell_{2j}\in \R$. 
	\begin{equation*}
		\begin{aligned}
			(w_1 ,\pi_1) & = \sum_{k=1}^2 \ell_{1k} (P^{(1k)} + v^{(1k)}, q^{(1k)}),\\
			(w_2 ,\pi_2) & = \sum_{j=1}^6 \ell_{2j} (P^{(2j)} + v^{(2j)}, L^{(2j)}+q^{(2j)}).
		\end{aligned}
	\end{equation*}
	It is important to observe that, by rescaling,
	\begin{equation*}
		\big(\ep w_1(x/\ep) + \ep^2 w_2(x/\ep), {\pi}_1(x/\ep) +\ep {\pi}_2(x/\ep) \big)
	\end{equation*}
	is a solution of the Stokes system in $\Omega^\ep$ with the no-slip boundary condition on $\partial \Omega^\ep$.

	Define the second-order excess as
	\begin{equation*}
		\begin{aligned}
			&H_{\rm 2nd}(u^\ep,p^\ep; \rho) \\
			&= 
			\inf\limits_{\substack{ (w_1,q_1) \in \mathscr{Q}_1(\Omega) \\ (w_2,q_2) \in \mathscr{Q}_2(\Omega)} } \Bigg\{
			\bigg(\dashint_{B^\ep_{\rho,+}} 
			\big|\nabla u^\ep - \nabla\big(\ep w_1(x/\ep) + \ep^2 w_2(x/\ep) \big) \big|^2\dd x \bigg)^{1/2}
			 \\
			&
			+ \sup_{s,t\in [1/16,1/4]}
			\bigg| \dashint_{B^\ep_{s\rho,+}} \big( p^\ep - \pi_1(x/\ep) - \ep \pi_2(x/\ep) \big)\dd x - \dashint_{B^\ep_{t\rho,+}} \big( p^\ep - \pi_1(x/\ep) - \ep \pi_2(x/\ep) \big)\dd x \bigg| \bigg\}.
		\end{aligned}
	\end{equation*}

	\begin{lemma}\label{lem.2nd.est.approx.func.}
		Let $L\in(0,\infty)$ and $\Omega$ be a bumpy periodic John domain with constant $L$ according to Definition \ref{def.PeriodicJohn}. 
		Let $(u^\ep,p^\ep)$ be as in Theorem \ref{thm.C2g}, namely, a weak solution of \eqref{S.ep} in Subsection \ref{subsec.set-up and approx.} satisfying \eqref{e.defMbis}. For all $\ep\in(0,{\frac1{32}}]$, $r\in[2\ep,{\frac1{16}}]$ and $\theta\in(0,{\frac18}]$, 
\begin{equation}
	\begin{aligned}
		H_{{\rm 2nd}}(u^\ep,p^\ep;\theta r)
		&\le C\big( \theta^2 + \theta^{-3}\big(\frac{\ep}{r}\big)^{1/12} \big) \Phi(u^\ep,p^\ep; 16r) \\
		&\qquad
		+ C\theta^{-3}\bigg(\dashint_{{Q_{10r}}}
		|\mathcal{M}^2_{\ep}[F^\ep]|^3\bigg)^{1/3},
	\end{aligned}
\end{equation}
		where $C$ depends only on $L$.
	\end{lemma}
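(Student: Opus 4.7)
The plan is to follow the structure of the proof of Lemma \ref{lem.H1st.excess.decay}, lifting every first-order ingredient to second-order. I introduce an intermediate ``flat'' second-order excess $H_{{\rm 2nd},0}(u^\ep,p^\ep;\rho)$, defined exactly as $H_{\rm 2nd}$ but with the minimizing class $\mathscr Q_1(\Omega)\oplus\mathscr Q_2(\Omega)$ replaced by the raw no-slip Stokes polynomials $\mathscr P_1\oplus\mathscr P_2$ (together with the corresponding pressures in $\spn\{L^{(2j)}\}$) introduced in Subsection \ref{subsec.Taylor.polys}, and with no boundary layer correctors present. I then (i) prove a $\theta^2$-excess decay for $H_{{\rm 2nd},0}$ of the same shape as Lemma \ref{lem.H1st.excess.decay}, and (ii) relate $H_{{\rm 2nd},0}$ and $H_{\rm 2nd}$ using Propositions \ref{lem.est.scaled.BL1j} and \ref{lem.est.scaled.BL2j}.

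For step (i), the key upgrade is from Lemma \ref{lem.est.approx.func.} to its second-order analog. Since the approximating pair $(v_r,q_r)$ of \eqref{approx.Stokes} is real analytic up to the flat boundary $\{x_3=-\ep\}$, its Taylor expansion at $-\ep{\bf e}_3$ to order two produces a polynomial $\widetilde P$ satisfying $\widetilde P|_{x_3=-\ep}=0$ and $\nabla\cdot\widetilde P=0$; by the classification in Subsection \ref{subsec.Taylor.polys}, $\widetilde P(\cdot-\ep{\bf e}_3)\in\mathscr P_1\oplus\mathscr P_2$ with associated pressure in $\spn\{L^{(2j)}\}$. Applying the Caccioppoli argument of Lemma \ref{lem.est.approx.func.} to $v_r-\widetilde P(\cdot-\ep{\bf e}_3)$ then yields
\begin{equation*}
\widetilde H_{{\rm 2nd},0}(v_r,q_r;\theta r)\le C\bigl(\theta^2+\theta^{-N}(\ep/r)\bigr)\widetilde H(v_r,q_r;r),
\end{equation*}
for some universal $N$. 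Combining this with Lemma \ref{lem.est.approx.} (bounding $u^\ep-v_r$ and the pressure difference by $(\ep/r)^{1/12}\Phi$ plus the nonlinear term), reproducing verbatim the argument of Lemma \ref{lem.excess.decay}, and using $H_{{\rm 2nd},0}(u^\ep,p^\ep;2r)\le\Phi(u^\ep,p^\ep;16r)$, produces the desired $\theta^2$-decay for $H_{{\rm 2nd},0}$.

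For step (ii), let $(P_1^*,P_2^*,L_2^*)$ be a minimizer of $H_{{\rm 2nd},0}(u^\ep,p^\ep;\theta r)$ with expansion coefficients $\ell_{1k}^*$ and $\ell_{2j}^*$. Testing in $H_{\rm 2nd}$ against the corrected tuple obtained by adding $\ep v^{(1k)}(x/\ep)$ and $\ep^2 v^{(2j)}(x/\ep)$ on the velocity side, and $q^{(1k)}(x/\ep)$ and $\ep q^{(2j)}(x/\ep)$ on the pressure side, produces an additional error controlled by Propositions \ref{lem.est.scaled.BL1j} and \ref{lem.est.scaled.BL2j} by $C\bigl(\sum_k|\ell_{1k}^*|+\sum_j|\ell_{2j}^*|\bigr)(\ep/\theta r)^{1/2}$. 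A straightforward analog of Lemma \ref{lem.h}, exploiting the linear independence of $\{\nabla P^{(1k)},\nabla P^{(2j)}\}$, yields $\sum|\ell^*|\le C\theta^{-5/2}\Phi(u^\ep,p^\ep;r)$, so that $H_{\rm 2nd}\le H_{{\rm 2nd},0}+C\theta^{-3}(\ep/r)^{1/2}\Phi$; combining with step (i) closes the proof.

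The principal obstacle lies in the second-order boundary layers $v^{(2j)}$ for $j\in\{1,3,5,6\}$, which grow linearly in the tangential direction (see \eqref{est1.prop.BL21} and \eqref{est1.prop.BL2356}). Proposition \ref{lem.est.scaled.BL2j}, whose proof critically depends on the exponential convergence \eqref{conv.prop.per.BL1j} afforded by periodicity, is precisely what converts this linear growth into the small factor $(\ep/r)^{1/2}$ after $\ep$-rescaling; this is the structural reason behind the periodicity assumption of Definition \ref{def.PeriodicJohn} in Theorem \ref{thm.C2g}. A secondary nuisance, already present at first order, is the shift between the Taylor expansion point $-\ep{\bf e}_3$ and the reference plane $\{x_3=0\}$, which generates small $\ep/r$ corrections that are tracked via $P(x+\ep{\bf e}_3)=P(x)+\ep(\nabla P){\bf e}_3$ but are absorbed into the $\theta^{-3}(\ep/r)^{1/12}\Phi$ term.
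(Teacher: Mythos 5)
Your proposal is correct and follows essentially the same route as the paper's proof: Taylor-expand the flat approximation $v_r$ at $-\ep\mathbf{e}_3$ into $\mathscr{P}_1\oplus\mathscr{P}_2$ (the paper's \eqref{est.C21.vr}), run the Caccioppoli/Bogovskii argument for the flat second-order excess decay, compare with $(u^\ep,p^\ep)$ via Lemma \ref{lem.est.approx.}, and then insert the boundary layers using Propositions \ref{lem.est.scaled.BL1j} and \ref{lem.est.scaled.BL2j}; the paper simply performs your two steps inline rather than naming an intermediate excess $H_{{\rm 2nd},0}$. One bookkeeping remark: the coefficient bound should be stated as $\sum_k|\ell_{1k}^*|+r\sum_j|\ell_{2j}^*|\le C\,(\dashint_{Q_r^\ep}|\nabla v_r|^2)^{1/2}$ as in \eqref{est.l1k.l2j}, i.e.\ the quadratic coefficients carry a factor $r$, which is exactly what pairs with the $\tfrac1r$-scaled estimate of Proposition \ref{lem.est.scaled.BL2j} to produce the $C\theta^{-3}(\ep/r)^{1/2}\Phi$ error you claim.
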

	\begin{proof}
		The proof follows from the strategy developed in Section \ref{sec.large}, in particular from Lemma \ref{lem.est.approx.} to Lemma \ref{lem.excess.decay}. Let $(v_r,q_r)$ be the solution of the approximate problem \eqref{approx.Stokes}.
		We will first use the $C^{2,1}$ estimate of $v_r = (v_{r,1}, v_{r,2}, v_{r,3})$ at the lower boundary $x_3 = -\ep$. Precisely, in view of no-slip Stokes polynomials defined in Section \ref{subsec.Taylor.polys}, the $C^{2,1}$ estimate $v_r$ gives
		\begin{equation}\label{est.C21.vr}
			\begin{aligned}
				&\Big|v_r(x) - \sum_{k=1}^{2} \ell_{1k}^* P^{(1k)}(x+\ep \mathbf{e}_3) - \sum_{j=1}^{6} \ell_{2j}^* P^{(2j)}(x+\ep \mathbf{e}_3)\Big| \\
				& \le 
				C\frac{|x + \ep \mathbf{e}_3|^3}{r^2} \bigg( \dashint_{Q_{r}^\ep} |\nabla v_r|^2 \bigg)^{1/2},
			\end{aligned}
		\end{equation}
		for $x\in Q_{r/2}^\ep$, where we choose $\ell_{1k}^* = \frac{\partial v_{r,k}}{\partial x_3}(-\ep \mathbf{e}_3)$ for $k = 1,2$ and $ \ell_{21}^* = \frac{\partial^2 v_{r,1}}{\partial x_2 \partial x_3}(-\ep \mathbf{e}_3), \ell_{22}^* = {\frac12} \frac{\partial^2 v_{r,1}}{{\partial x_3^2}}(-\ep \mathbf{e}_3), \ell_{23}^* = \frac{\partial^2 v_{r,2}}{\partial x_1 \partial x_3}(-\ep \mathbf{e}_3), \ell_{24}^* = {\frac12} \frac{\partial^2 v_{r,2}}{{\partial x_3^2}}(-\ep \mathbf{e}_3),  \ell_{25}^* = {-\frac12}\frac{\partial^2 v_{r,1}}{\partial x_1 \partial x_3}(-\ep \mathbf{e}_3), \ell_{26}^* = {-\frac12}\frac{\partial^2 v_{r,2}}{\partial x_2 \partial x_3}(-\ep \mathbf{e}_3)$. 
		
		Moreover,
		\begin{equation}\label{est.l1k.l2j}
			\sum_{k=1}^2 |\ell_{1k}^*| + r \sum_{j = 1}^6 |\ell_{2j}^*| \le 
			C \bigg( \dashint_{Q_r^\ep} |\nabla v_r|^2 \bigg)^{1/2}.
		\end{equation}
		%
		Observe that
		\begin{equation*}
			\begin{aligned}
				v_r^*(x) &= v_r(x) - \sum_{k=1}^2 \ell_{1k}^* P^{(1k)}(x+\ep \mathbf{e}_3) - \sum_{j=1}^6 \ell_{2j}^* P^{(2j)}(x+\ep \mathbf{e}_3), \\
				q_r^*(x) & = q_r(x) - \sum_{j=1}^6 \ell_{2j}^* L^{(2j)}(x+\ep \mathbf{e}_3),
			\end{aligned}
		\end{equation*}
		is a solution of the Stokes system in $Q_r^\ep$ with a no-slip condition on $x_3 = -\ep$.
		Therefore, for any $\theta \in (0,\frac18]$ and $r>\ep$, 
		it follows from \eqref{est.C21.vr} and the Caccioppoli inequality in rectangular region $Q_{2\theta r}^\ep$ that
		\begin{equation}\label{est.vr.thetar.C21'}
			\begin{aligned}
				\bigg(\dashint_{Q^\ep_{\theta r}}
				|\nabla v_r^*|
				\bigg)^{1/2}
				&\le\frac{C}{\theta r} 
				\bigg(\dashint_{Q^\ep_{2\theta r}} 
				|v_r^*|^2 \bigg)^{1/2} \\
				& \le C\Big(\theta^{2} + \theta^{-1} \big(\frac{\ep}{r}\big)\Big)
				\bigg( \dashint_{{Q_{r}^\ep}} |\nabla v_r|^2 \bigg)^{1/2}.
			\end{aligned}
		\end{equation}
		Then \eqref{est.l1k.l2j} implies
		\begin{equation}\label{est.vr.thetar.C21}
			\begin{aligned}
		&\bigg(\dashint_{Q^\ep_{\theta r}} \Big|\nabla v_r - \nabla\Big(\sum_{k=1}^2 \ell_{1k}^* P^{(1k)} + \sum_{j=1}^6 \ell_{2j}^* P^{(2j)}\Big) \Big|^2 \bigg)^{1/2} \\
		&\le 		
		C\Big(\theta^{2} + \theta^{-1}\big(\frac{\ep}{r}\big)\Big)
		\bigg( \dashint_{Q_{r}^\ep} |\nabla v_r|^2 \bigg)^{1/2}.
		\end{aligned}
		\end{equation}
		Next, to see the oscillation estimate for the pressure, applying Bogovskii's lemma to $q_r^*$ and the Caccioppoli inequality to $v_r^*$ (combined with \eqref{est.vr.thetar.C21'})
		in Lipschitz domains, we have
		\begin{equation}\label{est.C21.qr}
			\begin{aligned}
				\sup_{s,t\in [1/16,1/4]} \bigg|  \dashint_{Q^\ep_{s\theta r}} q_r^*
				- \dashint_{Q^\ep_{t\theta r}} q_r^* \bigg| 
				\le 
				C\Big(\theta^{2} + \theta^{-1} \big(\frac{\ep}{r}\big)\Big)
				\bigg( \dashint_{Q_{r}^\ep} |\nabla v_r|^2 \bigg)^{1/2}.
			\end{aligned}
		\end{equation}
		Notice that $L^{(2j)}$ are linear functions. Thus, an application of \eqref{est.l1k.l2j} and the triangle inequality to \eqref{est.C21.qr} leads to
		\begin{equation}\label{est.qr.C21at0}
			\begin{aligned}
				&\sup_{s,t\in [1/16,1/4]} \bigg|  \dashint_{Q^\ep_{s\theta r}} \bigg( q_r - \sum_{j=1}^6 \ell_{2j}^* L^{(2j)} \bigg) - \dashint_{Q^\ep_{t\theta r}} \bigg( q_r - \sum_{j=1}^6 \ell_{2j}^* L^{(2j)} \bigg) \bigg| \\
				& \le C\Big(\theta^{2} + \theta^{-1}\big(\frac{\ep}{r}\big)\Big)
				\bigg( \dashint_{Q_{r}^\ep} |\nabla v_r|^2 \bigg)^{1/2}.
			\end{aligned}
		\end{equation}
		This, combined with \eqref{est.vr.thetar.C21},  gives
		\begin{equation*}
			\begin{aligned}
				& \bigg(\dashint_{Q^\ep_{\theta r}} \Big|\nabla v_r - \nabla\Big(\sum_{k=1}^2 \ell_{1k}^* P^{(1k)} + \sum_{j=1}^6 \ell_{2j}^* P^{(2j)}\Big) \Big|^2 \bigg)^{1/2} \\
				& \quad + \sup_{s,t\in [1/16,1/4]} \bigg|  \dashint_{Q^\ep_{s\theta r}} \bigg( q_r - \sum_{j=1}^6 \ell_{2j}^* L^{(2j)} \bigg) - \dashint_{Q^\ep_{t\theta r}} \bigg( q_r - \sum_{j=1}^6 \ell_{2j}^* L^{(2j)} \bigg) \bigg| \\
				& \le C\Big(\theta^{2} + \theta^{-1}\big(\frac{\ep}{r}\big)\Big)
				\bigg( \dashint_{Q_{r}^\ep} |\nabla v_r|^2 \bigg)^{1/2}.
			\end{aligned}
		\end{equation*}
		This is the key second-order excess estimate we need for $(v_r, q_r)$ in $Q_r^\ep$. To proceed, we follow the similar argument developed in Section \ref{sec.large}. Precisely, using an analogue of Lemma \ref{lem.est.comparability} and taking the approximation estimate in Lemma \ref{lem.est.approx.}, we can replace $(v_r,q_r)$ by $(u^\ep, p^\ep)$ with new errors in $u^\ep$ and $F^\ep$. Combined with the energy estimate for \eqref{approx.Stokes},
		we now have
		\begin{equation}\label{est.ue.C21}
			\begin{aligned}
				& \bigg(\dashint_{B^\ep_{r,+}} \Big|\nabla u^\ep - \nabla\Big(\sum_{k=1}^2 \ell_{1k}^* P^{(1k)} + \sum_{j=1}^6 \ell_{2j}^* P^{(2j)}\Big) \Big|^2 \bigg)^{1/2} \\
				& \quad + \sup_{s,t\in [1/16,1/4]} \bigg|  \dashint_{{B^\ep_{s\theta r,+}}} \bigg( p^\ep - \sum_{j=1}^6 \ell_{2j}^* L^{(2j)} \bigg) - \dashint_{B^\ep_{t\theta r,+}} \bigg( p^\ep - \sum_{j=1}^6 \ell_{2j}^* L^{(2j)} \bigg) \bigg| \\
				& \le C\Big(\theta^{2} + \theta^{-3}\big(\frac{\ep}{r})^{1/12} \Big) \bigg( \dashint_{{B^\ep_{5r,+}}}
				 |\nabla u^\ep|^2 \bigg)^{1/2} + C\theta^{-3}\bigg(\dashint_{{Q_{4r}}} |\mathcal{M}^2_{\ep} [F^\ep]|^3\bigg)^{1/3}.
			\end{aligned}
		\end{equation}
		Next, we insert the boundary layers into the above inequality. By \eqref{est.vq.Br} and
		\begin{equation*}
	\begin{aligned}
			&\sum_{j = 1}^6 \bigg\{ \frac{1}{r} \bigg( \dashint_{B^\ep_{\theta r,+}} |
			\ep (\nabla v^{(2j)})(x/\ep)|^2\dd x  \bigg)^{1/2} + \frac{1}{r}\bigg( \dashint_{B^\ep_{\theta r,+}} |\ep q^{(2j)}(x/\ep)|^2\dd x  \bigg)^{1/2} \bigg\} \\
			&\le C\theta^{-1/2}\big(\frac{\ep}{r}\big)^{1/2},
		\end{aligned}
\end{equation*}
		which follows from Proposition \ref{lem.est.scaled.BL2j},
		we obtain from  
		\eqref{est.ue.C21} and \eqref{est.l1k.l2j}
			along with the energy estimate for \eqref{approx.Stokes}
		that
		\begin{equation}\label{est.ue.C21+}
			\begin{aligned}
				&
				\bigg(\dashint_{B^\ep_{\theta r,+}} \Big|\nabla u^\ep 
				- \nabla \Big(\sum_{k=1}^2 \ell_{1k}^* (P^{(1k)} + \ep v^{(1k)}(x/\ep)) \\
				& \qquad\qquad\qquad\qquad 
				+ \sum_{j=1}^6  \ell_{2j}^* (P^{(2j)} + \ep^2 v^{(2j)}(x/\ep))\Big) \Big|^2\dd x \bigg)^{1/2} \\
				& \quad + \sup_{s,t\in [1/16,1/4]} \bigg|  \dashint_{{B^\ep_{s\theta r,+}}} \bigg( p^\ep 
				- \sum_{k=1}^2 {\ell}^*_{1k} q^{(1k)}(x/\ep)
				- \sum_{j=1}^6 \ell_{2j}^* (L^{(2j)} + \ep q^{(2j)}(x/\ep)) \bigg)\dd x \\
				& \qquad\qquad\qquad\qquad - \dashint_{B^\ep_{t\theta r,+}} \bigg( p^\ep - \sum_{k=1}^2 {\ell}^*_{1k} q^{(1k)}(x/\ep)
				- \sum_{j=1}^6 \ell_{2j}^* 
				(L^{(2j)} + \ep q^{(2j)}(x/\ep))
				\bigg)\dd x \bigg| \\
				& \le C\Big(\theta^{2} + \theta^{-3}\big(\frac{\ep}{r})^{1/12} \Big) \bigg( \dashint_{{B^\ep_{5r,+}}}
|\nabla u^\ep|^2 \bigg)^{1/2} + C\theta^{-3}\bigg(\dashint_{{Q_{4r}}} |\mathcal{M}^2_{\ep} [F^\ep]|^3\bigg)^{1/3}.
		\end{aligned}
		\end{equation}
		In view of the definition of $H_{\rm 2nd}$, we arrive at
		\begin{equation*}
			\begin{aligned}
				& H_{\rm 2nd}(u^\ep,p^\ep; \theta r) \\ & \le 
				 C\Big(\theta^{2} + \theta^{-3}\big(\frac{\ep}{r})^{1/12} \Big) \bigg( \dashint_{{B^\ep_{5r,+}}}
|\nabla u^\ep|^2 \bigg)^{1/2} + C\theta^{-3}\bigg(\dashint_{{Q_{4r}}} |\mathcal{M}^2_{\ep} [F^\ep]|^3\bigg)^{1/3},
			\end{aligned}
		\end{equation*}
		which implies the desired estimate.
		\end{proof}

	\begin{proposition}\label{lem.H2nd.iteration}
		Let $L\in(0,\infty)$ and $\Omega$ be a bumpy periodic John domain with constant $L$ according to Definition \ref{def.PeriodicJohn}. Let $(u^\ep,p^\ep)$ be as in Theorem \ref{thm.C2g}. 
		For any $\gamma\in[0,1)$, $\delta\in(0,1)$, $\ep\in(0,\frac12]$ and $r\in[\ep,\frac12]$,
		\begin{equation}\label{est1.lem.H2st.iteration}
			\begin{aligned}
				H_{{\rm 2nd}}(u^\ep,p^\ep; r)
				\le C r^{1+\gamma} (M+M^{6+2\gamma+\delta}),
			\end{aligned}
		\end{equation}
		where $C$ depends on $L$, $\gamma$ and $\delta$. Here $M$ is the number in Theorem \ref{thm.C2g}.
	\end{proposition}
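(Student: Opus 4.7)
The proof closely follows the strategy of Proposition~\ref{lem.H1st.iteration}, with Lemma~\ref{lem.2nd.est.approx.func.} playing the role of Lemma~\ref{lem.H1st.excess.decay} and second-order correctors from $\mathscr{Q}_2(\Omega)$ joining the first-order correctors. As in the first-order case, a familiar argument based on rescaling, Poincar\'e's inequality and the Bogovskii lemma reduces the problem to establishing \eqref{est1.lem.H2st.iteration} for $r$ in an intermediate range $[N_0\ep, 1/N_1]$; the two endpoint ranges follow for free.

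Fix $\gamma\in[0,1)$ and $\delta\in(0,1)$, and choose $\delta'\in(0,\delta/4)$ small enough that $\delta'<1-\gamma$. Applying Theorem~\ref{thm.MFL3} with $l=6/(1-\gamma-\delta')$ gives
\begin{equation*}
\bigg(\dashint_{Q_r} |\mathcal{M}^2_{\ep}[F^\ep]|^3\bigg)^{1/3}
\le C(M + M^{6+2\gamma+\delta})\,r^{1+\gamma+\delta'}.
\end{equation*}
Substituting this into Lemma~\ref{lem.2nd.est.approx.func.} yields
\begin{equation*}
H_{\rm 2nd}(u^\ep, p^\ep;\theta r)
\le C\Big(\theta^2+\theta^{-3}\big(\tfrac{\ep}{r}\big)^{1/12}\Big)\Phi(u^\ep,p^\ep;16r)
+C\theta^{-3}(M+M^{6+2\gamma+\delta})r^{1+\gamma+\delta'}.
\end{equation*}
Since $\gamma<1$, we may choose $\theta\in(0,1/8]$ small so that $C\theta^2\le\tfrac12(\theta/16)^{1+\gamma}$, and then fix $\ep_0\in(0,1/2)$ so that $C\theta^{-3}(\ep/r)^{1/12}\le\tfrac12(\theta/16)^{1+\gamma}$ whenever $r\ge\ep/\ep_0$. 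This gives, for such $r$,
\begin{equation*}
H_{\rm 2nd}(u^\ep, p^\ep;\theta r)
\le (\theta/16)^{1+\gamma}\Phi(u^\ep,p^\ep;16r)
+C_\delta(M+M^{6+2\gamma+\delta})r^{1+\gamma+\delta'}.
\end{equation*}

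The crucial observation, exactly as in the first-order case, is that for any $(w_1,\pi_1)\in\mathscr{Q}_1(\Omega)$ and $(w_2,\pi_2)\in\mathscr{Q}_2(\Omega)$, the rescaled pair
$$(\ep w_1(x/\ep)+\ep^2 w_2(x/\ep),\,\pi_1(x/\ep)+\ep\pi_2(x/\ep))$$
solves the homogeneous Stokes system in the bumpy domain and vanishes on the bumpy boundary. Hence, after subtracting it from $(u^\ep,p^\ep)$, the result $(U^\ep,\Pi^\ep)$ is still a weak solution of \eqref{S.ep} with the \emph{same} nonlinear source $F^\ep$. The displayed inequality therefore applies to $(U^\ep,\Pi^\ep)$, and since $\mathscr{Q}_1(\Omega)$ and $\mathscr{Q}_2(\Omega)$ are vector spaces, $H_{\rm 2nd}(U^\ep,\Pi^\ep;\theta r)=H_{\rm 2nd}(u^\ep,p^\ep;\theta r)$. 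Taking the infimum over all admissible correctors on the right-hand side and invoking the very definition of $H_{\rm 2nd}$ yields the one-step excess decay
\begin{equation*}
H_{\rm 2nd}(u^\ep, p^\ep;\theta r)
\le (\theta/16)^{1+\gamma}H_{\rm 2nd}(u^\ep,p^\ep;16r)
+C_\delta(M+M^{6+2\gamma+\delta})r^{1+\gamma+\delta'}.
\end{equation*}

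A standard Campanato iteration of this inequality, initialized at a fixed scale $r_0\sim 1$ at which the trivial choice of zero correctors gives
$$H_{\rm 2nd}(u^\ep,p^\ep;r_0)\le\Phi(u^\ep,p^\ep;r_0)\le C(M+M^2)$$
thanks to Theorem~\ref{theo.lip.nonlinear}, then produces \eqref{est1.lem.H2st.iteration}. The main obstacle in the proof is already absorbed in Lemma~\ref{lem.2nd.est.approx.func.}: extracting the quadratic gain $\theta^2$ (rather than a mere $\theta$) requires the $C^{2,1}$ regularity of the approximate Stokes problem in the flat domain and, through the Taylor expansion, the full list of no-slip Stokes polynomials of degree two introduced in Subsection~\ref{subsec.Taylor.polys}, together with the second-order boundary layers of Subsection~\ref{subsec.2nd.BL} needed to correct their traces on $\partial\Omega$. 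The restriction $\gamma<1$ is tight at this step, as the inequality $1+\gamma<2$ is precisely what allows $\theta^2$ to be absorbed into $(\theta/16)^{1+\gamma}$ by choosing $\theta$ small.
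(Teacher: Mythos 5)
Your proposal is correct and follows essentially the same route as the paper: the paper's proof simply applies the source estimate \eqref{est.MFep.large-scale} with $\beta=1+\gamma+\delta$ and then declares the rest parallel to Proposition \ref{lem.H1st.iteration}, and the details you supply (excess decay from Lemma \ref{lem.2nd.est.approx.func.}, absorption of $\theta^2$ using $1+\gamma<2$, invariance of $H_{\rm 2nd}$ under subtraction of elements of $\mathscr{Q}_1(\Omega)+\mathscr{Q}_2(\Omega)$ rescaled, infimum over correctors, Campanato iteration) are exactly that parallel argument. The only cosmetic deviation is the initialization at $r_0\sim 1$, which the paper obtains directly from Poincar\'e's inequality and Bogovskii's lemma rather than from Theorem \ref{theo.lip.nonlinear}, but either bound suffices.
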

	\begin{proof}
		For any $\gamma\in [0,1)$, we choose 
		an arbitrary $\delta>0$ small enough so that $\delta<\frac{1-\gamma}{2}$. Then applying \eqref{est.MFep.large-scale} with $\beta = 1+\gamma+\delta$, we have
		\begin{equation*}
			\bigg(\dashint_{{Q_{r}}} |\mathcal{M}^2_{\ep} [F^\ep]|^3\bigg)^{1/3} \le 
			C (M + M^{6+2\gamma +4\delta}) r^{1+\gamma+\delta}.
		\end{equation*}
		Now, the rest of the proof is parallel to Proposition \ref{lem.H1st.iteration}. We omit the details.
	\end{proof}
	
	The following lemma is parallel to Lemma \ref{lem.1st.h}.
		
	\begin{lemma}\label{lem.2nd.h}
		Let $L\in(0,\infty)$ and $\Omega$ be a bumpy John domain with constant $L$ according to Definition \ref{def.John2}. For a given $\rho>0$, let
		$\ell_{1k}(\rho)$ and $\ell_{2j}(\rho)$ be the real numbers 
		so that 
		\begin{equation*}
			\begin{aligned}
				(w_1 ,\pi_1) & = \sum_{k=1}^2 \ell_{1k}(\rho) (P^{(1k)} + v^{(1k)}, q^{(1k)}),\\
				(w_2 ,\pi_2) & = \sum_{j=1}^6 \ell_{2j}(\rho) (P^{(2j)} + v^{(2j)}, L^{(2j)}+q^{(2j)})
			\end{aligned}
		\end{equation*}
		minimize $H_{{\rm 2nd}}(u^\ep,p^\ep;\rho)$. Then there exists a constant $\ep_2 \in (0,1)$ so that for all $\ep\in(0,\ep_2]$ and $r\in[\ep/\ep_2,\frac12]$,
		\begin{equation}\label{est1.lem.2st.h}
			\begin{aligned}
				&\sup_{r_1,r_2\in [r,2r]}\sum_{k = 1}^2 |\ell_{1k}(r_1) - \ell_{1k}(r_2)| \\
				&\qquad +\sup_{r_1,r_2\in [r,2r]}\sum_{j = 1}^6  r|\ell_{2j}(r_1) - \ell_{2j}(r_2)| 
				\le C
				{\sup_{t\in [r,2r]} H_{{\rm 2nd}}(u^\ep,p^\ep;t),}
			\end{aligned}
		\end{equation}
		where $C$ depends only on $L$.
	\end{lemma}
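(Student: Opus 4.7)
The plan is to mimic the proof of Lemma \ref{lem.1st.h}, exploiting linear independence of the ``no-slip Stokes polynomials'' of degrees $1$ and $2$ together with the scaled boundary-layer estimates of Propositions \ref{lem.est.scaled.BL1j} and \ref{lem.est.scaled.BL2j}. The algebraic starting point is the following rescaling-compatible norm equivalence: since $B^\ep_{r,+}$ contains the flat half-cube $Q_{r,+}(0)$ (where $\Omega^\ep \supset \{x_3>0\}$), and since the matrices $\nabla P^{(1k)}$ ($k=1,2$) together with the linear matrix fields $\nabla P^{(2j)}$ ($j=1,\dots,6$) are linearly independent, a homogeneity argument on the flat half-cube yields
\begin{equation*}
\sum_{k=1}^{2} |a_k| + r\sum_{j=1}^{6} |b_j|
\leq C\bigg(\dashint_{B^\ep_{r,+}} \Big| \sum_{k=1}^{2} a_k \nabla P^{(1k)} + \sum_{j=1}^{6} b_j \nabla P^{(2j)}\Big|^2\bigg)^{1/2},
\end{equation*}
for all $a_k,b_j\in\R$ and $r\ge\ep$, with $C$ depending only on $L$.

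Next, I apply this with $a_k=\ell_{1k}(r_1)-\ell_{1k}(r_2)$ and $b_j=\ell_{2j}(r_1)-\ell_{2j}(r_2)$, for fixed $r_1,r_2\in[r,2r]$. Adding and subtracting the two optimal lifts for $H_{\rm 2nd}(u^\ep,p^\ep;r_1)$ and $H_{\rm 2nd}(u^\ep,p^\ep;r_2)$ and the corresponding boundary-layer corrections $\ep v^{(1k)}(x/\ep)$ and $\ep^2 v^{(2j)}(x/\ep)$, the triangle inequality gives
\begin{equation*}
\begin{aligned}
&\bigg(\dashint_{B^\ep_{r,+}} \Big| \sum_k a_k \nabla P^{(1k)} + \sum_j b_j \nabla P^{(2j)}\Big|^2\bigg)^{1/2} \\
&\le C\bigl(H_{\rm 2nd}(u^\ep,p^\ep;r_1)+H_{\rm 2nd}(u^\ep,p^\ep;r_2)\bigr)\\
&\quad + C\sum_k |a_k|\bigg(\dashint_{B^\ep_{2r,+}} |(\nabla v^{(1k)})(x/\ep)|^2\bigg)^{1/2} + C\sum_j |b_j|\bigg(\dashint_{B^\ep_{2r,+}} |\ep(\nabla v^{(2j)})(x/\ep)|^2\bigg)^{1/2},
\end{aligned}
\end{equation*}
where I have mildly enlarged $B^\ep_{r,+}$ to $B^\ep_{2r,+}$ to accommodate both $r_1$ and $r_2$.

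Now Proposition \ref{lem.est.scaled.BL1j} bounds each first-order boundary-layer average by $C(\ep/r)^{1/2}$, while Proposition \ref{lem.est.scaled.BL2j} gives the second-order bound $C\,r\,(\ep/r)^{1/2}$. Plugging these in and using the key bound from the first paragraph, I obtain
\begin{equation*}
\sum_k |a_k| + r\sum_j |b_j| \le C_0\sup_{t\in[r,2r]}H_{\rm 2nd}(u^\ep,p^\ep;t) + C_1 \big(\tfrac{\ep}{r}\big)^{1/2}\Big(\sum_k|a_k| + r\sum_j|b_j|\Big).
\end{equation*}

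The main (and only delicate) obstacle is the absorption step: choosing $\ep_2\in(0,1)$ small enough so that $C_1 \ep_2^{1/2} < 1/2$, which is the analogue of the condition in Lemma \ref{lem.1st.h} and dictates the hypothesis $r\ge \ep/\ep_2$. Under this restriction, the $(\ep/r)^{1/2}$ term is absorbed into the left-hand side, yielding exactly \eqref{est1.lem.2st.h}. The routine cases $r>\ep/\ep_2$ near $\frac12$ or $\ep>\ep_2$ are handled by the usual enlargement/pressure argument of Remark \ref{rmk.Standard.P.est}. I expect no conceptual difficulty beyond verifying the first-paragraph norm equivalence; the weights $1$ and $r$ on $|a_k|$ and $|b_j|$ respectively are precisely the ones dictated by the homogeneity of $P^{(1k)}$ versus $P^{(2j)}$, and match the weights appearing in $\mathscr{Q}_2(\Omega)$ through the factor $\ep^2 w_2(x/\ep)$ in the definition of $H_{\rm 2nd}$.
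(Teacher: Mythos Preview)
Your proposal is correct and follows essentially the same route as the paper: linear independence of the no-slip Stokes polynomials (or equivalently their gradients) gives the scale-weighted norm equivalence, the triangle inequality with inserted $u^\ep$ and boundary layers reduces to $H_{\rm 2nd}$ plus error terms controlled by Propositions~\ref{lem.est.scaled.BL1j}--\ref{lem.est.scaled.BL2j}, and the $(\ep/r)^{1/2}$ factor is absorbed for $r\ge \ep/\ep_2$. The only cosmetic difference is that the paper derives the norm equivalence via the function values on the unit half-cube followed by rescaling and Poincar\'e, whereas you state it directly at the gradient level; also, your closing remark about ``routine cases $\ep>\ep_2$'' is superfluous since the lemma only claims the estimate for $\ep\in(0,\ep_2]$.
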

	\begin{proof}
		First, observe that for any $a_{k}, b_{j}\in \R$, 
		\begin{equation}\label{est.akbk}
			\sum_{k = 1}^2  |a_{k}| +\sum_{j = 1}^6  |b_{j}| \le C\bigg( \dashint_{B_{1}(0) \cap \{x_3>0\} } \Big| \sum_{k = 1}^2 a_{k} P^{(1k)}  + \sum_{j=1}^6 b_{j} P^{(2j)} \Big|^2\bigg)^{1/2}.
		\end{equation}
		This inequality is true because $P^{(1k)}$ and $P^{(2j)}$ are all linearly independent polynomials. Recall that $P^{(1k)}$ are homogeneous linear functions and $P^{(2j)}$ are homogeneous quadratic functions. This means $P^{(1k)}(rx) = rP^{(1k)}(x)$ and $P^{(2j)}(rx) = r^2P^{(2j)}(x)$. Fix $r_1, r_2 \in [r,2r]$. Applying \eqref{est.akbk} with $a_k = \ell_{1k}(r_1) - \ell(r_2)$ and $b_j = r(\ell_{2j}(r_1) - \ell_{2j}(r_2))$, we have
		\begin{equation*}
			\begin{aligned}
				&\sum_{k = 1}^2  |\ell_{1k}(r_1) - \ell_{1k}(r_2)| +\sum_{j = 1}^6  r|\ell_{2j}(r_1) - \ell_{2j}(r_2)| \\
				&\le C\bigg( \dashint_{B_{1}(0) \cap \{x_3>0\}} \Big| \sum_{k = 1}^2 ( \ell_{1k}(r_1) - \ell_{1k}(r_2)) P^{(1k)}  + \sum_{j=1}^6 r(\ell_{2j}(r_1) - \ell_{2j}(r_2) ) P^{(2j)} \Big|^2\bigg)^{1/2} \\
				& \le \frac{C}{r}\bigg( \dashint_{B_{r}(0) \cap \{x_3>0\}} \Big| \sum_{k = 1}^2 ( \ell_{1k}(r_1) - \ell_{1k}(r_2)) P^{(1k)}  + \sum_{j=1}^6 (\ell_{2j}(r_1) - \ell_{2j}(r_2) ) P^{(2j)} \Big|^2\bigg)^{1/2} \\
				& 
				\le C\bigg( \dashint_{B_{r}(0) \cap \{x_3>0\}} \Big| \sum_{k = 1}^2 ( \ell_{1k}(r_1) - \ell_{1k}(r_2)) \nabla P^{(1k)}  + \sum_{j=1}^6 (\ell_{2j}(r_1) - \ell_{2j}(r_2) ) \nabla P^{(2j)} \Big|^2\bigg)^{1/2},
			\end{aligned}
		\end{equation*}
		where the Poincar\'e inequality has been applied in the last line.
		Now, inserting $u^\ep, v^{(1k)}(x/\ep)$ and $v^{(2j)}(x/\ep)$ into the right-hand side, and using the triangle inequality, we obtain
		\begin{equation*}
	\begin{aligned}
		&\sum_{k = 1}^2  |\ell_{1k}(r_1) - \ell_{1k}(r_2)| +\sum_{j = 1}^6  r|\ell_{2j}(r_1) - \ell_{2j}(r_2)| \\
		&\le C\bigg( \dashint_{B_{r,+}^\ep} \Big| \nabla\Big(\sum_{k = 1}^2 ( \ell_{1k}(r_1) - \ell_{1k}(r_2)) (P^{(1k)} + \ep v^{(1k)}(x/\ep))  \\
		& \qquad \qquad \qquad \qquad
		+ {\sum_{j=1}^6} (\ell_{2j}(r_1) - \ell_{2j}(r_2)) (P^{(2j)} + \ep^2 v^{(2j)}(x/\ep)) \Big)\Big|^2\dd x\bigg)^{1/2} \\
		& \qquad \qquad 
		+ C\sum_{k = 1}^2 |\ell_{1k}(r_1) - \ell_{1k}(r_2)| \bigg( \dashint_{B_{r,+}^\ep} |(\nabla v^{(1j)}) (x/\ep)|^2\dd x \bigg)^{1/2} \\
		& \qquad \qquad 
		+ \frac{C}{r}\sum_{j = 1}^6 r|\ell_{2j}(r_1) - \ell_{2j}(r_2)| \bigg( \dashint_{B_{r,+}^\ep} 
		|\ep(\nabla v^{(2j)})(x/\ep)|^2\dd x \bigg)^{1/2}\\
		& \le CH_{\rm 2nd}(u^\ep,p^\ep; r_1) +CH_{\rm 2nd}(u^\ep,p^\ep; r_2) \\
		& \qquad \qquad + C_1\big( \frac{\ep}{r} \big)^{1/2} \sum_{j=1}^2 |\ell_{1k}(r_1) - \ell_{1k}(r_2)| 
		+ C_2\big( \frac{\ep}{r} \big)^{1/2} \sum_{k=1}^6 r|\ell_{2j}(r_1) - \ell_{2j}(r_2)|,
	\end{aligned} 
\end{equation*}
		where Proposition \ref{lem.est.scaled.BL2j} is applied in the last inequality.
		Thus, if $r > \ep/\ep_2$ for some sufficiently small constant $\ep_2 \in (0,1)$ so that $C_1 (\ep/r)^{1/2} < \frac12$ and $C_2 (\ep/r)^{1/2} < \frac12$, then
		\begin{equation*}
			\begin{aligned}
				\sum_{k = 1}^2 |\ell_{1k}(r_1) - \ell_{1k}(r_2)| +\sum_{j = 1}^6  r|\ell_{2j}(r_1) - \ell_{2j}(r_2)| 
				& \le  C \sum_{i = 1}^2 H_{\rm 2nd}(w^\ep,\pi^\ep; r_i). 
			\end{aligned}
		\end{equation*}
		This leads to the assertion.
	\end{proof}
	
	\begin{proofx}{Theorem \ref{thm.C2g}}
		The estimate for the velocity 
		is contained in \eqref{est1.lem.H2st.iteration}. The estimate for pressure can be derived similarly as Theorem \ref{thm.C1g}. The details are left to the reader.
	\end{proofx}
	
	\subsection{Liouville-type results}
	\label{sec.liouville}

	As an application of the construction of boundary layers and uniform regularity, a Liouville-type theorem for Stokes systems can be shown by the large-scale Lipschitz, $C^{1,\gamma}$ and $C^{2,\gamma}$ estimates. We point out that our large-scale regularity results hold also for the linear Stokes system, although with linear dependence on $M$ in the right-hand sides of \eqref{est2.theo.lip.nonlinear}, \eqref{est.thmC1a} and \eqref{est.C2gamma}. The proofs are simpler, 
	using that the source term $F^\ep = 0$. To describe the Liouville-type theorem, consider the Stokes system in the entire $\Omega$
	\begin{equation}\label{eq.forLiouville}
		\left\{
		\begin{array}{ll}
			-\Delta u+\nabla p=0, &x\in\Omega \\
			\nabla\cdot u=0, &x\in\Omega \\
			u=0, &x\in\partial\Omega, 
		\end{array}
		\right.
	\end{equation}
	where $\Omega$ is a bumpy John domain according to Definition \ref{def.John2}. Let $B_R = B_R(0)$. We state the Liouville-type theorem as follows. Its proof follows from a routine rescaling of the large-scale regularity estimates. Notice that this result complements Corollary \ref{cor.liouville} already stated above.
	\begin{theorem}\label{thm.liouville}
		Let $\Omega$ be a bumpy John domain according to Definition \ref{def.John2}. Let $(u,p)$ be a weak solution of \eqref{eq.forLiouville}. 
		\begin{enumerate}[label=(\roman*)]
			\item If for some $\sigma\in (0,1)$
			\begin{equation*}
				\liminf_{R\to \infty}  \frac{1}{R^{1+\sigma}} \bigg( \dashint_{B_{R,+}} |u|^2 \bigg)^{1/2} = 0,
			\end{equation*}
			then $(u,p) \in \mathscr{Q}_1(\Omega)$ (up to a constant for $p$).
			\item In addition, assume $\Omega$ is periodic bumpy John domain according to Definition \ref{def.PeriodicJohn}. If for some $\sigma\in (0,1)$,
			\begin{equation*}
				\liminf_{R\to \infty}  \frac{1}{R^{2+\sigma}} \bigg( \dashint_{B_{R,+}} |u|^2 \bigg)^{1/2} = 0,
			\end{equation*}
			then $(u,p) \in \mathscr{Q}_1(\Omega) + \mathscr{Q}_2(\Omega)$ (up to a constant for $p$).
		\end{enumerate}
	\end{theorem}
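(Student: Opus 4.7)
The plan is to derive both parts by rescaling and applying the linear analogues of Theorems \ref{thm.C1g} and \ref{thm.C2g}, then passing to the limit using finite-dimensionality of $\mathscr{Q}_1(\Omega)$ (respectively $\mathscr{Q}_1(\Omega)+\mathscr{Q}_2(\Omega)$).

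For part (i), I would fix $\gamma\in(\sigma,1)$ and select a sequence $R_n\to\infty$ realizing the liminf hypothesis, so that $R_n^{-1-\sigma}(\dashint_{B_{2R_n,+}}|u|^2)^{1/2}\to 0$. Setting $\ep_n := 1/R_n$, the rescaled couple $u^{\ep_n}(x) := \ep_n u(x/\ep_n)$, $p^{\ep_n}(x) := p(x/\ep_n)$ is a weak solution of the homogeneous linear Stokes system in $B^{\ep_n}_{1,+}$ with no-slip on $\Gamma^{\ep_n}_1$. A Caccioppoli-type estimate on $B_{R_n,+}$ (testing with a suitable cut-off and eliminating the pressure via Bogovskii's operator in a John subdomain, in the spirit of Section \ref{sec.lscz}) yields
\begin{equation*}
M_n := \bigg(\dashint_{B^{\ep_n}_{1,+}}|\nabla u^{\ep_n}|^2\bigg)^{1/2} \leq CR_n^{-1}\bigg(\dashint_{B_{2R_n,+}}|u|^2\bigg)^{1/2} = o(R_n^\sigma).
\end{equation*}

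Next, I apply the linear version of Theorem \ref{thm.C1g} at the mesoscopic scale $r_n := \rho/R_n$, for fixed $\rho>1$ and $n$ large enough that $r_n\in(\ep_n,1/2)$. Undoing the change of variables $y=x/\ep_n$ and using $r_n R_n=\rho$, one obtains $(w_n,\pi_n)\in \mathscr{Q}_1(\Omega)$ and a constant $\bar P_{1,n}$ such that
\begin{equation*}
\rho^{-1}\bigg(\dashint_{B_{\rho,+}}|u-w_n|^2\bigg)^{1/2} + \sup_{s,t\in[\tfrac{1}{16},\tfrac{1}{4}]}\bigg|\dashint_{B_{s\rho,+}}(p-\pi_n) - \dashint_{B_{t\rho,+}}(p-\pi_n)\bigg| \lesssim \rho^\gamma R_n^{-\gamma}M_n = \rho^\gamma\,o(R_n^{\sigma-\gamma}),
\end{equation*}
which tends to zero by the choice $\gamma>\sigma$. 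By finite-dimensionality of $\mathscr{Q}_1(\Omega)$ and the uniform $L^2$ bound on $\{w_n\}$ inherited from $u\in L^2_{\mathrm{loc}}(B_{\rho,+})$, a subsequence converges to some $(w^\rho_*,\pi^\rho_*)\in \mathscr{Q}_1(\Omega)$ with $u\equiv w^\rho_*$ and $p-\pi^\rho_*$ constant on $B_{\rho,+}$. Since two elements of $\mathscr{Q}_1(\Omega)$ coinciding on a nonempty open set must be equal (by linear independence of the basis in \eqref{e.defQ1} combined with real analyticity of Stokes solutions in the interior of $\Omega$), the limit $w^\rho_*$ is in fact independent of $\rho$, giving $(u,p)\in \mathscr{Q}_1(\Omega)$ up to an additive pressure constant.

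Part (ii) is entirely parallel, with Theorem \ref{thm.C2g} in place of Theorem \ref{thm.C1g}: the rescaled excess bound reads $\rho^{-1}(\dashint_{B_{\rho,+}}|u-w_{1,n}-w_{2,n}|^2)^{1/2}\lesssim (\rho/R_n)^{1+\gamma}M_n$, and the hypothesis $(\dashint_{B_{R,+}}|u|^2)^{1/2}=o(R^{2+\sigma})$ upgrades $M_n$ to $o(R_n^{1+\sigma})$, so the right-hand side equals $\rho^{1+\gamma}\,o(R_n^{\sigma-\gamma})\to 0$. Compactness in the finite-dimensional space $\mathscr{Q}_1(\Omega)+\mathscr{Q}_2(\Omega)$ then closes the argument. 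The main technical obstacle is the uniform-in-$R_n$ Caccioppoli bound on $M_n$, where the bumpy John geometry enters nontrivially; everything else is a direct rescaling of the already-established large-scale regularity.
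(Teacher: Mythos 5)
Your overall route is exactly the ``routine rescaling'' the paper has in mind: rescale by $\ep_n=1/R_n$, invoke the linear versions of Theorems \ref{thm.C1g} and \ref{thm.C2g} at a fixed physical scale $\rho$, let $n\to\infty$, and identify the limit by compactness in the finite-dimensional spaces $\mathscr{Q}_1(\Omega)$ and $\mathscr{Q}_1(\Omega)+\mathscr{Q}_2(\Omega)$ together with unique continuation and the $\rho$-independence argument. Those parts are correct as sketched (including the bookkeeping for the quadratic correctors in part (ii): the factor $\ep_n$ in front of $w_2$ can indeed be absorbed since $\mathscr{Q}_2(\Omega)$ is a linear space), and the pressure identification goes through the same way.

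The genuine gap is the step you yourself single out: the bound $M_n\le CR_n^{-1}\big(\dashint_{B_{2R_n,+}}|u|^2\big)^{1/2}$, which you assert follows ``in the spirit of Section \ref{sec.lscz}'' by testing with a cut-off and removing the pressure with a Bogovskii operator in a John subdomain. That argument is precisely the proof of Lemma \ref{appendix.lem.S.Caccioppoli.ineq.}, and it only yields the \emph{weak} Caccioppoli inequality: because the Bogovskii domain furnished by Definition \ref{def.John2} sits inside the doubled cube, the pressure is controlled by $\nabla u$ on $B_{4R_n,+}$, and after Young's inequality the right-hand side retains the defect term $\theta\big(\dashint_{B_{4R_n,+}}|\nabla u|^2\big)^{1/2}$; the paper explicitly states that the usual (strong) Caccioppoli inequality ``seems not available'' in John domains. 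This defect is fatal for your argument as written: the hypothesis is only a $\liminf$, so you control $\|u\|_{L^2}$ along a single sequence of scales and have no control of $\|u\|_{L^2}$ at all larger scales, and you have no a priori growth bound on the Dirichlet energy of $u$ whatsoever; hence the defect can neither be absorbed at the single scale $R_n$ nor iterated away over dyadic scales. Nor do the obvious fixes come for free: hole-filling with divergence-free test functions (correcting $\phi^2u$ by a Bogovskii solve supported in an annular region) only produces a fixed $\theta<1$, and Definition \ref{def.John2} does not provide uniform Bogovskii operators on annular sets in the first place. So to complete the proof you must either establish a genuine single-scale large-scale Caccioppoli inequality for the Stokes system in bumpy John domains (this is where the bumpy John geometry enters and real work is required), or reformulate the argument so that the growth hypothesis on $u$ is converted into a bound on $M_n$ by some other means; as it stands, this conversion does not follow from the estimates available in the paper.
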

	
	\appendix

	\section{Bogovskii's lemma and some applications}\label{appendix.Bogovskii}
	%
	For a bounded open set $D\subset\R^3$ and $p\in(1,\infty)$, let
	\begin{align*}
		L^q_0(D)=\bigg\{f\in L^q(D)~\bigg|~\dashint_D f=0\bigg\}.
	\end{align*}

	\begin{theorem}[{\cite[Theorem 4.1]{ADM06}}]\label{theo.acosta}
		Let $\Omega\subset\R^3$ be a bounded John domain according to Definition \ref{def.John} with constant $L$. There exists an operator $\B: L^q_0(\Omega)\to W^{1,q}_0(\Omega)^3$ satisfying 
		\begin{align*}
			\nabla\cdot \B[f]=f \quad {\rm in} \ \, \Omega
		\end{align*}
		and
		\begin{align}\label{e.estbogtheom}
			\|\B[f]\|_{W^{1,q}(\Omega)} \le C\|f\|_{L^q(\Omega)},	
		\end{align}
		with $C$ depending on $L$.
	\end{theorem}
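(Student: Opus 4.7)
The plan is to follow the strategy of Acosta--Dur\'an--Muschietti, which reduces the construction on a John domain to gluing together local Bogovskii operators on Whitney cubes via mass-transport along John chains. The central idea is that the John condition, though much weaker than the star-shapedness that underlies the classical Bogovskii construction, still provides enough combinatorial structure to control a singular integral operator by Calder\'on--Zygmund techniques.

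First I would normalize so that $\Omega$ is John with respect to $\tilde x=0$, and fix a reference ball $B_0\subset\Omega$ centered near $\tilde x$, on which the classical Bogovskii solution operator $\B_0\colon L^q_0(B_0)\to W^{1,q}_0(B_0)^3$ is available (Bogovskii's original construction on a ball, which is obtained by an explicit singular integral against a kernel built from a smooth cut-off supported in $B_0$, see e.g.\ Galdi's monograph). Next I would take a Whitney decomposition $\Omega=\bigcup_{j}Q_j$ into dyadic cubes with $\ell(Q_j)\approx\dist(Q_j,\partial\Omega)$, and choose a subordinate partition of unity $\{\phi_j\}$ with $\|\nabla\phi_j\|_\infty\lesssim \ell(Q_j)^{-1}$. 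Writing $f=\sum_j f\phi_j$, each local piece $f\phi_j$ generally fails to have zero mean, and the zero-mean of $f$ as a whole is encoded only in $\sum_j\int f\phi_j=0$.

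The heart of the construction is a chain argument. For each Whitney cube $Q_j$, the John path from a point of $Q_j$ to $0$ passes through a sequence of Whitney cubes $Q_j=Q_{j,0},Q_{j,1},\dots,Q_{j,N_j}=Q_0$ with $Q_{j,k}\cap Q_{j,k+1}$ containing a ball of size $\approx\ell(Q_{j,k})$, and with $\ell(Q_{j,k})$ evolving geometrically so that $N_j\lesssim 1+\log_2(\diam\Omega/\ell(Q_j))$; this is precisely where the constant $L$ enters and where one uses the lower bound $\dist(\rho(t),\partial\Omega)\ge t/L$ in Definition \ref{def.John}. I would then transport the excess mass $m_j=\int f\phi_j$ from $Q_j$ to $Q_0$ along this chain by defining, on each link $Q_{j,k}\cup Q_{j,k+1}$, an auxiliary function $g_{j,k}$ with $\int g_{j,k}=m_j$ supported in $Q_{j,k}\cap Q_{j,k+1}$ and of appropriate size; after summation the telescoping produces, on each Whitney cube $Q$, a corrected source $\tilde f_Q$ with $\int\tilde f_Q=0$ and supported in a slightly enlarged $Q^*$. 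On $Q^*$ (which is star-shaped with respect to a ball of size $\approx \ell(Q)$) the classical Bogovskii operator yields a local solution $\B_Q[\tilde f_Q]\in W^{1,q}_0(Q^*)^3$ with $\|\B_Q[\tilde f_Q]\|_{W^{1,q}(Q^*)}\lesssim\|\tilde f_Q\|_{L^q(Q^*)}$. The sum $\B[f]:=\sum_Q\B_Q[\tilde f_Q]$ is in $W^{1,q}_0(\Omega)^3$, and by construction $\nabla\cdot\B[f]=\sum_Q\tilde f_Q=f$.

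The bulk of the work lies in proving \eqref{e.estbogtheom}. The direct local bounds give
\[
\|\B[f]\|_{W^{1,q}(\Omega)}^q\lesssim\sum_Q\|\tilde f_Q\|_{L^q(Q^*)}^q,
\]
and, unwinding the chain definition, $\tilde f_Q$ is an $\ell^1$-weighted sum of terms $(\phi_j f)\mathbf 1_Q$ and $\ell(Q)^{-3}m_j\mathbf 1_Q$ ranging over all $j$ whose chain visits $Q$. The task is therefore to show that the maximal operator
\[
Tf(x)=\sum_{Q_j\ni x}\sum_{k}\frac{|m_{j,k}|}{\ell(Q_{j,k})^{3}}\mathbf 1_{Q_{j,k}}(x)
\]
is bounded on $L^q(\Omega)$, which is the main obstacle. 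This is proven via a Calder\'on--Zygmund/Hardy-type inequality: one identifies the kernel as a singular integral whose size/smoothness estimates come from the geometric decay $\ell(Q_{j,k})\approx 2^{-|k-N_j|}\ell(Q_0)$ along chains together with a bounded-overlap property of chains that must be verified from the John condition. Once this $L^q$ boundedness is in hand, summing the local estimates gives \eqref{e.estbogtheom} with a constant depending only on $L$ and $q$, completing the construction.
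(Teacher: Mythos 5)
First, a point of order: the paper does not prove this statement at all --- Theorem \ref{theo.acosta} is quoted verbatim from \cite[Theorem 4.1]{ADM06} and used as a black box, so there is no in-paper proof to compare with. Judged on its own terms, your sketch does follow the broad Acosta--Dur\'an--Muschietti strategy (Whitney decomposition, transport of the local excess masses $m_j=\int f\phi_j$ along John chains to a central cube, local Bogovskii solutions on star-shaped expanded cubes, and a final $L^q$ bound for the transport operator), but the decisive step is left as an assertion, and the mechanism you propose for it is not the right one.

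The whole content of the theorem is the $L^q$ bound on the mass-transport term, and ``Calder\'on--Zygmund size/smoothness estimates from geometric decay along chains plus a bounded-overlap property'' does not capture where the John condition enters. The operator in question is positive, so kernel smoothness is beside the point; and decay of $\ell(Q_{j,k})$ along a single chain says nothing about how many distant cubes $Q_j$ send their mass through a fixed cube $Q$. The key geometric lemma, which your sketch never isolates, is that the \emph{shadow} of $Q$ --- the union of all Whitney cubes whose chain to the central cube passes through $Q$ --- is contained in a fixed dilate $CQ$, with $C=C(L)$; this is exactly what $\dist(\rho(t),\partial\Omega)\ge t/L$ buys (a chain that has reached scale $\ell(Q)$ must have started within distance $\approx L\,\ell(Q)$ of $Q$). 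Once you have it, the total mass deposited in $Q$ is bounded by $\int_{CQ}|f|$, so the transport term is pointwise dominated by $C\,\ell(Q)^{-3}\int_{CQ}|f|\lesssim \mathcal{M}f(x)$ (Hardy--Littlewood maximal function), and $L^q$ boundedness for $q\in(1,\infty)$ follows at once; this is also where the argument must fail for non-John domains (e.g.\ outward cusps), so no purely combinatorial overlap count can replace it. In \cite{ADM06} this is organized as a decomposition theorem for $f$ plus a Hardy-type inequality on the Whitney tree, which is the same mechanism. Two smaller technical inaccuracies: adjacent Whitney cubes meet only along faces, so the intermediate transfer functions $g_{j,k}$ must be supported in overlapping \emph{expanded} cubes (you only invoke the expansion later, for the local Bogovskii step); and the telescoping at the root cube produces zero total excess only because $\int_\Omega f=0$, a point worth stating since it is the only place the hypothesis $f\in L^q_0(\Omega)$ is used.
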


	\begin{lemma}\label{lem.pressure.recovered}
		Let $\Omega$ be a bounded John domain according to Definition \ref{def.John}. 
		Set 
		\begin{align*}
			H^1_{0,\sigma}(\Omega)
			:=\{u\in {H^1_{0}(\Omega)^3}~|~\nabla\cdot u=0 \ \, {\rm in} \  \Omega\}.
		\end{align*}
		Let $f\in L^2(\Omega)^3$ and $F\in L^2(\Omega)^{3\times3}$. If $u\in H^1(\Omega)^3$ is a weak solution of the Stokes equations in the sense
		\begin{align*}
			\int_\Omega \nabla u \cdot \nabla \varphi 
			=\int_\Omega f\cdot\varphi - \int_\Omega F \cdot \nabla \varphi, \quad 
			\varphi\in {H^1_{0,\sigma}(\Omega)},
		\end{align*}
		then there exists a function $p\in L^2(\Omega)$ unique up to a constant for which we have
		\begin{align*}
			\int_\Omega \nabla u \cdot \nabla \phi 
			- 	\int_\Omega p (\nabla\cdot\phi) 
			=\int_\Omega f\cdot\phi - \int_\Omega F \cdot \nabla \phi, \quad \phi\in H^{1}_0(\Omega)^3.
		\end{align*}
		Namely, the pair $(u,p)$ is a weak solution of the Stokes equations. Moreover,
		\begin{equation}\label{est.Bogovskii}
			\| p - {\dashint_{\Omega}} p \|_{L^2(\Omega)} \le C\Big( \| u \|_{L^2(\Omega)} + { \rm diam}(\Omega)\| f \|_{L^2(\Omega)} + \| F \|_{L^2(\Omega)} \Big),
		\end{equation}
	where ${\rm diam}(\Omega)$ denotes the diameter of $\Omega$.
	\end{lemma}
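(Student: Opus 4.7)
The plan is a standard De Rham-type argument: the hypothesis gives information about $u$ only when tested against divergence-free functions, and one recovers the pressure by extending this to all of $H^1_0(\Omega)^3$ via the right-inverse of the divergence furnished by the Bogovskii operator of Theorem \ref{theo.acosta}. Concretely, I would first introduce the continuous linear functional $\Lambda:H^1_0(\Omega)^3\to\R$ defined by
$$\Lambda(\phi):=\int_\Omega\nabla u\cdot\nabla\phi-\int_\Omega f\cdot\phi+\int_\Omega F\cdot\nabla\phi,$$
which by hypothesis vanishes on the closed subspace $H^1_{0,\sigma}(\Omega)$. The task reduces to producing $p\in L^2(\Omega)$, unique modulo constants, such that $\Lambda(\phi)=\int_\Omega p\,(\nabla\cdot\phi)$ for every $\phi\in H^1_0(\Omega)^3$.

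Using the Bogovskii operator $\B:L^2_0(\Omega)\to H^1_0(\Omega)^3$ from Theorem \ref{theo.acosta} (with $q=2$), I would define an auxiliary functional $\widetilde\Lambda$ on $L^2_0(\Omega)$ by $\widetilde\Lambda(g):=\Lambda(\B[g])$. The bound $\|\B[g]\|_{H^1(\Omega)}\le C\|g\|_{L^2(\Omega)}$ shows $\widetilde\Lambda$ is bounded, so the Riesz representation theorem on the Hilbert space $L^2_0(\Omega)$ yields a unique $p\in L^2_0(\Omega)$ with $\widetilde\Lambda(g)=\int_\Omega p g$ for all $g\in L^2_0(\Omega)$. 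To verify this $p$ represents $\Lambda$ on all of $H^1_0(\Omega)^3$, I would, given $\phi\in H^1_0(\Omega)^3$, set $g:=\nabla\cdot\phi\in L^2_0(\Omega)$ (zero mean by the divergence theorem). Then $\phi-\B[g]$ is divergence-free, belongs to $H^1_{0,\sigma}(\Omega)$, so $\Lambda(\phi-\B[g])=0$ and
$$\Lambda(\phi)=\Lambda(\B[g])=\widetilde\Lambda(g)=\int_\Omega p\,(\nabla\cdot\phi).$$
Uniqueness up to a constant is immediate: if $q\in L^2(\Omega)$ satisfies $\int_\Omega q(\nabla\cdot\phi)=0$ for all $\phi\in H^1_0(\Omega)^3$, testing against $\phi=\B[g]$ forces $\int_\Omega qg=0$ for every $g\in L^2_0(\Omega)$, which means $q$ is constant. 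Finally, the estimate \eqref{est.Bogovskii} follows from the identity $\|p-\dashint_\Omega p\|_{L^2(\Omega)}=\|\widetilde\Lambda\|_{L^2_0(\Omega)^*}$, combined with the $H^1_0$-bound of Theorem \ref{theo.acosta} applied to $\B[g]$ and the Poincar\'e inequality $\|\B[g]\|_{L^2(\Omega)}\le C\,\mathrm{diam}(\Omega)\,\|\nabla\B[g]\|_{L^2(\Omega)}$, which supplies the $\mathrm{diam}(\Omega)$ factor in front of $\|f\|_{L^2(\Omega)}$.

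There is no substantial obstacle here once Theorem \ref{theo.acosta} is in hand: the whole argument is a soft functional-analytic unwinding of the variational hypothesis, and the only nontrivial ingredient is the continuous right-inverse of the divergence on the John domain $\Omega$, which is precisely what Theorem \ref{theo.acosta} supplies with a constant depending only on the John parameter $L$. In domains with worse geometry (e.g.\ outward cusps) the construction of such a Bogovskii operator would be the genuine difficulty and could fail, which is exactly why the John property is the minimal structural assumption sought in the paper.
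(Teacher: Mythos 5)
The paper states this lemma without giving a proof, and your argument is exactly the standard De Rham/Bogovskii route it presumably has in mind: the functional $\Lambda$ vanishes on $H^1_{0,\sigma}(\Omega)$, the correction $\phi-\B[\nabla\cdot\phi]$ lies in $H^1_{0,\sigma}(\Omega)$, the Riesz representation on $L^2_0(\Omega)$ produces $p$, and testing with $\B[g]$ gives uniqueness modulo constants. All of this is correct, and your handling of the $\mathrm{diam}(\Omega)$ factor, using the scale-invariant gradient bound $\|\nabla\B[g]\|_{L^2(\Omega)}\le C\|g\|_{L^2(\Omega)}$ together with the Poincar\'e inequality for $\B[g]\in H^1_0(\Omega)^3$ rather than the non-scale-invariant full $W^{1,2}$ norm in Theorem \ref{theo.acosta}, is the right way to proceed.

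The gap is in the final step. Your chain of inequalities bounds $|\widetilde\Lambda(g)|$ by $C\big(\|\nabla u\|_{L^2(\Omega)}+\mathrm{diam}(\Omega)\|f\|_{L^2(\Omega)}+\|F\|_{L^2(\Omega)}\big)\|g\|_{L^2(\Omega)}$, i.e.\ it proves the estimate with $\|\nabla u\|_{L^2(\Omega)}$, not with $\|u\|_{L^2(\Omega)}$ as written in \eqref{est.Bogovskii}: Cauchy--Schwarz applied to $\int_\Omega\nabla u\cdot\nabla\B[g]$ cannot produce $\|u\|_{L^2}$, and no integration by parts is available since $u$ has no boundary condition and $\Delta\B$ is not $L^2$-bounded. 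In fact no argument can close this gap, because \eqref{est.Bogovskii} as printed is false for general $u\in H^1(\Omega)^3$: take $\Omega=B_1$, $f=0$, $F=0$, $u_k=-k^{-1}\cos(kx_1)\,\mathbf{e}_1$ and $p_k=\sin(kx_1)$; then $\Delta u_k=\nabla p_k$, so $\int_\Omega\nabla u_k\cdot\nabla\varphi=0$ for every $\varphi\in H^1_{0,\sigma}(\Omega)$ and the recovered pressure is $p_k$ up to a constant, yet $\|p_k-\dashint_\Omega p_k\|_{L^2(\Omega)}$ stays bounded below while $\|u_k\|_{L^2(\Omega)}\to0$. So the statement should be read with $\|\nabla u\|_{L^2(\Omega)}$ in place of $\|u\|_{L^2(\Omega)}$ (this is also the form in which the paper actually invokes the lemma, e.g.\ in the proofs of Lemma \ref{appendix.lem.S.Caccioppoli.ineq.} and Lemma \ref{lem.osc.interior}); with that reading your proof is complete, but as written you should not assert that your estimate yields \eqref{est.Bogovskii} verbatim.
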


	A direct application of Bogovskii's operator is the Caccioppoli inequality for the Stokes equations. Let $Q_{r,+} = Q_r \cap \{ x_3 >0 \}$. Suppose $(u,p)$ is a weak solution of
	\begin{equation}\label{eq.Stokes.Qr}
		\left\{
		\begin{array}{ll}
			-\Delta u+\nabla p
			= \nabla\cdot F&\mbox{in}\ Q_{2r,+}\\
			\nabla\cdot u=0&\mbox{in}\ Q_{2r,+}\\
			u =0&\mbox{on}\ Q_{2r} \cap \{ x_3 =0 \}.
		\end{array}
		\right.
	\end{equation}
	The following is the Caccioppoli inequality over flat boundaries whose proof is classical \cite[Theorem 1.1]{GM82} (the interior Caccioppoli inequality is similar).
	\begin{lemma}\label{lem.StrongCaccioppoli}
		Let $F\in L^2(Q_{2r,+})^{3\times 3}$
		and let $(u,p)\in H^1(Q_{2r,+})^3 \times L^2(Q_{2r,+})$ be a weak solution to \eqref{eq.Stokes.Qr}.
		Then,
		\begin{equation}\label{est.Stokes.Qr}
			\| \nabla u \|_{L^2(Q_{r,+})} \le C\bigg( \frac{1}{r} \| u \|_{L^2(Q_{2r,+})} + \| F\|_{L^2(Q_{2r,+})} \bigg),
		\end{equation}
		where the constant $C$ is independent of $r$.
	\end{lemma}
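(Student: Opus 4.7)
The plan is to carry out the classical Caccioppoli scheme for the Stokes system: produce an $H^1_0$-admissible divergence-free test function obtained by modifying a cutoff of $u$ via Bogovskii's operator, so that the pressure term disappears from the weak formulation, and then close the estimate with a standard hole-filling iteration on nested half-cubes.

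For arbitrary radii $r \le t < s \le 2r$, I would choose $\eta \in C_c^\infty(Q_s)$ with $\eta \equiv 1$ on $Q_t$, $0 \le \eta \le 1$, and $|\nabla \eta| + (s-t)|\nabla^2 \eta| \le C/(s-t)$. Since $\nabla\cdot u=0$, the function $g := \nabla\cdot(\eta^2 u) = 2\eta\,u\cdot\nabla\eta$ has zero integral over $Q_{s,+}$: indeed $\eta$ vanishes on the lateral and upper faces of $\partial Q_{s,+}$, while $u$ vanishes on $\{x_3=0\}\cap Q_{2r}$. Because $Q_{s,+}$ is Lipschitz (hence John) with constants depending only on its shape, the Bogovskii operator provided by Theorem~\ref{theo.acosta} yields $w\in H^1_0(Q_{s,+})^3$ with $\nabla\cdot w=g$ and $\|\nabla w\|_{L^2(Q_{s,+})} \le C\|g\|_{L^2} \le C(s-t)^{-1}\|u\|_{L^2(Q_{s,+})}$. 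Then $\phi:=\eta^2 u-w \in H^1_0(Q_{s,+})^3$ is divergence-free, so testing \eqref{eq.Stokes.Qr} against $\phi$ eliminates the pressure and leaves $\int \nabla u:\nabla\phi = -\int F:\nabla\phi$.

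A direct computation together with the identity $\nabla u:\nabla(\eta^2 u) = \eta^2|\nabla u|^2 + \tfrac12\nabla(|u|^2)\cdot\nabla(\eta^2)$ and one integration by parts (whose boundary contributions vanish thanks to $u=0$ on $\{x_3=0\}$ and $\eta=0$ on the remainder of $\partial Q_{s,+}$) gives
\[
\int_{Q_{s,+}} \nabla u:\nabla(\eta^2 u) = \int_{Q_{s,+}} \eta^2|\nabla u|^2 - \tfrac12 \int_{Q_{s,+}} |u|^2\,\Delta(\eta^2).
\]
The three remaining terms $\int \nabla u:\nabla w$, $\int F:\nabla(\eta^2 u)$ and $\int F:\nabla w$ are controlled by Cauchy-Schwarz together with the Bogovskii bound on $\|\nabla w\|_{L^2}$ and Young's inequality. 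Combining everything yields, for every $\delta\in(0,1)$,
\[
\int_{Q_{t,+}} |\nabla u|^2 \le \delta \int_{Q_{s,+}} |\nabla u|^2 + \frac{C_\delta}{(s-t)^2}\int_{Q_{2r,+}} |u|^2 + C_\delta \int_{Q_{2r,+}} |F|^2.
\]
Feeding this into the standard Giaquinta--Giusti iteration lemma (see \cite[Ch.~V, Lemma~3.1]{G83}) applied to the nondecreasing function $\rho \mapsto \int_{Q_{\rho,+}} |\nabla u|^2$ absorbs the first term on the right and produces \eqref{est.Stokes.Qr}.

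I do not expect a serious obstacle, consistently with the classical nature of the statement. The only point that requires attention is the zero-mean compatibility $\int_{Q_{s,+}} g = 0$, which is precisely where the no-slip condition on the flat piece $\{x_3 = 0\}$ and the simple geometry of $Q_{s,+}$ enter; in the rough John setting considered elsewhere in the paper, the analogous argument must be replaced by the weak Caccioppoli inequality of Lemma~\ref{appendix.lem.S.Caccioppoli.ineq.}.
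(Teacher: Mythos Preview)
Your proposal is correct and is precisely the classical Caccioppoli argument for the Stokes system: divergence-free test function built from a cutoff plus a Bogovskii correction, followed by the Giaquinta--Giusti hole-filling iteration. The paper does not give its own proof of this lemma; it simply cites \cite[Theorem 1.1]{GM82}, and your write-up is a faithful rendering of that very argument (including the same iteration lemma \cite[Ch.~V, Lemma 3.1]{G83} already referenced elsewhere in the paper).
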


	%
	Now, consider the Stokes equations over John boundaries
	\begin{equation}\label{appendix.S.ep}
		\left\{
		\begin{array}{ll}
			-\Delta u^\ep+\nabla p^\ep
			= \nabla\cdot F^\ep&\mbox{in}\ B^\ep_{4r,+}\\
			\nabla\cdot u^\ep=0&\mbox{in}\ B^\ep_{4r,+}\\
			u^\ep=0&\mbox{on}\ \Gamma^\ep_{4r}.
		\end{array}
		\right.
	\end{equation}
	Unfortunately, the Caccioppoli inequality in the form of (\ref{est.Stokes.Qr}) cannot be derived for the weak solution of \eqref{appendix.S.ep} by the usual iteration argument (see e.g., \cite[Lemma 0.5]{GM82} or \cite[Chapter V, Lemma 3.1]{G83}) due to the assumption that the John domain condition (after rescaling) holds only for scales $r\ge \ep$. Actually, we only have a weaker Caccioppoli inequality valid for $r\ge \ep$, which is sufficient for us to show a (large-scale) Meyers estimate.
	%

	%
\begin{lemma}[A weak Caccioppoli inequality]\label{appendix.lem.S.Caccioppoli.ineq.}
	Let $L\in(0,\infty)$ and $\Omega$ be a bumpy John domain with constant $L$ according to Definition \ref{def.John2}. Let $\ep\in(0,\frac12]$ and $F^\ep\in L^2(B^\ep_{4r,+})^{3\times3}$, and let $(u^\ep, p^\ep)\in H^1(B^\ep_{4r,+})^3 \times L^2(B_{4r,+}^\ep)$ be a weak solution to \eqref{appendix.S.ep} with $r \ge \ep$. 
	Then for any $\theta \in (0,1)$,
	\begin{align}\label{est.appendix.lem.S.Caccioppoli.ineq.}
		\begin{split}
			\|\nabla u^\ep\|_{L^2(B^\ep_{{r},+})}
			\le \theta \|\nabla u^\ep\|_{L^2(B^\ep_{{4r},+})}
		+
			\frac{C}{\theta r} \|u^\ep\|_{L^2(B^\ep_{4r,+})} 
			+ C\|F^\ep\|_{L^2(B^\ep_{4r,+})}\,,
		\end{split}
	\end{align}
	where the constant $C$ depends only on $L$. 
	In particular $C$ is independent of 
	$\theta,\ep$ and $r$. Moreover, if $r\ge 4\ep$, then by the standard interior Caccioppoli inequality and a covering argument as in the proof of Lemma \ref{lem.selfimprove}, $B^\ep_{4r,+}$ may be replaced by $B^\ep_{2r,+}$ on the right-hand side of \eqref{est.appendix.lem.S.Caccioppoli.ineq.}.
\end{lemma}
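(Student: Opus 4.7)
The strategy is the classical Caccioppoli argument for the Stokes system: I will test the equation against a suitably cut-off version of the solution, and the only wrinkle is the pressure term, which forces me to modify the usual test function $\eta^2 u^\ep$ by a Bogovskii corrector so that the pressure drops out. Carrying this out in a bumpy John domain is exactly what restricts the estimate to scales $r\ge\ep$.

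First I will pick a smooth cut-off $\eta\in C_0^\infty(Q_{2r})$ with $\eta\equiv 1$ on $Q_r$ and $|\nabla\eta|\le C/r$. Since $r\ge\ep$ and $K=2$, Definition~\ref{def.John2} applied at scale $2r$ yields a bounded John domain $D$ (with constant depending only on $L$) such that $B^\ep_{2r,+}\subset D\subset B^\ep_{4r,+}$. The function $\eta^2 u^\ep$ extends by zero across $\partial\Omega^\ep$ (using $u^\ep=0$ on $\Gamma^\ep_{4r}$) and vanishes outside $Q_{2r}$, so $\eta^2 u^\ep\in H^1_0(D)^3$; in particular, $f:=\nabla\cdot(\eta^2 u^\ep)=2\eta\,\nabla\eta\cdot u^\ep$ has zero mean on $D$ and satisfies $\|f\|_{L^2(D)}\le (C/r)\|u^\ep\|_{L^2(B^\ep_{4r,+})}$. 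Theorem~\ref{theo.acosta} then produces $w\in H^1_0(D)^3$ with $\nabla\cdot w=f$ and $\|\nabla w\|_{L^2(D)}\le (C/r)\|u^\ep\|_{L^2(B^\ep_{4r,+})}$.

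Next I will test the weak formulation of \eqref{appendix.S.ep} against the divergence-free function $\phi:=\eta^2 u^\ep - w\in H^1_0(B^\ep_{4r,+})^3$ (extending $w$ by zero outside $D$). The pressure term vanishes, leaving
\[
\int \eta^2|\nabla u^\ep|^2 = -\int 2\eta(\nabla\eta\otimes u^\ep):\nabla u^\ep + \int\nabla u^\ep:\nabla w - \int F^\ep:\nabla(\eta^2 u^\ep) + \int F^\ep:\nabla w.
\]
Writing $A:=\|\eta\nabla u^\ep\|_{L^2}$, $\Theta:=\|\nabla u^\ep\|_{L^2(B^\ep_{4r,+})}$, $U:=\|u^\ep\|_{L^2(B^\ep_{4r,+})}$ and $F:=\|F^\ep\|_{L^2(B^\ep_{4r,+})}$, Cauchy--Schwarz and the Bogovskii bound on $\nabla w$ yield $A^2\le C(AU/r + \Theta U/r + FA + FU/r)$. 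Young's inequality, with weight $\theta$ applied precisely to the Bogovskii cross-term $\Theta\cdot(U/r)$ and absorbing weights on the $A$-linear contributions, leads to $A^2\le \theta^2\Theta^2 + C\theta^{-2}r^{-2}U^2 + CF^2$; taking square roots and using $\|\nabla u^\ep\|_{L^2(B^\ep_{r,+})}\le A$ gives \eqref{est.appendix.lem.S.Caccioppoli.ineq.}. For the refinement when $r\ge 4\ep$, I will combine the interior Caccioppoli inequality (on sub-cubes of size comparable to $r$ lying above $\{x_3>\ep\}$) with the boundary argument above at a smaller scale (on sub-cubes touching the bumpy boundary), glued by the covering argument used in the proof of Lemma~\ref{lem.selfimprove}; the constraint $r\ge 4\ep$ is what guarantees that the smaller enclosing John domains fit inside $B^\ep_{2r,+}$, allowing the right-hand side of \eqref{est.appendix.lem.S.Caccioppoli.ineq.} to be replaced by one involving $B^\ep_{2r,+}$ only.

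The main obstacle is the construction of the Bogovskii corrector with constants independent of $\ep$ and $r$. This is exactly what the bumpy John assumption is designed to deliver through Theorem~\ref{theo.acosta}, and it is also the source of the large-scale restriction: below the scale $\ep$, cusps or fractal features of the boundary may prevent the existence of any admissible enclosing John domain, so no uniformly controlled right-inverse of the divergence is available and a full Caccioppoli inequality (with an absorbable factor $1/r^2$ on $\|u^\ep\|^2$) cannot be derived by this route.
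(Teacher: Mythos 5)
Your proof is correct, and it rests on the same two pillars as the paper's own argument: testing with (a correction of) $\eta^2 u^\ep$, and invoking the Bogovskii operator of Theorem \ref{theo.acosta} on an intermediate John domain $D$ with $B^\ep_{2r,+}\subset D\subset B^\ep_{4r,+}$, which is exactly where the restriction $r\ge \ep$ enters. The difference is how Bogovskii is used. The paper keeps the test function $u^\ep\eta^2$, so the pressure term survives as $\int (p^\ep-L)\,u^\ep\cdot 2\eta\nabla\eta$ and is controlled through the pressure estimate $\| p^\ep-\dashint_{D}p^\ep\|_{L^2(D)}\le C(\|\nabla u^\ep\|_{L^2(D)}+\|F^\ep\|_{L^2(D)})$ of Lemma \ref{lem.pressure.recovered}; you instead subtract a Bogovskii corrector $w$ so that the test function is divergence-free and the pressure drops out, at the price of the extra terms $\int\nabla u^\ep:\nabla w$ and $\int F^\ep:\nabla w$. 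The two routes are dual and generate the same critical cross-term $\|\nabla u^\ep\|_{L^2(B^\ep_{4r,+})}\cdot r^{-1}\|u^\ep\|_{L^2(B^\ep_{4r,+})}$, which cannot be absorbed and is precisely where the $\theta$-weight must go — you place it correctly, and this is why only the weak form of the Caccioppoli inequality is available here. Two points worth making explicit in a write-up: (a) testing against $\phi=\eta^2u^\ep-w\in H^1_0$ requires first extending the weak formulation from $C_0^\infty$ test functions by density (harmless, since $p^\ep,F^\ep\in L^2$), after which the pressure term vanishes because $\nabla\cdot\phi=0$ and $\phi$ is supported in $\overline{D}$; (b) the bound $\|\nabla w\|_{L^2(D)}\le C(L)\|f\|_{L^2(D)}$ with a constant independent of $r$ and $\ep$ uses the scale-invariance of the gradient part of estimate \eqref{e.estbogtheom} together with the scale-invariance of the John constant, since Theorem \ref{theo.acosta} is stated for a fixed bounded domain. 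Your sketch for the refinement when $r\ge 4\ep$ (interior Caccioppoli away from the boundary plus translated boundary estimates at scale comparable to $r/4$, glued by the covering argument of Lemma \ref{lem.selfimprove}) is at the same level of detail as the paper's remark and is consistent with it.
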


\begin{proof}
	Let $\phi_r$ be a smooth cut-off function so that $\phi_r(x) = 1$ for ${x} \in B_r$, $\phi(x) = 0$ for $x\notin B_{2r}$ and $|\nabla \phi| \le C/r$. Integrating the first equation of \eqref{appendix.S.ep} against $u^\ep \phi^2$, we have
	\begin{equation}\label{eq.LemmaA4}
		\int_{B^\ep_{2r,+}} \nabla u^\ep \cdot \nabla u^\ep \phi^2 = -\int_{B^\ep_{2r,+}} \phi\nabla u^\ep \cdot (\nabla \phi \otimes u^\ep)  + \int_{B^\ep_{2r,+}} \nabla p^\ep \cdot u^\ep\phi^2 - \int_{B^\ep_{2r,+}} F^\cdot \nabla (u^\ep \phi^2).
	\end{equation}
	The first and third terms on the right-hand side are routine.
	To deal with the pressure, by Definition \ref{def.John2} of the bumpy John domain $\Omega$, we use the Bogovskii operator in a John domain $\Omega_{2r}^\ep$ satisfying $B_{2r,+}^\ep\subset\Omega_{2r}^\ep\subset B_{4r,+}^\ep$ and \eqref{est.Bogovskii} to obtain
	\begin{equation*}
		\bigg( \int_{\Omega_{2r}^\ep} |p^\ep - {\dashint_{\Omega_{2r}^\ep}} p^\ep|^2 \bigg) \le C\Big( \| \nabla u^\ep \|_{L^2(\Omega_{2r}^\ep)} + \| F^\ep \|_{L^2(\Omega_{2r}^\ep)} \Big).
	\end{equation*}
	Let $L = \dashint_{\Omega_{2r}^\ep} p^\ep$. Then, using the above estimate and $\nabla\cdot u^\ep = 0$,
	\begin{equation*}
		\begin{aligned}
		\bigg| \int_{B^\ep_{2r,+}} \nabla p^\ep \cdot u^\ep\phi^2 \bigg| &= \bigg| \int_{B^\ep_{2r,+}} \nabla (p^\ep - L) \cdot u^\ep\phi^2 \bigg| \\
		&= \bigg| \int_{B^\ep_{2r,+}} (p^\ep - L) u^\ep\cdot 2\phi \nabla\phi \bigg| \\
		& \le \frac{C}{r}\Big( \| \nabla u^\ep \|_{L^2(\Omega_{2r}^\ep)} + \| F^\ep \|_{L^2(\Omega_{2r}^\ep)} \Big) \| u^\ep \|_{L^2(B_{2r,+}^\ep)} \\
		& \le \theta^2 \| \nabla u^\ep \|_{L^2(B_{4r,+}^\ep)}^2 + \frac{C}{\theta^2 r^2} \| u^\ep \|_{L^2(B_{4r,+}^\ep)}^2 + C\|F^\ep\|_{L^2(B^\ep_{4r,+})},
	\end{aligned}
	\end{equation*}
	for any $\theta \in (0,1)$. In view of \eqref{eq.LemmaA4}, this gives the desired estimate by a standard argument.
\end{proof}
	

	\section{Large-scale estimates for the Green function}
	\label{app.green}

	This appendix is devoted to the study of the Green function for the Stokes equations in a bumpy John half-space according to Definition \ref{def.John2}. 
	The large-scale estimates proved in Section \ref{sec.large} will be applied. The basic scheme is to derive estimates for the velocity part of the Green function directly from the interior and large-scale boundary Lipschitz estimates. For this we follow the strategy pioneered in \cite{AL87,AL91}. Then, we deduce the estimates for the pressure part of the Green function from Bogovskii's lemma and the estimates for the velocity part.

	
	We use $B_R(x) = Q_R(x)$ to denote the cube centered at $x$ with side length $2R$. If the center is not important in the context, it is abbreviated as $B_R$. Throughout this appendix, $\Omega_{\le N}$, $\Omega_{\ge N}$, $\Omega_{< N}$, and $\Omega_{> N}$ defined around \eqref{e.defOmegaN} will be used. Moreover, let $\hat{x}$ denote the projection of $x\in\R^3$ on $\partial\R^3_+$.

	\subsection{Construction of the Green function}
	
	Let $D$ be an open set in $\R^3$. Denote by $Y^{1,2}(D)$ the space of functions 
	\begin{equation}
		\{u\in L^6(D)~|~\nabla u\in L^2(D)^3\}
	\end{equation}
	equipped with the norm $\| u \|_{Y^{1,2}(D)}=\| u \|_{L^6(D)} + \| \nabla u\|_{L^2(D)}$. Let $Y^{1,2}_0(D)$ be the closure of $C_0^\infty(D)$ under $\| \cdot \|_{Y^{1,2}(D)}$. The closed subspace of $Y^{1,2}_0(D)^3$
	\begin{equation}
		\{u\in Y^{1,2}_0(D)^3~|~\nabla\cdot u=0 \ \, {\rm in} \  D\}
	\end{equation}
	is denoted by $Y^{1,2}_{0,\sigma}(D)$. Note that, when the Lebesgue measure of $D$ is finite, we have $Y^{1,2}_0(D)=H^{1,2}_0(D)$ by the Sobolev inequality $\| u\|_{L^6(D)} \le C\| \nabla u\|_{L^2(D)}$ if $u\in C_{0}^\infty(D)$. Moreover, we see that $Y^{1,2}_{0,\sigma}(D)$ as well as $Y^{1,2}_{0}(D)^3$ is a Hilbert space with an inner product $\langle u,v \rangle=\int_D \nabla u \cdot \nabla v$.
	
	Let $\Omega$ be a bumpy John domain with constant $L\in(0,\infty)$ according to the Definition \ref{def.John2}. Based on similar proofs in \cite{HK07} and \cite{CL17} and using the large-scale Lipschitz estimate of Theorem \ref{theo.lip.nonlinear} proved in Section \ref{sec.large}, we can construct the Green function $(G,\Pi)=(G(x,y),\Pi(x,y))$, which satisfies the following properties:

	\begin{enumerate}[label=(\roman*)]	

		\item For any $q\in[1,\frac32)$, $G(\cdot,y) \in W^{1,q}_{0,{\rm loc}}(\overline{\Omega})^{3\times 3}$
		and $G(\cdot,y) \in Y^{1,2}(\Omega\setminus B_r(y) )^{3\times 3}$ for each $y\in \Omega$ and $r>0$. Moreover, $\Pi(\cdot,y)\in L^2_{{\rm loc}}(\overline{\Omega\setminus B_r(y)})^{3\times1}$ for each $y\in \Omega$ and $r>0$.
		\item $(G(\cdot,y),\Pi(\cdot,y))$ satisfies, for each $y\in \Omega$, 
		\begin{equation}\label{eq.def.GPi}
			\int_{\Omega} \nabla G(\cdot,y)\cdot \nabla \phi 
			- \int_{\Omega} \Pi(\cdot,y)(\nabla\cdot\phi) =\phi(y), \quad \phi\in C_{0}^\infty(\Omega)^3.
		\end{equation}
		\item For all $f\in C^{\infty}_{0}(\Omega)^3$, if
		the function $(u,p) \in Y^{1,2}_{0,\sigma}(\Omega)^3 \times L^2_{\rm loc}(\overline{\Omega})$, with $p(x)\to 0$ as $x_3\to \infty$, satisfies the Stokes equations in the sense 
		\begin{equation}\label{est.var.def}
			\int_{\Omega} \nabla u \cdot \nabla \phi 
			- \int_{\Omega} p (\nabla\cdot \phi)
			=\int_{\Omega} f \cdot \phi, \quad \phi\in C_{0}^\infty(\Omega)^3,
		\end{equation}
		then
		\begin{equation}\label{eq.GreenFormula}
			u(x)=\int_{\Omega} G(x,y) f(y) \dd y, \qquad p(x) = \int_{\Omega} \Pi(x,y) \cdot f(y) \dd y.
		\end{equation}
	\end{enumerate}

	We describe how to obtain $(G,\Pi)$ meeting Properties (i)-(iii) above. The existence and basic estimates of the velocity component $G(x,y)$ follow from a similar argument as \cite[Theorem 4.1]{HK07} by working in the Hilbert space $Y^{1,2}_{0,\sigma}(\Omega)$. In fact, there is $G(x,y)$ such that $u(x)$ defined in \eqref{eq.GreenFormula} belongs to $u\in Y^{1,2}_{0,\sigma}(\Omega)$ and is the unique solution of the Stokes equations in the sense 
	\begin{equation}\label{est.var.def'}
		\int_{\Omega} \nabla u \cdot \nabla \varphi 
		=\int_{\Omega} f \cdot \varphi, \quad \text{for any } \varphi\in Y^{1,2}_{0,\sigma}(\Omega).
	\end{equation}
	Then, by using Lemma \ref{lem.pressure.recovered} on each bounded John subdomain, one sees that there is a pressure $p\in L^2_{{\rm loc}}(\overline{\Omega})$ for which we have \eqref{est.var.def}, uniquely determined under the condition $p(x)\to 0$ as $x_3\to \infty$.

	When constructing the pressure component $\Pi(x,y)$ in \eqref{eq.GreenFormula}, we need a careful analysis since the domain is unbounded unlike in \cite{CL17}. Here the oscillation estimate of $p$ will play a crucial role. For an open set $E$, define the oscillation of $p$ in $E$ by
	\begin{equation}\label{def.osc}
		\osc_E p = \sup_{x,y\in E} |p(x) - p(y)|.
	\end{equation}
	The following lemma shows a fundamental oscillation estimate for the pressure.
	
	\begin{lemma}\label{lem.osc.interior}
		Let $L\in (0,\infty)$ and $\Omega$ be a bumpy John domain with constant $L$ according to Definition \ref{def.John2}. Then we have the following statements.
		\begin{enumerate}[label=(\roman*)]
			\item Let $\overline{B_R} \subset \Omega$. If $-\Delta u + \nabla p = 0$ and $\nabla\cdot u = 0$ in $B_R$, then
			\begin{equation}\label{est1.lem.osc.interior}
				\osc_{B_{R/2}} p \le C\bigg( \dashint_{B_{R}} |\nabla u|^2 \bigg)^{1/2},
			\end{equation}
			where $C$ is a universal constant.
			\item Let $z\in\partial\R^3_+$ and let $R>2$. If $-\Delta u + \nabla p = 0$ and $\nabla\cdot u = 0$ in $\Omega \cap B_R(z)$ and $u=0$ on $\partial\Omega \cap B_R(z)$, then
			\begin{equation}\label{est2.lem.osc.interior}
				\osc_{\Omega_{>2}\cap B_{R/2}(z)} p 
				\le C\bigg( \dashint_{\Omega\cap B_R(z)} |\nabla u|^2 \bigg)^{1/2},
			\end{equation}
			where $C$ depends on $L$ and is independent of $z$ and $R$.
		\end{enumerate}
	\end{lemma}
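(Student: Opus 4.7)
The starting observation for both parts is that taking the divergence of $-\Delta u+\nabla p=0$ together with $\nabla\cdot u=0$ yields $\Delta p=0$, so $p$ is harmonic wherever the Stokes system holds.

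For part (i), the plan is the classical Bogovskii-plus-harmonic argument. On the convex (hence John) cube $B_R$, the variant of Bogovskii's lemma used throughout the paper gives $\bigl(\dashint_{B_R}|p-\bar p|^2\bigr)^{1/2}\le C\bigl(\dashint_{B_R}|\nabla u|^2\bigr)^{1/2}$ with $\bar p=\dashint_{B_R}p$. Since $p-\bar p$ is harmonic on $B_R$, the standard $L^\infty$--$L^2$ estimate for harmonic functions yields $\sup_{B_{R/2}}|p-\bar p|\le C\bigl(\dashint_{B_R}|p-\bar p|^2\bigr)^{1/2}$, and the oscillation is at most twice this supremum, proving \eqref{est1.lem.osc.interior}.

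For part (ii), the key idea is that $\Omega$ contains the upper half-space, so for any $x$ with $x_3\ge 2$ the cube $B_{x_3}(x)$ lies in $\{y_3>0\}\subset\Omega$, where $p$ is harmonic and the mean value property $p(x)=\dashint_{B_{x_3}(x)}p$ holds. I would transfer an $L^2$ pressure oscillation bound into a pointwise bound at $x$ through this mean value identity combined with Cauchy--Schwarz, but the naive attempt bounding $\dashint_{B_{x_3}(x)}|p-\bar p|^2$ by $\dashint_{\text{big set}}|p-\bar p|^2$ incurs a volume ratio $|B_R|/|B_{x_3}(x)|\sim (R/x_3)^3$ that ruins scale invariance when $x_3\ll R$. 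The remedy is to split on the altitude and to center the estimates at $\hat x$, the projection of $x$ on $\{x_3=0\}$, for the problematic regime:

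\emph{Macroscopic regime $x_3\ge R/16$.} The volume ratio is bounded by an absolute constant. Pick a John subdomain $\Omega_\ast$ furnished by Definition~\ref{def.John2} satisfying $B_{R/2,+}(z)\subset\Omega_\ast\subset B_{R,+}(z)$, apply the Bogovskii-type pressure estimate of Appendix~\ref{appendix.Bogovskii} to obtain $\bigl(\dashint_{\Omega_\ast}|p-\bar p|^2\bigr)^{1/2}\le C\bigl(\dashint_{B_{R,+}(z)}|\nabla u|^2\bigr)^{1/2}$ where $\bar p=\dashint_{\Omega_\ast}p$, then use the mean value property on $B_{x_3}(x)\subset B_{R,+}(z)$ together with $|B_{R,+}|/|B_{x_3}(x)|\le 16^3$ to conclude $|p(x)-\bar p|\le CM$.

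\emph{Mesoscopic regime $2\le x_3<R/16$.} Here $x$ is close to the bumpy boundary. Apply the rescaled large-scale Lipschitz estimate of Theorem~\ref{theo.lip.nonlinear} (with $F^\ep=0$, so the nonlinear power of $M$ collapses to $M$) centered at $\hat x$ at outer scale $R/4$: for $r\in(1,R/8)$,
\[
\bigg(\dashint_{B_{r,+}(\hat x)}|p-c_{\hat x}|^2\bigg)^{1/2}\le C\bigg(\dashint_{B_{R/4,+}(\hat x)}|\nabla u|^2\bigg)^{1/2}\le CM,
\]
where $c_{\hat x}$ is a suitable pressure average at scale $R/8$ around $\hat x$ and the last step uses $B_{R/4,+}(\hat x)\subset B_{R,+}(z)$ together with a bounded volume ratio (since $|\hat x-z|_\infty\le R/2$). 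Taking $r=2x_3<R/8$, the containment $B_{x_3}(x)\subset B_{2x_3,+}(\hat x)$ holds with comparable volumes, so the mean value property now yields $|p(x)-c_{\hat x}|\le CM$ with no spurious volume factor.

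To compile the two cases into a single oscillation estimate, the reference constants $\bar p$ (macroscopic) and $c_{\hat x}$ (mesoscopic, depending on $x$) must be compared, which is done by the same rescaled Lipschitz estimate applied at $z$ at scale $R$: it pins the oscillation of $p$ at any mesoscopic scale to $CM$, and in particular controls $|c_{\hat x}-\bar p|$ by $CM$ uniformly in $x$. One then concludes $\sup_{\Omega_{>2}\cap B_{R/2}(z)}|p-\bar p|\le CM$, whence the oscillation bound. The main obstacle is precisely the reconciliation of reference constants in the mesoscopic regime across varying base points $\hat x$; without exploiting the scale-invariant pressure oscillation output of Theorem~\ref{theo.lip.nonlinear}, the straightforward Bogovskii-plus-harmonicity approach produces a nonremovable factor of $(R/x_3)^{3/2}$ that prevents an $R$-independent constant.
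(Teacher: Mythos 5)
Your proposal is essentially the paper's own argument: harmonicity of $p$ and the mean value property at radius comparable to $x_3$, Bogovskii's lemma on John subdomains furnished by Definition \ref{def.John2}, and the linear-Stokes (hence linear-in-$M$) version of the large-scale estimate of Theorem \ref{theo.lip.nonlinear} centered at the projection $\hat x$ to bridge from scale $\sim x_3$ up to scale $R$ — which is precisely how the paper kills the $(R/x_3)^{3/2}$ volume loss before reconciling reference constants and finishing with a covering argument. Two small repairs are needed to make your write-up literally correct. First, the mean value property holds over Euclidean balls, not cubes, so $p(x)=\dashint_{B_{x_3}(x)}p$ should be replaced by the identity over the inscribed ball $\mathcal{B}_{x_3}(x)$ (the paper uses $\mathcal{B}_{x_3/2}(x)$), which still gives $|p(x)-c|\le C\dashint_{B_{2x_3,+}(\hat x)}|p-c|$ by comparable volumes. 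Second, several of your containments fail near the edge of $B_{R/2}(z)$: in the macroscopic regime $B_{x_3}(x)$ need not lie in a John domain $\Omega_\ast\subset B_{R,+}(z)$, and $B_{R/8,+}(\hat x)$ need not lie in $B_{R/2,+}(z)$ when comparing $c_{\hat x}$ with $\bar p$; the paper sidesteps this by treating the region $x_3>R/8$ purely with the interior estimate \eqref{est1.lem.osc.interior}, by first proving the bound with $B_{2R}(z)$ on the right-hand side, and by recovering \eqref{est2.lem.osc.interior} through a final covering/chaining step — your argument needs the same patch, but no new idea.
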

	\begin{proof}
		The interior case (i) is classical and the proof is omitted. Let us prove the boundary case (ii). Since only the case where $R$ is sufficiently large is nontrivial, we assume that $R>32$. For any $x\in\Omega_{>2}\cap B_{R/2}(z)$,	the mean value property of harmonic functions 
		yields
		\begin{equation}\label{eq.p.mvp}
			p(x) = \dashint_{\mathcal{B}_r(x)} p, \qquad\mbox{if}\quad \mathcal{B}_r(x):=\{y|\ |y-x|<r\} \subset\Omega.
		\end{equation}
		Here we assume $r = \frac{x_3}{2} \le \frac{R}{16}$, hence $r>1$. Note that if $x_3 > \frac{R}{8}$, the oscillation can be handled by the interior estimate \eqref{est1.lem.osc.interior}.
		
		Recall that $\hat{x}$ is the projection of $x$ on $\partial\R^3_+$.
		Let $\Omega_{4r}(\hat{x})$ be a John domain given by Definition \ref{def.John2} so that $\Omega \cap B_{4r}(\hat{x}) \subset \Omega_{4r}(\hat{x}) \subset \Omega\cap B_{8r}(\hat{x})$. 
		Clearly $\mathcal{B}_r(x) \subset \Omega_{4r}(\hat{x})$, 
		$B_{R/4}(\hat{x}) \subset B_{3R/4}(z)$ and $B_{R/2}(\hat{x}) \subset B_{R}(z)$.
		By \eqref{eq.p.mvp} and the Bogovskii lemma,
		\begin{equation}\label{est.p-osc}
			\begin{aligned}
				\bigg| p(x) - \dashint_{\Omega_{4r}(\hat{x})} p\bigg| & \le \dashint_{\mathcal{B}_r(x)} |p - \dashint_{\Omega_{4r}(\hat{x})} p| \le C\dashint_{{\Omega}_{4r}(\hat{x})} |p - \dashint_{\Omega_{4r}(\hat{x})} p| \\
				& \le C\bigg( \dashint_{B_{8r}(\hat{x}) \cap \Omega} |\nabla u|^2 \bigg)^{1/2} \le C\bigg( \dashint_{B_{R/2}(\hat{x})\cap \Omega} |\nabla u|^2 \bigg)^{1/2} \\
				& \le C\bigg( \dashint_{B_{R}(z)\cap \Omega} |\nabla u|^2 \bigg)^{1/2},
			\end{aligned}
		\end{equation}
		where we also used the Lipschitz estimate of $u$ in the fourth inequality. Similarly, we have
		\begin{equation*}
			\bigg| \dashint_{B_{4r}(\hat{x})\cap \Omega} p - \dashint_{\Omega_{4r}(\hat{x})} p\bigg|  \le C\bigg( \dashint_{B_{R}(z) \cap \Omega} |\nabla u|^2 \bigg)^{1/2}.
		\end{equation*}
		On the other hand, by the pressure estimate for the Stokes system (an analogue of Theorem \ref{theo.lip.nonlinear} with linear dependence on $M$),
		\begin{equation*}
			\bigg| \dashint_{B_{4r}(\hat{x}) \cap \Omega } p - \dashint_{B_{R/4}(\hat{x}) \cap \Omega} p\bigg| \le C\bigg( \dashint_{B_{R}(z) \cap \Omega} |\nabla u|^2 \bigg)^{1/2}.
		\end{equation*}
		Similar to \eqref{est.p-osc}, because $B_{R/4}(\hat{x}) \subset B_{3R/4}(z)$, we obtain
		\begin{equation*}
			\bigg| \dashint_{B_{R/4}(\hat{x}) \cap \Omega} p -\dashint_{B_{3R/4}(z) \cap \Omega} p \bigg| \le C\bigg( \dashint_{B_{2R}(z) \cap \Omega} |\nabla u|^2 \bigg)^{1/2}.
		\end{equation*}
		Finally, combining the above estimates, we arrive at
		\begin{equation}\label{e.estoscp}
			|p(x) - \dashint_{B_{3R/4}(z)\cap \Omega} p| \le C\bigg( \dashint_{B_{2R}(z) \cap \Omega } |\nabla u|^2 \bigg)^{1/2}.
		\end{equation}
		Since $x\in \Omega_{>2}\cap B_{R/2}(z)$ is arbitrary, this implies the estimate \eqref{est2.lem.osc.interior} with $2R$ in the right-hand side instead of $R$. Then a covering argument using \eqref{est1.lem.osc.interior} yields the desired estimate \eqref{est2.lem.osc.interior}. The proof is complete.
	\end{proof}

	\begin{remark}\label{rmk.oscp.f}
		The interior oscillation estimate holds also for $-\Delta u + \nabla p = f$ and $\nabla \cdot u = 0$ in $B_R$ provided $f\in L^q(B_R)^3$ for some $q>3$. Precisely, by the classical Schauder theory,
		\begin{equation*}
			\osc_{B_{R/2}} p \le C\bigg( \dashint_{B_{R}} |\nabla u|^2 \bigg)^{1/2} + CR \bigg( \dashint_{B_{R}} |f|^q \bigg)^{1/q},
		\end{equation*}
		where $C$ is a universal constant.
	\end{remark}

	\medskip

		Now, we are ready to construct $\Pi(x,y)$ and prove Properties (i)-(iii) of $(G,\Pi)$. For a given $f\in C_0^\infty ( B_R(0)\cap\Omega)^3$ with $R>32$, we consider the Stokes equations \eqref{est.var.def} with $u$ given by \eqref{eq.GreenFormula} and the associated pressure $p\in L^2_{{\rm loc}}(\overline{\Omega})$.
	For $x\in \Omega_{>2}$ such that $|x| \geq 4R$, we set $r=\frac{|x|}{2} \ge 2R$.
	Since $f$ is supported in $B_R(0)\cap\Omega$, we have $-\Delta u + \nabla p = 0$ in $B_{r}(x)\cap\Omega$. Moreover, by an energy estimate using \eqref{est.var.def'}, we have
	\begin{equation}\label{e.enestnablau}
		\bigg( \dashint_{B_r(x)\cap\Omega} |\nabla u|^2 \bigg)^{1/2}
		\le \frac{CR}{r^{3/2}} \| {f}\|_{L^2(B_R(0)\cap\Omega)}.
	\end{equation}
	Therefore, Lemma \ref{lem.osc.interior} and a covering argument imply the oscillation estimate of $p$, namely,
	\begin{equation*}
		\osc_{\Omega_{>2} \cap B_{2r}(0)\setminus \overline{B_r(0)} } p 
		\le \frac{C R}{r^{3/2}} \| {f}\|_{L^2(B_R(0)\cap\Omega)}.
	\end{equation*}
	This further implies
	\begin{equation}\label{est.oscp}
		\begin{aligned}
			\osc_{\Omega_{>2} \setminus \overline{B_r(0)} } p 
			& \le \sum_{k = 1}^\infty \osc_{\Omega_{>2} \cap B_{2^k r}(0)\setminus \overline{B_{2^{k-1} r}(0)} } p \\
			& \le \sum_{k = 1}^\infty \frac{C R}{{(2^{k-1} r)}^{3/2}} 
			\| {f}\|_{L^2(B_R(0)\cap\Omega)}\\
			& \le \frac{C R}{r^{3/2}} \| {f}\|_{L^2(B_R(0)\cap\Omega)}.
		\end{aligned}
	\end{equation}
	This shows that $p(x)$ converges to a constant as $x\to \infty$. By the assumption that $p(x) \to 0$ as $x_3 \to \infty$, we know the limiting constant is zero. Hence, in view of \eqref{est.oscp}, we derive
	\begin{equation}\label{est.px.ptws1}
		|p(x)| \le \frac{C R}{|x|^{3/2}} \| {f}\|_{L^2(B_R(0)\cap\Omega)},
	\end{equation}
	for all $x\in \Omega_{>2}$ satisfying $|x|\geq 4R$. 
	Moreover, by arguing in a similar manner as in Step 3 in the proof of Theorem \ref{prop.BL1j} and using \eqref{est.px.ptws1} instead of \eqref{sing2.proof.prop.BL1j}, we find that for sufficiently large $R'\ge R$,
	\begin{align}\label{est.pressure.recovered}
		\|p\|_{L^2(B_{R'}(0)\cap\Omega)}
		\le C(R') \|f\|_{L^2(B_R(0)\cap\Omega)}
	\end{align}
	with a constant $C(R')$ depending on $R'$.

	On the other hand, for $x$ with either $x\in\Omega_{\le 2}$ or $|x| \ge 4R$, we can connect $x$ to another point $\tilde{x}\in\Omega_{>2}$ with $|\tilde{x}|\ge 4R$ by a chain of a finite number of cubes $\{ B_{r_i}(z_i)~|~i =1,2,\cdots, N \}$ such that $B_{2r_i}(z_i) \subset \Omega$. Using 
	Remark \ref{rmk.oscp.f} on each $B_{r_i}(z_i)$, as well as \eqref{est.px.ptws1} applied to $\tilde{x}$, we see that for any $x,R$
	\begin{equation}\label{est.px.ptws2}
		|p(x)| \le C_q(x,R) \| {f}\|_{L^q(B_R(0)\cap \Omega )} 
	\end{equation}
	provided $q>3$, where $C_q(x,R)$ is a constant depending only on $q,x$, and $R$. 
		
	From \eqref{est.px.ptws1} and \eqref{est.px.ptws2}, for each fixed $x\in\Omega$, the map $f \mapsto p(x)$ is a bounded linear functional on $L^q(B_R(0)\cap \Omega)^3$. By the Riesz representation theorem, there is a unique function $\Pi(x,\cdot) \in L^{q'}(B_R(0)\cap \Omega)^3$ with $q'\in [1,\frac32)$, so that
	\begin{equation*}
		p(x) = \int_{B_R(0) \cap \Omega} \Pi(x,y) \cdot f(y) \dd y.
	\end{equation*}
	Note that the above $\Pi(x,\cdot)$ is only defined in $B_R(0)\cap\Omega$ for a fixed $x$. As $x$ and $R$ vary, we can obtain a family of such functions, which can be glued together by the uniqueness of $p$. Thus we have constructed a function $\Pi(x,y)$ defined in the entire $\Omega\times\Omega$ satisfying $\Pi(x,\cdot) \in L^{q'}_{\rm loc}(\overline{\Omega})^{3}
	$. To investigate the local integrability of $\Pi(\cdot,\cdot)$, let us fix $R>1$ and define a functional $S(f,g)$ for smooth $f,g$ supported in $B_R(0)\cap\Omega$ by 
	\begin{align*}
		S(f,g)
		&=\int_{B_R(0) \cap \Omega} p(x) g(x) \dd x\\
		&=\int_{B_R(0) \cap \Omega} \int_{B_R(0) \cap \Omega}\big(\Pi(x,y)\cdot f(y)\big) g(x) \dd y \dd x.
	\end{align*}
	From \eqref{est.pressure.recovered}, by taking a sufficiently large $R'\ge R$,
	we see that
	\begin{align*}
		|S(f,g)|
		&\le \|p\|_{L^2(B_R(0)\cap\Omega)}\|g\|_{L^2(B_R(0)\cap\Omega)}\\
		&\le 
		{C(R')}
		\|f\|_{L^2(B_R(0)\cap\Omega)} \|g\|_{L^2(B_R(0)\cap\Omega)}.
	\end{align*}
	Hence $S$ is a bounded functional on $L^2(B_{R}(0)\cap\Omega)^3\times L^2(B_{R}(0)\cap\Omega)$, which implies that
	\begin{equation}\label{est.Pi0.product}
		\int_{B_R(0) \cap \Omega} \Pi(x,\cdot) g(x) \dd x \quad \text{is in} \quad L^2(B_{R}(0)\cap\Omega)^3.
	\end{equation}

	Now we can prove that $(G,\Pi)$ satisfies Properties (i)-(iii). Property (iii) is obvious from the arguments so far. Property (ii) follows from Property (iii) combined with the Lebesgue differentiation theorem. Here we use the fact that, for all $\phi\in C^\infty_0(\Omega)^3$, the function of $y$
	\begin{equation*}
		\int_{\Omega} \Pi(x,y)(\nabla\cdot\phi)(x) \dd x
	\end{equation*}
	belongs to $L^2_{{\rm loc}}(\overline{\Omega})^3$ because of \eqref{est.Pi0.product}. The integrability of $\Pi(\cdot,y)$ in Property (i) follows from the weak form \eqref{eq.def.GPi}.
	Consequently, we have constructed the Green function $(G,\Pi)$ meeting Properties (i)-(iii).

	\medskip
	
	We should point out that in the above argument for existence, the estimate, for example of $\Pi(x, \cdot)$, is very rough, especially when $x$ is close to the boundary $\partial\Omega$. This is because the large-scale regularity of $\Pi(x,\cdot)$ is not taken into consideration. In the following, we obtain some more careful estimates of $(G,\Pi)$ by studying the equation \eqref{eq.def.GPi}.

	\subsection{Large-scale estimates of the velocity component}
	For convenience, let $G(x,y)$ and $\Pi(x,y)$ be zero-extended for both $x$ and $y$. Recall the symmetry $G(x,y) = G^t(y,x)$, where $G^t$ is the transpose of $G$. 
	Thus by definition, $G(x,y) = 0$ if either $x\in \partial \Omega$ or $y\in \partial \Omega$ and $x\neq y$. 
	Denote by $\delta(x)$ the distance from $x$ to $\partial\Omega$. 
	
	Notice that $\nabla_xG$ denotes the derivative of $G$ with respect to the first variable, i.e. 
	$$(\nabla_xG)(x,y)=(\nabla G(\cdot,y))(x),\qquad\mbox{for all}\quad (x,y)\in\Omega.$$
	Similarly, $\nabla_yG$ denotes the derivative of $G$ with respect to the second variable. The following estimates for the derivatives of $G$ are crucial.

	\begin{proposition}\label{prop.DG}
		Let $L\in(0,\infty)$ and $\Omega$ be a bumpy John domain with constant $L$ according to Definition \ref{def.John2}. The velocity component $G(x,y)$ satisfies{\rm :}
		\begin{enumerate}[label=(\roman*)]
			\item For $x_3>2$ and $y_3>2$,
			\begin{equation}\label{est.DxG.ptwise}
				|\nabla_x G(x,y)| \le C \min \bigg\{ \frac{1}{|x-y|^2}, \frac{\delta(y)}{|x-y|^3} \bigg\},
			\end{equation}
			and
			\begin{equation}\label{est.DyG.ptwise}
				|\nabla_y G(x,y)| \le C \min \bigg\{ \frac{1}{|x-y|^2}, \frac{\delta(x)}{|x-y|^3} \bigg\}.
			\end{equation}
			\item For $x_3>2$ and $y_3 < 2$ with $|x-y|>{32}$,
			\begin{equation}\label{est.DG.2-1}
				\bigg( \int_{B_{1}(y)} |\nabla_y G(x,z)|^2 \dd z \bigg)^{1/2} \le C \min \bigg\{ \frac{1}{|x-y|^2}, \frac{\delta(x)}{|x-y|^3} \bigg\},
			\end{equation} 
			and
			\begin{equation}\label{est.DG.2-2}
				\bigg( \int_{B_{1}(y)} |\nabla_x G(x,z)|^2 \dd z \bigg)^{1/2} \le  \frac{C}{|x-y|^3}.
			\end{equation}
			\item For $x_3<2$ and $y_3 >2$ with $|x-y|>{32}$,
			\begin{equation}\label{est.DG.3-1}
				\bigg( \int_{B_{1}(x)} |\nabla_x G(z,y)|^2 \dd z \bigg)^{1/2} \le C \min \bigg\{ \frac{1}{|x-y|^2}, \frac{\delta(y)}{|x-y|^3} \bigg\},
			\end{equation}
			and
			\begin{equation}\label{est.DG.3-2}
				\bigg( \int_{B_{1}(x)} |\nabla_y G(z,y)|^2 \dd z \bigg)^{1/2} \le \frac{C}{|x-y|^3}.
			\end{equation}
		\end{enumerate}
		Here $C$ depends on $L$.
	\end{proposition}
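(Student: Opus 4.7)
The plan is to follow the classical Avellaneda--Lin strategy developed in \cite{AL87,AL91}, adapted to the bumpy John half-space setting by using the large-scale Lipschitz estimate of Theorem \ref{theo.lip.nonlinear} (in its linear Stokes version, with linear dependence on $M$) in place of the uniform boundary Lipschitz estimate available in smoother settings. Two regularity inputs are used: classical interior Lipschitz estimates for Stokes, which apply in any ball $B_{r}(z)\subset\Omega$ and thus in particular for centers $z$ with $z_3>1$; and the large-scale Lipschitz estimate up to $\partial\Omega$, which at scale $\ge 1$ is insensitive to the microstructure of the boundary. Throughout, the symmetry $G(x,y)=G^t(y,x)$ lets one transfer estimates between the $x$ and $y$ variables: $G(x,\cdot)$ (resp.\ $G(\cdot,y)$) solves a homogeneous Stokes system away from the singularity and vanishes on $\partial\Omega$.

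The first step is to establish the pointwise a priori bound
\begin{equation}\label{e.plan.Gbound}
|G(x,y)|\le \frac{C}{|x-y|},\qquad x,y\in\Omega,\ x\ne y,
\end{equation}
as well as its refinement $|G(x,y)|\le C\delta(y)/|x-y|^2$ when $\delta(y)\le |x-y|/4$, and the symmetric refinement with $\delta(x)$. I would obtain \eqref{e.plan.Gbound} by a duality argument: for $f\in C^\infty_0(B_r(x_0))^3$ and the associated solution $(u,p)$ of \eqref{est.var.def'}, a Caccioppoli-plus-Sobolev iteration on dyadic annuli centered at $y$, together with an appeal to the large-scale Lipschitz estimate whenever the annulus meets the boundary layer, yields a pointwise bound $|u(y)|\le C r^{1/2}\|f\|_{L^2}/|x_0-y|$, from which $|G(x,y)|\le C/|x-y|$ follows. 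The boundary refinement with $\delta(\cdot)$ is obtained by writing $G(x,y)-G(x,\hat y)$, where $\hat y$ is a nearest boundary point to $y$, and applying either interior or large-scale Lipschitz in the $y$ variable depending on whether $\delta(y)\gtrsim 1$ or $\delta(y)\lesssim 1$.

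The second step treats part (i). Since $x_3,y_3>2$ and $G(\cdot,y)$ solves the homogeneous Stokes system on $B_{|x-y|/4}(x)$ (with no boundary contact in this ball when $|x-y|$ is not too large; otherwise we use the large-scale Lipschitz estimate instead), the interior gradient bound gives $|\nabla_x G(x,y)|\le C|x-y|^{-1}\sup_{B_{|x-y|/2}(x)}|G(\cdot,y)|\le C|x-y|^{-2}$. Feeding in the sharper $|G(x,y)|\le C\delta(y)/|x-y|^2$ bound of Step 1 in the same way produces the improved $\delta(y)/|x-y|^3$ estimate, and the corresponding bound on $\nabla_y G$ follows from symmetry. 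For parts (ii) and (iii) one of the two points lies in the boundary layer $\{z_3<2\}$, so pointwise gradient control at that point is impossible; the statement therefore measures the gradient in the $L^2$ sense on $B_1$ around the boundary-layer point. This is precisely the mesoscopic quantity controlled by Theorem \ref{theo.lip.nonlinear}: applied to $G^t(\cdot,x)$ as a Stokes solution in the $y$ variable vanishing on $\partial\Omega$, it yields $\big(\dashint_{B_1(y)}|\nabla_y G(x,z)|^2\,\mathrm{d}z\big)^{1/2}\le C\sup_{B_2(y)\cap\Omega}|G(x,\cdot)|$, which is then combined with the refined pointwise decay of Step 1 at scales $\ge 1$. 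Part (iii) is symmetric.

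The main obstacle is Step 1. In contrast to the flat half-space or a smooth domain, here one has no pointwise boundary regularity whatsoever for $G(x,\cdot)$ near $\partial\Omega$, so the iteration that produces the pointwise $|x-y|^{-1}$ decay must at every step convert mesoscopic $L^2$-averages into pointwise control for the interior variable, using the large-scale Lipschitz estimate in the roles played classically by boundary Lipschitz. Once \eqref{e.plan.Gbound} and its $\delta$-refinement are in place, parts (i)--(iii) follow essentially mechanically from interior Lipschitz regularity, Theorem \ref{theo.lip.nonlinear}, and the symmetry of the Green function.
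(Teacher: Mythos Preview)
Your strategy has the right ingredients (duality, interior Lipschitz, large-scale Lipschitz, symmetry), but Step 1 as stated is too strong and does not go through in a bumpy John half-space. You claim the pointwise bound $|G(x,y)|\le C/|x-y|$ for \emph{all} $x,y\in\Omega$. When one of the points, say $y$, lies in the bumpy channel $\Omega\cap\{-1<z_3\le 0\}$, there is no small-scale boundary regularity available: the function $z\mapsto G(x,z)$ is only known to be in $H^1_{\rm loc}$ there, and Theorem \ref{theo.lip.nonlinear} controls $L^2$-averages of $\nabla_y G(x,\cdot)$ at scales $\ge 1$, not $L^\infty$. Your duality argument for Step~1 requires $|u(y)|$ pointwise at an arbitrary $y$, and a ``Caccioppoli-plus-Sobolev iteration'' near a rough boundary point cannot produce that. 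The same issue reappears in Step~2 when you invoke $\sup_{B_2(y)\cap\Omega}|G(x,\cdot)|$: this supremum is neither well-defined nor controlled when $B_2(y)$ meets the bumpy layer, and the inequality you write is not what Theorem \ref{theo.lip.nonlinear} yields (it gives $L^2$-gradients at a small scale in terms of $L^2$-gradients at a large scale).

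The paper's proof avoids this obstruction by running duality in the other direction. For part (ii) (to which (i) and (iii) reduce by symmetry), one first uses the large-scale Lipschitz estimate in the $y$-variable to pass from $\big(\dashint_{B_1(y)}|\nabla_y G(x,z)|^2\,\dd z\big)^{1/2}$ to $\big(\dashint_{B_{R/2}(\hat{y})}|\nabla_y G(x,z)|^2\,\dd z\big)^{1/2}$ with $R\simeq|x-y|$. Then one tests with an \emph{arbitrary} $F\in L^2(B_{R/2}(\hat{y})\cap\Omega)^{3\times 3}$, solves $-\Delta u+\nabla p=\nabla\!\cdot F$ in $\Omega$, uses the representation $u(x)=-\int \nabla_y G(x,z)\,F(z)\,\dd z$, and estimates $|u(x)|$ and $|\nabla u(x)|$ \emph{pointwise at the interior point $x$} (where $x_3>2$ makes interior regularity available, combined with large-scale Lipschitz near $\hat{x}$ when $x_3\ll R$). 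The energy bound $\|\nabla u\|_{L^2(\Omega)}\le C\|F\|_{L^2}$ then yields $|u(x)|\lesssim \min\{R^{-1/2},\,x_3 R^{-3/2}\}\|F\|_{L^2}$ and $|\nabla u(x)|\lesssim R^{-3/2}\|F\|_{L^2}$, from which \eqref{est.DG.2-1}--\eqref{est.DG.2-2} follow by duality. No pointwise control of $G$ or $u$ is ever needed at a boundary-layer point. Your outline becomes correct if you drop the universal pointwise $G$-bound and reroute the duality so that the pointwise evaluation always lands on the variable with $z_3>2$.
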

	
		Notice that $G$ and $\Pi$ are zero-extended outside $\Omega$. Therefore, the integrals above make sense even in the case when $B_1(x)$ or $B_1(y)$ intersect $\Omega^{{\rm c}}$. For the estimates concerned with the oscillation of the pressure, on the contrary, we make precise when the balls intersect the boundary; see for instance Lemma \ref{lem.osc.interior}.
	\begin{proof}[Proof of Proposition \ref{prop.DG}]
		Note that (ii) and (iii) are symmetric. While (i) is the interior estimate whose proof is similar to (ii) and (iii).
		Hence, we will only prove (ii). Since we are working on cubes, it is more convenient to define $R = |x-y|_\infty:= \max_{1\le i\le 3} |x_i - y_i|$, which is comparable to the usual distance $|x-y|$. Recall that $(G(x,\cdot), \Pi(x,\cdot))$ is a weak solution of Stoke system in $\Omega\setminus \{ x\}$. To show \eqref{est.DG.2-1}, we begin with the interior and boundary Lipschitz estimates for $G(x,\cdot)$,
		\begin{equation}\label{est.DGB12BR}
			\begin{aligned}
				\bigg( \dashint_{B_1(y)}|\nabla_y G(x,z)|^2 {\dd z} \bigg)^{1/2} & \le C\bigg( \dashint_{B_3(\hat{y})} |\nabla_y G(x,z)|^2 {\dd z} \bigg)^{1/2} \\
				& \le C \bigg( \dashint_{B_{R/2}(\hat{y})} |\nabla_y G(x,z)|^2 {\dd z} \bigg)^{1/2}.
			\end{aligned}
		\end{equation}
	where $\hat{y}$ is the projection of $y$ on $\{ y_3 = 0 \}$.
		
		 
	To proceed, let $F\in L^2(B_{R/2}(\hat{y}) \cap \Omega)^{3\times 3}$ (zero-extended to the whole of $\Omega$). Let $(u,p)$ be the weak solution of
	\begin{equation}\label{eq.uF}
		\left\{
		\begin{array}{ll}
			-\Delta u+\nabla p= \nabla\cdot F & \text{in } \Omega \\
			\nabla\cdot u=0 &\text{in }\Omega \\
			u = 0 &\text{on }\partial\Omega.
		\end{array}
		\right.
	\end{equation}
	Recall from \eqref{eq.GreenFormula} that
	\begin{equation}\label{eq.Green.DF}
	u(x) = \int_{\Omega} G(x,y) \nabla\cdot F(y) {\dd y} = -\int_{\Omega} \nabla_y G(x,y) F(y) {\dd y}.
	\end{equation}
	The energy estimate implies
	\begin{equation}\label{est.energy.L2}
		\int_{\Omega} |\nabla u|^2 \le C\int_{\Omega} |F|^2.
	\end{equation}

	Next, we estimate $|u(x)|$ and $|\nabla u(x)|$. Let $r = x_3$, which is comparable to $\delta(x)$ since $x_3 > 2$. We consider two cases: $r< R/10$ or $r> R/10$.  If $r>R/10$,
	since $F$ is supported in $B_{R/2}(\hat{y}) \cap \Omega$ which does not intersect with $B_{R/10}(x)$, we can apply the interior Lipschitz estimate to $u$ and \eqref{est.energy.L2}
	\begin{equation}\label{est.GDu1}
		|\nabla u(x)| \le C\bigg( \dashint_{B_{R/10}(x)} |\nabla u|^2 \bigg)^{1/2} \le CR^{-3/2} \bigg( \int_{\Omega} |F|^2 \bigg)^{1/2}.
	\end{equation}
	On the other hand, we apply the interior estimate, Sobolev embedding and \eqref{est.energy.L2} to obtain
\begin{equation}
	\begin{aligned}
	|u(x)| & \le C\bigg( \dashint_{B_{R/10}(x)} |u|^6 \bigg)^{1/6} \le CR^{-1/2} \bigg( \int_{\Omega} |\nabla u|^2 \bigg)^{1/2} \\
	&  \le C R^{-1/2} \bigg( \int_{\Omega} |F|^2 \bigg)^{1/2}.
	\end{aligned}
\end{equation}

If $r< R/10$, by the interior and boundary Lipschitz estimate
\begin{equation}
	\begin{aligned}
	|\nabla u(x)| & \le C\bigg( \dashint_{B_{r}(x)} |\nabla u|^2 \bigg)^{1/2} \le C\bigg( \dashint_{B_{2r}(\hat{x})} |\nabla u|^2 \bigg)^{1/2} \\
	& \le C\bigg( \dashint_{B_{R/5}(\hat{x})} |\nabla u|^2  \bigg)^{1/2} \le CR^{-3/2} \bigg( \int_{\Omega} |F|^2 \bigg)^{1/2}.
\end{aligned}
\end{equation}
Moreover, using the Poincar\'{e} inequality and the boundary Lipschitz estimate, we have
\begin{equation}\label{est.Gu4}
	\begin{aligned}
	|u(x)| & \le C\bigg( \dashint_{B_{r}(x)} |u|^2 \bigg)^{1/2} \le C\bigg( \dashint_{B_{2r}(\hat{x})} |u|^2 \bigg)^{1/2} \\
	& \le Cr \bigg( \dashint_{B_{2r}(\hat{x})} |\nabla u|^2 \bigg)^{1/2} \le Cr\bigg( \dashint_{B_{R/5}(\hat{x})} |\nabla u|^2 \bigg)^{1/2} \\
	& \le Cr R^{-3/2} \bigg( \int_{\Omega} |F|^2 \bigg)^{1/2}.
\end{aligned}
\end{equation}

From the estimates \eqref{est.GDu1} - \eqref{est.Gu4}, \eqref{eq.Green.DF} and duality, we see that
\begin{equation}\label{est.DDGBR}
	\bigg( \dashint_{B_{R/2}(\hat{y}) \cap \Omega} |\nabla_x \nabla_y G(x,z)|^2 {\dd z} \bigg)^{1/2} \le \frac{C}{R^3},
\end{equation}
and
\begin{equation}\label{est.DGBR}
	\bigg( \dashint_{B_{R/2}(\hat{y}) \cap \Omega} |\nabla_y G(x,z)|^2 {\dd z} \bigg)^{1/2} \le \frac{Cr }{R^3}.
\end{equation}
Note that \eqref{est.DGB12BR} and \eqref{est.DGBR} combined lead to \eqref{est.DG.2-1}. To see \eqref{est.DG.2-2}, notice that $(\nabla_x G(x,y), \nabla_x \Pi(x, y))$ is a weak solution in $y \in \Omega\setminus \{x\}$. Thus, we may apply \eqref{est.DDGBR},  Poincar\'{e} inequality and boundary Lipschitz estimate to obtain
\begin{equation*}
	\begin{aligned}
	\bigg( \dashint_{B_{1}(y)} |\nabla_x G(x,z)|^2 \dd z \bigg)^{1/2} &\le C \bigg( \dashint_{{B_{3}(\hat{y})}} |\nabla_x G(x,z)|^2 \dd z \bigg)^{1/2}  \\
	& \le C \bigg( \dashint_{{B_{3}(\hat{y})}} |\nabla_y \nabla_x G(x,z)|^2 \dd z \bigg)^{1/2} \\
	& \le C \bigg( \dashint_{B_{R/2}(\hat{y})} |\nabla_y \nabla_x G(x,z)|^2 \dd z \bigg)^{1/2} \le 
	\frac{C}{R^3}.
\end{aligned}
\end{equation*}
The proof of (ii) thus is complete.
	\end{proof}
	
		Analogously, we can also show the estimates for $G$ itself. The proof is left to the reader.
		\begin{proposition}\label{prop.Green.G}
		Let $L\in(0,\infty)$ and $\Omega$ be a bumpy John domain with constant $L$ according to Definition \ref{def.John2}. The velocity component $G(x,y)$ satisfies:
		\begin{enumerate}[label=(\roman*)] 
			\item For $x_3>2$ and ${y_3}>2$, 
			\begin{equation}\label{est.G.ptws}
				|G(x,y)| \le C \min \bigg\{ \frac{1}{|x-y|}, \frac{\delta(x)}{|x-y|^2}, \frac{\delta(y)}{|x-y|^2}, \frac{\delta(x){\delta(y)}}{|x-y|^3} \bigg\}.
			\end{equation} 
			\item For $x_3 > 2$ and $|x-y|>{32}$,
			\begin{equation}\label{est.G.largescale}
				\bigg( \int_{B_{1}(y)} |G(x,z)|^2 \dd z \bigg)^{1/2} \le C \min \bigg\{ \frac{1}{|x-y|}, \frac{\delta(x)}{|x-y|^2}, \frac{\delta(y)+1}{|x-y|^2}, \frac{\delta(x)(\delta(y)+1)}{|x-y|^3} \bigg\}.
			\end{equation}
			\item For $y_3 > 2$ and $|x-y|>{32}$,
			\begin{equation}\label{est.G.largescale2}
				\bigg( \int_{B_{1}(x)} |G(z,y)|^2 \dd z \bigg)^{1/2} \le C \min \bigg\{ \frac{1}{|x-y|}, \frac{\delta(y)}{|x-y|^2}, \frac{\delta(x)+1}{|x-y|^2}, \frac{\delta(y)(\delta(x)+1)}{|x-y|^3} \bigg\}.
			\end{equation}
		\end{enumerate}
		Here $C$ depends on $L$.
	\end{proposition}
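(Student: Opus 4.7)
The plan is to mirror the dual-representation strategy used in the proof of Proposition \ref{prop.DG}, this time extracting bounds on $G$ itself rather than its derivatives. For each statement, I would introduce an auxiliary Stokes problem
\begin{equation*}
-\Delta v+\nabla \pi=g\quad\text{in }\Omega,\qquad \nabla\cdot v=0,\qquad v|_{\partial\Omega}=0,
\end{equation*}
for a test source $g$ supported in a small ball near one of the arguments. By the representation formula \eqref{eq.GreenFormula}, $v(x)=\int_\Omega G(x,z)g(z)\,\dd z$. An $L^\infty$ bound of $v$ at $x$ in terms of $\|g\|_{L^1}$ translates, by taking approximations to the Dirac mass, to a pointwise bound of $G(x,y)$; an $L^\infty$ bound at $x$ in terms of $\|g\|_{L^2}$ translates by duality to an $L^2$ bound of $G(x,\cdot)$ on $B_1(y)$. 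The required bounds on $v$ come from the global energy estimate $\|\nabla v\|_{L^2(\Omega)}\le C\|g\|_{H^{-1}}$, the Sobolev embedding, and the interior and large-scale boundary Lipschitz estimate of Theorem \ref{theo.lip.nonlinear} applied to $v$ away from $\mathrm{supp}\,g$.

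For part (i), fix $x,y$ with $x_3,y_3>2$ and set $R=|x-y|$. The base estimate $|G(x,y)|\le C/R$ would be obtained by taking $g$ to be an approximate point source at $y$, and controlling $|v(x)|$ by combining the energy-Sobolev bound $\|v\|_{L^6(\Omega)}\le C\|g\|_{L^{6/5}}$ with a dyadic-annulus iteration of the interior Lipschitz estimate on $v$, which upgrades the $L^6$ decay of $v$ away from the source to a pointwise $1/R$ decay; sending the width of $g$ to zero yields the claim. To improve this to $|G(x,y)|\le C\delta(y)/R^2$, I would exploit the fact that $G(x,\cdot)$ vanishes on $\partial\Omega$ (by the symmetry $G(x,\cdot)=G^t(\cdot,x)$ and the no-slip condition in the second argument), and integrate $\nabla_y G(x,\cdot)$ along a path of length $\sim\delta(y)$ from $y$ to a nearest boundary point $y_b$, using the pointwise bound $|\nabla_y G(x,z)|\le C/R^2$ of Proposition \ref{prop.DG}(i). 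The symmetric estimate $|G(x,y)|\le C\delta(x)/R^2$ follows by transposition, and the product bound $|G(x,y)|\le C\delta(x)\delta(y)/R^3$ comes from applying the boundary-vanishing argument a second time, now using the improved bound $|\nabla_y G(x,z)|\le C\delta(x)/R^3$ from Proposition \ref{prop.DG}(i).

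For parts (ii) and (iii), pointwise information near a rough boundary point may fail, so I would instead estimate the $L^2$ average of $G$ on the unit ball. Taking $g$ supported in $B_1(y)$ (respectively $B_1(x)$) with $\|g\|_{L^2}=1$, the dual formulation reduces the task to bounding $|v(x)|$ (respectively $|v(y)|$). Exactly as in the proof of Proposition \ref{prop.DG}(ii)-(iii), I would split into the cases $\delta(x)\ge R/10$ (pure interior Lipschitz at $x$) and $\delta(x)<R/10$ (large-scale boundary Lipschitz of Theorem \ref{theo.lip.nonlinear} on the macroscopic ball $B_{R/2}(\hat{x})$, combined with the Poincar\'e inequality to harvest the extra factor $\delta(x)$). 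The factor $\delta(y)+1$ in \eqref{est.G.largescale} reflects the fact that when $y$ is close to $\partial\Omega$, the ball $B_1(y)$ straddles the rough boundary and one can only exploit a vanishing factor of order $\delta(y)+1$ at the level of the unit scale. The main obstacle will be the sharp decay rate $1/|x-y|$ for the auxiliary velocity $v$ far from its source: energy and Sobolev estimates alone are too crude, and this is precisely where the large-scale Lipschitz estimate in John domains (Theorem \ref{theo.lip.nonlinear}) and its pressure companion, together with the Bogovskii machinery of Appendix \ref{appendix.Bogovskii}, are essential to convert global integrability into the correct scale-by-scale pointwise decay.
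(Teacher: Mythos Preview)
Your overall strategy is correct and matches what the paper intends: the paper itself says the proof is ``analogous'' to that of Proposition~\ref{prop.DG} and leaves it to the reader, and your dual-representation scheme with auxiliary Stokes problems, energy estimates, and interior/large-scale Lipschitz estimates is exactly that analogy.

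There is, however, one genuine gap in your handling of the $\delta(y)$ (and $\delta(x)$) factors in part~(i). You propose to integrate $\nabla_y G(x,\cdot)$ along a path from $y$ to a nearest boundary point $y_b\in\partial\Omega$, invoking the pointwise bound $|\nabla_y G(x,z)|\le C/R^2$ from Proposition~\ref{prop.DG}(i). But that pointwise bound is only stated for $z_3>2$, and the path necessarily enters the rough strip $\{z_3\le 2\}$, where no pointwise control on $\nabla_y G$ is available (the boundary can have cusps and fractals). So the line integral cannot be closed as written. The fix, which is exactly what the paper does in the proof of Proposition~\ref{prop.DG} (see the chain \eqref{est.Gu4}), is to avoid path integration altogether and instead argue via averaged quantities: when $\delta(y)<R/4$, apply the interior $L^\infty$ estimate at $y$, enlarge to $B_{2y_3}(\hat y)$, use the Poincar\'e inequality there (exploiting $G(x,\cdot)|_{\partial\Omega}=0$) to extract the factor $y_3\simeq\delta(y)$, and then use the large-scale boundary Lipschitz estimate to pass from $B_{2y_3}(\hat y)$ to $B_{R/2}(\hat y)$, where the $L^2$ gradient bound $C/R^2$ (already derived in the proof of Proposition~\ref{prop.DG}) is available. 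The same correction applies to the product bound $\delta(x)\delta(y)/R^3$.

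Your treatment of parts (ii) and (iii) via duality with $g$ supported in a unit ball and the two-case split on $\delta(x)$ is correct and mirrors the paper's argument for Proposition~\ref{prop.DG}(ii)--(iii).
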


	\subsection{Large-scale estimates of the pressure component} The estimates of $\Pi$ are stated as follows.

	\begin{proposition}\label{prop.pressureest}
		Let $L\in(0,\infty)$ and $\Omega$ be a bumpy John domain with constant $L$ according to Definition \ref{def.John2}. The pressure component $\Pi(x,y)$ satisfies{\rm :}
		\begin{enumerate}[label=(\roman*)]
			\item For $x_3>2$ and $y_3>2$,
			\begin{equation}\label{est.Pi-1}
				|\Pi(x,y)| \le C \min \bigg\{ \frac{1}{|x-y|^2}, \frac{\delta(y)}{|x-y|^3} \bigg\}.
			\end{equation}
			\item For $x_3<2$ and $ y_3>2$ with $|x-y|>{32}$,
			\begin{equation}\label{est.Pi-2}
				\bigg( \int_{B_{1}(x)} |\Pi(z,y)|^2 \dd z \bigg)^{1/2} \le C \min \bigg\{ \frac{1}{|x-y|^2}, \frac{\delta(y)}{|x-y|^3} \bigg\}.
			\end{equation}		
			\item For $x_3>2$ and $y_3<2$ with $|x-y|>{32}$,
			\begin{equation}\label{est.Pi-3}
				\bigg( \int_{B_{1}(y)} |\Pi(x,z)|^2 \dd z \bigg)^{1/2} \le \frac{C}{|x-y|^3}.
			\end{equation}
		\end{enumerate}
		Here $C$ depends on $L$.
	\end{proposition}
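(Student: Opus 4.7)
The plan is to establish all three estimates by combining four tools already available to us: the large-scale bounds for $\nabla G$ collected in Proposition \ref{prop.DG}, the pressure oscillation lemma (Lemma \ref{lem.osc.interior}), Bogovskii's operator in John domains (Lemma \ref{lem.pressure.recovered}), and the representation formula \eqref{eq.GreenFormula}. The normalization that $\Pi(\cdot,y)$ vanishes as $x_3\to\infty$ (established during the construction of the Green function) will anchor every telescoping argument. The three cases will be handled in the order (i), (ii), (iii), with case (iii) using the results of (i) via duality.

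For case (i), fix $y$ with $y_3>2$ and set $R=|x-y|$. The pair $(G(\cdot,y),\Pi(\cdot,y))$ solves a homogeneous Stokes system in $\Omega\setminus\{y\}$, so in any ball avoiding $y$ Lemma \ref{lem.osc.interior} gives an oscillation bound of the form $\osc_B\Pi(\cdot,y)\le C(\dashint_B|\nabla_z G(z,y)|^2\,\mathrm{d}z)^{1/2}$. I would use the interior version when a ball around $x$ fits inside $\Omega_{>2}$ and the boundary version (based at $\hat x$) otherwise, and in both cases I would bound the right-hand side by $C\min\{1/R^2,\delta(y)/R^3\}$ using Proposition \ref{prop.DG}(i) in the interior regime and Proposition \ref{prop.DG}(iii) with a covering argument when integration regions dip below $\{z_3=2\}$. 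Telescoping along a dyadic chain going to infinity in the $x_3$-direction and invoking $\Pi(\cdot,y)\to 0$ at infinity then yields a geometric sum whose total is comparable to its first term, giving the pointwise bound \eqref{est.Pi-1}.

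For case (ii), choose $\hat x$ and let $\Omega_2(\hat x)$ be a bumpy John subdomain with $B_1(x)\cap\Omega\subset\Omega_2(\hat x)\subset B_4(\hat x)\cap\Omega$, supplied by Definition \ref{def.John2}. Applying Bogovskii's lemma to the Stokes system $-\Delta G(\cdot,y)+\nabla\Pi(\cdot,y)=0$ on $\Omega_2(\hat x)$ yields
$\|\Pi(\cdot,y)-\overline{\Pi}\|_{L^2(\Omega_2(\hat x))}\le C\|\nabla_z G(z,y)\|_{L^2(\Omega_2(\hat x))}$, where $\overline\Pi$ is the mean value. The right-hand side is then bounded by $C\min\{1/|x-y|^2,\delta(y)/|x-y|^3\}$ using Proposition \ref{prop.DG}(iii) together with a covering argument. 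To control $|\overline\Pi|$ itself, I would pick a reference point $x^*:=\hat x+4\mathbf{e}_3\in\Omega_2(\hat x)$, apply case (i) to bound $|\Pi(x^*,y)|$, and compare $\overline\Pi$ with $\Pi(x^*,y)$ by writing the difference as a Bogovskii-type integral, arguing as in the proof of Lemma \ref{lem.osc.interior}(ii).

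Case (iii) is the main obstacle and I would approach it by duality. For any test field $F\in C_0^\infty(B_1(y)\cap\Omega)^3$ the representation \eqref{eq.GreenFormula} yields $p(x)=\int_{B_1(y)}\Pi(x,z)\cdot F(z)\,\mathrm{d}z$, where $(u,p)$ solves the Stokes system with source $F$ and the normalization $p\to 0$ at infinity; it therefore suffices to prove $|p(x)|\le C\|F\|_{L^2}/|x-y|^3$. Writing $u(z)=\int G(z,w)F(w)\,\mathrm{d}w$ and applying Cauchy--Schwarz gives
\begin{equation*}
|\nabla_z u(z)|^2\le \|F\|_{L^2(B_1(y))}^2\int_{B_1(y)}|\nabla_z G(z,w)|^2\,\mathrm{d}w.
\end{equation*}
Using the symmetry $G(z,w)=G^t(w,z)$ to rewrite $|\nabla_z G(z,w)|=|\nabla_{2\mathrm{nd}}G(w,z)|$ and then applying the second inequality of Proposition \ref{prop.DG}(iii) with $``x"=y$ and $``y"=z$ yields the pointwise bound $|\nabla u(z)|\le C\|F\|_{L^2}/|z-y|^3$ valid whenever $z_3>2$ and $|z-y|>32$. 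I would then iterate Lemma \ref{lem.osc.interior} along a carefully chosen dyadic chain going to infinity, using the interior version when the underlying ball fits inside $\Omega_{>2}$ and the boundary version based at $\hat x$ otherwise; in the latter regime, the pointwise bound transfers to the $L^2$ average over $\Omega\cap B_R(\hat x)$ via the large-scale Lipschitz estimate of Theorem \ref{theo.lip.nonlinear} applied to $u$ in the source-free region. The delicate point is engineering the chain so that the oscillation contributions form a genuinely geometric series and the sharp rate $R^{-3}$ is not degraded by a spurious logarithmic factor; this is the place where the combinatorics of interior versus boundary balls has to be handled with care.
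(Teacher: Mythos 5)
Your parts (i) and (ii) follow the paper's own proof essentially verbatim: for (i), the oscillation estimate of Lemma \ref{lem.osc.interior} combined with the gradient bounds of Proposition \ref{prop.DG} and a dyadic telescoping to infinity; for (ii), a reference point at height about $3$ handled by (i), Bogovskii's lemma on a John domain around $\hat x$, and the comparison argument from the proof of Lemma \ref{lem.osc.interior}. Two small remarks: the paper does not take the vanishing of $\Pi(\cdot,y)$ at infinity for granted from the construction, but proves it inside the proposition (the oscillation bound shows the limit $\widehat{\Pi}(y)$ exists, and it is identified as zero by testing against compactly supported $f$ and using the normalization $p\to 0$); and your John domain $\Omega_2(\hat x)$ should be taken slightly larger (the paper uses the one between $B_3(\hat x)\cap\Omega$ and $B_6(\hat x)\cap\Omega$) so that it genuinely contains $B_1(x)\cap\Omega$ and an interior reference point of height greater than $2$. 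Both points are cosmetic.

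The genuine gap is in (iii), exactly at the step you yourself flag as delicate. The duality set-up and the far-field bound $|\nabla u(z)|\le C\|F\|_{L^2}/|z-y|^3$ for $z_3>2$, $|z-y|>32$ are correct, and coincide with the paper's use of \eqref{est.DG.2-2} (your detour through the symmetry of $G$ and \eqref{est.DG.3-2} is equivalent). But the mechanism you invoke for the boundary balls does not work as stated: Theorem \ref{theo.lip.nonlinear} controls mesoscopic averages of $\nabla u$ by the average over a larger cube, i.e.\ it transfers information from large scales down to scales $\ge \ep$; it cannot upgrade a pointwise bound that is only available on $\{z_3>2\}$ into a bound for $\big(\dashint_{\Omega\cap B_R(\hat w)}|\nabla u|^2\big)^{1/2}$, because this average includes the layer $\{-1<z_3<2\}$ on which you have no pointwise information, and the right-hand side of the boundary oscillation estimate in Lemma \ref{lem.osc.interior} is precisely such a full average. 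What is missing is an independent estimate of the form $\big(\dashint_{\Omega\cap B_t(\hat w)}|\nabla u|^2\big)^{1/2}\le C\|F\|_{L^2}\,t^{-3}$ for boundary cubes at distance $t$ from the source; this can be obtained, for instance, by repeating the duality/Caccioppoli argument from the proof of Proposition \ref{prop.DG} with the source supported in $B_1(y)$ near the boundary, or by an iteration exploiting that the layer $\{z_3<2\}$ occupies a volume fraction $O(1/t)$ of $B_t(\hat w)$ together with unit-scale estimates and the global energy bound as a starting point. Once such a bound is available, the logarithmic loss you worry about disappears automatically if, as in the paper's argument for (i), you telescope over dyadic annuli centered at $y$ and cover each annulus by boundedly many balls of radius comparable to the annulus radius: each annulus then contributes $C\|F\|_{L^2}(2^k r)^{-3}$ and the series is geometric. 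As written, your chain ``going to infinity'' is not specified enough to rule out the logarithm, and the one tool you name for the boundary regime points in the wrong direction; so (iii) is not yet a complete proof, although the overall strategy is the same as the paper's.
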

	
	\begin{proof}
		We will carry out a delicate oscillation estimate of the pressure originating from \cite{GZ19}. We first consider the estimate (i), i.e., $x_3>2$ and $y_3>2$. Consider a point $w\in \Omega$ with $w\neq y$. Let $t = |w-y|_\infty$. 
		We claim
		\begin{equation}\label{est.oscPi.Bt}
			\osc_{B_{t/4}(w)\cap \Omega_{>2}} \Pi(\cdot,y) 
			\le C\min\{ 
			\frac1{t^2},\frac{\delta(y)}{t^3}
			\}
		\end{equation}
		with $C$ independent of $t,w$, and $y$. The operator $\osc$ is defined in \eqref{def.osc}.

		We prove the above claim by considering different situations.
		If $w\in B_{y_3/2}(y)$, then $t<\frac{y_3}{2}$ and $B_{t}(w) \subset \Omega$. By the interior pressure estimate \eqref{est1.lem.osc.interior} in Lemma \ref{lem.osc.interior} and \eqref{est.DxG.ptwise},
		\begin{equation*}
			\begin{aligned}
				\osc_{B_{t/4}(w)} \Pi(\cdot,y) & \le C\bigg( \dashint_{B_{t/2}(w)} |\nabla_x G(z,y)|^2 \dd z \bigg)^{1/2} 
				\le C\min\{ 
				\frac1{t^2},\frac{\delta(y)}{t^3}
				\}.
			\end{aligned}
		\end{equation*}
		Next, if $w\notin B_{y_3/2}(y)$, we consider two subcases: (a) $|w_3| < \frac{t}{4}$; (b) $|w_3| \ge \frac{t}{4}$. Without loss of generality, we assume $t>\frac{y_3}{2}>20$.
		
		For the case (a), let $\hat{w}$ be the projection of $w$ on $\partial\R^3_+$. 
		Using the interior and boundary pressure estimates in John domains from Lemma \ref{lem.osc.interior} combined with a covering argument,
		\begin{equation}\label{Question1}
			\osc_{B_{t/4}(w)\cap \Omega_{>2}} \Pi(\cdot,y) \le C\bigg( \dashint_{B_{t/2}(\hat{w})} |\nabla_x G(z,y)|^2 \dd z \bigg)^{1/2} 
			\le C\min\{ 
			\frac1{t^2},\frac{\delta(y)}{t^3}
			\},
		\end{equation}
		where we have also used \eqref{est.DxG.ptwise} and \eqref{est.DG.3-1} in the second inequality.

		Now, for the case (b), $B_{t/4}(w)$ may be decomposed as a union of a finite number of cubes $B_{t/16}(w_i)$ with $i = 1,2,\cdots, K_0$ where $K_0$ is an absolute constant, so that $B_{t/8}(w_i)$ is contained in $\Omega_{>2}$. Thus,
		\begin{equation*}
			\begin{aligned}
				\osc_{B_{t/4}(w)} \Pi(\cdot,y) &\le \sum_{i = 1}^{K_0} \osc_{B_{t/16}(w_i)} \Pi(\cdot,y) \\
				&\le C\sum_{i = 1}^{K_0} \bigg( \dashint_{B_{t/8}(w_i)} |\nabla_x G(z,y)|^2 \dd z \bigg)^{1/2}\\
				&
				\le C\min\{ 
				\frac1{t^2},\frac{\delta(y)}{t^3}
				\},
			\end{aligned}
		\end{equation*}
		where we have used \eqref{est.DxG.ptwise}.
		Thus, the claim \eqref{est.oscPi.Bt} is proved.

		Now, by a covering argument, it is not difficult to see from \eqref{est.oscPi.Bt} that, for any $r>0$,
		\begin{equation*}
			\osc_{\Omega_{>2}\cap B_{2r}(y)\setminus\overline{B_r(y)}}
			\Pi(\cdot,y)
			=\osc_{ (B_{2r}(y)\cap \Omega_{>2})\setminus 
			\overline{(B_r(y)\cap \Omega_{>2})}} \Pi(\cdot,y) 
			\le C\min\{ 	
			\frac1{r^2},\frac{\delta(y)}{r^3}
			\}.
		\end{equation*}
		Consequently,
		\begin{equation}\label{est.Piosc>Br}
			\osc_{\Omega_{>2} \setminus \overline{B_r(y)} } \Pi(\cdot,y) \le \sum_{k = 1}^\infty 
			\osc_{\Omega_{>2}\cap B_{2^kr}(y)\setminus\overline{B_{2^{k-1}r}(y)}}
			\le C\min\{ 
			\frac1{r^2},\frac{\delta(y)}{r^3}
			\}.
		\end{equation}
		This means that for each $y$ with $y_3 > 2$, 
		there exists a function $\widehat{\Pi}(y)$ such that
		\begin{equation*}
			\lim_{|x|\to \infty,\, x_3>2} \Pi(x,y) = \widehat{\Pi}(y).
		\end{equation*}
		This convergence is uniform on any compact set in $\{y_3>2\}$. 
		We show that $\widehat{\Pi}(y) \equiv 0$. In fact, if $f\in C_0^\infty(\Omega)^3$, 
		the pressure of the Stokes equations with the source $f$ 
		is given by
		\begin{equation*}
			p(x) = \int_{\Omega} \Pi(x,y)\cdot f(y) \dd y.
		\end{equation*}
		By the definition of the Green function, $p(x) \to 0$ holds as $|x_3| \to \infty$. It follows that
		\begin{equation*}
			\int_{\Omega} \widehat{\Pi}(y) \cdot f(y) \dd y = 0.
		\end{equation*}
		This holds for any 
		$f\in C_{0}^\infty(\Omega_0)^3$ where $\Omega_0$ is a bounded open set whose closure is contained in $\{y_3>2\}$. Thus we have $\widehat{\Pi}(y) \equiv 0$.
		Therefore, \eqref{est.Piosc>Br} implies \eqref{est.Pi-1} 
		since $r$ is arbitrary.

		Next, we prove (ii). Let $x_3<2, y_3>2$, and $r:=|x-y|_\infty$. 
		Without loss of generality, it suffices to assume $r>32$.
		For such $x = (x_1,x_2,x_3)$, we pick $\tilde{x} = (x_1,x_2,3)$. Because $-1<x_3<2$ and ${|x - \tilde{x}|_\infty} < 4$, then $r-4\le {|\tilde{x} - y|_\infty} \le r + 4$ and hence by (i),
		\begin{equation}\label{est.PiAt.tx}
			|\Pi(\tilde{x},y)| \le C \min \bigg\{ \frac{1}{(r-4)^2}, \frac{\delta(y)}{(r-4)^3} \bigg\} \le C \min \bigg\{ \frac{1}{r^2}, \frac{\delta(y)}{r^3} \bigg\}.
		\end{equation}
		Next, we consider
		\begin{equation*}
			|\Pi(\tilde{x},y) - \dashint_{\Omega_{3}(\hat{x})} \Pi(\cdot,y)|,
		\end{equation*}
		where $\hat{x} = (x_1,x_2,0)$ is the projection
		and $\Omega_{3}(\hat{x})$ is the John domain between $\Omega\cap B_3(\hat{x})$ and $\Omega\cap B_6(\hat{x})$ given by Definition \ref{def.John2}. Following the argument 
		in the proof 
		of Lemma \ref{lem.osc.interior}, we can show
		\begin{equation*}
			|\Pi(\tilde{x},y) - \dashint_{\Omega_{3}(\hat{x})} \Pi(\cdot,y)| \le C\bigg( \dashint_{\Omega_{10}(\hat{x})} |\nabla_x G(z,y)|^2\dd z \bigg)^{1/2} \le C \min \bigg\{ \frac{1}{r^2}, \frac{\delta(y)}{r^3} \bigg\},
		\end{equation*}
		where we have used \eqref{est.DG.3-1} 
		as well as \eqref{est.DxG.ptwise} combined with a covering argument 
		and the fact 
		$\text{dist}(\Omega_{10}(\hat{x}), y) \approx r$ 
		in the last inequality. On the other hand, observe that $\Omega\cap B_1(x) \subset \Omega_{3}(\hat{x})$. Hence, by the Bogovskii lemma in $\Omega_{3}(\hat{x})$ and \eqref{est.DG.3-1} with a covering argument,
		\begin{equation}\label{est.PiB1-O3}
			\begin{aligned}
				& \bigg( \dashint_{B_1(x)} |\Pi(z,y) 
				- \dashint_{\Omega_{3}(\hat{x})} \Pi(\cdot,y)|^2 \dd z \bigg)^{1/2} \\
				& \le C\bigg( \dashint_{\Omega_{3}(\hat{x})} |\Pi(z,y) - \dashint_{\Omega_{3}(\hat{x})} \Pi(\cdot,y)|^2 \dd z \bigg)^{1/2} \\
				& \le C \bigg( \dashint_{\Omega_{3}(\hat{x})} |\nabla_x G(z,y)|^2\dd z \bigg)^{1/2}\\
				& \le C \min \bigg\{ \frac{1}{r^2}, \frac{\delta(y)}{r^3} \bigg\}.
			\end{aligned}
		\end{equation}
		Combining the estimates above, we obtain 
		\begin{equation*}
			\bigg( \dashint_{B_1(x)} |\Pi(z,y)|^2 \dd z \bigg)^{1/2} \le C \min \bigg\{ \frac{1}{r^2}, \frac{\delta(y)}{r^3} \bigg\}.
		\end{equation*}
		This proves \eqref{est.Pi-2}.

		Next, we use a duality method to prove (iii). Let $f\in C_0^\infty( B_1(y)\cap\Omega)^3$, zero-extended to $\Omega$, 
		and consider
		\begin{equation}\label{eq.Question2}
			\left\{
			\begin{array}{ll}
				-\Delta u+\nabla p= f\chi_{B_{1}(y)} & \text{in } \Omega \\
				\nabla\cdot u=0 &\text{in }\Omega \\
				u = 0 &\text{on }\partial\Omega.
			\end{array}
			\right.
		\end{equation}
		By definition, the solution $(u,p)$ with finite energy can be represented by \eqref{eq.GreenFormula}.
		Since we already know the estimate of $\nabla_x G$ (namely, \eqref{est.DG.2-2}),
		we have
		\begin{equation*}
			|\nabla u(x)| \le \frac{C}{|x-y|^3} \|f\|_{L^2(B_1(y))}
		\end{equation*}
		for $|x-y|_\infty>4$ with $x_3>2$. 
		By a familiar oscillation argument, we obtain
		\begin{equation}\label{est.Question3}
			|p(x)| = \bigg| \int_{\Omega} \Pi(x,y)\cdot f(y) \dd y \bigg| \le \frac{C}{|x-y|^3} \|f\|_{L^2(B_1(y))}.
		\end{equation}
		This implies \eqref{est.Pi-3}.
		\end{proof}

	\section{Proof of the iteration lemma}\label{app.it}
	\begin{proof}[Proof of Lemma \ref{lem.iteration}]
		The proof is a variation of the one in \cite{Z}. For fixed $r\in(\ep,{\frac1{16}})$, the assumption \eqref{lem.iteration.assump.f} implies 
		\begin{align*}
			\int_{r}^{{1/8}} \frac{h(t)}{t} \dd t
			&\le \int_{r}^{{1/8}} \frac{h(2t)}{t} \dd t + C_0\int_{r}^{{1/8}} \frac{H(2t)}{t} \dd t \\
			&\le \int_{2r}^{{1/4}} \frac{h(t)}{t} \dd t + C_0\int_{2r}^{{1/4}} \frac{H(t)}{t} \dd t,
		\end{align*}
		which, combined with \eqref{lem.iteration.assump.b}, \eqref{lem.iteration.assump.d} and \eqref{lem.iteration.assump.c}, gives
		\begin{align*}
			\int_{r}^{2r} \frac{h(t)}{t} \dd t
			&\le \int_{{1/8}}^{{1/4}} \frac{h(t)}{t} \dd t + C_0\int_{2r}^{{1/4}} \frac{H(t)}{t} \dd t \\
			&\le C{(\Phi(\frac12) + B_0 )}
			 + C_0\int_{r}^{{1/2}} \frac{H(t)}{t} \dd t.
		\end{align*}
		Then from \eqref{lem.iteration.assump.f} we have 
		\begin{align*}
			\int_{r}^{2r} \frac{h(t)}{t} \dd t
			&\ge \int_{r}^{2r} \frac{h(r)-C_0H(2t)}{t} \dd t \\
			&\ge \frac{h(r)}{{4}} - C_0\int_{r}^{{1/2}} \frac{H(t)}{t} \dd t.
		\end{align*}
		Therefore for $r\in(\ep,{\frac1{16}})$, we find
		\begin{align}\label{est1.proof.lem.iteration}
			h(r) \le C{(\Phi(\frac12) + B_0 )}
			+ C\int_{r}^{{1/2}} \frac{H(t)}{t} \dd t.
		\end{align}
		Let $\delta\in(0,\min\{\frac{\theta}4,{\frac1{(16)^2}}\})$ be a small number to be determined later and let us set $\ep_*=\delta^2$. We temporarily assume that $\ep\in(0,\theta\ep_*)$ in the following proof. From \eqref{lem.iteration.assump.a} we have
		\begin{align*}
			\int_{\ep/\delta}^{\delta} \frac{H(\theta t)}{t} \dd t
			&\le \frac12 \int_{\ep/\delta}^{\delta} \frac{H({2t})}{t} \dd t
			+ C_0
			\bigg(\int_{\ep/\delta}^{\delta} \big(\frac{\ep}{t}\big)^{\alpha} \frac{\Phi({16}t)}{t} \dd t
			+ B_0\int_{\ep/\delta}^{\delta}t^{\beta-1}\dd t\bigg) \\
			&\le \frac12 \int_{\ep/\delta}^{{1/2}} \frac{H(t)}{t} \dd t
			+ C_0
			\bigg(\int_{\ep/\delta}^{\delta} \big(\frac{\ep}{t}\big)^{\alpha} \frac{\Phi({16}t)}{t} \dd t
			+ \beta^{-1}B_0\bigg).
		\end{align*}
		From \eqref{lem.iteration.assump.e} and the estimate \eqref{est1.proof.lem.iteration} for $h(r)$, we have
		\begin{align*}
			\int_{\ep/\delta}^{\delta} \big(\frac{\ep}{t}\big)^{\alpha} \frac{\Phi({16}t)}{t} \dd t
			&\le
			C_0\int_{\ep/\delta}^{\delta} \big(\frac{\ep}{t}\big)^{\alpha} \frac{H({16}t)+h({16}t)}{t} \dd t \\
			&\le
			C_0\delta^{\alpha}\int_{{16}\ep/\delta}^{{16}\delta} \frac{H(t)}{t} \dd t \\
			&\quad
			+ C\bigg(\int_{\ep/\delta}^{\delta} \big(\frac{\ep}{t}\big)^{\alpha} \frac{\dd t}{t}\bigg)
			\bigg({(\Phi(\frac12) + B_0 )} + \int_{{16}\ep/\delta}^{{1/2}} \frac{H(t)}{t} \dd t\bigg) \\
			&\le
			(C_0+C_1\alpha^{-1})\delta^\alpha \int_{\ep/\delta}^{{1/2}} \frac{H(t)}{t} \dd t 
			+ C_1\alpha^{-1}\delta^{\alpha} {(\Phi(\frac12) + B_0 )}.
		\end{align*}
		Now let us choose $\delta$ sufficiently small depending on $\alpha$, $C_0$ and $C_1$ so that
		\begin{align*}
			\frac12+C_0(C_0+C_1\alpha^{-1})\delta^{\alpha}
			\le \frac34.
		\end{align*}
		Then we obtain
		\begin{align*}
			\int_{\theta\ep/\delta}^{\theta\delta} \frac{H(t)}{t} \dd t
			&\le \frac34 \int_{\ep/\delta}^{{1/2}} \frac{H(t)}{t} \dd t
			+ C{(\Phi(\frac12) + B_0 )},
			\end{align*}
		and consequently, from $\ep/\delta<\theta\delta$,
		\begin{align*}
			\int_{\theta\ep/\delta}^{\theta\delta} \frac{H(t)}{t} \dd t
			&\le 3\int_{\theta\delta}^{{1/2}} \frac{H(t)}{t} \dd t
			+ C{(\Phi(\frac12) + B_0 )}.
		\end{align*}
		Therefore from \eqref{lem.iteration.assump.b} and \eqref{lem.iteration.assump.c} we have
		\begin{equation}\label{est2.proof.lem.iteration}
			\begin{aligned}
				\int_{\theta\ep/\delta}^{{1/2}} \frac{H(t)}{t} \dd t
				&\le 4\int_{\theta\delta}^{{1/2}} \frac{H(t)}{t} \dd t 
				+ C{(\Phi(\frac12) + B_0 )} \\
				&\le C{(\Phi(\frac12) + B_0 )},
			\end{aligned}
		\end{equation}
		where we have used
		\begin{align}\label{est3.proof.lem.iteration}
			\sup_{\theta\delta\le r \le1/2} \Phi(r) 
			\le C{(\Phi(\frac12) + B_0 )}
			\end{align}
		with some constant $C$ independent of $\ep$, which is proved by applying \eqref{lem.iteration.assump.c} finitely many times. Hence, from $4\ep<\theta\ep/\delta$, the estimates \eqref{est1.proof.lem.iteration} and \eqref{est2.proof.lem.iteration} lead to, for $r\in (\theta\ep/\delta, {\frac1{16}})$,
		\begin{equation}\label{est4.proof.lem.iteration}
			\begin{aligned}
				h(r) 
				&\le C{(\Phi(\frac12) + B_0 )}
				+ C\int_{\theta\ep/\delta}^{{1/2}} \frac{H(t)}{t} \dd t \\
				&\le C{(\Phi(\frac12) + B_0 )}.
				\end{aligned}
		\end{equation}
		For $r\in(\theta\ep/\delta,{\frac1{32}})$, from \eqref{lem.iteration.assump.e}, \eqref{est2.proof.lem.iteration} and \eqref{est4.proof.lem.iteration}, we see that
		\begin{align*}
			\int_{r}^{2r}\frac{\Phi(t)}{t}\dd t
			&\le 
			C_0\int_{r}^{2r}\frac{H(t)}{t}\dd t + C_0\int_{r}^{2r}\frac{h(t)}{t}\dd t \\
			&\le 
			C{(\Phi(\frac12) + B_0 )}.
			\end{align*}
		From this, using the following inequality valid for all fixed $r\in({2\ep},1/2)$
		\begin{align*}
			\Phi(r) &\le C\big(\Phi(t) + B_0t^\beta\big), \quad t\in[r,2r],
		\end{align*}
		which is a consequence of \eqref{lem.iteration.assump.c}, we find
		\begin{align*}
			\sup_{\theta\ep/\delta\le r\le1/{32}} \Phi(r)
			\le 
			C{(\Phi(\frac12) + B_0 )}.
		\end{align*}
		Using repeatedly \eqref{lem.iteration.assump.c} finitely many times, we have
		\begin{align}\label{est5.proof.lem.iteration}
			\sup_{\ep\le r\le1/{32}}\Phi(r) 
			\le C{(\Phi(\frac12) + B_0 )}
		\end{align}
		with a constant $C$ independent of $\ep$. On the other hand, \eqref{lem.iteration.assump.b} and \eqref{est5.proof.lem.iteration} imply
		\begin{equation}\label{est6.proof.lem.iteration}
			\begin{aligned}
				\int_{\ep}^{\theta\ep/\delta}\frac{H(t)}{t}\dd t
				&\le C_0\int_{\ep}^{\theta\ep/\delta}\frac{\Phi(t)}{t}\dd t \\
				&\le C{(\Phi(\frac12) + B_0 )}.
			\end{aligned}
		\end{equation}
		Combining \eqref{est2.proof.lem.iteration}, \eqref{est3.proof.lem.iteration}, \eqref{est5.proof.lem.iteration} and \eqref{est6.proof.lem.iteration}, we obtain the assertion \eqref{est1.lem.iteration}, provided $\ep \in (0,\theta \ep_*)$. Finally, if $\ep \in (\theta\ep_*, {\frac1{48}})$, \eqref{est1.lem.iteration} is trivial by applying \eqref{lem.iteration.assump.b} and \eqref{lem.iteration.assump.c} finitely many times.
	\end{proof}

	\small 
	\bibliographystyle{abbrv}
	\bibliography{quant-NS}
	
\end{document}